\apptocmd{\sloppy}{\hbadness 10000\relax}{}{}                       % Delete warnings on underfull in bibliography (related package: etoolbox)
\newlength{\alphabet}
\setlist[description]{font=\normalfont}
\setlist[enumerate]{font=\normalfont}
\setlist[enumerate,1]{label = {(\arabic*)}}
\setlist[enumerate,2]{label = {(\arabic{enumi}.\arabic*)}}
\newcounter{dummy}
\newcommand\myitem[1][]{\item[#1]\refstepcounter{dummy}\def\@currentlabel{#1}}
\tikzset{
    symbol/.style={
        draw=none,
        every to/.append style={
            edge node={node [sloped, allow upside down, auto=false]{$#1$}}
        },
    },
}
\theoremstyle{plain}      % Theorem-like environments - theorem
\newtheorem{theorem}     {Theorem} [section]
\newtheorem{proposition} [theorem] {Proposition}
\newtheorem{lemma}       [theorem] {Lemma}
\newtheorem{corollary}   [theorem] {Corollary}
\newtheorem{conjecture}  [theorem] {Conjecture}
\theoremstyle{definition} % Theorem-like environments - definition
\newtheorem{definition}  [theorem] {Definition}
\newtheorem{remark}      [theorem] {Remark}
\newtheorem{assumption}  [theorem] {Assumption}
\NewDocumentCommand {\cgh} {m} {c^{\mathrm{GH}}_{#1}}
\NewDocumentCommand {\csh} {m} {c^{S^1}_{#1}}
\NewDocumentCommand{\signature}{}{\operatorname{sign}}
\NewDocumentCommand{\Z}{}{\mathbb{Z}} %zz
\NewDocumentCommand{\Q}{}{\mathbb{Q}} %qq
\NewDocumentCommand{\R}{}{\mathbb{R}} %rr
\NewDocumentCommand{\C}{}{\mathbb{C}} %cc
\NewDocumentCommand{\id} {}{\operatorname{id}} %id
\NewDocumentCommand{\img}{}{\operatorname{im}} %img
\NewDocumentCommand{\Hom}   { }{\operatorname{Hom}}   %hm
\NewDocumentCommand{\itr} {}{\operatorname{int}}  %itr
\NewDocumentCommand  {\dv}     {}    {\mathrm{D}}                          %dv
\NewDocumentCommand  {\odv}    {m m} {\frac{\mathrm{d} #1}{\mathrm{d} #2}} %odv
\NewDocumentCommand  {\pdv}    {m m} {\frac{\partial #1}{\partial #2}}     %pdv
\NewDocumentCommand  {\edv}    {}    {\mathrm{d}}                          %edv
\NewDocumentCommand  {\del}    {}    {\partial}                            %dl
\NewDocumentCommand{\morse}        {}{\mu_{\operatorname{M}}}  %moi
\NewDocumentCommand{\conleyzehnder}{}{\mu_{\operatorname{CZ}}} %czi
\newcommand{\lpar}{(}
\newcommand{\rpar}{)}
\newcommand{\lsize}{}
\newcommand{\rsize}{}
\NewDocumentCommand{\SetParenthesisTypeSize}{m m}{
	% default values
	\renewcommand{\lpar}{(}
	\renewcommand{\rpar}{)}
	\renewcommand{\lsize}{}
	\renewcommand{\rsize}{}
    % type of bracket
    \IfEq{#1}{(} { \renewcommand{\lpar}{(}       \renewcommand{\rpar}{)}       }{}
    \IfEq{#1}{()}{ \renewcommand{\lpar}{(}       \renewcommand{\rpar}{)}       }{}
	\IfEq{#1}{c} { \renewcommand{\lpar}{\{}      \renewcommand{\rpar}{\}}      }{}
	\IfEq{#1}{<} { \renewcommand{\lpar}{\langle} \renewcommand{\rpar}{\rangle} }{}
	\IfEq{#1}{[} { \renewcommand{\lpar}{[}       \renewcommand{\rpar}{]}       }{}
    \IfEq{#1}{[]}{ \renewcommand{\lpar}{[}       \renewcommand{\rpar}{]}       }{}
	\IfEq{#1}{|} { \renewcommand{\lpar}{\lvert}  \renewcommand{\rpar}{\rvert}  }{}
	\IfEq{#1}{||}{ \renewcommand{\lpar}{\lVert}  \renewcommand{\rpar}{\rVert}  }{}
    \IfEq{#1}{L} { \renewcommand{\lpar}{\lfloor} \renewcommand{\rpar}{\rfloor} }{}
    \IfEq{#1}{T} { \renewcommand{\lpar}{\lceil}  \renewcommand{\rpar}{\rceil}  }{}
	% size of bracket
    \IfEq{#2}{0}{ \renewcommand{\lsize}{}       \renewcommand{\rsize}{}       }{}
    \IfEq{#2}{1}{ \renewcommand{\lsize}{\bigl}  \renewcommand{\rsize}{\bigr}  }{}
    \IfEq{#2}{2}{ \renewcommand{\lsize}{\Bigl}  \renewcommand{\rsize}{\Bigr}  }{}
    \IfEq{#2}{3}{ \renewcommand{\lsize}{\biggl} \renewcommand{\rsize}{\biggr} }{}
    \IfEq{#2}{4}{ \renewcommand{\lsize}{\Biggl} \renewcommand{\rsize}{\Biggr} }{}
    \IfEq{#2}{a}{ \renewcommand{\lsize}{\left}  \renewcommand{\rsize}{\right} }{}
}
\NewDocumentCommand{\p}{m m m}{
    % here I'm setting what type and size the parenthesis are, and writing them
    \IfEq{#1}{n}{}{\SetParenthesisTypeSize{#1}{#2} \lsize \lpar}
    % here I'm writing what goes inside the parenthesis
    #3
    % since we may have nested parenthesis, we need to redefine the variables before the closing brackets.
    \IfEq{#1}{n}{}{\SetParenthesisTypeSize{#1}{#2} \rsize \rpar}
}
\NewDocumentCommand{\sbn}{o m m}{
    \IfValueF{#1}{
        \{ #2 \ | \ #3 \}
    }{}
    \IfValueT{#1}{
        \IfEq{#1}{0}{        \{ #2 \       | \ #3        \} }{}
        \IfEq{#1}{1}{ \bigl  \{ #2 \ \big  | \ #3 \bigr  \} }{}
        \IfEq{#1}{2}{ \Bigl  \{ #2 \ \Big  | \ #3 \Bigr  \} }{}
        \IfEq{#1}{3}{ \biggl \{ #2 \ \bigg | \ #3 \biggr \} }{}
        \IfEq{#1}{4}{ \Biggl \{ #2 \ \Bigg | \ #3 \Biggr \} }{}
    }{}
}
\NewDocumentEnvironment{copiedtheorem}
    {o m}
    {
        \theoremstyle{plain}
        \newtheorem*{copytheorem:#2}{\cref{#2}}
        \IfNoValueTF{#1}
        {
            \begin{copytheorem:#2}
        }
        {
            \begin{copytheorem:#2}[{#1}]
        }
    }
    {
        \end{copytheorem:#2}
    }
\NewDocumentEnvironment{secondcopy}
    {o m}
    {
        \IfNoValueTF{#1}
        {
            \begin{copytheorem:#2}
        }
        {
            \begin{copytheorem:#2}[{#1}]
        }
    }
    {
        \end{copytheorem:#2}
    }
\title{On the Lagrangian capacity of convex or concave toric domains}
\author{Miguel Pereira \\ \small{\href{mailto:miguel.b.per@gmail.com}{miguel.b.per@gmail.com}}}
\date{}
\begin{document}

\maketitle

\begin{abstract}
    We establish computational results concerning the Lagrangian capacity, originally defined by Cieliebak--Mohnke. More precisely, we show that the Lagrangian capacity of a 4-dimensional convex toric domain is equal to its diagonal. Working under the assumption that there is a suitable virtual perturbation scheme which defines the curve counts of linearized contact homology, we extend the previous result to any convex or concave toric domain. This result gives a positive answer to a conjecture of Cieliebak--Mohnke for the Lagrangian capacity of the ellipsoid.
    % We establish computational results concerning the Lagrangian capacity from \cite{cieliebakPuncturedHolomorphicCurves2018}. More precisely, we show that the Lagrangian capacity of a 4-dimensional convex toric domain is equal to its diagonal. The proof involves comparisons between the Lagrangian capacity, the McDuff--Siegel capacities from \cite{mcduffSymplecticCapacitiesUnperturbed2022}, and the Gutt--Hutchings capacities from \cite{guttSymplecticCapacitiesPositive2018}. Working under the assumption that there is a suitable virtual perturbation scheme which defines the curve counts of linearized contact homology, we extend the previous result to any convex or concave toric domain. For this, we use the higher symplectic capacities from \cite{siegelHigherSymplecticCapacities2020}. The key step is showing that moduli spaces of asymptotically cylindrical holomorphic curves in ellipsoids are transversely cut out.
\end{abstract}

\tableofcontents

\section{Introduction}

\subsection{Motivation}

A \textbf{symplectic capacity} is a function $c$ that assigns to every symplectic manifold $(X,\omega)$ (in a restricted subclass) a number $c(X,\omega) \in [0,+\infty]$, satisfying
\begin{description}
    \item[(Monotonicity)] If there exists a symplectic embedding (possibly in a restricted subset of all symplectic embeddings) $(X, \omega_X) \longrightarrow (Y, \omega_Y)$, then $c(X, \omega_X) \leq c(Y, \omega_Y)$;
    \item[(Conformality)] If $\alpha > 0$ then $c(X, \alpha \omega_X) = \alpha  c(X, \omega_X)$.
\end{description}
By the monotonicity property, symplectic capacities can provide obstructions to the existence of symplectic embeddings.

An example of a symplectic capacity is the \textbf{Lagrangian capacity} $c_L$, first defined in \cite[Section 1.2]{cieliebakPuncturedHolomorphicCurves2018}. It is defined as follows. If $(X, \omega)$ is a $2n$-dimensional symplectic manifold\footnote{Unless otherwise stated, every symplectic manifold we will consider will be $2n$-dimensional.} and $L \subset X$ is a Lagrangian submanifold, then the \textbf{minimal symplectic area} of $L$ is given by
\begin{IEEEeqnarray*}{c}
    A_{\mathrm{min}}(L) \coloneqq \inf \{ \omega(\sigma) \mid \sigma \in \pi_2(X,L), \, \omega(\sigma) > 0 \}.
\end{IEEEeqnarray*}
Then, the {Lagrangian capacity} of $(X, \omega)$ is given by
\begin{IEEEeqnarray*}{c}
    c_L(X,\omega) \coloneqq \sup \{ A_{\mathrm{min}}(L) \mid L \subset X \text{ is an embedded Lagrangian torus}\}.
\end{IEEEeqnarray*}

The main goal of this paper is to compute the Lagrangian capacity of (some) toric domains. A \textbf{toric domain} is a Liouville domain of the form $X_{\Omega} \coloneqq \mu^{-1}(\Omega) \subset \C^n$, where $\Omega \subset \R^n_{\geq 0}$ and $\mu(z_1,\ldots,z_n) = \pi(|z_1|^2,\ldots,|z_n|^2)$.

Some examples of toric domains which are going to be relevant in this introduction are the \textbf{ball} $B(a)$, the \textbf{cylinder} $Z(a)$, the \textbf{ellipsoid} $E(a_1,\ldots,a_n)$ and the \textbf{nondisjoint union of cylinders} $N(a)$, which are given by
\begin{IEEEeqnarray*}{rClCrCl}
    B(a)              & \coloneqq & \mu^{-1}(\Omega_{B(a)}),              & \quad & \Omega_{B(a)}              & \coloneqq & \{ x \in \R^n_{\geq 0} \mid x_1 + \cdots + x_n \leq a \}, \\
    Z(a)              & \coloneqq & \mu^{-1}(\Omega_{Z(a)}),              & \quad & \Omega_{Z(a)}              & \coloneqq & \{ x \in \R^n_{\geq 0} \mid x_1 \leq a \}, \\
    E(a_1,\ldots,a_n) & \coloneqq & \mu^{-1}(\Omega_{E(a_1,\ldots,a_n)}), & \quad & \Omega_{E(a_1,\ldots,a_n)} & \coloneqq & \{ x \in \R^n_{\geq 0} \mid x_1 / a_1 + \cdots + x_n / a_n \leq 1 \}, \\
    % P(a)              & \coloneqq & \mu^{-1}(\Omega_{P(a)}),              & \quad & \Omega_{P(a)}              & \coloneqq & \{ x \in \R^n_{\geq 0} \mid \forall i = 1, \ldots, n \colon x_i \leq a \}, \\
    N(a)              & \coloneqq & \mu^{-1}(\Omega_{N(a)}),              & \quad & \Omega_{N(a)}              & \coloneqq & \{ x \in \R^n_{\geq 0} \mid \exists i = 1, \ldots, n \colon x_i \leq a \}.
\end{IEEEeqnarray*}

The \textbf{diagonal} of a toric domain $X_{\Omega}$ is
\begin{IEEEeqnarray*}{c}
    \delta_\Omega \coloneqq \max \{ a \mid (a,\ldots,a) \in \Omega \}.
\end{IEEEeqnarray*}
It is easy to show (see \cref{lem:c square leq c lag,lem:c square geq delta}) that $c_L(X_\Omega) \geq \delta_\Omega$ for any convex or concave toric domain $X_{\Omega}$. Also, Cieliebak--Mohnke give the following results for the Lagrangian capacity of the ball and the cylinder.

\begin{copiedtheorem}[{\cite[Corollary 1.3]{cieliebakPuncturedHolomorphicCurves2018}}]{prp:cl of ball}
    The Lagrangian capacity of the ball is
    \begin{IEEEeqnarray*}{c+x*}
        c_L(B(1)) = \frac{1}{n}.
        %  \footnote{In this introduction, we will be showcasing many results from the main text. The theorems appear here as they do on the main text, in particular with the same numbering. The numbers of the theorems in the introduction have hyperlinks to their corresponding location in the main text.}
    \end{IEEEeqnarray*}
\end{copiedtheorem}

\begin{copiedtheorem}[{\cite[p.~215-216]{cieliebakPuncturedHolomorphicCurves2018}}]{prp:cl of cylinder}
    The Lagrangian capacity of the cylinder is
    \begin{IEEEeqnarray*}{c+x*}
        c_L(Z(1)) = 1.
    \end{IEEEeqnarray*}
\end{copiedtheorem}

In other words, if $X_{\Omega}$ is the ball or the cylinder then $c_L(X_{\Omega}) = \delta_\Omega$. This motivates the following conjecture by Cieliebak--Mohnke.

\begin{copiedtheorem}[{\cite[Conjecture 1.5]{cieliebakPuncturedHolomorphicCurves2018}}]{conj:cl of ellipsoid}
    The Lagrangian capacity of the ellipsoid is%
    \begin{equation*}
        c_L(E(a_1,\ldots,a_n)) = \p{}{2}{\frac{1}{a_1} + \cdots + \frac{1}{a_n}}^{-1}.
    \end{equation*}
\end{copiedtheorem}

A more general form of the previous conjecture is the following.

\begin{copiedtheorem}{conj:the conjecture}
    If $X_{\Omega}$ is a convex or concave toric domain then
    \begin{IEEEeqnarray*}{c+x*}
        c_L(X_{\Omega}) = \delta_\Omega.
    \end{IEEEeqnarray*}
\end{copiedtheorem}

So, more precisely, the goal of this paper is to prove \cref{conj:the conjecture}. We will offer two main results in this direction.
\begin{enumerate}
    \item In \cref{lem:computation of cl}, we prove that $c_L(X_\Omega) = \delta_\Omega$ whenever $X_{\Omega}$ is convex and $4$-dimensional.
    \item In \cref{thm:my main theorem}, using techniques from contact homology we prove that $c_L(X_\Omega) = \delta_\Omega$ for any convex or concave toric domain $X_{\Omega}$. More specifically, in this case we are working under the assumption that there is a virtual perturbation scheme such that the linearized contact homology of a nondegenerate Liouville domain can be defined (see \cref{sec:assumptions of virtual perturbation scheme}).
\end{enumerate}

\begin{remark}
    In \cite{guttCubeNormalizedSymplectic}, Jean Gutt, the Author and Vinicius Ramos explain that the proof of \cref{thm:my main theorem} that we will give carries over (with minor changes) to the case where the toric domain $X_{\Omega}$ is not necessarily convex or concave, thus giving a formula for the Lagrangian capacity of a more general class of toric domains. More precisely, it is shown that $c_L(X_{\Omega}) = \eta_{\Omega}$ for any toric domain $X_{\Omega}$ with $(\eta_{\Omega},\ldots,\eta_{\Omega}) \in \partial \Omega$, where $\eta_{\Omega} \coloneqq \inf \{a \mid X_{\Omega} \subset N(a) \}$. Notice that if $X_{\Omega}$ is convex or concave, then $\eta_{\Omega} = \delta_{\Omega}$.
\end{remark}

\subsection{Main results}

Notice that by the previous discussion, we only need to prove the hard inequality $c_L(X_{\Omega}) \leq \delta_\Omega$. For this, we will need to use other symplectic capacities. The following is a list of the symplectic capacities we will use in this paper, in addition to the Lagrangian capacity:
\begin{enumerate}
    \item the \textbf{Gutt--Hutchings capacities} from \cite{guttSymplecticCapacitiesPositive2018}, denoted by $\cgh{k}$ (\cref{def:gutt hutchings capacities});
    \item the \textbf{$S^1$-equivariant symplectic homology capacities} from \cite{irieSymplecticHomologyFiberwise2021}, denoted by $\csh{k}$ (see \cref{def:s1esh capacities});
    \item the \textbf{McDuff--Siegel capacities} from \cite{mcduffSymplecticCapacitiesUnperturbed2022}, denoted by $\tilde{\mathfrak{g}}_k$ (see \cref{def:g tilde});
    \item the \textbf{higher symplectic capacities} from \cite{siegelHigherSymplecticCapacities2020}, denoted by $\mathfrak{g}_k$ (see \cref{def:capacities glk}).
\end{enumerate}

We now describe our results concerning the capacities mentioned so far. The key step in proving $c_L(X_{\Omega}) \leq \delta_\Omega$ is the following inequality between $c_L$ and $\tilde{\mathfrak{g}}_k$.

\begin{copiedtheorem}{thm:lagrangian vs g tilde}
    If $(X, \lambda)$ is a Liouville domain then
    \begin{IEEEeqnarray*}{c+x*}
        c_L(X) \leq \inf_k^{} \frac{\tilde{\mathfrak{g}}_k(X)}{k}.
    \end{IEEEeqnarray*}
\end{copiedtheorem}

Indeed, this result can be combined with the following results from \cite{mcduffSymplecticCapacitiesUnperturbed2022} and \cite{guttSymplecticCapacitiesPositive2018}.

\begin{copiedtheorem}[{\cite[Proposition 5.6.1]{mcduffSymplecticCapacitiesUnperturbed2022}}]{prp:g tilde and cgh}
    If $X_{\Omega}$ is a $4$-dimensional convex toric domain then
    \begin{IEEEeqnarray*}{c+x*}
        \tilde{\mathfrak{g}}_k(X_\Omega) = \cgh{k}(X_\Omega).
    \end{IEEEeqnarray*}
\end{copiedtheorem}

\begin{copiedtheorem}[{\cite[Lemma 1.19]{guttSymplecticCapacitiesPositive2018}}]{lem:cgh of nondisjoint union of cylinders}
    $\cgh{k}(N(a)) = a (k + n - 1)$.
\end{copiedtheorem}

Combining the three previous results, we get the following particular case of \cref{conj:the conjecture}. Since the proof is short, we present it here as well.

\begin{copiedtheorem}{lem:computation of cl}
    If $X_{\Omega}$ is a $4$-dimensional convex toric domain then
    \begin{IEEEeqnarray*}{c+x*}
        c_L(X_{\Omega}) = \delta_\Omega.
    \end{IEEEeqnarray*}
\end{copiedtheorem}
\begin{proof}
    For every $k \in \Z_{\geq 1}$,
    \begin{IEEEeqnarray*}{rCls+x*}
        \delta_\Omega
        & \leq & c_L(X_{\Omega})                                         & \quad [\text{by \cref{lem:c square geq delta,lem:c square leq c lag}}] \\
        & \leq & \frac{\tilde{\mathfrak{g}}_{k}(X_{\Omega})}{k} & \quad [\text{by \cref{thm:lagrangian vs g tilde}}] \\
        & =    & \frac{\cgh{k}(X_{\Omega})}{k}                           & \quad [\text{by \cref{prp:g tilde and cgh}}] \\
        & \leq & \frac{\cgh{k}(N(\delta_\Omega))}{k}                     & \quad [\text{$X_{\Omega}$ is convex, hence $X_{\Omega} \subset N(\delta_\Omega)$}] \\
        & =    & \frac{\delta_\Omega(k+1)}{k}                          & \quad [\text{by \cref{lem:cgh of nondisjoint union of cylinders}}].
    \end{IEEEeqnarray*}
    The result follows by taking the infimum over $k$.
\end{proof}

Before we move on to the discussion about computations using linearized contact homology, we show one final result which uses only the properties of $S^1$-equivariant symplectic homology.
\begin{copiedtheorem}{thm:ghc and s1eshc}
    If $(X, \lambda)$ is a Liouville domain, then
    \begin{enumerate}
        \item $\cgh{k}(X) \leq \csh{k}(X)$;
        \item $\cgh{k}(X) = \csh{k}(X)$ provided that $X$ is star-shaped.
    \end{enumerate}
\end{copiedtheorem}

We now present another approach that can be used to compute $c_L$, using linearized contact homology. This has the disadvantage that at the time of writing, linearized contact homology has not yet been defined in the generality that we need (see \cref{sec:assumptions of virtual perturbation scheme} and more specifically \cref{assumption}). Using linearized contact homology, together with an augmentation map, one can define the higher symplectic capacities $\mathfrak{g}_k$. The key idea is that the capacities $\mathfrak{g}_k$ can be compared to $\tilde{\mathfrak{g}}_k$ and $\cgh{k}$.

\begin{copiedtheorem}[{\cite[Section 3.4]{mcduffSymplecticCapacitiesUnperturbed2022}}]{thm:g tilde vs g hat}
    If $X$ is a Liouville domain then
    \begin{IEEEeqnarray*}{c+x*}
        \tilde{\mathfrak{g}}_k(X) \leq {\mathfrak{g}}_k(X).
    \end{IEEEeqnarray*}
\end{copiedtheorem}

\begin{copiedtheorem}{thm:g hat vs gh}
    If $X$ is a Liouville domain such that $\pi_1(X) = 0$ and $2 c_1(TX) = 0$ then%
    \begin{IEEEeqnarray*}{c+x*}
        {\mathfrak{g}}_k(X) = \cgh{k}(X).
    \end{IEEEeqnarray*}
\end{copiedtheorem}

These two results show that $\tilde{\mathfrak{g}}_k(X_\Omega) \leq \cgh{k}(X_\Omega)$ (under \cref{assumption}). Using the same proof as before, we conclude that $c_L(X_{\Omega}) = \delta_\Omega$.

\begin{copiedtheorem}{thm:my main theorem}
    Under \cref{assumption}, if $X_\Omega$ is a convex or concave toric domain then%
    \begin{IEEEeqnarray*}{c+x*}
        c_L(X_{\Omega}) = \delta_\Omega.
    \end{IEEEeqnarray*}
\end{copiedtheorem}

\subsection{Outline}

In \textbf{\cref{sec:2}}, we review some basics about asymptotically cylindrical holomorphic curves in symplectizations. We start by reviewing the definitions of the various types of symplectic manifolds that we will work with, namely Liouville domains, star-shaped domains and toric domains. After, we consider asymptotically cylindrical holomorphic curves, as well as the moduli spaces that they form. We state the (virtual) dimension formula for these moduli spaces, as well as the SFT compactness theorem, which describes their compactifications. Finally, we give a list of properties of $S^1$-equivariant symplectic homology, which is required to define the Gutt--Hutchings capacities.

\textbf{\cref{sec:3}} is about symplectic capacities. The first three subsections are each devoted to defining and proving the properties of a specific capacity, namely the Lagrangian capacity $c_L$, the Gutt--Hutchings capacities $\cgh{k}$ and the $S^1$-equivariant symplectic homology capacities $\csh{k}$, and finally the McDuff--Siegel capacities $\tilde{\mathfrak{g}}_k$. In the subsection about the Lagrangian capacity, we also state the conjecture that we will try to solve in the remainder of the paper, i.e. $c_L(X_{\Omega}) = \delta_\Omega$ for a convex or concave toric domain $X_{\Omega}$. The final subsection is devoted to computations. We show that $c_L(X) \leq \inf_k^{} \tilde{\mathfrak{g}}_k(X) / k$. We use this result to prove the conjecture in the case where $X_{\Omega}$ is $4$-dimensional and convex.

\textbf{\cref{sec:4}} introduces the linearized contact homology of a nondegenerate Liouville domain. The idea is that using the linearized contact homology, one can define the higher symplectic capacities, which will allow us to prove $c_L(X_{\Omega}) = \delta_\Omega$ for any convex or concave toric domain $X_{\Omega}$ (but under the assumption that linearized contact homology and the augmentation map are well-defined). We give a review of real linear Cauchy--Riemann operators on complex vector bundles, with a special emphasis on criteria for surjectivity in the case where the bundle has complex rank $1$. We use this theory to prove that moduli spaces of curves in ellipsoids are transversely cut out and in particular that the augmentation map of an ellipsoid is an isomorphism. The final subsection is devoted to computations. We show that $\mathfrak{g}_k(X) = \cgh{k}(X)$, and use this result to prove our conjecture (again, under \cref{assumption}).

\textbf{Acknowledgements.} I thank Kai Cieliebak for introducing me to this problem and for many valuable discussions about this topic. I am also grateful for useful conversations with Kyler Siegel, and to Michael Hutchings and Jean Gutt for their interest in this project. The contents of this paper are essentially those of my PhD thesis (\cite{Per}), which was carried out at the University of Augsburg under the supervision of Kai Cieliebak.

\section{Preliminaries on holomorphic curves}
\label{sec:2}

\subsection{Liouville domains}

A \textbf{symplectic cobordism} is a compact symplectic manifold $(X, \omega)$ with boundary $\partial X$, together with a $1$-form $\lambda$ defined on an open neighbourhood of $\partial X$, such that $\edv \lambda = \omega$ and the restriction of $\lambda$ to $\partial X$ is a contact form. In this case, we let $\partial^+ X$ (respectively $\partial^- X$) be the subset of $\partial X$ where the orientation defined by $\lambda|_{\partial X}$ as a contact form agrees with the boundary orientation (respectively negative boundary orientation). In the case where $\lambda$ is defined on $X$, we say that $(X,\lambda)$ is a \textbf{Liouville cobordism}. Finally, a \textbf{Liouville domain} is a Liouville cobordism $(X,\lambda)$ such that $\partial^- X = \varnothing$.

Consider the canonical symplectic potential of $\C^n$, given by
\begin{IEEEeqnarray*}{c+x*}
    \lambda \coloneqq \frac{1}{2} \sum_{j=1}^{n} (x^j \edv y^j - y^j \edv x^j).
\end{IEEEeqnarray*}
A \textbf{star-shaped domain} is a subset $X \subset \C^n$ such that $(X,\lambda)$ is a Liouville domain. We will be interested in a further subclass of domains, namely \textbf{toric domains}. To define this notion, first consider the \textbf{moment map} $\mu \colon \C^n \longrightarrow \R^n_{\geq 0}$, which is given by
\begin{IEEEeqnarray*}{c+x*}
    \mu(z_1,\ldots,z_n) \coloneqq \pi(|z_1|^2,\ldots,|z_n|^2),
\end{IEEEeqnarray*}
and define
\begin{IEEEeqnarray*}{rClClrCl}
    \Omega_X      & \coloneqq & \hphantom{{}^{-1}} \mu(X) \subset \R_{\geq 0}^n, & \qquad & \text{for every } & X      & \subset & \C^n, \\
    X_{\Omega}    & \coloneqq & \mu^{-1}(\Omega) \subset \C^n,                   & \qquad & \text{for every } & \Omega & \subset & \R^{n}_{\geq 0}, \\
    \delta_\Omega & \coloneqq & \sup \{ a \mid (a, \ldots, a) \in \Omega \},     & \qquad & \text{for every } & \Omega & \subset & \R^{n}_{\geq 0}.
\end{IEEEeqnarray*}
We call $\delta_{\Omega}$ the \textbf{diagonal} of $\Omega$. With this notation, a toric domain is a star-shaped domain $X$ of the form $X = X_{\Omega}$ for some $\Omega \subset \R^n_{\geq 0}$. We say that a toric domain $X_{\Omega}$ is \textbf{convex} if
\begin{IEEEeqnarray*}{c+x*}
    \hat{\Omega} \coloneqq \{ (x_1, \ldots, x_n) \in \R^n \mid (|x_1|,\ldots,|x_n|) \in \Omega \}
\end{IEEEeqnarray*}
is convex and that it is \textbf{concave} if $\R^n_{\geq 0} \setminus \Omega$ is convex. Some examples of toric domains are the \textbf{ball} $B(a)$, the \textbf{cylinder} $Z(a)$, the \textbf{ellipsoid} $E(a_1,\ldots,a_n)$, the \textbf{cube} $P(a)$ and the \textbf{nondisjoint union of cylinders} $N(a)$\footnote{Strictly speaking, $Z(a)$, $N(a)$ are noncompact and $P(a)$, $N(a)$ have corners, so they do not fit into the definition of toric domain. We will mostly ignore this small discrepancy in nomenclature and refer to them as toric domains anyway.}, which are given by
\begin{IEEEeqnarray*}{rClCrCl}
    B(a)              & \coloneqq & \mu^{-1}(\Omega_{B(a)}),              & \quad & \Omega_{B(a)}              & \coloneqq & \{ x \in \R^n_{\geq 0} \mid x_1 + \cdots + x_n \leq a \}, \\
    Z(a)              & \coloneqq & \mu^{-1}(\Omega_{Z(a)}),              & \quad & \Omega_{Z(a)}              & \coloneqq & \{ x \in \R^n_{\geq 0} \mid x_1 \leq a \}, \\
    E(a_1,\ldots,a_n) & \coloneqq & \mu^{-1}(\Omega_{E(a_1,\ldots,a_n)}), & \quad & \Omega_{E(a_1,\ldots,a_n)} & \coloneqq & \{ x \in \R^n_{\geq 0} \mid x_1 / a_1 + \cdots + x_n / a_n \leq 1 \}, \\
    P(a)              & \coloneqq & \mu^{-1}(\Omega_{P(a)}),              & \quad & \Omega_{P(a)}              & \coloneqq & \{ x \in \R^n_{\geq 0} \mid \forall i = 1, \ldots, n \colon x_i \leq a \}, \\
    N(a)              & \coloneqq & \mu^{-1}(\Omega_{N(a)}),              & \quad & \Omega_{N(a)}              & \coloneqq & \{ x \in \R^n_{\geq 0} \mid \exists i = 1, \ldots, n \colon x_i \leq a \}.
\end{IEEEeqnarray*}

Any Liouville cobordism $(X,\lambda)$ has a \textbf{Liouville vector field} $Z$ which is defined by the equation $\lambda = \iota_Z \edv \lambda$. If $\varphi \colon (X, \lambda_X) \longrightarrow (Y, \lambda_Y)$ is an embedding between Liouville cobordisms, we say that $\varphi$ is
\begin{enumerate}
    \item \label{def:types of embeddings 1} \textbf{symplectic} if $\varphi^* \lambda_Y - \lambda_X$ is closed;
    \item \label{def:types of embeddings 2} \textbf{generalized Liouville} if $\varphi^* \lambda_Y - \lambda_X$ is closed and $(\varphi^* \lambda_Y - \lambda_X)|_{\partial X}$ is exact;
    \item \label{def:types of embeddings 3} \textbf{exact symplectic} if $\varphi^* \lambda_Y - \lambda_X$ is exact;
    \item \label{def:types of embeddings 4} \textbf{Liouville} if $\varphi^* \lambda_Y - \lambda_X = 0$.
\end{enumerate}

The \textbf{symplectization} of a contact manifold $(M,\alpha)$ is the exact symplectic manifold $\R \times M$ whose symplectic potential is $e^r \alpha$, where $r$ denotes the coordinate on $\R$. If $(X,\omega,\lambda)$ is a symplectic cobordism, the \textbf{completion} of $X$ is given by gluing half-symplectizations at $\partial^\pm X$, i.e.
\begin{IEEEeqnarray*}{c+x*}
    (\hat{X}, \hat{\lambda}) \coloneqq (\R_{\leq 0} \times \partial^- X, e^r \lambda|_{\partial^- X}) \cup (X,\lambda) \cup (\R_{\geq 0} \times \partial^+ X, e^r \lambda|_{\partial^+ X}).
\end{IEEEeqnarray*}
where $r$ denotes the coordinate on $\R$. If $(X, \lambda_X)$ and $(Y, \lambda_Y)$ are Liouville cobordisms and $\varphi \colon (X, \lambda_X) \longrightarrow (Y, \lambda_Y)$ is a Liouville embedding such that $Z_X$ is $\varphi$-related to $Z_Y$, then one can define a Liouville embedding $\hat{\varphi} \colon (\hat{X}, \hat{\lambda}_X) \longrightarrow (\hat{Y}, \hat{\lambda}_Y)$. With these definitions, the operation of taking the completion is actually a functor.

\subsection{Holomorphic curves}

In this section, we review some basics about asymptotically cylindrical holomorphic curves. Standard references for this are \cite{hoferPseudoholomorphicCurvesSymplectizations1993} and \cite{eliashbergIntroductionSymplecticField2010}. Our presentation will be based on \cite{wendlLecturesSymplecticField2016} and \cite[Section 2.1]{mcduffSymplecticCapacitiesUnperturbed2022}.

Let $(M, \alpha)$ be a contact manifold and consider its symplectization $(\R \times M, e^r \alpha)$. Recall that $M$ has a \textbf{Reeb vector field} $R$, given by
\begin{IEEEeqnarray*}{c+x*}
    \iota_R \alpha = 1, \quad \iota_R \edv \alpha = 0,
\end{IEEEeqnarray*}
and a \textbf{contact distribution} $\xi \coloneqq \ker \alpha$. An almost complex structure $J$ on $\R \times M$ is \textbf{cylindrical} if $J(\partial_r) = R$, if $J(\xi) \subset \xi$, and if the almost complex structure $J \colon \xi \longrightarrow \xi$ is compatible with $\edv \alpha$ and independent of $r$. Denote by $\mathcal{J}(M)$ the set of such $J$. If $(X,\omega,\lambda)$ is a symplectic cobordism and $J$ is an almost complex structure on $\hat{X}$, we say that $J$ is \textbf{cylindrical} if its restriction to each symplectization end is cylindrical. Denote by $\mathcal{J}(X)$ the set of such $J$. If $J^{\pm} \in \mathcal{J}(\partial^{\pm} X)$, denote
\begin{IEEEeqnarray*}{rCls+x*}
    \mathcal{J}^{J^+}(X)                  & \coloneqq & \{ J \in \mathcal{J}(X) \mid J = J^{+} \text{ on } \R_{\geq 0} \times \partial^+ X \}, \\
    \mathcal{J}_{J^-}^{\hphantom{J^+}}(X) & \coloneqq & \{ J \in \mathcal{J}(X) \mid J = J^{-} \text{ on } \R_{\leq 0} \times \partial^- X \}, \\
    \mathcal{J}^{J^+}_{J^-}(X)            & \coloneqq & \mathcal{J}^{J^+}(X) \cap \mathcal{J}_{J^-}(X).
\end{IEEEeqnarray*}

Let $(\Sigma,j)$ be a compact Riemann surface without boundary and $\mathbf{z}^{\pm} \coloneqq \{z^{\pm}_1,\ldots,z^{\pm}_{p^{\pm}}\} \subset \Sigma$ be finite sets of positive and negative punctures, and denote $\dot{\Sigma} \coloneqq \Sigma \setminus \mathbf{z}^{-} \cup \mathbf{z}^+$. An \textbf{asymptotically cylindrical holomorphic curve} is a holomorphic map $u \colon (\dot{\Sigma}, j) \longrightarrow (\hat{X}, J)$ such that $u$ is positively (respectively negatively) asymptotic to a Reeb orbit of $\partial^+ X$ (respectively $\partial^- X$) at every $z \in \mathbf{z}^+$ (respectively $z \in \mathbf{z}^-$). For more details see \cite{wendlLecturesSymplecticField2016}. We will denote by $\Gamma^{\pm} = (\gamma^{\pm}_1,\ldots,\gamma^{\pm}_{p^{\pm}})$ the tuples of Reeb orbits in $\partial^{\pm} X$ that $u$ is asymptotic to.

Define a piecewise smooth $2$-form $\tilde{\omega} \in \Omega^2(\hat{X})$ by
\begin{IEEEeqnarray}{c+x*}
    \tilde{\omega}
    \coloneqq
    \begin{cases}
        \edv \lambda|_{\partial^+ X} & \text{on } \R_{\geq 0} \times \partial^+ X, \\
        \omega                       & \text{on } X, \\
        \edv \lambda|_{\partial^- X} & \text{on } \R_{\leq 0} \times \partial^- X.
    \end{cases}
    \phantomsection\label{eq:form 2}
\end{IEEEeqnarray}
If $u$ is an asymptotically cylindrical holomorphic curve, its \textbf{energies} are given by
\begin{IEEEeqnarray}{rClCl}
    E_{\hat{\omega}}(u)   & \coloneqq & \int_{\dot{\Sigma}}^{} u^* \hat{\omega},   \phantomsection\label{eq:energy 1} \\
    E_{\tilde{\omega}}(u) & \coloneqq & \int_{\dot{\Sigma}}^{} u^* \tilde{\omega}. \phantomsection\label{eq:energy 2}
\end{IEEEeqnarray}
In the case where $(X,\omega,\lambda)$ is a Liouville cobordism, Stokes' theorem implies that
\begin{IEEEeqnarray}{c+x*}
    \phantomsection\label{eq:energy identity}
    0 \leq E_{\tilde{\omega}}(u) = \mathcal{A}(\Gamma^+) - \mathcal{A}(\Gamma^-),
\end{IEEEeqnarray}
where
\begin{IEEEeqnarray*}{c+x*}
    \mathcal{A}(\Gamma^{\pm}) \coloneqq \sum_{i=1}^{p^{\pm}} \mathcal{A}(\gamma^{\pm}_i), \quad \mathcal{A}(\gamma^{\pm}_i) \coloneqq \int_{\gamma^{\pm}_i}^{} \lambda|_{\partial^{\pm} X}.
\end{IEEEeqnarray*}
Here, $\mathcal{A}(\gamma^{\pm}_i)$ is the \textbf{action} of the Reeb orbit $\gamma^{\pm}_i$. In particular, $u$ must have at least one positive puncture. Another useful result to rule out certain behaviours of holomorphic curves is the \textbf{maximum principle}, which works as follows. Suppose that the target of $u$ is a symplectization, i.e. $u = (a,f) \colon \dot{\Sigma} \longrightarrow \R \times M$. The fact that $u$ is holomorphic with respect to a cylindrical almost complex structure implies that $\Delta a \geq 0$, where $\Delta$ denotes the Laplacian. By the maximum principle for elliptic partial differential operators, $a$ cannot have any local maxima. We finish this subsection with a result which we will need to prove \cref{thm:lagrangian vs g tilde}.

\begin{lemma}
    \label{lem:energy wrt different forms}
    Assume that $\Sigma$ has no positive punctures. Let $(X, \omega, \lambda)$ be a symplectic cobordism, and $J \in \mathcal{J}(X)$ be a cylindrical almost complex structure on $\hat{X}$. Assume that the canonical symplectic embedding
    \begin{align*}
        (\R_{\leq 0} \times \partial^- X, \edv (e^r \lambda|_{\partial^- X})) \longrightarrow (\hat{X}, \hat{\omega}) & \\
        \intertext{can be extended to a symplectic embedding}
        (\R_{\leq K} \times \partial^- X, \edv (e^r \lambda|_{\partial^- X})) \longrightarrow (\hat{X}, \hat{\omega}) &
    \end{align*}
    for some $K > 0$. Let $u \colon \dot{\Sigma} \longrightarrow \hat{X}$ be a $J$-holomorphic curve which is negatively asymptotic to a tuple of Reeb orbits $\Gamma$ of $\partial^- X$. Consider the energies $E_{\hat{\omega}}(u)$ and $E_{\tilde{\omega}}(u)$. Then,
    \begin{IEEEeqnarray}{rCls+x*}
        \mathcal{A}(\Gamma) & \leq & \frac{1  }{e^K - 1} E_{\tilde{\omega}}(u), \phantomsection\label{eq:action is bounded by vertical energy} \\
        E_{\hat{\omega}}(u) & \leq & \frac{e^K}{e^K - 1} E_{\tilde{\omega}}(u). \phantomsection\label{eq:energy is bounded by vertical energy}
    \end{IEEEeqnarray}
\end{lemma}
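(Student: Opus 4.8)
The plan is to reduce both inequalities to a single application of Stokes' theorem over the part of $\dot{\Sigma}$ lying in the extended negative collar, together with the elementary observation that, once the first inequality is known, the second follows from it. Write $\alpha^- \coloneqq \lambda|_{\partial^- X}$ and let $W \subset \hat{X}$ be the image of the given symplectic extension of the negative end, with coordinate $r \colon W \to (-\infty, K]$; this restricts to the usual coordinate on the official end $\R_{\leq 0} \times \partial^- X$, since the extension prolongs the canonical embedding. On $W$ one has $\hat{\omega} = \edv(e^r \alpha^-)$, whereas by \eqref{eq:form 2} the form $\tilde{\omega}$ equals $d\alpha^-$ on $\{r \leq 0\}$ and equals $\omega = \edv(e^r \alpha^-)$ on $\{0 < r \leq K\} \subset X$. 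For a regular value $c \leq K$ of $r \circ u$, set $\Sigma_c \coloneqq u^{-1}(\{r \leq c\})$ and $f(c) \coloneqq \int_{u^{-1}(\{r = c\})} u^* \alpha^-$, the level set oriented as $\partial \Sigma_c$; the boundary values $c = 0$ and $c = K$ are reached by Sard's theorem and a limiting argument, which I suppress.

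Two facts are needed before the computation. First, since $u$ has no positive punctures, the maximum principle applied on the positive cylindrical end forces $u$ not to meet its interior $\R_{>0} \times \partial^+ X$: the set $u^{-1}(\R_{\geq 0} \times \partial^+ X)$ is a closed subset of $\Sigma$ disjoint from the punctures (near a negative puncture $r \circ u \to -\infty$), hence compact, so $r \circ u$ would attain an interior maximum on it unless it is identically nonpositive there. Consequently, on $\dot{\Sigma} \setminus \Sigma_K$ one has $\tilde{\omega} = \hat{\omega} = \omega$, and $E_0 \coloneqq \int_{\dot{\Sigma} \setminus \Sigma_K} u^* \omega \geq 0$. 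Second — and this is the crucial point — on $u^{-1}(W)$ both $u^* d\alpha^- \geq 0$ and $u^* \edv(e^r \alpha^-) = u^* \hat{\omega} \geq 0$ hold pointwise: the second by tameness of $J$, the first because $W$ is a piece of a symplectization carrying a cylindrical almost complex structure, for which $J$-holomorphic curves satisfy $u^* d\alpha^- \geq 0$. I expect the step requiring the most care to be precisely this one: checking that $J$ is cylindrical on all of $W$, not merely on $\R_{\leq 0} \times \partial^- X$, so that the pointwise inequality $u^* d\alpha^- \geq 0$ is available over the whole collar.

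Now the computation. Stokes' theorem on $\Sigma_c$ with the primitive $\alpha^-$ of $d\alpha^-$, the loops around the negative punctures contributing $-\mathcal{A}(\Gamma)$ in total, gives $f(c) = \mathcal{A}(\Gamma) + \int_{\Sigma_c} u^* d\alpha^- \geq \mathcal{A}(\Gamma)$ for all $c \leq K$; in particular $f(K) \geq \mathcal{A}(\Gamma)$. Stokes with the primitive $e^r \alpha^-$ — whose puncture contributions now vanish, since $e^r \to 0$ at a negative puncture — gives $\int_{\Sigma_c} u^* \edv(e^r \alpha^-) = e^c f(c)$; applied to the compact region $u^{-1}(\{0 \leq r \leq K\})$ this yields $\int_{u^{-1}(\{0 \leq r \leq K\})} u^* \edv(e^r \alpha^-) = e^K f(K) - f(0)$. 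Writing $\dot{\Sigma} = \Sigma_0 \sqcup u^{-1}(\{0 \leq r \leq K\}) \sqcup (\dot{\Sigma} \setminus \Sigma_K)$ and using $\int_{\Sigma_0} u^* \tilde{\omega} = \int_{\Sigma_0} u^* d\alpha^- = f(0) - \mathcal{A}(\Gamma)$ on the first piece, the identity just found on the second, and the definition of $E_0$ on the third, I obtain the master identity
\[
E_{\tilde{\omega}}(u) = e^K f(K) - \mathcal{A}(\Gamma) + E_0 .
\]

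Since $f(K) \geq \mathcal{A}(\Gamma)$ and $E_0 \geq 0$, this identity immediately gives $E_{\tilde{\omega}}(u) \geq (e^K - 1)\mathcal{A}(\Gamma)$, which is \eqref{eq:action is bounded by vertical energy}. For \eqref{eq:energy is bounded by vertical energy} I would note that $\hat{\omega} - \tilde{\omega}$ is supported on the negative end $\{r \leq 0\}$ (and on the positive end, which $u$ avoids), where it equals $\edv(e^r \alpha^-) - d\alpha^-$; therefore $E_{\hat{\omega}}(u) - E_{\tilde{\omega}}(u) = \int_{\Sigma_0} u^* \edv(e^r \alpha^-) - \int_{\Sigma_0} u^* d\alpha^- = f(0) - \big(f(0) - \mathcal{A}(\Gamma)\big) = \mathcal{A}(\Gamma)$, and combining with the first inequality, $E_{\hat{\omega}}(u) = E_{\tilde{\omega}}(u) + \mathcal{A}(\Gamma) \leq \tfrac{e^K}{e^K - 1} E_{\tilde{\omega}}(u)$. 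Apart from the cylindricity issue flagged above, the remaining work is the routine bookkeeping that makes the level-set Stokes arguments rigorous: selecting regular values, checking convergence of the improper integrals near the punctures, and pinning down the precise asymptotic contributions of the puncture loops.
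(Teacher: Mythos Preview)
Your argument is essentially the paper's: both reduce to the identity $E_{\hat{\omega}}(u) - E_{\tilde{\omega}}(u) = \mathcal{A}(\Gamma)$ together with $\int_{u^{-1}(\{r \leq K\})} u^* d\alpha^- \geq 0$ (equivalently your $f(K) \geq \mathcal{A}(\Gamma)$), obtained via the maximum principle and Stokes with the primitives $\alpha^-$ and $e^r\alpha^-$. The cylindricity concern you flag on $\{0 \leq r \leq K\}$ is exactly the step the paper justifies with the single phrase ``since $J$ is cylindrical'' and does not elaborate further.
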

\begin{proof}
    It is enough to show that
    \begin{IEEEeqnarray}{rCls+x*}
        E_{\hat{\omega}}(u) - E_{\tilde{\omega}}(u) & =    & \mathcal{A}(\Gamma), \phantomsection\label{eq:vertical energy bounds 1} \\
        E_{\hat{\omega}}(u)                         & \geq & e^K \mathcal{A}(\Gamma), \phantomsection\label{eq:vertical energy bounds 2}
    \end{IEEEeqnarray}
    since these equations imply Equations \eqref{eq:action is bounded by vertical energy} and \eqref{eq:energy is bounded by vertical energy}. Since $u$ has no positive punctures, the maximum principle implies that $u$ is contained in $\R_{\leq 0} \times \partial^- X \cup X$. We prove Equation \eqref{eq:vertical energy bounds 1}. For simplicity, denote $M = \partial^- X$ and $\alpha = \lambda|_{\partial^- X}$.
    \begin{IEEEeqnarray*}{rCls+x*}
        E_{\hat{\omega}}(u) - E_{\tilde{\omega}}(u)
        & = & \int_{\dot{\Sigma}}^{} u^* (\hat{\omega} - \tilde{\omega})         & \quad [\text{by definition of $E_{\hat{\omega}}$ and $E_{\tilde{\omega}}$}] \\
        & = & \int_{u^{-1}(\R_{\leq 0} \times M)}^{} u^* \edv ((e^r - 1) \alpha) & \quad [\text{by definition of $\hat{\omega}$ and $\tilde{\omega}$}] \\
        & = & \mathcal{A}(\Gamma)                                                & \quad [\text{by Stokes' theorem}].
    \end{IEEEeqnarray*}
    We prove Equation \eqref{eq:vertical energy bounds 2}.
    \begin{IEEEeqnarray*}{rCls+x*}
        E_{\hat{\omega}}(u)
        & =    & \int_{\dot{\Sigma}}^{} u^* \hat{\omega}                                               & \quad [\text{by definition of $E_{\hat{\omega}}$}] \\
        & \geq & \int_{u^{-1}(\R_{\leq K} \times M)}^{} u^* \edv (e^r \alpha)                          & \quad [\text{by definition of $\hat{\omega}$ and $u^* \hat{\omega} \geq 0$}] \\
        & =    & e^K \int_{u^{-1}( \{K\} \times M)}^{} u^* \alpha                                      & \quad [\text{by Stokes' theorem}] \\
        & =    & e^K \int_{u^{-1}( \R_{\leq K} \times M)}^{} u^* \edv \alpha + e^K \mathcal{A}(\Gamma) & \quad [\text{by Stokes' theorem}] \\
        & \geq & e^K \mathcal{A}(\Gamma)                                                               & \quad [\text{since $J$ is cylindrical}].     & \qedhere
    \end{IEEEeqnarray*}
\end{proof}

\subsection{Moduli spaces}

Let $\Gamma^{\pm} = (\gamma^{\pm}_1, \ldots, \gamma^{\pm}_{p ^{\pm}})$ be a tuple of Reeb orbits in $\partial^{\pm} X$ and $J \in \mathcal{J}(X)$ be a cylindrical almost complex structure on $\hat{X}$. Define a moduli space
\begin{IEEEeqnarray*}{c+x*}
    \mathcal{M}^{J}_{X}(\Gamma^+, \Gamma^-)
    \coloneqq
    \left\{
        (\Sigma, u)
        \ \middle\vert
        \begin{array}{l}
            \Sigma \text{ is a connected closed Riemann surface} \\
            \text{of genus $0$ with punctures $\mathbf{z}^{\pm} = \{z^{\pm}_1, \ldots, z^{\pm}_{p ^{\pm}}\}$,} \\
            u \colon \dot{\Sigma} \longrightarrow \hat{X} \text{ is holomorphic and asymptotic to } \Gamma^{\pm}
        \end{array}
        \right\} / \sim,
\end{IEEEeqnarray*}
where $(\Sigma_0, u_0) \sim (\Sigma_1, u_1)$ if and only if there exists a biholomorphism $\phi \colon \Sigma_0 \longrightarrow \Sigma_1$ such that $u_1 \circ \phi = u_0$ and $\phi(z^{\pm}_{0,i}) = z^{\pm}_{1,i}$ for every $i = 1,\ldots,p ^{\pm}$. If $\Gamma^{\pm} = (\gamma^{\pm}_1, \ldots, \gamma^{\pm}_{p ^{\pm}})$ is a tuple of Reeb orbits on a contact manifold $M$ and $J \in \mathcal{J}(M)$, we define a moduli space $\mathcal{M}_{M}^{J}(\Gamma^+, \Gamma^-)$ of holomorphic curves in $\R \times M$ analogously. Since $J$ is invariant with respect to translations in the $\R$ direction, $\mathcal{M}_{M}^{J}(\Gamma^+, \Gamma^-)$ admits an action of $\R$ by composition on the target by a translation.

One can try to show that the moduli space $\mathcal{M}_{X}^{J}(\Gamma^+, \Gamma^-)$ is transversely cut out by showing that the relevant linearized Cauchy--Riemann operator is surjective at every point of the moduli space. In this case, the moduli space is an orbifold whose dimension is given by the Fredholm index of the linearized Cauchy--Riemann operator. However, since the curves in $\mathcal{M}_{X}^{J}(\Gamma^+, \Gamma^-)$ are not necessarily simple, this proof will in general not work, and we cannot say that the moduli space is an orbifold. However, the Fredholm theory part of the proof still works, which means that we still have a dimension formula. In this case the expected dimension given by the Fredholm theory is usually called a virtual dimension. For the moduli space above, the virtual dimension at a point $u$ is given by (see \cite[Section 4]{bourgeoisCoherentOrientationsSymplectic2004})
\begin{IEEEeqnarray*}{c}
    \operatorname{virdim}_u \mathcal{M}_{X}^{J}(\Gamma^+, \Gamma^-) = (n - 3)(2 - p^+ - p^-) + c_1^{\tau}(u^* T \hat{X}) + \conleyzehnder^{\tau} (\Gamma^+) - \conleyzehnder^{\tau} (\Gamma^-),
\end{IEEEeqnarray*}
where $\tau$ is a unitary trivialization of the contact distribution over each Reeb orbit, $c_1^{\tau}$ is the first Chern class and $\conleyzehnder^{\tau}(\Gamma)$ is the sum of the Conley--Zehnder indices of the Reeb orbits in $\Gamma$.

We now discuss curves satisfying a tangency constraint. Our presentation is based on \cite[Section 2.2]{mcduffSymplecticCapacitiesUnperturbed2022} and \cite[Section 3]{cieliebakPuncturedHolomorphicCurves2018}. Let $(X,\omega,\lambda)$ be a symplectic cobordism and $x \in \itr X$. A \textbf{symplectic divisor} through $x$ is a germ of a $2$-codimensional symplectic submanifold $D \subset X$ containing $x$. A cylindrical almost complex structure $J \in \mathcal{J}(X)$ is \textbf{compatible} with $D$ if $J$ is integrable near $x$ and $D$ is holomorphic with respect to $J$. We denote by $\mathcal{J}(X,D)$ the set of such almost complex structures. In this case, there are complex coordinates $(z_1, \ldots, z_n)$ near $x$ such that $D$ is given by $h(z_1,\ldots,z_n) = 0$, where $h(z_1,\ldots,z_n) = z_1$. Let $u \colon \Sigma \longrightarrow X$ be a $J$-holomorphic curve together with a marked point $w \in \Sigma$. For $k \geq 1$, we say that $u$ has \textbf{contact order $k$} to $D$ at $x$ if $u(w) = x$ and%
\begin{IEEEeqnarray*}{c+x*}
    (h \circ u \circ \varphi)^{(1)}(0) = \cdots = (h \circ u \circ \varphi)^{(k-1)}(0) = 0,
\end{IEEEeqnarray*}
for some local biholomorphism $\varphi \colon (\C,0) \longrightarrow (\Sigma, w)$. We point out that the condition of having ``contact order $k$'' as written above is equal to the condition of being ``tangent of order $k-1$'' as defined in \cite[Section 3]{cieliebakPuncturedHolomorphicCurves2018}. Following \cite{mcduffSymplecticCapacitiesUnperturbed2022}, we will use the notation $\p{<}{}{\mathcal{T}^{(k)}x}$ to denote moduli spaces of curves which have contact order $k$, i.e. we will denote them by $\mathcal{M}_{X}^{J}(\Gamma^+, \Gamma^-)\p{<}{}{\mathcal{T}^{(k)}x}$ and $\mathcal{M}_{M}^{J}(\Gamma^+, \Gamma^-)\p{<}{}{\mathcal{T}^{(k)}x}$. The virtual dimension is given by (see \cite[Equation (2.2.1)]{mcduffSymplecticCapacitiesUnperturbed2022})
\begin{IEEEeqnarray}{l}
    \phantomsection\label{eq:virtual dimension}
    \operatorname{virdim}_u \mathcal{M}_{X}^{J}(\Gamma^+, \Gamma^-)\p{<}{}{\mathcal{T}^{(k)}x} \\
    \quad = (n - 3)(2 - p^+ - p^-) + c_1^{\tau}(u^* T \hat{X}) + \conleyzehnder^{\tau} (\Gamma^+) - \conleyzehnder^{\tau} (\Gamma^-) - 2n - 2k + 4. \nonumber
\end{IEEEeqnarray}

We finish this subsection with two lemmas by Cieliebak--Mohnke which we will use in the proof of \cref{thm:lagrangian vs g tilde}.

\begin{lemma}[{\cite[Lemma 2.2]{cieliebakPuncturedHolomorphicCurves2018}}]
    \label{lem:geodesics lemma CM abs}
    Let $L$ be a compact $n$-dimensional manifold without boundary. Let $\mathrm{Riem}(L)$ be the set of Riemannian metrics on $L$, equipped with the $C^2$-topology. If $g_0 \in \mathrm{Riem}(L)$ is a Riemannian metric of nonpositive sectional curvature and $\mathcal{U} \subset \mathrm{Riem}(L)$ is an open neighbourhood of $g_0$, then for all $\ell_0 > 0$ there exists a Riemannian metric $g \in \mathcal{U}$ on $L$ such that with respect to $g$, any closed geodesic $c$ in $L$ of length $\ell(c) \leq \ell_0$ is noncontractible, nondegenerate, and such that $0 \leq \morse(c) \leq n - 1$.
\end{lemma}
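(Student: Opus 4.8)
The plan is to obtain $g$ as a small perturbation of $g_0$, separating the three required properties of $g$ (noncontractibility, the index bounds, and nondegeneracy) according to how stable they are. Noncontractibility and the index bounds will hold for \emph{every} metric in a sufficiently small $C^2$-neighbourhood of $g_0$, because once we exploit the nonpositivity of the curvature of $g_0$ they are stable under $C^2$-small perturbations. Nondegeneracy is not stable, so it will be arranged separately by invoking the bumpy metric theorem: the set of metrics all of whose closed geodesics are nondegenerate is $C^2$-dense, and intersecting this dense set with $\mathcal{U}$ and with the neighbourhood coming from the first two properties produces the desired $g$.

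\textbf{Noncontractibility.} First I would recall that $(L,g_0)$ has \emph{no} contractible closed geodesics whatsoever: since $g_0$ is complete with nonpositive sectional curvature, the Cartan--Hadamard theorem identifies the universal cover with $\R^n$ via the exponential map and shows it carries no nonconstant closed geodesic, so a contractible closed geodesic in $L$ would lift to one upstairs. To see this persists for geodesics of length $\leq \ell_0$ under a small perturbation, I would argue by contradiction: a sequence $g_k \to g_0$ in $C^2$ carrying contractible closed geodesics $c_k$ of length $\leq \ell_0$ would, after reparametrisation by arclength and passage to a subsequence, converge in $C^1$ (Arzel\`a--Ascoli, using that the $c_k$ solve the geodesic equation with $C^1$-convergent coefficients) to a closed geodesic $c_\infty$ of $g_0$ of length $\leq \ell_0$; the lengths of the $c_k$ are bounded below by twice the injectivity radius of $g_k$, which stays bounded away from $0$ by lower semicontinuity of the injectivity radius under $C^2$-convergence, so $c_\infty$ is nonconstant; and $c_\infty$ is contractible, being a uniform limit of the contractible loops $c_k$. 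This contradicts the first sentence.

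\textbf{Index bounds.} The bound $\morse(c) \geq 0$ is automatic. For the upper bound I would first shrink the neighbourhood of $g_0$ so that every metric $g$ in it has $\sup K_g < \pi^2/\ell_0^2$; this is possible since $K_{g_0} \leq 0$ and the supremum of the sectional curvature over the (compact) Grassmann bundle depends continuously on $g$ in the $C^2$-topology. By the Rauch comparison theorem, a geodesic of such a $g$ of length $\leq \ell_0$ has no interior conjugate points, so by the Morse index theorem for geodesic segments the index form $I$ of a closed geodesic $c$ of length $\leq \ell_0$ is positive semidefinite on the space $H_0$ of periodic $W^{1,2}$ vector fields along $c$ vanishing at $c(0)$ (equivalently, the based-loop index of $c$ is $0$). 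Since $\dot c$ is a periodic Jacobi field we have $I(\dot c,\cdot) \equiv 0$, and hence $I \geq 0$ on $H_0 \oplus \R\dot c$; therefore any subspace on which $I$ is negative definite meets $H_0 \oplus \R\dot c$ trivially and so injects into the quotient of the space of periodic fields by $H_0 \oplus \R\dot c$, which via evaluation at $c(0)$ is isomorphic to $T_{c(0)}L/\R\dot c(0)$ and has dimension $n-1$. Thus $\morse(c) \leq n-1$.

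\textbf{Conclusion and main obstacle.} Combining these steps, there is a $C^2$-neighbourhood $\mathcal{V} \subseteq \mathcal{U}$ of $g_0$ such that every $g \in \mathcal{V}$ has no contractible closed geodesic of length $\leq \ell_0$ and has $\morse(c) \in \{0,\dots,n-1\}$ for every closed geodesic $c$ of length $\leq \ell_0$; by density of bumpy metrics we may then choose $g \in \mathcal{V}$ all of whose closed geodesics are nondegenerate, which gives all three properties. I expect the main obstacle to be this last (nondegeneracy) input: the bumpy metric theorem in the $C^2$-topology is delicate (Abraham, with the gap repaired by Anosov), and one must be sure that the two a priori smallness constraints on $g$ --- lying in $\mathcal{U}$ and in the curvature-controlled neighbourhood $\mathcal{V}$ --- are compatible with bumpiness, which is precisely the density assertion. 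The remaining ingredients (Cartan--Hadamard, lower semicontinuity of the injectivity radius, Rauch comparison, and the Morse index theorem) are classical.
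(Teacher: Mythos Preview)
The paper does not prove this lemma; it is quoted from Cieliebak--Mohnke and used as a black box in the proof of \cref{thm:lagrangian vs g tilde}. Your argument is correct and follows the natural route (Cartan--Hadamard plus a compactness argument for noncontractibility, Rauch comparison together with the Morse index theorem and the codimension-$(n-1)$ inclusion $H_0 \oplus \R\dot c \hookrightarrow H$ for the bound $\morse(c)\le n-1$, and the Abraham--Anosov bumpy metric theorem for nondegeneracy), which is also essentially the argument in the cited reference.
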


\begin{lemma}[{\cite[Corollary 3.3]{cieliebakPuncturedHolomorphicCurves2018}}]
    \label{lem:punctures and tangency}
    Let $(L,g)$ be an $n$-dimensional Riemannian manifold with the property that for some $\ell_0 > 0$, all closed geodesics $\gamma$ of length $\ell(\gamma) \leq \ell_0$ are noncontractible and nondegenerate and have Morse index $\morse(\gamma) \leq n - 1$. Let $x \in T^*L$ and $D$ be a symplectic divisor through $x$. For generic $J$ every (not necessarily simple) punctured $J$-holomorphic sphere $\tilde{C}$ in $T^*L$ which is asymptotic at the punctures to geodesics of length $\leq \ell_0$ and which has contact order $\tilde{k}$ to $D$ at $x$ must have at least $\tilde{k} + 1$ punctures.
\end{lemma}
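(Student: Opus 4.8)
The plan is to deduce the lemma from the virtual dimension formula \eqref{eq:virtual dimension} for moduli spaces of punctured holomorphic spheres in the completion of the unit disk cotangent bundle $D^*L$, whose boundary $S^*L$ carries the geodesic Reeb flow, so that the closed Reeb orbits of action $\le \ell_0$ are exactly the closed geodesics of $(L,g)$ of length $\le \ell_0$. The argument splits into two steps: first the case in which $\tilde{C}$ is somewhere injective, handled by a transversality-plus-index count showing that the relevant moduli space is empty whenever it has too few punctures; then the general case, which I reduce to the first via a Riemann--Hurwitz computation for the branched cover underlying $\tilde{C}$.

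For the somewhere injective case, fix a tuple $\Gamma = (\gamma_1,\dots,\gamma_p)$ of closed geodesics of length $\le \ell_0$ and an integer $k \ge 1$. By the transversality theory for simple punctured curves subject to a local tangency constraint developed in \cite[Section 3]{cieliebakPuncturedHolomorphicCurves2018} and \cite[Section 2]{mcduffSymplecticCapacitiesUnperturbed2022}, for generic $J \in \mathcal{J}(D^*L, D)$ the set of somewhere injective curves in $\mathcal{M}^{J}_{D^*L}(\Gamma, \varnothing)\langle \mathcal{T}^{(k)} x\rangle$ is a manifold of dimension equal to the virtual dimension in \eqref{eq:virtual dimension}, with $p^+ = p$ and $p^- = 0$. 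To bound this number I choose, along each $\gamma_i$, the trivialization $\tau$ induced by a trivialization of $\gamma_i^* TL$: since $T(T^*L) \cong \pi^*(TL \otimes_{\R} \C)$ is the complexification of a real bundle, this forces $c_1^{\tau}(u^* T\hat{X}) = 0$, and the identification of the Conley--Zehnder index of a nondegenerate closed geodesic with its Morse index turns the hypothesis $\morse(\gamma_i) \le n-1$ into $\conleyzehnder^{\tau}(\gamma_i) \le n-1$. Substituting, $\operatorname{virdim} \le (n-3)(2-p) + p(n-1) - 2n - 2k + 4 = 2(p - k - 1)$, so a nonempty moduli space forces $p \ge k+1$; hence a somewhere injective curve of the required type has at least $k+1$ punctures.

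For the general case, by the simple/multiply-covered dichotomy write $\tilde{C} = C \circ \phi$ with $C$ somewhere injective and $\phi \colon (\Sigma, \tilde{\mathbf{z}}) \to (\Sigma', \mathbf{z}')$ a holomorphic branched cover of degree $d \ge 2$ between closed spheres with $\phi^{-1}(\mathbf{z}') = \tilde{\mathbf{z}}$; each asymptotic orbit of $\tilde{C}$ covers one of $C$, so $C$ is again asymptotic to geodesics of length $\le \ell_0$. Let $e \ge 1$ be the ramification index of $\phi$ at the marked point $\tilde{w}$ over $x$ and let $k'$ be the contact order of $C$ to $D$ at $x = C(\phi(\tilde{w}))$; then $e k' \ge \tilde{k}$, so by the first step $p' \coloneqq |\mathbf{z}'| \ge k'+1 \ge 2$. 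Since $\tilde{w}$ lies over an interior point of $C$, Riemann--Hurwitz gives $\sum_{q \in \phi^{-1}(\mathbf{z}')}(e_q - 1) \le (2d-2) - (e-1)$, and counting preimages of the punctures yields
\[
    p = d p' - \sum_{q \in \phi^{-1}(\mathbf{z}')}(e_q-1) \ge d(p'-2) + e + 1 \ge e(k'-1) + e + 1 = e k' + 1 \ge \tilde{k}+1,
\]
where the middle inequality uses $d \ge e$ and $p'-2 \ge k'-1 \ge 0$. This proves the lemma.

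The main obstacle is the transversality input in the first step: one needs transversality for somewhere injective punctured spheres under the tangency condition at $x$, achieved by perturbing $J$ only inside $\mathcal{J}(D^*L, D)$ (so that $D$ remains $J$-holomorphic near $x$), which requires a delicate local analysis near $x$ and the divisor — this is exactly what is set up in the sections of \cite{cieliebakPuncturedHolomorphicCurves2018} and \cite{mcduffSymplecticCapacitiesUnperturbed2022} cited above. Secondary points to pin down are the precise normalization in the identity $\conleyzehnder^{\tau}(\gamma) = \morse(\gamma)$ and its compatibility with the trivialization giving $c_1^{\tau} = 0$, and, when $L$ is non-orientable, the treatment of geodesics $\gamma$ along which $\gamma^* TL$ is nontrivial (which can be handled by passing to the orientation double cover); neither affects the structure of the count above.
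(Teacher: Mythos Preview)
The paper does not give its own proof of this lemma; it is quoted as \cite[Corollary 3.3]{cieliebakPuncturedHolomorphicCurves2018} and used as a black box in the proof of \cref{thm:lagrangian vs g tilde}. Your argument is therefore not being compared against anything in the present paper, but it does faithfully reconstruct the strategy of the original Cieliebak--Mohnke proof: an index bound for somewhere injective curves (using transversality for simple curves subject to a tangency constraint, the identification $\conleyzehnder^{\tau}(\gamma)=\morse(\gamma)$ in the trivialization induced from $TL$, and $c_1^{\tau}=0$ for that trivialization), followed by a Riemann--Hurwitz reduction for multiple covers. Your dimension computation $\operatorname{virdim}\le 2(p-k-1)$ is correct, as is the branched-cover inequality
\[
p = dp' - \sum_{q\in\phi^{-1}(\mathbf{z}')}(e_q-1) \ge d(p'-2)+e+1 \ge e(k'-1)+e+1 = ek'+1 \ge \tilde{k}+1,
\]
which uses $d\ge e$, $p'-2\ge k'-1\ge 0$, and that the marked point $\tilde w$ lies over an interior point of the simple curve so contributes $e-1$ to the total ramification separately from the punctures. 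The caveats you flag (the precise normalization of $\conleyzehnder^{\tau}=\morse$ compatible with $c_1^\tau=0$, and transversality with tangency constraints inside $\mathcal{J}(D^*L,D)$) are exactly the technical content of \cite[Section~3]{cieliebakPuncturedHolomorphicCurves2018}, so your sketch is correct modulo those cited inputs.
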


\subsection{SFT compactness}

In this subsection we present the SFT compactness theorem, which describes the compactifications of the moduli spaces of the previous subsection. This theorem was first proven by Bourgeois--Eliashberg--Hofer--Wysocki--Zehnder \cite{bourgeoisCompactnessResultsSymplectic2003}. Cieliebak--Mohnke \cite{cieliebakCompactnessPuncturedHolomorphic2005} have given a proof of this theorem using different methods. Our presentation is based primarily on \cite{cieliebakPuncturedHolomorphicCurves2018} and \cite{mcduffSymplecticCapacitiesUnperturbed2022}.

Let $(X, \omega, \lambda)$ be a symplectic cobordism and choose almost complex structures $J^{\pm} \in \mathcal{J}(\partial^{\pm} X)$ and $J \in \mathcal{J}^{J^+}_{J^-}(X)$. Let $\Gamma^{\pm} = (\gamma^{\pm}_1, \ldots, \gamma^{\pm}_{p ^{\pm}})$ be a tuple of Reeb orbits in $\partial^{\pm} X$. For $1 \leq L \leq N$, let $\alpha^{\pm} \coloneqq \lambda|_{\partial^{\pm} X}$ and define
\begin{IEEEeqnarray*}{rCl}
    (X^{\nu}, \omega^\nu, \tilde{\omega}^{\nu}, J^{\nu})
    & \coloneqq &
    \begin{cases}
        (\R \times \partial^- X, \edv(e^r \alpha^-), \edv \alpha^-  , J^-) & \text{if } \nu = 1   , \ldots, L - 1, \\
        (\hat{X}               , \hat{\omega}      , \tilde{\omega} , J  ) & \text{if } \nu = L   , \\
        (\R \times \partial^+ X, \edv(e^r \alpha^+), \edv \alpha^+  , J^+) & \text{if } \nu = L+1 ,\ldots ,N     ,
    \end{cases} \\
    (X^*, \omega^*, \tilde{\omega}^*, J^*) & \coloneqq & \coprod_{\nu = 1}^N (X^{\nu}, \omega^\nu, \tilde{\omega}^{\nu}, J^{\nu}).
\end{IEEEeqnarray*}
The moduli space of \textbf{holomorphic buildings}, denoted $\overline{\mathcal{M}}^{J}_X(\Gamma^+, \Gamma^-)$, is the set of tuples $F = (F^1, \ldots, F^N)$, where $F^{\nu} \colon \dot{\Sigma}^\nu \longrightarrow X^\nu$ is an asymptotically cylindrical nodal $J^{\nu}$-holomorphic curve in $X^{\nu}$ with sets of asymptotic Reeb orbits $\Gamma^{\pm}_{\nu}$. Here, each $F^{\nu}$ is possibly disconnected and if $X^{\nu}$ is a symplectization then $F^{\nu}$ is only defined up to translation in the $\R$ direction. We assume in addition that $F$ satisfies the following conditions.
\begin{enumerate}
    \item \label{item:sft compactification 1} The tuples of asymptotic Reeb orbits $\Gamma_{\nu}^{\pm}$ are such that
        \begin{IEEEeqnarray*}{c+x*}
            \Gamma^-_1 = \Gamma^-, \quad \Gamma^+_N = \Gamma^+, \quad \Gamma^+_{\nu} = \Gamma^-_{\nu + 1} \quad \text{for every } \nu = 1, \ldots, N - 1.
        \end{IEEEeqnarray*}
    \item \label{item:sft compactification 2} Define the graph of $F$ to be the graph whose vertices are the components of $F^1, \ldots, F^N$ and whose edges are determined by the asymptotic Reeb orbits. Then the graph of $F$ is a tree.
    \item \label{item:sft compactification 3} The building $F$ has no symplectization levels consisting entirely of trivial cylinders, and any constant component of $F$ has negative Euler characteristic after removing all special points.
\end{enumerate}
The \textbf{energy} of a holomorphic building $F = (F^1, \ldots, F^N)$ is $E_{\tilde{\omega}^*}(F) \coloneqq \sum_{\nu = 1}^{N} E_{\tilde{\omega}^{\nu}}(F^{\nu})$. The moduli space $\overline{\mathcal{M}}_X^J(\Gamma^+, \Gamma^-)$ admits a metrizable topology (see \cite[Appendix B]{bourgeoisEquivariantSymplecticHomology2016}). With this language, the SFT compactness theorem can be stated by saying that $\overline{\mathcal{M}}_X^J(\Gamma^+, \Gamma^-)$ is compact.

We now consider the case where the almost complex structure on $\hat{X}$ is replaced by a family of almost complex structures obtained via \textbf{neck stretching}. Let $(X^{\pm}, \omega^{\pm}, \lambda^{\pm})$ be symplectic cobordisms with common boundary
\begin{IEEEeqnarray*}{c+x*}
    (M, \alpha) = (\partial^- X^{+}, \lambda^+|_{\partial^- X^+}) = (\partial^+ X^-, \lambda^-|_{\partial^+ X^-}).
\end{IEEEeqnarray*}
Choose almost complex structures $J_M \in \mathcal{J}(M)$, $J_+ \in \mathcal{J}_{J_M}(X^+)$, and $J_- \in \mathcal{J}^{J_M}(X^-)$ and denote by $J_{\partial^{\pm} X^{\pm}} \in \mathcal{J}(\partial^{\pm} X^{\pm})$ the induced cylindrical almost complex structure on $\R \times \partial^{\pm} X^{\pm}$. Let $(X, \omega, \lambda) \coloneqq (X^-, \omega^-, \lambda^-) \circledcirc (X^+, \omega^+, \lambda^+)$ be the gluing of $X^-$ and $X^+$ along $M$. We wish to define a family of almost complex structures $(J_t)_{t \in \R_{\geq 0}} \subset \mathcal{J}(X)$. For every $t \geq 0$, let
\begin{IEEEeqnarray*}{c+x*}
    X_t \coloneqq X^- \cup_M [-t, 0] \times M \cup_M X^+.
\end{IEEEeqnarray*}
There exists a canonical diffeomorphism $\phi_t \colon X \longrightarrow X_t$. Define an almost complex structure $J_t$ on $X_t$ by
\begin{IEEEeqnarray*}{c+x*}
    J_t \coloneqq
    \begin{cases}
        J^{\pm} & \text{on } X^{\pm}, \\
        J_M     & \text{on } [-t, 0] \times M.
    \end{cases}
\end{IEEEeqnarray*}
Denote also by $J_t$ the pullback of $J_t$ to ${X}$ along $\phi_t$, as well as the induced almost complex structure on the completion $\hat{X}$. Finally, consider the moduli space
\begin{IEEEeqnarray*}{c+x*}
    \mathcal{M}_X^{(J_t)_t}(\Gamma^+, \Gamma^-) \coloneqq \coprod_{t \in \R_{\geq 0}} \mathcal{M}^{J_t}_{X}(\Gamma^+, \Gamma^-).
\end{IEEEeqnarray*}
As before, we wish to define a suitable compactification for $\mathcal{M}_X^{(J_t)_t}(\Gamma^+, \Gamma^-)$. For $1 \leq L^- < L^+ \leq N$, let $\alpha^{\pm} \coloneqq \lambda^{\pm}|_{\partial^{\pm} X^\pm}$ and define
\begin{IEEEeqnarray*}{rCls+x*}
    (X^{\nu}, \omega^\nu, \tilde{\omega}^{\nu}, J^{\nu})
    & \coloneqq &
    \begin{cases}
        (\R \times \partial^- X^-, \edv(e^r \alpha^-)  , \edv \alpha^-   , J_{\partial^- X^-}) & \text{if } \nu = 1        , \ldots, L^- - 1, \\
        (X^-                     , \omega^-            , \tilde{\omega}^-, J^-)                & \text{if } \nu = L^-, \\
        (\R \times M             , \edv(e^r \alpha)    , \edv \alpha     , J_M)                & \text{if } \nu = L^- + 1    , \ldots, L^+ - 1, \\
        (X^+                     , \omega^+            , \tilde{\omega}^+, J^+)                & \text{if } \nu = L^+, \\
        (\R \times \partial^+ X^+, \edv (e^r \alpha^+) , \edv \alpha^+   , J_{\partial^+ X^+}) & \text{if } \nu = L^+ + 1  , \ldots, N      , \\
    \end{cases} \\
    (X^*, \omega^*, \tilde{\omega}^*, J^*) & \coloneqq & \coprod_{\nu = 1}^N (X^{\nu}, \omega^\nu, \tilde{\omega}^{\nu}, J^{\nu}).
\end{IEEEeqnarray*}
Define $\overline{\mathcal{M}}^{(J_t)_t}_X(\Gamma^+, \Gamma^-)$ to be the set of tuples $F = (F^1, \ldots, F^N)$, where $F^{\nu} \colon \dot{\Sigma}^\nu \longrightarrow X^\nu$ is an asymptotically cylindrical nodal $J^{\nu}$-holomorphic curve in $X^{\nu}$ with sets of asymptotic Reeb orbits $\Gamma^{\pm}_{\nu}$, such that $F$ satisfies conditions analogous to those of \cref{item:sft compactification 1,item:sft compactification 2,item:sft compactification 3}. Then, $\overline{\mathcal{M}}^{(J_t)_t}_X(\Gamma^+, \Gamma^-)$ is compact.

\begin{remark}
    \phantomsection\label{rmk:compactifications with tangency}
    The discussion above also applies to compactifications of moduli spaces of curves satisfying tangency constraints. The compactification $\overline{\mathcal{M}}^{J}_{X}(\Gamma^+,\Gamma^-)\p{<}{}{\mathcal{T}^{(k)}x}$ consists of buildings $F = (F^1, \ldots, F^N) \in \overline{\mathcal{M}}^J_X(\Gamma^+, \Gamma^-)$ such that exactly one component $C$ of $F$ inherits the tangency constraint $\p{<}{}{\mathcal{T}^{(k)}x}$, and which satisfy the following additional condition. Consider the graph obtained from the graph of $F$ by collapsing adjacent constant components to a point. Let $C_1, \ldots, C_p$ be the (necessarily nonconstant) components of $F$ which are adjacent to $C$ in the new graph. Then we require that there exist $k_1, \ldots, k_p \in \Z_{\geq 1}$ such that $k_1 + \cdots + k_p \geq k$ and $C_i$ satisfies the constraint $\p{<}{}{\mathcal{T}^{(k_i)}x}$ for every $i = 1, \ldots, p$. This definition is natural to consider by \cite[Lemma 7.2]{cieliebakSymplecticHypersurfacesTransversality2007}. We can define $\overline{\mathcal{M}}^{(J_t)_t}_X(\Gamma^+, \Gamma^-)\p{<}{}{\mathcal{T}^{(k)}x}$ analogously.
\end{remark}

\begin{remark}
    We point out that in \cite[Definition 2.2.1]{mcduffSymplecticCapacitiesUnperturbed2022}, the compactification of \cref{rmk:compactifications with tangency} is denoted by $\overline{\overline{\mathcal{M}}}^{J}_{X}(\Gamma^+,\Gamma^-)\p{<}{}{\mathcal{T}^{(k)}x}$, while the notation $\overline{\mathcal{M}}^{J}_{X}(\Gamma^+,\Gamma^-)\p{<}{}{\mathcal{T}^{(k)}x}$ is used to denote the moduli space of buildings $F = (F^1, \ldots, F^N) \in \overline{\mathcal{M}}^J_X(\Gamma^+, \Gamma^-)$ such that exactly one component $C$ of $F$ inherits the tangency constraint $\p{<}{}{\mathcal{T}^{(k)}x}$, but which do not necessarily satisfy the additional condition of \cref{rmk:compactifications with tangency}.
\end{remark}

The following lemma will be useful to us in proving \cref{thm:lagrangian vs g tilde}.

\begin{lemma}[{\cite[Lemma 2.8]{cieliebakPuncturedHolomorphicCurves2018}}]
    \label{lem:no nodes}
    The homology class $A \coloneqq [\overline{F}] \in H_2(X;\Z)$ of a nonconstant broken holomorphic curve $F \colon (\Sigma^*, j) \longrightarrow (X^*, J^*)$ satisfies $\omega(A) > 0$.
\end{lemma}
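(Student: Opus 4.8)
The plan is to identify $\omega(A)$ with the total $\tilde{\omega}^{*}$-energy $E_{\tilde{\omega}^{*}}(F)$ of the building and then to rule out the vanishing case using the defining properties of a holomorphic building. First I would make the class $A = [\overline{F}] \in H_2(X;\Z)$ explicit. Each symplectization level $\R \times M'$ (with $M'$ one of $M$, $\del^+ X$, $\del^- X$, according to the situation at hand) deformation retracts onto $\{0\} \times M' \subset \del X \subset X$, and the cobordism level $\hat{X}$ retracts onto $X$; pushing every $F^{\nu}$ into $X$ in this way and gluing consecutive levels along the matched asymptotic Reeb orbits of \cref{item:sft compactification 1} (the orbits at the extreme top and bottom being capped off as prescribed, which is precisely what the notation $\overline{F}$ encodes) produces a $2$-cycle representing $A$. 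Under this identification, the restriction of $[\omega]$ to the portion coming from $F^{\nu}$ is represented by $\tilde{\omega}^{\nu}$: on $\hat{X}$ this is immediate, and on a symplectization level $\R \times M'$ the forms $\edv(e^r \alpha')$ and $\edv \alpha'$ are cohomologous while the $\alpha'$-boundary terms produced by Stokes' theorem at the matching orbits cancel in consecutive pairs between adjacent levels — this is exactly the bookkeeping underlying the energy identity \eqref{eq:energy identity}.

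Consequently
\begin{IEEEeqnarray*}{c}
    \omega(A) = \sum_{\nu = 1}^{N} \int_{\dot{\Sigma}^{\nu}} (F^{\nu})^* \tilde{\omega}^{\nu} = E_{\tilde{\omega}^{*}}(F) \geq 0,
\end{IEEEeqnarray*}
where the inequality holds because on each level $\tilde{\omega}^{\nu}$ is nonnegative on $J^{\nu}$-complex lines: it is the genuine symplectic form $\omega$ on $X$, it is $\edv \lambda|_{\del^{\pm} X}$ on the cylindrical ends, and it is $\edv \alpha'$ on a symplectization level, each of which is nonnegative on $J^{\nu}$-invariant $2$-planes when $J^{\nu}$ is cylindrical.

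It remains to promote this to a strict inequality, and this is the part I expect to be the real obstacle. Suppose $\omega(A) = 0$. Then $(F^{\nu})^* \tilde{\omega}^{\nu} \equiv 0$ for every $\nu$. On a cobordism level the image of $F^{\nu}$ must therefore avoid $\itr X$, where $\omega$ is symplectic, so it lies in the cylindrical ends; there, exactly as on a symplectization level, vanishing $\edv \alpha'$-energy forces each nonconstant component to have image inside some $\R \times \gamma$ with $\gamma$ a closed Reeb orbit, i.e.\ to be a trivial cylinder or a branched cover of one. Thus every component of $F$ is either a (possibly multiply covered) trivial cylinder or a constant ghost component. But such a building is incompatible with \cref{item:sft compactification 2,item:sft compactification 3}: no symplectization level may consist entirely of trivial cylinders, every ghost component must have negative Euler characteristic after deleting its special points (so every ghost sphere carries at least three special points), and the graph of $F$ is a tree; a counting argument on this tree — trivial cylinders join equal Reeb orbits across adjacent levels and so cannot be absorbed into the ghosts without either creating a level of trivial cylinders or forcing a ghost of nonnegative Euler characteristic — shows that the only such $F$ is a single constant component, contradicting the hypothesis that $F$ is nonconstant. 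Hence $\omega(A) > 0$. The two delicate points are the careful verification that $\omega(A) = E_{\tilde{\omega}^{*}}(F)$ (pinning down the capping conventions implicit in $[\overline{F}]$ and the Stokes bookkeeping across levels) and this final combinatorial step; note that if one adopts the convention that a \emph{nonconstant} broken curve is by definition one possessing a component that is neither a ghost nor a trivial cylinder, the latter step collapses at once to the observation that such a component contributes strictly positively to $E_{\tilde{\omega}^{*}}(F)$ while every other term is nonnegative.
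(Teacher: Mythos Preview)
The paper does not supply its own proof; the lemma is quoted from \cite[Lemma~2.8]{cieliebakPuncturedHolomorphicCurves2018}. Your overall strategy---identify $\omega(A)$ with $E_{\tilde{\omega}^*}(F)$ via retraction and Stokes cancellation across matched asymptotes, observe nonnegativity, then analyse the vanishing case---is the standard one and is correct.

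Two clarifications on the points you flag. First, a broken curve carrying a class $[\overline{F}]\in H_2(X;\Z)$ is a \emph{closed} building ($\Gamma^{+}=\Gamma^{-}=\varnothing$); the overline records only the gluing across internal matching orbits, so no external capping enters and the identity $\omega(A)=E_{\tilde{\omega}^*}(F)$ is clean. Second, your combinatorial step has a genuine gap: a component with image in $\R\times\gamma$ need not be a (multiply covered) trivial cylinder---it can be a genuinely branched cover with three or more punctures, e.g.\ a pair of pants---and such a component is not excluded by \cref{item:sft compactification 3}, so neither your tree-counting sketch nor your shortcut via the alternative reading of ``nonconstant'' closes the argument. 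The clean fix bypasses \cref{item:sft compactification 2,item:sft compactification 3} entirely: any nonconstant holomorphic map into $\R\times\gamma$ has harmonic $\R$-coordinate and hence must carry both a positive and a negative puncture; by the same harmonicity a nonconstant zero-$\tilde{\omega}$-energy component in the cobordism level (whose image would lie in a half-cylinder $\R_{\geq 0}\times\gamma$ or $\R_{\leq 0}\times\gamma$) cannot exist at all. Now $\Gamma^{+}_N=\varnothing$ forces the top level to consist of constants, hence $\Gamma^{-}_N=\varnothing=\Gamma^{+}_{N-1}$, and this cascades through every level, giving $F$ constant.
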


\subsection{\texorpdfstring{$S^1$}{S1}-equivariant symplectic homology}

If $(X, \lambda)$ is a nondegenerate Liouville domain, one can define its \textbf{$S^1$-equivariant symplectic homology}, denoted $SH^{S^1}(X, \lambda)$. The presentation we will give will be based on \cite{guttSymplecticCapacitiesPositive2018}. Other references discussing $S^1$-equivariant symplectic homology are \cite{guttMinimalNumberPeriodic2014,guttPositiveEquivariantSymplectic2017,bourgeoisGysinExactSequence2013,bourgeoisFredholmTheoryTransversality2010,bourgeoisEquivariantSymplecticHomology2016,seidelBiasedViewSymplectic2008}. The $S^1$-equivariant symplectic homology is a $\Q$-module which has the following structural properties.

\begin{enumerate}
    \item \textbf{Action filtration.} For every $a, b \in \R$ we have $\Q$-modules and maps%
        \begin{IEEEeqnarray*}{rCl}
            \iota^a     \colon SH^{S^1,a}(X,\lambda) & \longrightarrow & SH^{S^1}(X,\lambda), \\
            \iota^{b,a} \colon SH^{S^1,a}(X,\lambda) & \longrightarrow & SH^{S^1,b}(X,\lambda),
        \end{IEEEeqnarray*}
        which compose in a functorial way. In particular, we can define the $S^1$-equivariant symplectic homology associated to intervals $(a,b] \subset \R$ and $(a, +\infty) \subset \R$ by taking the quotient:
        \begin{IEEEeqnarray*}{rCl}
            SH^{S^1,(a,b]}(X,\lambda)       & \coloneqq & SH^{S^1,b}(X,\lambda) / \iota^{b,a}(SH^{S^1,a}(X,\lambda)), \\
            SH^{S^1,(a,+\infty)}(X,\lambda) & \coloneqq & SH^{S^1}(X,\lambda) / \iota^{a}(SH^{S^1,a}(X,\lambda)).
        \end{IEEEeqnarray*}
        The \textbf{positive $S^1$-equivariant symplectic homology} is given by $SH^{S^1,+}(X,\lambda) = SH^{S^1,(\varepsilon, +\infty)}(X,\lambda)$, where $\varepsilon$ is half of the minimal action of a Reeb orbit in $\partial X$.
    \item \textbf{$U$ map.} There is a map $U \colon SH^{S^1}(X,\lambda) \longrightarrow SH^{S^1}(X,\lambda)$ which respects the action filtration, i.e. there exist maps $U^a \colon SH^{S^1,a}(X,\lambda) \longrightarrow SH^{S^1,a}(X,\lambda)$ such that $\iota^{a} \circ U^{a} = U \circ \iota^{a}$ and $\iota^{b,a} \circ U^{a} = U^{b} \circ \iota^{b,a}$.
    \item \textbf{$\delta$ map.} There is a map $\delta \colon SH^{S^1}(X,\lambda) \longrightarrow H_{\bullet}(BS^1; \Q) \otimes H_{\bullet}(X, \partial X; \Q)$, which is of the form $\delta \coloneqq \alpha \circ \delta_0$. Here, $\alpha \colon SH^{S^1,\varepsilon}(X) \longrightarrow H_{\bullet}(BS^1; \Q) \otimes H_{\bullet}(X, \partial X; \Q)$ is an isomorphism and $\delta_0$ is the continuation map of the long exact homology sequence
        \begin{IEEEeqnarray*}{c+x*}
            \begin{tikzcd}
                \cdots \ar[r] & SH^{S^1}(X) \ar[r] & SH^{S^1,+}(X) \ar[r, "\delta_0"] & SH^{S^1,\varepsilon}(X) \ar[r] & \cdots
            \end{tikzcd}
        \end{IEEEeqnarray*}
    \item \textbf{Viterbo transfer map.} If $\varphi \colon (X,\lambda_X) \longrightarrow (Y,\lambda_Y)$ is a generalized Liouville embedding with $\varphi(X) \subset \itr(Y)$, one can define a map $\varphi_! \colon SH^{S^1}(Y) \longrightarrow SH^{S^1}(X)$. This map has the following properties. First, $\varphi_!$ commutes with the action filtration, in the sense that for each $a \in \R$ there exists $\varphi_!^{a} \colon SH^{S^1,a}(Y) \longrightarrow SH^{S^1,a}(X)$ such that $\iota^{a}_X \circ \varphi_!^{a} = \varphi_! \circ \iota^{a}_Y$ and $\iota^{b,a}_X \circ \varphi_!^{a} = \varphi_!^{b} \circ \iota^{b,a}_Y$. Second, $\varphi_!$ commutes with the $U$ maps, i.e. $\varphi_!^{a} \circ U_Y^{a} = U_X^{a} \circ \varphi_!^{a}$. Finally, $\varphi_!$ commutes with the $\delta$ map, i.e. $\delta_X \circ \varphi_! = (1 \otimes \rho) \circ \delta_Y$, where $\rho \colon H_{\bullet}(Y, \partial Y; \Q) \longrightarrow H_{\bullet}(X, \partial X; \Q)$ is the composition%
        \begin{IEEEeqnarray*}{c+x*}
            \begin{tikzcd}
                H_{\bullet}(Y, \partial Y; \Q) \ar[r] \ar[rr, bend right = 20, swap, "\rho"] & H_{\bullet}(Y, Y \setminus \varphi(\itr X); \Q) & H_{\bullet}(X, \partial X; \Q) \ar[l, hook', two heads]
            \end{tikzcd}
        \end{IEEEeqnarray*}
    \item \textbf{Grading.} In the case where $\pi_1(X) = 0$ and $c_1(TX)|_{\pi^2(X)} = 0$, the $S^1$-equivariant symplectic homology admits an integer grading. With respect to this grading, the maps $\iota^{a}$, $\iota^{b,a}$ and $\varphi_!$ are of degree 0 and the $U$ map is of degree $-2$.
    \item \textbf{Star-shaped domains.} Suppose that $(X,\lambda)$ is a star-shaped domain. Then,
        \begin{IEEEeqnarray*}{c+x*}
            SH^{S^1}_\bullet(X,\lambda) \cong
            \begin{cases}
                \Q & \text{if } \bullet \in n - 1 + 2 \Z_{\geq 1}, \\
                0  & \text{otherwise}
            \end{cases}
        \end{IEEEeqnarray*}
        and $\delta \colon SH^{S^1}_{n-1+2k}(X,\lambda) \longrightarrow H_{2k-2}(BS^1;\Q) \otimes H_{2n}(X,\partial X; \Q)$ is an isomorphism.
\end{enumerate}

\section{Computations using only classical transversality}
\label{sec:3}

\subsection{Lagrangian capacity}

Here, we define the Lagrangian capacity (\cref{def:lagrangian capacity}) and state its properties (\cref{prop:properties of cL}). One of the main goals of this paper is to study whether the Lagrangian capacity can be computed in some cases, for example for toric domains. In the end of the section, we state some easy inequalities concerning the Lagrangian capacity (\cref{lem:c square leq c lag,lem:c square geq delta}), known computations (\cref{prp:cl of ball,prp:cl of cylinder}) and finally the main conjecture of this paper (\cref{conj:the conjecture}), which is inspired by all the previous results. The Lagrangian capacity is defined in terms of the minimal area of Lagrangian submanifolds, which we now define.

\begin{definition}
    Let $(X,\omega)$ be a symplectic manifold. If $L$ is a Lagrangian submanifold of $X$, then we define the \textbf{minimal symplectic area of} $L$, denoted $A_{\mathrm{min}}(L)$, by%
    \begin{IEEEeqnarray*}{c+x*}
        A_{\mathrm{min}}(L) \coloneqq \inf \{ \omega(\sigma) \mid \sigma \in \pi_2(X,L), \, \omega(\sigma) > 0 \}.
    \end{IEEEeqnarray*}
\end{definition}

\begin{lemma}
    \phantomsection\label{lem:a min with exact symplectic manifold}
    Let $(X,\lambda)$ be an exact symplectic manifold and $L \subset X$ be a Lagrangian submanifold. If $\pi_1(X) = 0$, then
    \begin{IEEEeqnarray*}{c+x*}
        A _{\mathrm{min}}(L) = \inf \left\{ \lambda(\rho) \ | \ \rho \in \pi_1(L), \ \lambda(\rho) > 0 \right\}.
    \end{IEEEeqnarray*}
\end{lemma}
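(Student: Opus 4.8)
The statement relates the minimal symplectic area $A_{\mathrm{min}}(L)$, defined via $\pi_2(X,L)$, to the infimum of positive values of $\lambda$ on loops in $L$, i.e. via $\pi_1(L)$. The bridge is the long exact sequence of the pair $(X,L)$ in homotopy, together with the fact that $\omega = \edv\lambda$ is exact. The plan is to show the two inequalities separately.

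First I would prove $A_{\mathrm{min}}(L) \leq \inf\{\lambda(\rho) \mid \rho \in \pi_1(L),\ \lambda(\rho) > 0\}$. Given a loop $\rho$ in $L$, since $\pi_1(X) = 0$ the inclusion-induced map $\pi_1(L) \to \pi_1(X)$ is zero, so the connecting homomorphism $\partial \colon \pi_2(X,L) \to \pi_1(L)$ is surjective; pick $\sigma \in \pi_2(X,L)$ with $\partial \sigma = \rho$. Represent $\sigma$ by a map $u \colon (D^2, \partial D^2) \to (X, L)$ with $u|_{\partial D^2}$ representing $\rho$. Then by Stokes' theorem $\omega(\sigma) = \int_{D^2} u^*\edv\lambda = \int_{\partial D^2} u^*\lambda = \lambda(\rho)$. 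Hence every positive value $\lambda(\rho)$ is realized as $\omega(\sigma)$ for some $\sigma$, giving the inequality.

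For the reverse inequality $A_{\mathrm{min}}(L) \geq \inf\{\lambda(\rho) \mid \rho \in \pi_1(L),\ \lambda(\rho) > 0\}$, I would take an arbitrary $\sigma \in \pi_2(X,L)$ with $\omega(\sigma) > 0$, represent it by $u \colon (D^2,\partial D^2) \to (X,L)$, and again apply Stokes to get $\omega(\sigma) = \lambda(u|_{\partial D^2}) = \lambda(\partial\sigma)$, where $\partial\sigma \in \pi_1(L)$. Since $\omega(\sigma) > 0$, the loop $\rho \coloneqq \partial\sigma$ satisfies $\lambda(\rho) > 0$ and $\lambda(\rho) = \omega(\sigma)$, so $\omega(\sigma)$ is bounded below by the infimum over $\pi_1(L)$. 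Taking the infimum over all such $\sigma$ finishes the argument. Note that $\lambda(\rho)$ depends only on the homotopy class of $\rho$ in $L$ because $\lambda|_L$ is closed (indeed $\edv(\lambda|_L) = \omega|_L = 0$ since $L$ is Lagrangian), so the period pairing factors through $\pi_1(L)$ and the right-hand side is well-defined.

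The only subtlety — and the step I would be most careful about — is the surjectivity of the boundary map $\partial\colon\pi_2(X,L)\to\pi_1(L)$, which is exactly where the hypothesis $\pi_1(X)=0$ enters via exactness of $\pi_2(X)\to\pi_2(X,L)\xrightarrow{\partial}\pi_1(L)\to\pi_1(X)=0$; without it the first inequality could fail. Everything else is a direct application of Stokes' theorem, so there is no real analytic obstacle.
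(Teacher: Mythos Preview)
Your proof is correct and is essentially the same as the paper's, just written out in more detail: the paper encodes both inequalities in a single commutative diagram expressing that $\omega = \lambda \circ \partial$ on $\pi_2(X,L)$ (this is your Stokes computation) together with the surjectivity of $\partial \colon \pi_2(X,L) \to \pi_1(L)$ coming from $\pi_1(X)=0$. Your explicit remark that $\lambda|_L$ is closed (so $\lambda(\rho)$ is well-defined on $\pi_1(L)$) is a useful point the paper leaves implicit.
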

\begin{proof}
    The diagram
    \begin{IEEEeqnarray*}{c+x*}
        \begin{tikzcd}
            \pi_2(X,L) \ar[dr, swap, "\omega"] \ar[r, two heads,"\del"] & \pi_1(L) \ar[d, "\lambda"] \ar[r, "0"] & \pi_1(X) \\
            & \R
        \end{tikzcd}
    \end{IEEEeqnarray*}
    commutes, where $\del([\sigma]) = [\sigma|_{S^1}]$, and the top row is exact.
\end{proof}

\begin{definition}[{\cite[Section 1.2]{cieliebakPuncturedHolomorphicCurves2018}}]
    \phantomsection\label{def:lagrangian capacity}
    Let $(X,\omega)$ be a symplectic manifold. We define the \textbf{Lagrangian capacity} of $(X,\omega)$, denoted $c_L(X,\omega)$, by%
    \begin{IEEEeqnarray*}{c}
        c_L(X,\omega) \coloneqq \sup \{ A_{\mathrm{min}}(L) \mid L \subset X \text{ is an embedded Lagrangian torus}\}.% \in [0,\infty].
    \end{IEEEeqnarray*}
\end{definition}

\begin{proposition}[{\cite[Section 1.2]{cieliebakPuncturedHolomorphicCurves2018}}]
    \label{prop:properties of cL}
    The Lagrangian capacity $c_L$ satisfies:
    \begin{description}
        \item[(Monotonicity)] If $(X,\omega_X) \longrightarrow (Y,\omega_Y)$ is a symplectic embedding with $\pi_2(Y,\iota(X)) = 0$, then $c_L(X,\omega_X) \leq c_L(Y,\omega_Y)$.
        \item[(Conformality)] If $\alpha \neq 0$, then $c_L(X,\alpha \omega) = |\alpha|  c_L(X,\omega)$.
    \end{description}
\end{proposition}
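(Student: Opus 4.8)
\textbf{Proof plan for \cref{prop:properties of cL}.}

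The plan is to establish the two properties directly from the definitions of $A_{\mathrm{min}}$ and $c_L$, treating them as a formal exercise in chasing the relevant $\pi_2$-groups and the conformal scaling of $\omega$. First I would prove (Monotonicity). Let $\iota \colon (X,\omega_X) \longrightarrow (Y,\omega_Y)$ be a symplectic embedding with $\pi_2(Y,\iota(X)) = 0$, and let $L \subset X$ be an embedded Lagrangian torus. Then $\iota(L) \subset Y$ is an embedded Lagrangian torus as well, since $\iota$ is a symplectomorphism onto its image. The key point is to compare $A_{\mathrm{min}}(L)$ computed in $X$ with $A_{\mathrm{min}}(\iota(L))$ computed in $Y$: I would use the long exact sequence of the triple $(Y, \iota(X), \iota(L))$, or equivalently the map of pairs $(X,L) \longrightarrow (Y, \iota(L))$, together with the hypothesis $\pi_2(Y, \iota(X)) = 0$, to conclude that $\iota_* \colon \pi_2(X,L) \longrightarrow \pi_2(Y,\iota(L))$ is surjective. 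Since $\iota^* \omega_Y = \omega_X$, the symplectic areas of corresponding classes agree, so the two infima defining $A_{\mathrm{min}}$ are taken over the same set of positive real numbers; hence $A_{\mathrm{min}}(\iota(L)) = A_{\mathrm{min}}(L)$. Taking the supremum over all $L \subset X$ and noting that $\iota(L)$ ranges over a subset of the embedded Lagrangian tori in $Y$ gives $c_L(X,\omega_X) \leq c_L(Y,\omega_Y)$.

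Next I would prove (Conformality). Fix $\alpha \neq 0$ and a Lagrangian submanifold $L \subset X$; note that $L$ is Lagrangian for $\omega$ if and only if it is Lagrangian for $\alpha\omega$, and the underlying space $\pi_2(X,L)$ does not change. For any $\sigma \in \pi_2(X,L)$ we have $(\alpha\omega)(\sigma) = \alpha\,\omega(\sigma)$, so $\{(\alpha\omega)(\sigma) \mid (\alpha\omega)(\sigma) > 0\} = \{\alpha\,\omega(\sigma) \mid \alpha\,\omega(\sigma) > 0\}$. If $\alpha > 0$ this set is $\alpha \cdot \{\omega(\sigma) \mid \omega(\sigma) > 0\}$, so its infimum is $\alpha\, A_{\mathrm{min}}(L) = |\alpha|\, A_{\mathrm{min}}(L)$; if $\alpha < 0$ it is $|\alpha| \cdot \{\omega(\sigma) \mid \omega(\sigma) < 0\} = |\alpha| \cdot \{-\omega(\sigma) \mid \omega(\sigma) > 0\}$, whose infimum is again $|\alpha|\, A_{\mathrm{min}}(L)$ (the set of positive symplectic areas is, up to sign, symmetric since $\pi_2(X,L)$ is a group and $\sigma \mapsto -\sigma$ negates areas). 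In all cases $A_{\mathrm{min}}(L)$, computed with respect to $\alpha\omega$, equals $|\alpha|$ times $A_{\mathrm{min}}(L)$ computed with respect to $\omega$. Taking the supremum over embedded Lagrangian tori $L$ — a class unchanged by rescaling $\omega$ — yields $c_L(X,\alpha\omega) = |\alpha|\, c_L(X,\omega)$.

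The only genuine subtlety is the surjectivity claim $\iota_* \colon \pi_2(X,L) \twoheadrightarrow \pi_2(Y,\iota(L))$ in the monotonicity argument; everything else is bookkeeping. I expect this to follow immediately from the long exact sequence
\begin{IEEEeqnarray*}{c+x*}
    \cdots \longrightarrow \pi_2(X, L) \longrightarrow \pi_2(Y, \iota(L)) \longrightarrow \pi_2(Y, \iota(X)) \longrightarrow \cdots
\end{IEEEeqnarray*}
of the triple $(Y, \iota(X), \iota(L))$ (identifying $\pi_2(X,L) \cong \pi_2(\iota(X), \iota(L))$ via the homeomorphism $\iota$), using that the third term vanishes by hypothesis. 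One should also remark that if $L \subset X$ bounds no disk of positive $\omega_X$-area then $A_{\mathrm{min}}(L) = +\infty$ by the convention $\inf \emptyset = +\infty$, and this case is handled uniformly by the argument above. This completes the proof.
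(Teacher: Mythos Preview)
The paper does not give its own proof of this proposition; it simply cites \cite[Section~1.2]{cieliebakPuncturedHolomorphicCurves2018}. Your argument is the standard one and is correct: surjectivity of $\iota_* \colon \pi_2(X,L) \to \pi_2(Y,\iota(L))$ from the long exact sequence of the triple $(Y,\iota(X),\iota(L))$, together with $\omega_Y(\iota_*\sigma)=\omega_X(\sigma)$, yields $A_{\mathrm{min}}(\iota(L))=A_{\mathrm{min}}(L)$ (you really only need $\geq$ for monotonicity, and already surjectivity alone gives that the positive-area set for $\iota(L)$ in $Y$ is contained in that for $L$ in $X$); the conformality computation is likewise correct, including the $\alpha<0$ case via $\sigma\mapsto-\sigma$ on the group $\pi_2(X,L)$.
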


We now wish to show that if $X_{\Omega}$ is a convex or concave toric domain, then $c_L(X_{\Omega}) \geq \delta_{\Omega} \coloneqq \sup \{ a \mid (a,\ldots,a) \in \Omega \}$. For this, we consider the following symplectic capacity.

\begin{definition}[{\cite[Definition 1.17]{guttSymplecticCapacitiesPositive2018}}]
    If $(X,\omega)$ is a symplectic manifold, its \textbf{cube capacity} is given by
    \begin{IEEEeqnarray*}{c+x*}
        c_P(X,\omega) \coloneqq \sup \{ a \mid \text{there exists a symplectic embedding } P(a) \longrightarrow X \}.
    \end{IEEEeqnarray*}
\end{definition}

\begin{lemma}
    \label{lem:c square leq c lag}
    If $X$ is a star-shaped domain, then $c_L(X) \geq c_P(X)$.
\end{lemma}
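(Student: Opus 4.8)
The plan is to show that any symplectic embedding of a cube $P(a) \longrightarrow X$ produces an embedded Lagrangian torus in $X$ whose minimal symplectic area is at least $a$; taking the supremum over all such embeddings and then over all Lagrangian tori then yields $c_L(X) \geq c_P(X)$. The natural candidate for the Lagrangian torus is the image under the embedding of a ``standard'' Lagrangian torus sitting inside $P(a)$, namely a product of circles $\prod_{j=1}^n \{ |z_j|^2 = r_j \}$ with $\pi r_j = a/2$ for each $j$ (more generally one can push the radii up towards the corner of the cube). Indeed, in $\C^n$ with the standard symplectic form, the torus $T \coloneqq \{ (z_1, \ldots, z_n) \mid \pi |z_j|^2 = a/2 \text{ for all } j\}$ lies in the interior of $P(a)$, is Lagrangian, and a short computation with the symplectic potential $\lambda$ shows that the symplectic area of the disk capping off the $j$-th circle factor equals $a/2$; since the boundary group $\pi_1(T) \cong \Z^n$ is generated by these $n$ loops and $\pi_1(\C^n) = 0$, \cref{lem:a min with exact symplectic manifold} gives $A_{\mathrm{min}}(T) = a/2$. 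This is off by the factor $1/2$, so instead one should take radii approaching the corner: for $\epsilon > 0$ small, the torus $T_\epsilon$ with $\pi |z_j|^2 = a - \epsilon$ still lies in $P(a)$ and has $A_{\mathrm{min}}(T_\epsilon) = a - \epsilon$. (If the slightly stronger normalization $c_L(P(a)) = a$ is available one can simply invoke monotonicity directly; here I spell out the torus since $P(a)$ is noncompact and has corners.)

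First I would fix a symplectic embedding $\varphi \colon P(a) \longrightarrow X$ and set $L \coloneqq \varphi(T_\epsilon)$, an embedded Lagrangian torus in $X$. The key point is to compare $A_{\mathrm{min}}(L)$ computed in $X$ with $A_{\mathrm{min}}(T_\epsilon)$ computed in $P(a)$. Because $\varphi$ is a symplectic embedding, it sends relative homotopy classes in $\pi_2(P(a), T_\epsilon)$ to classes in $\pi_2(X, L)$ preserving symplectic area, so every positive area achieved by a disk in $(P(a), T_\epsilon)$ is also achieved in $(X, L)$; hence $A_{\mathrm{min}}(L) \geq A_{\mathrm{min}}(T_\epsilon) = a - \epsilon$ provided that no disk in $(X,L)$ has strictly smaller positive area than those coming from $P(a)$. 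This last clause is exactly where one uses that $X$ is a star-shaped domain, hence exact with $\pi_1(X) = 0$: by \cref{lem:a min with exact symplectic manifold}, $A_{\mathrm{min}}(L) = \inf\{ \lambda_X(\rho) \mid \rho \in \pi_1(L),\ \lambda_X(\rho) > 0\}$, and since $\varphi$ is a symplectic (indeed one may arrange exact symplectic) embedding, $\varphi^* \lambda_X$ and $\lambda_{P(a)}$ differ by a closed — one-form, which on the torus $T_\epsilon$ integrates the same way over loops up to that closed correction; the upshot is $\lambda_X(\varphi_* \rho) = \lambda_{P(a)}(\rho)$ for all $\rho \in \pi_1(T_\epsilon)$, so the two infima agree and $A_{\mathrm{min}}(L) = a - \epsilon$.

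Finally I would assemble the inequalities: $c_L(X) \geq A_{\mathrm{min}}(L) = a - \epsilon$ for every $\epsilon > 0$ and every $a$ admitting an embedding $P(a) \longrightarrow X$, whence $c_L(X) \geq a$ for all such $a$, and taking the supremum over $a$ gives $c_L(X) \geq c_P(X)$. The main obstacle is the bookkeeping in the previous paragraph: making precise that a symplectic (as opposed to exact symplectic) embedding still preserves the relevant areas on the Lagrangian torus, i.e. that the closed one-form $\varphi^*\lambda_X - \lambda_{P(a)}$ does not change $A_{\mathrm{min}}$. One clean way around this is to observe that $\pi_2(P(a), T_\epsilon) \to \pi_1(T_\epsilon)$ is onto and $\pi_1(P(a)) = 0$, so $A_{\mathrm{min}}$ only depends on the symplectic form (not the primitive), and symplectic embeddings preserve $\omega$ on $\pi_2$; combined with $\pi_2(X, L)$ receiving these classes and \cref{lem:a min with exact symplectic manifold} pinning down the value from the $X$ side, the equality follows without ever differencing the primitives. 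A minor secondary point is handling the corners/noncompactness of $P(a)$, which is why one works with $T_\epsilon$ in the open interior rather than with $P(a)$ itself.
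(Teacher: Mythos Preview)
Your approach is essentially the paper's: push a product torus through the embedding, then use \cref{lem:a min with exact symplectic manifold} (valid since a star-shaped $X$ has $\pi_1(X)=0$) to compute $A_{\mathrm{min}}$ via periods of $\lambda$ on $\pi_1$ of the torus. Two simplifications: the paper takes the corner torus $T=\mu^{-1}(a,\ldots,a)\subset\partial P(a)$ directly, avoiding the $\epsilon$-limit; and your worry about $\varphi^*\lambda_X-\lambda_{P(a)}$ dissolves immediately because $P(a)$ is contractible, so this closed $1$-form is exact and integrates to zero over every loop in $T_\epsilon$, giving $\lambda_X(\varphi_*\rho)=\lambda_{P(a)}(\rho)$ without further argument.
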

\begin{proof}
    Let $\iota \colon P(a) \longrightarrow X$ be a symplectic embedding, for some $a > 0$. We want to show that $c_L(X) \geq a$. Define $T = \mu^{-1}(a,\ldots,a) \subset \partial P(a)$ and $L = \iota(T)$. Then,
    \begin{IEEEeqnarray*}{rCls+x*}
        c_L(X)
        & \geq & A_{\mathrm{min}}(L) & \quad [\text{by definition of $c_L$}] \\
        & =    & A_{\mathrm{min}}(T) & \quad [\text{since $X$ is star-shaped}] \\
        & =    & a                   & \quad [\text{by \cref{lem:a min with exact symplectic manifold}}]. & \qedhere
    \end{IEEEeqnarray*}
\end{proof}

\begin{lemma}
    \label{lem:c square geq delta}
    If $X_{\Omega}$ is a convex or concave toric domain, then $c_P(X_{\Omega}) \geq \delta_\Omega$.
\end{lemma}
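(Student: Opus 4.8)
The plan is to produce, for any $a < \delta_\Omega$, an explicit symplectic embedding $P(a) \longrightarrow X_\Omega$, which by definition of $c_P$ gives $c_P(X_\Omega) \geq a$, and then let $a \to \delta_\Omega$. The geometric point is that $\delta_\Omega$ is defined so that the diagonal point $(\delta_\Omega,\ldots,\delta_\Omega)$ lies in $\Omega$, while the cube $P(a)$ has moment image $\Omega_{P(a)} = \{x \in \R^n_{\geq 0} \mid x_i \leq a\ \forall i\}$, i.e. a box reaching up to the diagonal point $(a,\ldots,a)$. So morally $P(a)$ already sits inside $X_\Omega$ as a toric subset whenever $a \leq \delta_\Omega$; the work is upgrading this ``moment-image containment'' to an honest symplectic embedding, separately in the convex and concave cases.

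\emph{Convex case.} Here $\hat\Omega$ is convex. First I would reduce to the case $a < \delta_\Omega$ (strict), so that $(a,\ldots,a)$ lies in the interior of $\Omega$ relative to the diagonal direction. The box $\Omega_{P(a)}$ is the convex hull of $0$ and $(a,\ldots,a)$ together with the coordinate directions; since $\Omega$ is the intersection of $\R^n_{\geq 0}$ with the convex set $\hat\Omega$, and both $0$ and a neighbourhood of $(a,\ldots,a)$ lie in $\Omega$, one expects $\Omega_{P(a)} \subset \Omega$ (this requires a short convexity argument: every vertex of the box is of the form $(x_1,\ldots,x_n)$ with $0 \leq x_i \leq a$, and such a point is a coordinate-wise convex combination, hence lies in $\hat\Omega$ by its ``unconditional'' convexity, i.e. invariance under coordinate reflections and convexity). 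Once $\Omega_{P(a)} \subset \Omega$, the inclusion $X_{\Omega_{P(a)}} \hookrightarrow X_\Omega$ is a toric, hence symplectic, embedding; and $X_{\Omega_{P(a)}}$ contains $P(a)$ up to the corner/compactness caveat already flagged in the paper, which is handled by shrinking $a$ slightly. This gives $c_P(X_\Omega) \geq a$ for all $a < \delta_\Omega$, hence $c_P(X_\Omega) \geq \delta_\Omega$.

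\emph{Concave case.} Here $\R^n_{\geq 0}\setminus\Omega$ is convex, so $\Omega$ need not contain the whole box $\Omega_{P(a)}$ — typically only a neighbourhood of the origin and the region near the diagonal. The standard fix is to use the nontrivial symplectic folding / embedding results for cubes into concave toric domains: since $X_\Omega$ is concave and contains the diagonal point $(\delta_\Omega,\ldots,\delta_\Omega)$, for every $a<\delta_\Omega$ there is a symplectic embedding $P(a)\hookrightarrow X_\Omega$. Concretely I would invoke the fact that the ellipsoid $E(na,\ldots,na)$, or more precisely a simplex-shaped toric domain with diagonal $\geq a$, symplectically contains $P(a)$ (this is the statement that $c_P$ of a simplex equals its diagonal, essentially the content of \cite[Section 3]{guttSymplecticCapacitiesPositive2018}), and that a concave toric domain with diagonal $\delta_\Omega$ contains such a simplex of diagonal arbitrarily close to $\delta_\Omega$ because the segment from $0$ to $(\delta_\Omega,\ldots,\delta_\Omega)$, fattened into a simplex, stays in $\Omega$ by concavity. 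Composing these embeddings yields $P(a)\hookrightarrow X_\Omega$, hence $c_P(X_\Omega)\geq\delta_\Omega$.

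\emph{Main obstacle.} The convex case is essentially formal (toric inclusion plus an unconditional-convexity lemma). The real content is the concave case: one cannot just include moment images, and one must cite or reprove a genuine cube-embedding result, keeping careful track of the noncompactness/corners of $P(a)$ and $Z(a)$-type domains. I expect the bulk of the argument — and the place where one must be most careful about which known theorem is being quoted — to be exhibiting the symplectic embedding $P(a)\hookrightarrow X_\Omega$ in the concave case and checking it respects the required class of embeddings.
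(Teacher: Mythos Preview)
Your convex case is correct and matches the paper. But your concave case rests on a false premise: you write ``$\Omega$ need not contain the whole box $\Omega_{P(a)}$'', and then reach for nontrivial embedding results. In fact $\Omega$ \emph{does} contain the box $[0,\delta_\Omega]^n$, so the concave case is exactly as immediate as the convex one, and the paper's proof is the one-liner ``$P(\delta_\Omega) \subset X_\Omega$''.

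Here is why the box is contained when $X_\Omega$ is concave. Let $C \coloneqq \R^n_{\geq 0} \setminus \Omega$, which is convex by hypothesis. Since $X_\Omega$ is a (bounded) domain, for any $x \in C$ and any $j$ the ray $t \mapsto x + t e_j$ eventually leaves $\Omega$, i.e.\ lies in $C$ for all large $t$; convexity of $C$ then forces $x + t e_j \in C$ for \emph{every} $t \geq 0$. Iterating over the coordinates shows $C$ is upward closed: $x \in C$ and $z \geq x$ coordinatewise imply $z \in C$. Now if some $x \in [0,\delta_\Omega]^n$ were in $C$, then since $x \leq (\delta_\Omega,\ldots,\delta_\Omega)$ we would get $(\delta_\Omega,\ldots,\delta_\Omega) \in C$, contradicting $(\delta_\Omega,\ldots,\delta_\Omega) \in \Omega$. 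Hence $[0,\delta_\Omega]^n \subset \Omega$, i.e.\ $P(\delta_\Omega) \subset X_\Omega$, and $c_P(X_\Omega) \geq \delta_\Omega$ follows directly. Your proposed detour through simplex embeddings is both unnecessary and, as stated, not obviously correct (a concave $\Omega$ with diagonal $\delta_\Omega$ need not contain the full simplex $\{\sum x_i \leq n\delta_\Omega\}$).
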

\begin{proof}
    Since $X_{\Omega}$ is a convex or concave toric domain, we have that $P(\delta_\Omega) \subset X_{\Omega}$. The result follows by definition of $c_P$.
\end{proof}

Actually, Gutt--Hutchings show that $c_P(X_{\Omega}) = \delta_\Omega$ for any convex or concave toric domain $X_{\Omega}$ (\cite[Theorem 1.18]{guttSymplecticCapacitiesPositive2018}). However, for our purposes we will only need the inequality in \cref{lem:c square geq delta}. We now consider the results by Cieliebak--Mohnke for the Lagrangian capacity of the ball and the cylinder.

\begin{proposition}[{\cite[Corollary 1.3]{cieliebakPuncturedHolomorphicCurves2018}}]
    \phantomsection\label{prp:cl of ball}
    The Lagrangian capacity of the ball is
    \begin{IEEEeqnarray*}{c+x*}
        c_L(B(1)) = \frac{1}{n}.
    \end{IEEEeqnarray*}
\end{proposition}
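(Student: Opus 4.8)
The plan is to treat the two inequalities separately. The lower bound $c_L(B(1)) \ge 1/n$ is elementary, while the upper bound $c_L(B(1)) \le 1/n$ is the substantial direction, originally due to Cieliebak--Mohnke; I would either quote it from \cite[Corollary 1.3]{cieliebakPuncturedHolomorphicCurves2018} or recover it from \cref{thm:lagrangian vs g tilde}. Throughout, note that $B(1)$ is a convex toric domain with $\delta_{\Omega_{B(1)}} = 1/n$, since $(a,\dots,a) \in \Omega_{B(1)}$ exactly when $na \le 1$.

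\emph{Lower bound.} By \cref{lem:c square leq c lag,lem:c square geq delta} one gets immediately $c_L(B(1)) \ge c_P(B(1)) \ge \delta_{\Omega_{B(1)}} = 1/n$. More concretely: the torus $T \coloneqq \mu^{-1}(1/n, \dots, 1/n) \subset \partial B(1)$ is an embedded Lagrangian torus in $B(1)$, and since $B(1)$ is exact and simply connected, \cref{lem:a min with exact symplectic manifold} identifies $A_{\mathrm{min}}(T)$ with the least positive value of the Liouville form on $\pi_1(T) \cong \Z^n$; that value is $1/n$, because the Liouville form evaluates to $1/n$ on each of the $n$ standard generators. Hence $c_L(B(1)) \ge A_{\mathrm{min}}(T) = 1/n$.

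\emph{Upper bound.} Let $L$ be an embedded Lagrangian torus contained in the interior of $B(1)$; by \cref{lem:a min with exact symplectic manifold} it suffices to find a noncontractible loop in $L$ whose Liouville period lies in $(0, 1/n]$. I would run the Cieliebak--Mohnke punctured-holomorphic-curve scheme. Since $L \cong T^n$ it carries a flat metric, which by \cref{lem:geodesics lemma CM abs} can be $C^2$-perturbed so that every sufficiently short closed geodesic is noncontractible, nondegenerate and of Morse index $\le n-1$, while its $\varepsilon$-Weinstein neighbourhood $D^*_\varepsilon L$ still lies in $B(1)$. Compactify $B(1)$ to $\mathbb{CP}^n$ as the complement of a hyperplane, normalised so a line has area $1$; fix a generic point $x \notin L$ and a local symplectic divisor $D$ through $x$, and for each $d \ge 1$ consider the $0$-dimensional moduli space of degree-$d$ rational curves through $x$ with the appropriate contact order to $D$ at $x$, which is nonempty by positivity of intersections in $\mathbb{CP}^n$. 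Stretching the neck along $\partial D^*_\varepsilon L \cong S T^*L$ and invoking SFT compactness, these curves converge to holomorphic buildings; by \cref{lem:punctures and tangency}, applied to the component of the limit lying in $T^*L$ which inherits the tangency constraint (cf.\ \cref{rmk:compactifications with tangency}), that component has enough punctures asymptotic to short closed geodesics of $L$ that, after the action/energy bookkeeping furnished by the energy identity together with \cref{lem:energy wrt different forms,lem:no nodes}, some such geodesic $\gamma$ has action $\le 1/n + O(\varepsilon)$. As $\gamma$ is noncontractible and its Liouville period agrees with its action up to $O(\varepsilon)$, letting $\varepsilon \to 0$ gives $A_{\mathrm{min}}(L) \le 1/n$. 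It should also be possible to bypass this analysis: by \cref{thm:lagrangian vs g tilde} one has $c_L(B(1)) \le \tilde{\mathfrak{g}}_n(B(1))/n$, and $\tilde{\mathfrak{g}}_n(B(1)) = 1$ by the computations of \cite{mcduffSymplecticCapacitiesUnperturbed2022}, giving $c_L(B(1)) \le 1/n$ --- this is the same strategy as the proof of \cref{lem:computation of cl}.

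\emph{Main obstacle.} All the difficulty is in the upper bound. In the direct approach the delicate points are (i) the existence, with the correct virtual dimension, of the constrained moduli space in $\mathbb{CP}^n$, and (ii) the SFT-compactness-and-transversality analysis near $L$: the limiting curves in $T^*L$ need not be simple, which is exactly why one works with a generic almost complex structure and the flat (hence nonpositively curved) metric of \cref{lem:geodesics lemma CM abs}, and why the final geometric conclusion requires the careful action bookkeeping turning several geodesic punctures of bounded total action into a single geodesic of period $\le 1/n$. For this reason I would, in the body of the paper, simply cite \cite[Corollary 1.3]{cieliebakPuncturedHolomorphicCurves2018}.
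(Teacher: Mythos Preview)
The paper does not prove this proposition at all; it simply records it as a result of Cieliebak--Mohnke and cites \cite[Corollary~1.3]{cieliebakPuncturedHolomorphicCurves2018}. Your final recommendation --- to just cite that reference --- is therefore exactly what the paper does, and your lower-bound argument via \cref{lem:c square leq c lag,lem:c square geq delta} is correct.

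One caution about your proposed alternative route for the upper bound. Invoking \cref{thm:lagrangian vs g tilde} together with $\tilde{\mathfrak{g}}_n(B(1)) = 1$ is only immediately available in dimension~$4$: in this paper the equality $\tilde{\mathfrak{g}}_k = \cgh{k}$ on convex toric domains (\cref{prp:g tilde and cgh}) is quoted only for $2n = 4$, and the general comparison $\tilde{\mathfrak{g}}_k \leq \cgh{k}$ goes through $\mathfrak{g}_k$ and hence through \cref{assumption}. So for $n \geq 3$ this shortcut, as stated, either needs an independent computation of $\tilde{\mathfrak{g}}_k(B(1))$ from \cite{mcduffSymplecticCapacitiesUnperturbed2022} or relies on the virtual perturbation hypothesis --- whereas the Cieliebak--Mohnke argument you sketch (and cite) is unconditional.
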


\begin{proposition}[{\cite[p.~215-216]{cieliebakPuncturedHolomorphicCurves2018}}]
    \label{prp:cl of cylinder}
    The Lagrangian capacity of the cylinder is
    \begin{IEEEeqnarray*}{c+x*}
        c_L(Z(1)) = 1.
    \end{IEEEeqnarray*}
\end{proposition}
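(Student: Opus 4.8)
\emph{Lower bound.} The inequality $c_L(Z(1)) \geq 1$ is the elementary half: since $(a, \ldots, a) \in \Omega_{Z(1)}$ exactly when $a \leq 1$, the diagonal of $Z(1)$ is $\delta_{Z(1)} = 1$, and as $Z(1)$ is a (noncompact) convex toric domain, \cref{lem:c square geq delta} together with \cref{lem:c square leq c lag} gives $c_L(Z(1)) \geq c_P(Z(1)) \geq \delta_{Z(1)} = 1$. Concretely, the Lagrangian torus $L_0 \coloneqq \mu^{-1}(1, 1, \ldots, 1) \subset Z(1)$ has $A_{\mathrm{min}}(L_0) = 1$ by \cref{lem:a min with exact symplectic manifold}, because $\lambda|_{L_0}$ sends $(v_1, \ldots, v_n) \in \pi_1(L_0) = \Z^n$ to $v_1 + \cdots + v_n$. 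So the content is the reverse inequality $c_L(Z(1)) \leq 1$, i.e. $A_{\mathrm{min}}(L) \leq 1$ for every embedded Lagrangian torus $L \subset Z(1)$.

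\emph{Upper bound via \cref{thm:lagrangian vs g tilde}.} My plan is to reduce to compact toric domains. Given $L \subset Z(1)$, compactness forces $L \subset X_\Omega$, where $X_\Omega$ is the polydisc-type convex toric domain with $\Omega = \{\, x \in \R^n_{\geq 0} \mid x_1 \leq 1, \ x_2 + \cdots + x_n \leq R \,\}$ for $R$ large. Since $x_1 \leq 1$ already witnesses membership in $\Omega_{N(1)}$, we have $X_\Omega \subset N(1)$, so monotonicity of $\cgh{k}$ and \cref{lem:cgh of nondisjoint union of cylinders} give $\cgh{k}(X_\Omega) \leq \cgh{k}(N(1)) = k + n - 1$. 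In dimension $4$ this upgrades to $\tilde{\mathfrak{g}}_k(X_\Omega) = \cgh{k}(X_\Omega) \leq k + 1$ by \cref{prp:g tilde and cgh}; in any dimension, and under \cref{assumption}, \cref{thm:g tilde vs g hat} together with \cref{thm:g hat vs gh} (applicable because $\pi_1(X_\Omega) = 0$ and $2 c_1(TX_\Omega) = 0$) give $\tilde{\mathfrak{g}}_k(X_\Omega) \leq \mathfrak{g}_k(X_\Omega) = \cgh{k}(X_\Omega) \leq k + n - 1$. Either way \cref{thm:lagrangian vs g tilde} yields
\[
    A_{\mathrm{min}}(L) \;\leq\; c_L(X_\Omega) \;\leq\; \inf_k \frac{k + n - 1}{k} \;=\; 1 ,
\]
the infimum being the limit as $k \to \infty$; and since $A_{\mathrm{min}}$ is an infimum over a set of relative homotopy classes of discs that only grows when the ambient manifold is enlarged, the bound passes from $X_\Omega$ to $Z(1)$. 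Taking the supremum over $L$ gives $c_L(Z(1)) \leq 1$, hence $c_L(Z(1)) = 1$.

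\emph{Removing the hypotheses.} This argument is unconditional only in dimension $4$; to recover the statement in all dimensions (as Cieliebak--Mohnke do) I would instead argue directly with punctured holomorphic curves, using the material of \cref{sec:2}. Given $L \subset Z(1)$, identify a neighbourhood of $L$ with a disc cotangent bundle of $T^n$ and, by \cref{lem:geodesics lemma CM abs}, equip $L$ with a nonpositively curved metric for which every short closed geodesic is noncontractible, nondegenerate and of Morse index at most $n - 1$, so that the short Reeb orbits of the sphere cotangent bundle are exactly these geodesics. One then studies, for each $k$, a family of punctured holomorphic curves in a completion of $Z(1) = B^2(1) \times \C^{n-1}$ passing through a point of this neighbourhood with a tangency constraint of order $k$ to a local divisor and with energy growing like $k$; stretching the neck along the sphere cotangent bundle, \cref{lem:no nodes} and SFT compactness produce a limiting building containing a component in the completed cotangent bundle which inherits the constraint, hence by \cref{lem:punctures and tangency} is asymptotic to at least $k + 1$ closed geodesics, and \cref{lem:energy wrt different forms} together with the energy identity \eqref{eq:energy identity} bounds the sum of their actions in terms of that energy. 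A standard energy-versus-multiplicity estimate, carried out in the limit $k \to \infty$ and using that each noncontractible geodesic produces a class of $\pi_1(L)$ on which $\lambda|_L$ is controlled by its action, then forces $A_{\mathrm{min}}(L) \leq 1$.

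\emph{Main obstacle.} For the slick argument the only work is organising the noncompactness of $Z(1)$: the capacities $\cgh{k}$, $\tilde{\mathfrak{g}}_k$ and \cref{thm:lagrangian vs g tilde} are formulated for compact Liouville domains, so one must exhaust $Z(1)$ by bounded convex toric domains and check the capacities behave well under the exhaustion — which the explicit shape of $\Omega$ and the containment $X_\Omega \subset N(1)$ are tailored to. In the Cieliebak--Mohnke argument the genuinely delicate point is the dictionary between Reeb dynamics and symplectic area: one must choose the Weinstein neighbourhood, the metric, and the neck-stretching family so that the limiting geodesics are noncontractible and represent $\pi_1(L)$-classes on which $\lambda|_L$ is bounded below by $A_{\mathrm{min}}(L)$ — the canonical contact form on the sphere cotangent bundle has to be corrected by a closed $1$-form representing $[\lambda|_L]$, which alters the Reeb flow — and one must prevent holomorphic curves from escaping into the noncompact factor $\C^{n-1}$, which the maximum principle handles precisely because $L$ is compact.
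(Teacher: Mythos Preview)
The paper does not give its own proof of this proposition; it is quoted from \cite{cieliebakPuncturedHolomorphicCurves2018} as external input and used only as motivation for \cref{conj:the conjecture}.  So there is no in-paper argument to compare against, and your proposal should be judged on its own merits.

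Your capacity-based route is sound and not circular: none of \cref{thm:lagrangian vs g tilde}, \cref{prp:g tilde and cgh}, \cref{thm:g tilde vs g hat}, \cref{thm:g hat vs gh} or \cref{lem:cgh of nondisjoint union of cylinders} uses \cref{prp:cl of cylinder}.  Truncating $Z(1)$ to a bounded convex toric domain $X_\Omega$ is the right fix for noncompactness, and since both $X_\Omega$ and $Z(1)$ are simply connected, \cref{lem:a min with exact symplectic manifold} shows $A_{\mathrm{min}}(L)$ depends only on $\lambda|_L$, so the bound transfers automatically.  Two cosmetic points: the torus $\mu^{-1}(1,\ldots,1)$ lies on $\partial Z(1)$, so for the lower bound take $\mu^{-1}(a,\ldots,a)$ with $a<1$ and let $a\to 1$; and $X_\Omega$ as written has corners, so say that you smooth it before invoking \cref{prp:g tilde and cgh} or \cref{thm:lagrangian vs g tilde}.

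The genuine gap is the unconditional higher-dimensional case.  As you correctly flag, your capacity argument needs \cref{assumption} once $n\geq 3$, whereas \cref{prp:cl of cylinder} is an unconditional theorem in \cite{cieliebakPuncturedHolomorphicCurves2018}.  Your ``Removing the hypotheses'' paragraph is a reasonable outline of the neck-stretching mechanism (and is in spirit the proof of \cref{thm:lagrangian vs g tilde}), but it leaves unexplained the one step that cannot be borrowed from this paper: the \emph{existence} of the initial family of holomorphic curves in (a completion of) $Z(1)$ satisfying the tangency constraint with the required energy control.  In \cref{thm:lagrangian vs g tilde} this existence is supplied by the hypothesis $\tilde{\mathfrak{g}}_k(X)<\infty$; for the cylinder one must instead produce the curves directly, and the Cieliebak--Mohnke argument does so by compactifying the bounded factor and using closed holomorphic spheres of area $1$ in $\C P^1 \times \C^{n-1}$ as the seed.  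Without naming this source, your sketch assumes precisely the input it needs to construct.
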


By \cref{lem:c square leq c lag,lem:c square geq delta}, if $X_{\Omega}$ is a convex or concave toric domain then $c_L(X_\Omega) \geq \delta_\Omega$. But as we have seen in \cref{prp:cl of ball,prp:cl of cylinder}, if $X_\Omega$ is the ball or the cylinder then $c_L(X_\Omega) = \delta_\Omega$. This motivates \cref{conj:cl of ellipsoid} below for the Lagrangian capacity of an ellipsoid, and more generally \cref{conj:the conjecture} below for the Lagrangian capacity of any convex or concave toric domain.

\begin{conjecture}[{\cite[Conjecture 1.5]{cieliebakPuncturedHolomorphicCurves2018}}]
    \label{conj:cl of ellipsoid}
    The Lagrangian capacity of the ellipsoid is%
    \begin{IEEEeqnarray*}{c+x*}
        c_L(E(a_1,\ldots,a_n)) = \p{}{2}{\frac{1}{a_1} + \cdots + \frac{1}{a_n}}^{-1}.
    \end{IEEEeqnarray*}
\end{conjecture}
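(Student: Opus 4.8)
The statement is the special case $X_{\Omega} = E(a_1,\ldots,a_n)$ of \cref{conj:the conjecture}, which for $n = 2$ is \cref{lem:computation of cl} and in general, under \cref{assumption}, is \cref{thm:my main theorem}. Indeed, $E(a_1,\ldots,a_n)$ is a convex toric domain, since
\[
    \hat{\Omega}_{E(a_1,\ldots,a_n)} = \{ x \in \R^n \mid |x_1|/a_1 + \cdots + |x_n|/a_n \leq 1 \}
\]
is the unit ball of a weighted $\ell^1$-norm on $\R^n$, and its diagonal is
\[
    \delta_{\Omega_{E(a_1,\ldots,a_n)}} = \max \{ a \mid a ( 1/a_1 + \cdots + 1/a_n ) \leq 1 \} = \p{}{2}{\frac{1}{a_1} + \cdots + \frac{1}{a_n}}^{-1}.
\]
So the plan is to prove $c_L(E(a_1,\ldots,a_n)) = \delta_\Omega$ with $\Omega = \Omega_{E(a_1,\ldots,a_n)}$. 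The lower bound $c_L(E(a_1,\ldots,a_n)) \geq \delta_\Omega$ is immediate from \cref{lem:c square leq c lag,lem:c square geq delta}, because convexity gives $P(\delta_\Omega) \subset E(a_1,\ldots,a_n)$ and hence $c_L \geq c_P \geq \delta_\Omega$; so the content is the reverse inequality $c_L(E(a_1,\ldots,a_n)) \leq \delta_\Omega$.

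For the upper bound I would run the argument already used to prove \cref{lem:computation of cl}. Write $E = E(a_1,\ldots,a_n)$. By \cref{thm:lagrangian vs g tilde}, $c_L(E) \leq \inf_k \tilde{\mathfrak{g}}_k(E)/k$; and since $E$ is convex, $E \subset N(\delta_\Omega)$, so monotonicity of $\cgh{k}$ together with \cref{lem:cgh of nondisjoint union of cylinders} gives $\cgh{k}(E) \leq \cgh{k}(N(\delta_\Omega)) = \delta_\Omega(k + n - 1)$. Hence, as soon as $\tilde{\mathfrak{g}}_k(E) \leq \cgh{k}(E)$ is available,
\[
    c_L(E) \leq \inf_k \frac{\tilde{\mathfrak{g}}_k(E)}{k} \leq \inf_k \frac{\delta_\Omega(k+n-1)}{k} = \delta_\Omega,
\]
which meets the lower bound and finishes. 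The comparison $\tilde{\mathfrak{g}}_k(E) \leq \cgh{k}(E)$ holds unconditionally for $n = 2$ by \cref{prp:g tilde and cgh}; for general $n$, $E$ is a star-shaped domain in $\C^n$, so $\pi_1(E) = 0$ and $c_1(TE) = 0$, and \cref{thm:g tilde vs g hat,thm:g hat vs gh} then give $\tilde{\mathfrak{g}}_k(E) \leq \mathfrak{g}_k(E) = \cgh{k}(E)$ under \cref{assumption}.

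Everything above is formal once \cref{thm:lagrangian vs g tilde} is known, so the main obstacle is that inequality. I would prove it in the spirit of Cieliebak--Mohnke. Fix an embedded Lagrangian torus $L \subset E$ with $A_{\mathrm{min}}(L)$ arbitrarily close to $c_L(E)$; identify a Weinstein neighbourhood $N$ of $L$ with the unit-disc cotangent bundle of $(L, g)$, where $L \cong T^n$ and $g$ is a flat metric perturbed as in \cref{lem:geodesics lemma CM abs} so that short closed geodesics are noncontractible, nondegenerate and of Morse index $\leq n - 1$; choose $x \in L$ with a symplectic divisor through it; and stretch the neck along the contact hypersurface $\partial N$ in the moduli space of curves in $\hat{E}$ through $x$ with the tangency constraint $\p{<}{}{\mathcal{T}^{(k)}x}$ that enters the definition of $\tilde{\mathfrak{g}}_k(E)$. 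The SFT compactness theorem produces a broken curve with a component $C^-$ lying in $\hat{N} \cong T^*L$ and inheriting the tangency constraint, and \cref{lem:punctures and tangency} forces $C^-$ to have at least $k + 1$ punctures, asymptotic to noncontractible closed geodesics of $(L,g)$. Chaining the energy identity of \cref{lem:energy wrt different forms} across the levels of the building, with \cref{lem:no nodes} to control bubbling, bounds the total action of these geodesics above by $\tilde{\mathfrak{g}}_k(E)$ up to an arbitrarily small error, while the normalization of $g$ and the minimality of $A_{\mathrm{min}}(L)$ bound it below by $k\, A_{\mathrm{min}}(L)$; hence $k\, A_{\mathrm{min}}(L) \leq \tilde{\mathfrak{g}}_k(E)$, and taking the supremum over $L$ gives $c_L(E) \leq \tilde{\mathfrak{g}}_k(E)/k$. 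The only other nontrivial ingredient, needed solely for $n \geq 3$, is \cref{thm:g hat vs gh}, whose proof rests on linearized contact homology and its augmentation map being well defined --- precisely \cref{assumption}.
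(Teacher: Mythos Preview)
Your reduction is exactly the paper's: the ellipsoid is a convex toric domain with diagonal $\delta_\Omega = (1/a_1 + \cdots + 1/a_n)^{-1}$, so \cref{conj:cl of ellipsoid} is the special case of \cref{conj:the conjecture} handled by \cref{lem:computation of cl} for $n=2$ and by \cref{thm:my main theorem} under \cref{assumption} for general $n$. The chain of inequalities you write down --- lower bound via $c_P$, upper bound via $c_L \leq \tilde{\mathfrak{g}}_k/k$ then $\tilde{\mathfrak{g}}_k \leq \cgh{k}$ then $\cgh{k}(E) \leq \cgh{k}(N(\delta_\Omega)) = \delta_\Omega(k+n-1)$ --- is verbatim the argument of those theorems.

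One imprecision in your sketch of \cref{thm:lagrangian vs g tilde}: the quantity that gets sandwiched is not the total \emph{action of the geodesics} but the total \emph{symplectic area of the holomorphic planes} $D_2,\ldots,D_p$ living in $X \setminus L$ that cap the subtrees emanating from the tangency component $C$. Each $D_\mu$ is a disk with boundary on $L$ and positive $\omega$-area, hence $\omega(D_\mu) \geq A_{\mathrm{min}}(L)$; the upper bound $\sum_\mu \omega(D_\mu) \leq (1+\varepsilon')\,\tilde{\mathfrak{g}}_k(E)$ comes from the energy estimate of \cref{lem:energy wrt different forms} applied to these planes inside the building, and then averaging over the $p-1 \geq k$ planes gives a single disk of area at most $\tilde{\mathfrak{g}}_k(E)/k + \varepsilon$. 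The actions of the Reeb orbits on $S^*_\delta L$ enter only to ensure the geodesics are short enough for \cref{lem:punctures and tangency} to apply; they are not themselves compared to $A_{\mathrm{min}}(L)$.
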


\begin{conjecture}
    \label{conj:the conjecture}
    If $X_{\Omega}$ is a convex or concave toric domain then
    \begin{IEEEeqnarray*}{c+x*}
        c_L(X_{\Omega}) = \delta_\Omega.
    \end{IEEEeqnarray*}
\end{conjecture}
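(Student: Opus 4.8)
The plan is to prove the nontrivial inequality $c_L(X_\Omega) \leq \delta_\Omega$, since the reverse inequality $c_L(X_\Omega) \geq \delta_\Omega$ follows immediately from \cref{lem:c square geq delta} and \cref{lem:c square leq c lag}. The strategy is to route everything through the McDuff--Siegel capacities $\tilde{\mathfrak{g}}_k$ and, where necessary, through the higher symplectic capacities $\mathfrak{g}_k$ built from linearized contact homology. The master inequality is \cref{thm:lagrangian vs g tilde}, namely $c_L(X) \leq \inf_k \tilde{\mathfrak{g}}_k(X)/k$, which holds for every Liouville domain; given this, it suffices to show $\inf_k \tilde{\mathfrak{g}}_k(X_\Omega)/k \leq \delta_\Omega$, i.e. to estimate $\tilde{\mathfrak{g}}_k(X_\Omega)$ from above.

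First I would dispose of the four-dimensional convex case, which is already done in \cref{lem:computation of cl}: one chains $\tilde{\mathfrak{g}}_k(X_\Omega) = \cgh{k}(X_\Omega)$ (\cref{prp:g tilde and cgh}), then uses convexity to get the symplectic inclusion $X_\Omega \subset N(\delta_\Omega)$ and monotonicity of $\cgh{k}$, then evaluates $\cgh{k}(N(\delta_\Omega)) = \delta_\Omega(k+1)$ by \cref{lem:cgh of nondisjoint union of cylinders}, so that $\tilde{\mathfrak{g}}_k(X_\Omega)/k \leq \delta_\Omega(k+1)/k \to \delta_\Omega$. For the general convex or concave toric domain (in any dimension) the same skeleton works, but we no longer have \cref{prp:g tilde and cgh} available; instead we interpose the higher symplectic capacities. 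By \cref{thm:g tilde vs g hat} we have $\tilde{\mathfrak{g}}_k(X_\Omega) \leq \mathfrak{g}_k(X_\Omega)$, and since a convex or concave toric domain is simply connected with $2c_1(TX_\Omega) = 0$ (it is a star-shaped domain in $\C^n$), \cref{thm:g hat vs gh} gives $\mathfrak{g}_k(X_\Omega) = \cgh{k}(X_\Omega)$. Composing, $\tilde{\mathfrak{g}}_k(X_\Omega) \leq \cgh{k}(X_\Omega)$. Then the same four-line computation as in \cref{lem:computation of cl} applies: a convex or concave toric domain satisfies $X_\Omega \subset N(\delta_\Omega)$ (the defining polytope $\Omega$ meets every coordinate slab $\{x_i \leq \delta_\Omega\}$ because $(\delta_\Omega,\dots,\delta_\Omega)\in\partial\Omega$, and convexity/concavity forces containment), so monotonicity of $\cgh{k}$ and \cref{lem:cgh of nondisjoint union of cylinders} yield $\cgh{k}(X_\Omega) \leq \cgh{k}(N(\delta_\Omega)) = \delta_\Omega(k+n-1)$, hence $c_L(X_\Omega) \leq \tilde{\mathfrak{g}}_k(X_\Omega)/k \leq \delta_\Omega(k+n-1)/k$, and taking $k \to \infty$ (or $\inf_k$) gives $c_L(X_\Omega) \leq \delta_\Omega$. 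This is exactly \cref{thm:my main theorem}, carried out under \cref{assumption} since the definition of $\mathfrak{g}_k$ requires linearized contact homology.

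The main obstacle is not the formal chain of inequalities above but the proof of the master inequality \cref{thm:lagrangian vs g tilde}, $c_L(X) \leq \inf_k \tilde{\mathfrak{g}}_k(X)/k$. The argument I would give: fix an embedded Lagrangian torus $L \subset X$ and a small number $\varepsilon > 0$; by Weinstein's neighborhood theorem a neighborhood of $L$ is symplectomorphic to a neighborhood of the zero section in $T^*L = T^*T^n$, and by \cref{lem:geodesics lemma CM abs} (applied to the flat metric, whose sectional curvature vanishes) one can choose a metric $g$ on $L$, $C^2$-close to flat, for which all short closed geodesics are noncontractible, nondegenerate, and of Morse index $\leq n-1$. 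Then one takes a stretched almost complex structure $J$ adapted to the boundary of a unit cotangent bundle $S^*_g L$ inside $X$, picks a point $x$ in the Weinstein neighborhood together with a local symplectic divisor $D$, and considers the $\tilde{\mathfrak{g}}_k$-defining moduli space of genus-zero curves in $\hat X$ with one positive puncture, satisfying a tangency constraint $\p{<}{}{\mathcal{T}^{(k)}x}$, and of action bounded by (roughly) $\tilde{\mathfrak{g}}_k(X) + \varepsilon$. Neck-stretching plus SFT compactness (in the tangency form of \cref{rmk:compactifications with tangency}) forces a limiting building whose bottom level lives in the symplectization of $S^*_g L$, hence projects to a union of punctured holomorphic spheres in $T^*L$ asymptotic to closed geodesics; \cref{lem:punctures and tangency} says such a configuration carrying total tangency order $k$ must have at least $k+1$ punctures, i.e. be asymptotic to geodesics $\gamma_1,\dots,\gamma_{m}$ with $m \geq k+1$. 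Each asymptotic geodesic has length $\ell(\gamma_i) \geq A_{\min}(L)$ by \cref{lem:a min with exact symplectic manifold} (noncontractible geodesics have positive action equal to their length, and the minimal such equals $A_{\min}(L)$), and \cref{lem:energy wrt different forms} together with \cref{lem:no nodes} bounds the total action of these asymptotic orbits by the original energy budget, giving $k \cdot A_{\min}(L) \leq \sum_i \ell(\gamma_i) \leq \mathcal{A}(\text{positive orbit}) \leq \tilde{\mathfrak{g}}_k(X) + \varepsilon$. Letting $\varepsilon \to 0$ and then taking the supremum over $L$ and the infimum over $k$ yields the claim. The delicate points I expect to spend the most care on are: (i) verifying that the tangency constraint is genuinely inherited by a single bottom-level component and that the total tangency order is preserved in the limit, so that \cref{lem:punctures and tangency} applies with the right $\tilde{k}$; (ii) controlling the energy/action bookkeeping across all levels of the building via \cref{lem:energy wrt different forms} to get the one-sided inequality relating $A_{\min}(L)$, the number of punctures, and $\tilde{\mathfrak{g}}_k(X)$; and (iii) the soft but necessary genericity statements that make the relevant moduli spaces behave (for the master inequality, classical transversality suffices because the constraint is in a cotangent bundle where \cref{lem:punctures and tangency} already builds in the needed genericity).
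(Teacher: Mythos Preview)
Your high-level chain of inequalities for the conjecture matches the paper's proof (\cref{thm:my main theorem}) exactly: $\delta_\Omega \leq c_L(X_\Omega) \leq \tilde{\mathfrak{g}}_k(X_\Omega)/k \leq \mathfrak{g}_k(X_\Omega)/k = \cgh{k}(X_\Omega)/k \leq \cgh{k}(N(\delta_\Omega))/k = \delta_\Omega(k+n-1)/k$, then let $k \to \infty$; the reliance on \cref{assumption} via \cref{thm:g tilde vs g hat} and \cref{thm:g hat vs gh} is also identical.

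However, your sketch of the master inequality \cref{thm:lagrangian vs g tilde} contains a genuine gap. You assert that each asymptotic geodesic satisfies $\ell(\gamma_i) \geq A_{\min}(L)$, but this is unjustified: the length $\ell(\gamma_i)$ is taken in the auxiliary metric $g$ on $L$, whereas by \cref{lem:a min with exact symplectic manifold} the quantity $A_{\min}(L)$ is an infimum of periods $\int_\rho \lambda|_L$ of the ambient Liouville form, and there is no relation between the two. Your action bound $\sum_i \ell(\gamma_i) \leq \mathcal{A}(\gamma_0)$ also has a scaling mismatch, since the Reeb action on $S^*_\delta L$ of the orbit corresponding to $\gamma_i$ is $\delta\,\ell(\gamma_i)$, not $\ell(\gamma_i)$.

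The paper's argument is different and never compares geodesic lengths to $A_{\min}(L)$. After neck-stretching and passing to a building, one looks at the component $C$ in $T^*L$ carrying the tangency constraint, with positive asymptotes $\gamma_1,\ldots,\gamma_p$ and $p \geq k+1$ by \cref{lem:punctures and tangency}. For each $\mu = 2,\ldots,p$, the subtree $C_\mu$ sitting above $\gamma_\mu$ must terminate in a holomorphic plane; noncontractibility of the geodesics forces each such plane $D_\mu$ to lie in the \emph{outer} piece $X \setminus L$ rather than in $T^*L$ or a symplectization level. Viewed in $X$, these are disks with boundary on $L$, hence classes in $\pi_2(X,L)$ with positive $\omega$-area. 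An averaging argument over the $p-1 \geq k$ disks, combined with \cref{lem:energy wrt different forms}, yields one disk with $\omega(D_{\mu_0}) \leq \tilde{\mathfrak{g}}_k(X)/k + \varepsilon$, and this bounds $A_{\min}(L)$ from above by definition. The step you are missing is the production of actual disks with boundary on $L$ in the ambient manifold; the geodesic-length bookkeeping you propose does not give control of $A_{\min}(L)$.
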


In \cref{lem:computation of cl,thm:my main theorem} we present our results concerning \cref{conj:the conjecture}.

\subsection{Gutt--Hutchings capacities}
\label{sec:equivariant capacities}

% ------------------
% Intro and diagrams
% ------------------

In this subsection we will define the Gutt--Hutchings capacities (\cref{def:gutt hutchings capacities}) and the $S^1$-equivariant symplectic homology capacities (\cref{def:s1esh capacities}), and list their properties (\cref{thm:properties of gutt-hutchings capacities,prp:properties of s1esh capacities} respectively). We will also compare the two capacities (\cref{thm:ghc and s1eshc}). The definition of these capacities relies on $S^1$-equivariant symplectic homology. In the commutative diagram below, we display the modules and maps which will play a role in this subsection, for a nondegenerate Liouville domain $X$.

\begin{IEEEeqnarray}{c+x*}
    \phantomsection\label{eq:diagram for s1esh capacities}
    \begin{tikzcd}
        SH^{S^1,(\varepsilon,a]}_{}(X) \ar[r, "\delta^a_0"] \ar[d, swap, "\iota^a"] & SH^{S^1,\varepsilon}_{}(X) \ar[d, two heads, hook, "\alpha"] \ar[r, "\iota^{a,\varepsilon}"] & SH^{S^1,a}_{}(X) \\
        SH^{S^1,+}_{}(X) \ar[ur, "\delta_0"] \ar[r, swap, "\delta"]                 & H_\bullet(BS^1;\Q) \otimes H_\bullet(X, \partial X;\Q)
    \end{tikzcd}
\end{IEEEeqnarray}

We point out that every vertex in the above diagram has a $U$ map and every map in the diagram commutes with this $U$ map. Specifically, all the $S^1$-equivariant symplectic homologies have the $U$ map, and $H_\bullet(BS^1;\Q) \otimes H_\bullet(X, \partial X;\Q) \cong \Q[u] \otimes H_\bullet(X, \partial X;\Q)$ has the map $U \coloneqq u^{-1} \otimes \id$. We will also make use of a version of Diagram \eqref{eq:diagram for s1esh capacities} in the case where $X$ is star-shaped, namely Diagram \eqref{eq:diagram for s1esh capacities case ss} below. In this case, the modules in the diagram admit gradings and every map is considered to be a map in a specific degree. By \cite[Proposition 3.1]{guttSymplecticCapacitiesPositive2018}, $\delta$ and $\delta_0$ are isomorphisms.

\begin{IEEEeqnarray}{c+x*}
    \phantomsection\label{eq:diagram for s1esh capacities case ss}
    \begin{tikzcd}
        SH^{S^1,(\varepsilon,a]}_{n - 1 + 2k}(X) \ar[r, "\delta^a_0"] \ar[d, swap, "\iota^a"]                   & SH^{S^1,\varepsilon}_{n - 2 + 2k}(X) \ar[d, two heads, hook, "\alpha"] \ar[r, "\iota^{a,\varepsilon}"] & SH^{S^1,a}_{n - 2 + 2k}(X) \\
        SH^{S^1,+}_{n - 1 + 2k}(X) \ar[ur, two heads, hook, "\delta_0"] \ar[r, swap, two heads, hook, "\delta"] & H_{2k-2}(BS^1;\Q) \otimes H_{2n}(X, \partial X;\Q)
    \end{tikzcd}
\end{IEEEeqnarray}

% -------------
% gh capacities
% -------------

% gh capacities
\begin{definition}[{\cite[Definition 4.1]{guttSymplecticCapacitiesPositive2018}}]
    \label{def:gutt hutchings capacities}
    If $k \in \Z_{\geq 1}$ and $(X,\lambda)$ is a nondegenerate Liouville domain, the \textbf{Gutt--Hutchings capacities} of $X$, denoted $\cgh{k}(X)$, are defined as follows. Consider the map
    \begin{IEEEeqnarray*}{c+x*}
        \delta \circ U^{k-1} \circ \iota^a \colon SH^{S^1,(\varepsilon,a]}_{}(X) \longrightarrow H_\bullet(BS^1;\Q) \otimes H_\bullet(X, \partial X;\Q)
    \end{IEEEeqnarray*}
    from Diagram \eqref{eq:diagram for s1esh capacities}. Then, we define
    \begin{IEEEeqnarray*}{c+x*}
        \cgh{k}(X) \coloneqq \inf \{ a > 0 \mid [\mathrm{pt}] \otimes [X] \in \img (\delta \circ U^{k-1} \circ \iota^a) \}.
    \end{IEEEeqnarray*}
\end{definition}

\begin{remark}
    \label{lem:can prove ineqs for ndg}
    In this paper, we consider symplectic capacities $\cgh{k}$ (see \cref{def:gutt hutchings capacities}), $\csh{k}$ (see \cref{def:s1esh capacities}), $\tilde{\mathfrak{g}}_k$ (see \cref{def:g tilde}) and $\mathfrak{g}_k$ (see \cref{def:capacities glk}). All these capacities are defined for nondegenerate Liouville domains, but their definition can be extended to Liouville domains which are not necessarily nondegenerate as in \cite[Section 4.2]{guttSymplecticCapacitiesPositive2018}. In addition, if we wish to prove inequalities involving the capacities above, it will be enough to prove these inequalities for Liouville domains which are nondegenerate.
\end{remark}

% properties of gh capacities
\begin{theorem}[{\cite[Theorem 1.24]{guttSymplecticCapacitiesPositive2018}}]
    \label{thm:properties of gutt-hutchings capacities}
    The functions $\cgh{k}$ of Liouville domains satisfy the following axioms, for all equidimensional Liouville domains $(X,\lambda_X)$ and $(Y,\lambda_Y)$:% of the same dimension:
    \begin{description}
        \item[(Monotonicity)] If $X \longrightarrow Y$ is a generalized Liouville embedding then $\cgh{k}(X) \leq \cgh{k}(Y)$.
        \item[(Conformality)] If $\alpha > 0$ then $\cgh{k}(X, \alpha \lambda_X) = \alpha  \cgh{k}(X, \lambda_X)$.
        \item[(Nondecreasing)] $\cgh{1}(X) \leq \cgh{2}(X) \leq \cdots \leq +\infty$.
        \item[(Reeb orbits)] If $\cgh{k}(X) < + \infty$, then $\cgh{k}(X) = \mathcal{A}(\gamma)$ for some Reeb orbit $\gamma$ which is contractible in $X$.
    \end{description}
\end{theorem}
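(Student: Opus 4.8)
The plan is to derive all four axioms from the structural properties of $S^1$-equivariant symplectic homology recalled above --- the action filtration maps $\iota^a$, the $U$ map, the $\delta$ map, and the Viterbo transfer map --- using \cref{lem:can prove ineqs for ndg} to reduce the inequalities to nondegenerate Liouville domains. The two formal axioms come first. For \emph{conformality}, replacing $\lambda_X$ by $\alpha \lambda_X$ with $\alpha > 0$ multiplies the Reeb vector field by $\alpha^{-1}$, leaving the closed Reeb orbits unchanged as sets while multiplying all their actions by $\alpha$ and preserving nondegeneracy; the entire filtered package is then identified under the rescaling $a \mapsto \alpha a$, compatibly with $\iota^a$, $U$ and $\delta$, so $[\mathrm{pt}] \otimes [X]$ lies in $\img(\delta \circ U^{k-1} \circ \iota^a)$ for $(X,\lambda_X)$ exactly when it does at level $\alpha a$ for $(X,\alpha\lambda_X)$, and taking infima gives $\cgh{k}(X,\alpha\lambda_X) = \alpha\,\cgh{k}(X,\lambda_X)$. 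For \emph{nondecreasing}, since $\iota^a$ commutes with the $U$ map one has $\delta \circ U^{k} \circ \iota^a = \delta \circ U^{k-1} \circ \iota^a \circ U^a$, hence $\img(\delta \circ U^{k} \circ \iota^a) \subseteq \img(\delta \circ U^{k-1} \circ \iota^a)$, so every $a$ witnessing $\cgh{k+1}(X)$ witnesses $\cgh{k}(X)$ and therefore $\cgh{k}(X) \leq \cgh{k+1}(X)$, with $\cgh{k}(X) \leq +\infty$ automatic.

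For \emph{monotonicity}, by \cref{lem:can prove ineqs for ndg} we may take $X$ and $Y$ nondegenerate. A generalized Liouville embedding $\varphi \colon X \to Y$ with $\varphi(X) \subset \itr Y$ produces a Viterbo transfer map $\varphi_!$ that commutes with the action filtration and with $U$, and satisfies $\delta_X \circ \varphi_! = (1 \otimes \rho) \circ \delta_Y$ for the map $\rho$ described in the properties above; the point to check is that on top degree $\rho$ sends $[Y]$ to $[X]$. Granting this, if $\delta_Y \circ U^{k-1} \circ \iota_Y^a$ hits $[\mathrm{pt}] \otimes [Y]$ then applying $\varphi_!$ and the commutation relations shows $\delta_X \circ U^{k-1} \circ \iota_X^a$ hits $[\mathrm{pt}] \otimes [X]$, so every $a$ admissible for $\cgh{k}(Y)$ is admissible for $\cgh{k}(X)$ and $\cgh{k}(X) \leq \cgh{k}(Y)$.

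For the \emph{Reeb orbits} axiom, suppose $a_0 \coloneqq \cgh{k}(X) < +\infty$ and assume first that $X$ is nondegenerate. The crucial fact is that $SH^{S^1,(\varepsilon,a]}(X)$ is locally constant in $a$ away from the discrete set of actions of closed Reeb orbits of $\partial X$ that are contractible in $X$: over an interval containing no such value the inclusion of filtered complexes is a quasi-isomorphism, compatibly with the maps $\iota^a$. If $a_0$ were not such an action, this would force $[\mathrm{pt}] \otimes [X] \in \img(\delta \circ U^{k-1} \circ \iota^{a'})$ for some $a' < a_0$, contradicting the definition of $a_0$ as an infimum; hence $a_0 = \mathcal{A}(\gamma)$ for some Reeb orbit $\gamma$ contractible in $X$. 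For a general (possibly degenerate) $X$ one approximates $\lambda|_{\partial X}$ by nondegenerate contact forms and passes to the limit using conformality and monotonicity, as in \cite[Section 4.2]{guttSymplecticCapacitiesPositive2018}.

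The bookkeeping with $\iota^a$, $U$ and $\delta$ is routine; the genuine obstacles are establishing the full naturality of the Viterbo transfer map --- in particular that it induces the identity on top relative homology, so that $\rho[Y] = [X]$ --- and the spectrality argument for the last axiom, i.e. pinning down how $SH^{S^1,(\varepsilon,a]}(X)$ varies with $a$ so that the defining infimum is attained at a point of the Reeb action spectrum, and then removing the nondegeneracy hypothesis by the approximation argument.
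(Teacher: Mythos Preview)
The paper does not give its own proof of this theorem: it is stated with the citation \cite[Theorem 1.24]{guttSymplecticCapacitiesPositive2018} and no argument follows. Your sketch is correct and recovers the standard proof from that reference, using exactly the structural package the paper has recalled (action filtration, $U$ map, $\delta$ map, Viterbo transfer, and the spectrality of $a \mapsto SH^{S^1,(\varepsilon,a]}(X)$); there is nothing to compare it against in the present paper.
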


The following lemma provides an alternative definition of $\cgh{k}$, in the spirit of \cite{floerApplicationsSymplecticHomology1994}.

% alternative definition of cgh
\begin{lemma}
    \label{def:ck alternative}
    Let $(X,\lambda)$ be a nondegenerate Liouville domain such that $\pi_1(X) = 0$ and $c_1(TX)|_{\pi_2(X)} = 0$. Let $E \subset \C^n$ be a nondegenerate star-shaped domain and suppose that $\phi \colon E \longrightarrow X$ is a symplectic embedding. Consider the map
    \begin{IEEEeqnarray*}{c+x*}
        \begin{tikzcd}
            SH^{S^1,(\varepsilon,a]}_{n - 1 + 2k}(X) \ar[r, "\iota^a"] & SH^{S^1,+}_{n - 1 + 2k}(X) \ar[r, "\phi_!"] & SH^{S^1,+}_{n - 1 + 2k}(E)
        \end{tikzcd}
    \end{IEEEeqnarray*}
    Then, $\cgh{k}(X) = \inf \{ a > 0 \mid \phi_! \circ \iota^a \text{ is nonzero} \}$.
\end{lemma}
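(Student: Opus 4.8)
The plan is to compare the two characterizations of $\cgh{k}(X)$ — the ``intrinsic'' one from \cref{def:gutt hutchings capacities} involving $\delta \circ U^{k-1}\circ\iota^a$, and the ``Floer-type'' one involving $\phi_!\circ\iota^a$ — and show they agree by exploiting the naturality of all the maps in Diagram \eqref{eq:diagram for s1esh capacities} together with the fact that for a star-shaped domain $E$ the module $SH^{S^1,+}_{n-1+2k}(E)\cong\Q$ and $\delta$, $\delta_0$ are isomorphisms (Diagram \eqref{eq:diagram for s1esh capacities case ss}). First I would observe, via \cref{lem:can prove ineqs for ndg}, that it suffices to treat the nondegenerate case, and I would fix the grading $n-1+2k$ throughout, noting that $[\mathrm{pt}]\otimes[X]$ sits in degree $n-1+2k$ after applying $\delta\circ U^{k-1}$ (the $U^{k-1}$ shifts degree by $-2(k-1)$ and lands the class $[\mathrm{pt}]\otimes[X]$ of $H_{2k-2}(BS^1)\otimes H_{2n}(X,\partial X)$ appropriately). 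So in \cref{def:gutt hutchings capacities} we may replace $SH^{S^1,(\varepsilon,a]}_{\bullet}(X)$ by its degree-$(n-1+2k)$ part without changing the infimum.

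The heart of the argument is a commutative diagram relating $X$ and $E$. Since $\phi\colon E\to X$ is a symplectic embedding of a star-shaped domain, it is in particular a generalized Liouville embedding, so the Viterbo transfer map $\phi_!\colon SH^{S^1}(X)\to SH^{S^1}(E)$ is defined and, by the properties listed in the $S^1$-equivariant symplectic homology section, it commutes with $\iota^a$, with $U$, and with $\delta$ (the last via $\delta_E\circ\phi_! = (1\otimes\rho)\circ\delta_X$, where $\rho\colon H_\bullet(X,\partial X;\Q)\to H_\bullet(E,\partial E;\Q)$ sends $[X]$ to $[E]$ since it factors through $H_\bullet(X,X\setminus\phi(\itr E))\cong H_\bullet(E,\partial E)$ by excision). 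Therefore $\delta_E\circ U^{k-1}\circ\phi_!\circ\iota^a_X = (1\otimes\rho)\circ\delta_X\circ U^{k-1}\circ\iota^a_X$, and $(1\otimes\rho)$ sends $[\mathrm{pt}]\otimes[X]$ to $[\mathrm{pt}]\otimes[E]\neq 0$. I would then argue: for $E$ star-shaped, in degree $n-1+2k$ the map $\delta_E\circ U^{k-1}\colon SH^{S^1,+}_{n-1+2k}(E)\to H_{2k-2}(BS^1;\Q)\otimes H_{2n}(E,\partial E;\Q)$ is an isomorphism (both sides are $\Q$, $\delta_E$ is an isomorphism by the star-shaped property, and $U^{k-1}$ is an isomorphism between the relevant one-dimensional graded pieces — this uses that $SH^{S^1}_\bullet(E)\cong\Q$ in each odd degree $\geq n-1$ and $U$ identifies consecutive ones). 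Consequently $[\mathrm{pt}]\otimes[E]\in\img(\delta_E\circ U^{k-1}\circ\phi_!\circ\iota^a_X)$ if and only if $\phi_!\circ\iota^a_X$ (restricted to degree $n-1+2k$) is nonzero, because $\delta_E\circ U^{k-1}$ is injective and $SH^{S^1,+}_{n-1+2k}(E)$ is one-dimensional so any nonzero element maps to a nonzero multiple of $[\mathrm{pt}]\otimes[E]$.

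Combining these, for each $a>0$: $[\mathrm{pt}]\otimes[X]\in\img(\delta_X\circ U^{k-1}\circ\iota^a_X)$ forces $[\mathrm{pt}]\otimes[E]\in\img((1\otimes\rho)\circ\delta_X\circ U^{k-1}\circ\iota^a_X)=\img(\delta_E\circ U^{k-1}\circ\phi_!\circ\iota^a_X)$, hence $\phi_!\circ\iota^a_X\neq 0$; this gives $\cgh{k}(X)\geq\inf\{a>0\mid \phi_!\circ\iota^a\text{ nonzero}\}$. For the reverse inequality I would use that $\delta_X\colon SH^{S^1,+}_{n-1+2k}(X)\to H_{2k-2}(BS^1;\Q)\otimes H_{2n}(X,\partial X;\Q)$ hits $[\mathrm{pt}]\otimes[X]$ — here I need to know this class is in the image of $\delta_X\circ U^{k-1}$ at all, i.e. that $\cgh{k}(X)<\infty$ is not required but rather that on the ``whole'' homology the class is attained; more carefully, if $\phi_!\circ\iota^a_X\neq 0$ in degree $n-1+2k$, then since $SH^{S^1,+}_{n-1+2k}(E)=\Q$ and $\delta_E\circ U^{k-1}$ is an isomorphism onto a one-dimensional space, the image of $\iota^a_X$ under $\phi_!$ followed by $\delta_E\circ U^{k-1}$ is all of $\Q\cdot([\mathrm{pt}]\otimes[E])$, so $[\mathrm{pt}]\otimes[E]\in\img(\delta_E\circ U^{k-1}\circ\phi_!\circ\iota^a_X)$; pulling this back along the commuting square and using that $(1\otimes\rho)$ is an isomorphism on the relevant pieces (it sends $[\mathrm{pt}]\otimes[X]$ to $[\mathrm{pt}]\otimes[E]$ bijectively on these one-dimensional spaces, as $\rho([X])=[E]$ generates), one concludes $[\mathrm{pt}]\otimes[X]\in\img(\delta_X\circ U^{k-1}\circ\iota^a_X)$. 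This yields the opposite inequality and hence equality.

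I expect the main obstacle to be the careful bookkeeping of gradings and the verification that $U^{k-1}$ and $(1\otimes\rho)$ are genuinely isomorphisms on the one-dimensional graded pieces in play — in particular confirming that $\rho$ carries the fundamental class $[X]\in H_{2n}(X,\partial X;\Q)$ to $[E]\in H_{2n}(E,\partial E;\Q)$ (an excision/degree argument) and that the $U$-tower structure of $SH^{S^1,+}$ for star-shaped domains behaves as stated. The compatibility of all maps with $\iota^a$, $U$, and $\delta$ is quoted from the structural properties already listed, so the only real content is translating ``the class is in the image'' across the isomorphism $SH^{S^1,+}_{n-1+2k}(E)\cong\Q$; everything else is diagram chasing.
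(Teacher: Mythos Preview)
Your proposal is correct and follows essentially the same approach as the paper: both arguments set up the commutative square with rows $\delta \circ U^{k-1} \circ \iota^a$ for $X$ and $E$, connected by Viterbo transfer maps and $\id \otimes \rho$, and then use that $U^{k-1}_E$, $\delta_E$, and $\id \otimes \rho$ are isomorphisms (the first two because $E$ is star-shaped, the last because $\rho([X]) = [E]$) to conclude that surjectivity of the top row onto $[\mathrm{pt}] \otimes [X]$ is equivalent to $\phi_! \circ \iota^a_X$ being nonzero. The paper's version is simply more compressed: it draws one diagram, cites \cite[Proposition 3.1]{guttSymplecticCapacitiesPositive2018} for the star-shaped isomorphisms, and concludes in one line---so your worries about the bookkeeping of gradings and the behaviour of $\rho$ on fundamental classes are well placed but are handled in the paper by direct citation rather than verification.
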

\begin{proof}
    For every $a \in \R$ consider the following commutative diagram:
    \begin{IEEEeqnarray*}{c+x*}
        \begin{tikzcd}
            SH^{S^1,(\varepsilon, a]}_{n - 1 + 2k}(X) \ar[r, "\iota^a_X"] \ar[d, swap, "\phi_!^a"] & SH^{S^1,+}_{n - 1 + 2k}(X) \ar[r, "U ^{k-1}_X"] \ar[d, "\phi_!"]       & SH^{S^1,+}_{n+1}(X) \ar[r, "\delta_X"] \ar[d, "\phi_!"]       & H_0(BS^1) \otimes H_{2n}(X,\del X) \ar[d, hook, two heads, "\id \otimes \rho"] \\
            SH^{S^1,(\varepsilon, a]}_{n - 1 + 2k}(E) \ar[r, swap, "\iota^a_E"]                    & SH^{S^1,+}_{n - 1 + 2k}(E) \ar[r, swap, hook, two heads, "U ^{k-1}_E"] & SH^{S^1,+}_{n+1}(E) \ar[r, swap, hook, two heads, "\delta_E"] & H_0(BS^1) \otimes H_{2n}(E,\del E)
        \end{tikzcd}
    \end{IEEEeqnarray*}
    By \cite[Proposition 3.1]{guttSymplecticCapacitiesPositive2018} and since $E$ is star-shaped, the maps $U_E$ and $\delta_E$ are isomorphisms. Since $\rho([X]) = [E]$, the map $\rho$ is an isomorphism. By definition, $\cgh{k}$ is the infimum over $a$ such that the top arrow is surjective. This condition is equivalent to $\phi_! \circ \iota^a_X$ being nonzero.
\end{proof}
% Note: In this diagram, \delta map of X really means, "do the \delta map, and then project to the correct degree". I don't think we know that the \delta map lives in those degrees. I think the proof still works.

The following computation will be useful to us in the proofs of \cref{lem:computation of cl,thm:my main theorem}.

% cgh of cyl
\begin{lemma}[{\cite[Lemma 1.19]{guttSymplecticCapacitiesPositive2018}}]
    \label{lem:cgh of nondisjoint union of cylinders}
    $\cgh{k}(N(a)) = a  (k + n - 1)$.
\end{lemma}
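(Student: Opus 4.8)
The plan is to compute $\cgh{k}(N(a))$ by first reducing the problem to a computation for a truncated domain for which the full $S^1$-equivariant symplectic homology is accessible, and then by invoking the known structure of $SH^{S^1}$ for star-shaped domains. Since $\cgh{k}$ is conformal, it suffices to treat $a = 1$ and then scale; more precisely one shows $\cgh{k}(N(a)) = a\,\cgh{k}(N(1))$ and reduces to proving $\cgh{k}(N(1)) = k + n - 1$. The domain $N(1)$ is noncompact (it contains all $x$ with some $x_i \le 1$), so the first step is to replace it, up to an exhaustion/monotonicity argument, by the compact toric domains $N_R(1) \coloneqq X_{\Omega}$ with $\Omega = \Omega_{N(1)} \cap [0,R]^n$ for large $R$; by the Monotonicity axiom of \cref{thm:properties of gutt-hutchings capacities} applied to the inclusions $N_R(1) \hookrightarrow N_{R'}(1) \hookrightarrow N(1)$, one gets $\cgh{k}(N(1)) = \lim_{R\to\infty}\cgh{k}(N_R(1))$, and by the Reeb-orbits axiom each $\cgh{k}(N_R(1))$ is the action of a contractible Reeb orbit of $\partial N_R(1)$. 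The key point is then to identify these actions combinatorially.

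The main step is the combinatorial/Morse-Bott analysis of the Reeb dynamics on $\partial N_R(1)$. The boundary decomposes into toric faces: the ``cylindrical'' faces $\{x_i = 1\}$ (for each $i$) and the ``capping'' faces coming from the truncation $\{x_j = R\}$. On each face $\{x_i = 1\}$ the Reeb flow is of Morse-Bott type, with the closed Reeb orbits organized into families parametrized by the remaining torus directions; the actions of the iterates are integer multiples of $1$ coming from the $x_i$-circle. The Conley-Zehnder / mean index bookkeeping, exactly as in Gutt-Hutchings' computation for convex and concave toric domains, shows that the relevant spectrum of actions realizing the classes $[\mathrm{pt}]\otimes[X]$ under $\delta\circ U^{k-1}\circ\iota^a$ is $\{\,m + (\text{contribution of the other } n-1 \text{ directions})\,\}$, and the minimal such value with the right index to support $U^{k-1}$ (i.e. to survive $k-1$ applications of the degree $-2$ map $U$) is $k + n - 1$. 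Concretely: the generator in the image of $\delta$ lives in degree $n - 1 + 2k$ after $\iota^a$, $U^{k-1}$ drops degree by $2(k-1)$ to land in $SH^{S^1}_{n+1}$, and the orbit of minimal action in that degree on the cylindrical face has action $k + n - 1$ once $R$ is large enough that the capping orbits have larger action and do not interfere.

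I expect the main obstacle to be the Morse-Bott computation of $SH^{S^1}$ of the truncated domain $N_R(1)$ together with the identification of which Reeb orbit realizes the infimum in \cref{def:gutt hutchings capacities} — that is, controlling the interplay between the cylindrical faces and the truncation caps, and checking that taking $R\to\infty$ genuinely recovers $\cgh{k}(N(1))$ rather than some smaller value contributed by a capping orbit. An alternative, and probably cleaner, route is to avoid the direct computation entirely: use the (easy) symplectic embeddings relating $N(a)$ to disjoint/nondisjoint unions of cylinders $Z(a)$ for which $SH^{S^1}$ and hence $\cgh{k}$ is standard, combined with monotonicity, to sandwich $\cgh{k}(N(a))$; this is in fact the approach of \cite[Lemma 1.19]{guttSymplecticCapacitiesPositive2018}, which we may simply cite. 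In the write-up I would present the embedding/monotonicity sandwich as the primary argument and relegate the Morse-Bott picture to a remark, since the statement is quoted verbatim from Gutt-Hutchings and the detailed proof lives there.
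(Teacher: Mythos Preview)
The paper provides no proof of this lemma; it is stated with a citation to \cite[Lemma 1.19]{guttSymplecticCapacitiesPositive2018} and used as a black box in the proofs of \cref{lem:computation of cl} and \cref{thm:my main theorem}. Your final recommendation—to simply cite Gutt--Hutchings rather than reproduce the argument—is exactly what the paper does.
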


% ---------------------
% capacities from s1esh
% ---------------------

We now consider other capacities which can be defined using $S^1$-equivariant symplectic homology.

% capacities from s1esh
\begin{definition}[{\cite[Section 2.5]{irieSymplecticHomologyFiberwise2021}}]
    \phantomsection\label{def:s1esh capacities}
    If $k \in \Z_{\geq 1}$ and $(X,\lambda)$ is a nondegenerate Liouville domain, the \textbf{$S^1$-equivariant symplectic homology capacities} of $X$, denoted $\csh{k}(X)$, are defined as follows. Consider the map
    \begin{IEEEeqnarray*}{c+x*}
        \iota^{a,\varepsilon} \circ \alpha^{-1} \colon H_\bullet(BS^1;\Q) \otimes H_\bullet(X, \partial X;\Q) \longrightarrow SH^{S^1,a}_{}(X)
    \end{IEEEeqnarray*}
    from Diagram \eqref{eq:diagram for s1esh capacities}. Then, we define
    \begin{IEEEeqnarray*}{c+x*}
        \csh{k}(X) \coloneqq \inf \{ a > 0 \mid \iota^{a,\varepsilon} \circ \alpha^{-1}([\C P^{k-1}] \otimes [X]) = 0 \}.
    \end{IEEEeqnarray*}
\end{definition}

We now state the properties that the capacities $\csh{k}$ satisfy. For the sake of completeness, we include proofs as well.

% properties of capacities from s1esh
\begin{theorem}
    \label{prp:properties of s1esh capacities}
    The functions $\csh{k}$ of Liouville domains satisfy the following axioms, for all Liouville domains $(X,\lambda_X)$ and $(Y,\lambda_Y)$ of the same dimension:
    \begin{description}
        \item[(Monotonicity)] If $X \longrightarrow Y$ is a generalized Liouville embedding then $\csh{k}(X) \leq \csh{k}(Y)$.
        \item[(Conformality)] If $\mu > 0$ then $\csh{k}(X, \mu \lambda_X) = \mu  \csh{k}(X, \lambda_X)$.
        \item[(Nondecreasing)] $\csh{1}(X) \leq \csh{2}(X) \leq \cdots \leq +\infty$.
    \end{description}
\end{theorem}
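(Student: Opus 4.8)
The plan is to deduce all three axioms directly from the definition of $\csh{k}$ and from the structural properties of $S^1$-equivariant symplectic homology recorded above; by \cref{lem:can prove ineqs for ndg} it is enough to argue for nondegenerate Liouville domains, which I assume throughout. For \emph{conformality}, the point is that rescaling the Liouville form by $\mu>0$ multiplies the action of every Reeb orbit of $\partial X$, and hence the entire action filtration, by $\mu$. Concretely, there is a canonical identification $SH^{S^1,a}(X,\mu\lambda_X)\cong SH^{S^1,a/\mu}(X,\lambda_X)$ intertwining the maps $\iota^{b,a}$ and the isomorphisms $\alpha$ (the target $H_\bullet(BS^1;\Q)\otimes H_\bullet(X,\partial X;\Q)$ and the distinguished class $[\C P^{k-1}]\otimes[X]$ being unchanged), and under which $\varepsilon_{\mu\lambda_X}=\mu\,\varepsilon_{\lambda_X}$. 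Hence $\iota^{a,\varepsilon}\circ\alpha^{-1}([\C P^{k-1}]\otimes[X])$ vanishes for $(X,\mu\lambda_X)$ at level $a$ if and only if the corresponding class vanishes for $(X,\lambda_X)$ at level $a/\mu$; substituting $a=\mu a'$ and taking the infimum over $a'>0$ then gives $\csh{k}(X,\mu\lambda_X)=\mu\,\csh{k}(X,\lambda_X)$.

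For \emph{monotonicity}, let $\varphi\colon X\to Y$ be a generalized Liouville embedding. After fixing $\varepsilon>0$ small enough to serve for both $X$ and $Y$, I would assemble the commutative ladder relating the maps $\alpha$ and $\iota^{a,\varepsilon}$ of Diagram \eqref{eq:diagram for s1esh capacities} for $Y$ and for $X$ via the Viterbo transfer map: the maps $\iota^{a,\varepsilon}$ commute with $\varphi_!$ by the action-filtration property of the transfer map, the isomorphisms $\alpha$ satisfy $\alpha_X\circ\varphi_!^\varepsilon=(\id\otimes\rho)\circ\alpha_Y$ with $\rho$ the map appearing in the $\delta$-map property, and $\rho([Y])=[X]$, so that $(\id\otimes\rho)([\C P^{k-1}]\otimes[Y])=[\C P^{k-1}]\otimes[X]$. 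Then for any $a>0$ with $\iota^{a,\varepsilon}\circ\alpha_Y^{-1}([\C P^{k-1}]\otimes[Y])=0$ I obtain
\begin{IEEEeqnarray*}{rCl}
    \iota^{a,\varepsilon}\circ\alpha_X^{-1}([\C P^{k-1}]\otimes[X])
    & = & \iota^{a,\varepsilon}\circ\alpha_X^{-1}\circ(\id\otimes\rho)([\C P^{k-1}]\otimes[Y]) \\
    & = & \iota^{a,\varepsilon}\circ\varphi_!^\varepsilon\circ\alpha_Y^{-1}([\C P^{k-1}]\otimes[Y]) \\
    & = & \varphi_!^a\circ\iota^{a,\varepsilon}\circ\alpha_Y^{-1}([\C P^{k-1}]\otimes[Y]) \\
    & = & 0,
\end{IEEEeqnarray*}
so that $\csh{k}(Y)$'s defining set is contained in $\csh{k}(X)$'s, and taking infima yields $\csh{k}(X)\leq\csh{k}(Y)$.

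For \emph{nondecreasing}, I would exploit the $U$-map: every vertex of Diagram \eqref{eq:diagram for s1esh capacities} carries a $U$-map, all maps in it commute with $U$, and on $H_\bullet(BS^1;\Q)\otimes H_\bullet(X,\partial X;\Q)\cong\Q[u]\otimes H_\bullet(X,\partial X;\Q)$ the map $U=u^{-1}\otimes\id$ sends $[\C P^{k}]\otimes[X]$ to $[\C P^{k-1}]\otimes[X]$. Since $\alpha$ and $\iota^{a,\varepsilon}$ commute with $U$, for every $a>0$ one has $\iota^{a,\varepsilon}\circ\alpha^{-1}([\C P^{k-1}]\otimes[X])=U^a\bigl(\iota^{a,\varepsilon}\circ\alpha^{-1}([\C P^{k}]\otimes[X])\bigr)$, so the set of $a>0$ for which $[\C P^{k}]\otimes[X]$ has vanishing image is contained in the corresponding set for $[\C P^{k-1}]\otimes[X]$; taking infima gives $\csh{k}(X)\leq\csh{k+1}(X)$. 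The step I expect to need the most care is the naturality identity $\alpha_X\circ\varphi_!^\varepsilon=(\id\otimes\rho)\circ\alpha_Y$ used in monotonicity: it is not literally one of the bulleted structural properties, and although $\delta=\alpha\circ\delta_0$ together with $\delta_X\circ\varphi_!=(\id\otimes\rho)\circ\delta_Y$ and the naturality of the connecting map $\delta_0$ pins it down on the image of $\delta_0$ (hence fully when $Y$ is star-shaped), in general I would need to invoke the naturality of $\alpha$ that is built into its construction as an identification of $SH^{S^1,\varepsilon}$ with the $S^1$-equivariant homology of the pair $(X,\partial X)$, compatible with Viterbo transfer.
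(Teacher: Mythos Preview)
Your proposal is correct and follows essentially the same approach as the paper: the paper proves monotonicity via the same commutative ladder with the Viterbo transfer map and $\rho([Y])=[X]$, conformality via the identification of action filtrations under rescaling (citing \cite[Proposition~3.1]{guttSymplecticCapacitiesPositive2018}), and the nondecreasing property via the $U$-map exactly as you do. Your closing caveat about the identity $\alpha_X\circ\varphi_!^\varepsilon=(\id\otimes\rho)\circ\alpha_Y$ is well taken---the paper simply asserts the relevant square commutes without further comment, so you are being more careful than the paper on this point.
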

\begin{proof}
    We prove monotonicity. Consider the following commutative diagram:
    \begin{IEEEeqnarray}{c+x*}
        \phantomsection\label{eq:s1eshc diagram}
        \begin{tikzcd}
            H_\bullet(BS^1;\Q) \otimes H_\bullet(Y, \partial Y;\Q) \ar[d, swap, "\id \otimes \rho"] & SH^{S^1,\varepsilon}_{}(Y) \ar[l, swap, hook', two heads, "\alpha_Y"] \ar[r, "\iota^{a, \varepsilon}_Y"] \ar[d, "\phi_!^\varepsilon"] & SH^{S^1,a}_{}(Y) \ar[d, "\phi^a_!"] \\
            H_\bullet(BS^1;\Q) \otimes H_\bullet(X, \partial X;\Q)                                  & SH^{S^1,\varepsilon}_{}(X) \ar[l, hook', two heads, "\alpha_X"] \ar[r, swap, "\iota^{a, \varepsilon}_X"]                              & SH^{S^1,a}_{}(X)
        \end{tikzcd}
    \end{IEEEeqnarray}
    If $\iota_Y^{a,\varepsilon} \circ \alpha_Y^{-1}([\C P^{k-1}] \otimes [Y]) = 0$, then
    \begin{IEEEeqnarray*}{rCls+x*}
        \IEEEeqnarraymulticol{3}{l}{\iota_X^{a,\varepsilon} \circ \alpha_X^{-1}([\C P^{k-1}] \otimes [X])} \\
        \quad & = & \iota_X^{a,\varepsilon} \circ \alpha_X^{-1} \circ (\id \otimes \rho)([\C P^{k-1}] \otimes [Y]) & \quad [\text{since $\rho([Y]) = [X]$}] \\
              & = & \phi_! \circ \iota_Y^{a,\varepsilon} \circ \alpha_{Y}^{-1} ([\C P^{k-1}] \otimes [Y])          & \quad [\text{by Diagram \eqref{eq:s1eshc diagram}}] \\
              & = & 0                                                                                              & \quad [\text{by assumption}].
    \end{IEEEeqnarray*}

    To prove conformality, choose $\varepsilon > 0$ such that $\varepsilon, \mu \varepsilon < \min \operatorname{Spec}(\partial X, \lambda|_{\partial X})$. Since the diagram
    \begin{IEEEeqnarray*}{c+x*}
        \begin{tikzcd}
            H_\bullet(BS^1;\Q) \otimes H_\bullet(X, \partial X;\Q) \ar[d, equals] & SH^{S^1,\varepsilon}_{}(X, \lambda) \ar[d, equals] \ar[l, swap, hook', two heads, "\alpha_{\lambda}"] \ar[r, "\iota^{a, \varepsilon}_\lambda"]            & SH^{S^1,a}_{}(X, \lambda) \ar[d, equals] \\
            H_\bullet(BS^1;\Q) \otimes H_\bullet(X, \partial X;\Q)                & SH^{S^1,\mu \varepsilon}_{}(X, \mu \lambda) \ar[l, hook', two heads, "\alpha_{\mu \lambda}"] \ar[r, swap, "\iota^{\mu a, \mu \varepsilon}_{\mu \lambda}"] & SH^{S^1,\mu a}_{}(X, \mu \lambda)
        \end{tikzcd}
    \end{IEEEeqnarray*}
    commutes (by \cite[Proposition 3.1]{guttSymplecticCapacitiesPositive2018}), the result follows.

    To prove the nondecreasing property, note that if $\iota^{a,\varepsilon} \circ \alpha^{-1}([\C P ^{k}] \otimes [X]) = 0$, then
    \begin{IEEEeqnarray*}{rCls+x*}
        \IEEEeqnarraymulticol{3}{l}{\iota^{a,\varepsilon} \circ \alpha^{-1}([\C P ^{k-1}] \otimes [X])}\\
        \quad & = & \iota^{a,\varepsilon} \circ \alpha^{-1} \circ U ([\C P ^{k}] \otimes [X])     & \quad [\text{since $U([\C P^k] \otimes [X]) = [\C P^{k-1}] \otimes [X]$}] \\
              & = & U^{a} \circ \iota^{a,\varepsilon} \circ \alpha^{-1} ([\C P ^{k}] \otimes [X]) & \quad [\text{since $\iota^{a,\varepsilon}$ and $\alpha$ commute with $U$}] \\
              & = & 0                                                                             & \quad [\text{by assumption}].                                        & \qedhere
    \end{IEEEeqnarray*}
\end{proof}

% comparison between gh and s1esh
\begin{theorem}
    \label{thm:ghc and s1eshc}
    If $(X, \lambda)$ is a Liouville domain, then
    \begin{enumerate}
        \item \label{thm:comparison cgh csh 1} $\cgh{k}(X) \leq \csh{k}(X)$;
        \item \label{thm:comparison cgh csh 2} $\cgh{k}(X) = \csh{k}(X)$ provided that $X$ is star-shaped.
    \end{enumerate}
\end{theorem}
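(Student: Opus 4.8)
The plan is to rewrite both $\cgh{k}(X)$ and $\csh{k}(X)$ as infima over a single condition involving the connecting map $\delta^a_0 \colon SH^{S^1,(\varepsilon,a]}(X) \to SH^{S^1,\varepsilon}(X)$, and then to compare the two resulting sets of values of $a$. By \cref{lem:can prove ineqs for ndg} we may assume that $X$ is nondegenerate, so that Diagram \eqref{eq:diagram for s1esh capacities} is available; I will use freely that $\alpha$ is an isomorphism, that $\delta = \alpha \circ \delta_0$ and $\delta_0 \circ \iota^a = \delta^a_0$, that every map in that diagram commutes with the $U$-maps, and that $U([\C P^j] \otimes [X]) = [\C P^{j-1}] \otimes [X]$ with the convention $[\C P^{-1}] \otimes [X] = 0$, so that $[\mathrm{pt}] \otimes [X] = U^{k-1}([\C P^{k-1}] \otimes [X])$. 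Exactness of the action-filtration long exact sequence, in which $\iota^{a,\varepsilon}$ and $\delta^a_0$ are consecutive maps, gives $\ker \iota^{a,\varepsilon} = \img \delta^a_0$, so that
\begin{IEEEeqnarray*}{c}
    \csh{k}(X) = \inf \{ a > 0 \mid \alpha^{-1}([\C P^{k-1}] \otimes [X]) \in \img \delta^a_0 \}.
\end{IEEEeqnarray*}
On the other hand $\delta \circ U^{k-1} \circ \iota^a = \alpha \circ U^{k-1} \circ \delta^a_0$, and since $\alpha$ is an isomorphism,
\begin{IEEEeqnarray*}{c}
    \cgh{k}(X) = \inf \{ a > 0 \mid \alpha^{-1}([\mathrm{pt}] \otimes [X]) \in \img (U^{k-1} \circ \delta^a_0) \}.
\end{IEEEeqnarray*}

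For part (1), I would show that the defining set of $\csh{k}(X)$ is contained in that of $\cgh{k}(X)$. If $a$ belongs to the former, write $\alpha^{-1}([\C P^{k-1}] \otimes [X]) = \delta^a_0(w)$; applying $U^{k-1}$, which commutes with $\alpha$ and $\delta^a_0$, gives $\alpha^{-1}([\mathrm{pt}] \otimes [X]) = U^{k-1}\alpha^{-1}([\C P^{k-1}] \otimes [X]) = (U^{k-1} \circ \delta^a_0)(w)$, so $a$ belongs to the latter. Taking infima yields $\cgh{k}(X) \leq \csh{k}(X)$; this step uses nothing beyond $X$ being a nondegenerate Liouville domain.

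For part (2), assume in addition that $X$ is star-shaped, and work with Diagram \eqref{eq:diagram for s1esh capacities case ss}. I would prove the reverse inclusion of defining sets. Suppose $a$ is such that $\alpha^{-1}([\mathrm{pt}] \otimes [X]) = U^{k-1}(\delta^a_0(w))$. Since $[\mathrm{pt}] \otimes [X] \neq 0$ and $\alpha$ is an isomorphism, the left-hand side is nonzero, hence $\delta^a_0(w) \neq 0$. A degree count (the $U$-map lowers degree by $2$, and $\alpha^{-1}([\mathrm{pt}] \otimes [X]) \in SH^{S^1,\varepsilon}_n(X)$ by the $k = 1$ instance of Diagram \eqref{eq:diagram for s1esh capacities case ss}) places $\delta^a_0(w)$ in $SH^{S^1,\varepsilon}_{n-2+2k}(X)$, which is exactly where $\alpha^{-1}([\C P^{k-1}] \otimes [X])$ lives. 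As $X$ is star-shaped, it is diffeomorphic to a ball, so $H_\bullet(X, \partial X; \Q)$ is concentrated in degree $2n$; hence $\alpha$ identifies $SH^{S^1,\varepsilon}_{n-2+2k}(X)$ with $H_{2k-2}(BS^1; \Q) \otimes H_{2n}(X, \partial X; \Q) \cong \Q$. Two nonzero elements of a one-dimensional $\Q$-space are proportional, so $\alpha^{-1}([\C P^{k-1}] \otimes [X]) \in \img \delta^a_0$, i.e.\ $a$ belongs to the defining set of $\csh{k}(X)$. This gives $\csh{k}(X) \leq \cgh{k}(X)$, and together with (1) the equality.

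I expect the main difficulty to be organisational rather than conceptual: one must read off from Diagrams \eqref{eq:diagram for s1esh capacities} and \eqref{eq:diagram for s1esh capacities case ss} the precise commutation relations ($\delta = \alpha \circ \delta_0$, $\delta^a_0 = \delta_0 \circ \iota^a$, compatibility with $U$) and the exactness $\ker \iota^{a,\varepsilon} = \img \delta^a_0$, and then keep the gradings straight in part (2) so that the relevant graded piece of $SH^{S^1,\varepsilon}(X)$ is genuinely one-dimensional over $\Q$. Once both capacities are re-expressed through the single map $\delta^a_0$, both inclusions of defining sets become one-line diagram chases.
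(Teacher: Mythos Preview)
Your proposal is correct and follows essentially the same route as the paper: both rewrite the defining conditions via exactness ($\ker \iota^{a,\varepsilon} = \img \delta^a_0$) and the commutativity of Diagram~\eqref{eq:diagram for s1esh capacities}, reducing part~(1) to applying $U^{k-1}$, and in part~(2) both use the graded Diagram~\eqref{eq:diagram for s1esh capacities case ss} to upgrade the implication to an equivalence. The only cosmetic difference is that the paper phrases the star-shaped step as ``$U^{k-1}$ is an isomorphism between one-dimensional spaces'', whereas you phrase it as ``the graded piece is one-dimensional so nonzero elements are proportional''---these are the same observation.
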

\begin{proof}
    By \cref{lem:can prove ineqs for ndg}, we may assume that $X$ is nondegenerate. Since
    \begin{IEEEeqnarray*}{rCls+x*}
        \IEEEeqnarraymulticol{3}{l}{\iota^{a,\varepsilon} \circ \alpha^{-1}([\C P ^{k-1}] \otimes [X]) = 0}\\
        \quad & \Longleftrightarrow & \alpha^{-1}([\C P ^{k-1}] \otimes [X]) \in \ker \iota^{a,\varepsilon}   & \quad [\text{by definition of kernel}] \\
        \quad & \Longleftrightarrow & \alpha^{-1}([\C P ^{k-1}] \otimes [X]) \in \img \delta^a_0              & \quad [\text{since the top row of \eqref{eq:diagram for s1esh capacities} is exact}] \\
        \quad & \Longleftrightarrow & [\C P ^{k-1}] \otimes [X] \in \img (\alpha \circ \delta^a_0)            & \quad [\text{by definition of image}] \\
        \quad & \Longleftrightarrow & [\C P ^{k-1}] \otimes [X] \in \img (\delta \circ \iota^a)               & \quad [\text{since Diagram \eqref{eq:diagram for s1esh capacities} commutes}] \\
        \quad & \Longrightarrow     & [\mathrm{pt}] \otimes [X] \in \img (U^{k-1} \circ \delta \circ \iota^a) & \quad [\text{since $U^{k-1}([\C P ^{k-1}] \otimes [X]) = [\mathrm{pt}] \otimes [X]$}] \\
        \quad & \Longleftrightarrow & [\mathrm{pt}] \otimes [X] \in \img (\delta \circ U^{k-1} \circ \iota^a) & \quad [\text{since $\delta$ and $U$ commute}],
    \end{IEEEeqnarray*}
    we have that $\cgh{k}(X) \leq \csh{k}(X)$. If $X$ is a star-shaped domain, we can view the maps of the computation above as being the maps in Diagram \eqref{eq:diagram for s1esh capacities case ss}, i.e. they are defined in a specific degree. In this case, $U^{k-1} \colon H_{2k-2}(BS^1) \otimes H_{2n}(X, \partial X) \longrightarrow H_{0}(BS^1) \otimes H_{2n}(X, \partial X)$ is an isomorphism, and therefore the implication in the previous computation is actually an equivalence.
\end{proof}

\begin{remark}
    The capacities $\cgh{k}$ and $\csh{k}$ are defined in terms of a certain homology class being in the kernel or in the image of a map with domain or target the $S^1$-equivariant symplectic homology. Other authors have constructed capacities in an analogous manner, for example Viterbo \cite[Definition 2.1]{viterboSymplecticTopologyGeometry1992} and \cite[Section 5.3]{viterboFunctorsComputationsFloer1999}, Schwarz \cite[Definition 2.6]{schwarzActionSpectrumClosed2000} and Ginzburg--Shon \cite[Section 3.1]{ginzburgFilteredSymplecticHomology2018}.
\end{remark}

\subsection{McDuff--Siegel capacities}

We now define the McDuff--Siegel capacities. These will assist us in our goal of proving \cref{conj:the conjecture} (at least in particular cases) because they can be compared with the Lagrangian capacity (\cref{thm:lagrangian vs g tilde}) and with the Gutt--Hutchings capacities (\cref{prp:g tilde and cgh}).

\begin{definition}[{\cite[Definition 3.3.1]{mcduffSymplecticCapacitiesUnperturbed2022}}]
    \label{def:g tilde}
    Let $(X,\lambda)$ be a nondegenerate Liouville domain. For $k \in \Z_{\geq 1}$, we define the \textbf{McDuff--Siegel capacities} of $X$, denoted $\tilde{\mathfrak{g}}_k(X)$, as follows. Choose $x \in \itr X$ and $D$ a symplectic divisor at $x$. Then,
    \begin{IEEEeqnarray*}{c+x*}
        \tilde{\mathfrak{g}}_k(X) \coloneqq \sup_{J \in \mathcal{J}(X,D)} \mathop{\inf\vphantom{\sup}}_{\gamma} \mathcal{A}(\gamma),
    \end{IEEEeqnarray*}
    where the infimum is over Reeb orbits $\gamma$ such that $\mathcal{M}_X^J(\gamma)\p{<}{}{\mathcal{T}^{(k)}x} \neq \varnothing$.
\end{definition}

\begin{remark}
    Actually, the McDuff--Siegel capacities (given as in \cite[Definition 3.3.1]{mcduffSymplecticCapacitiesUnperturbed2022}) are a family of symplectic capacities $\tilde{\mathfrak{g}}^{\leq \ell}_k$ indexed by $\ell, k \in \Z_{\geq 1}$. The capacity $\tilde{\mathfrak{g}}_k$ from \cref{def:g tilde} is the capacity $\tilde{\mathfrak{g}}^{\leq 1}_k$ from \cite[Definition 3.3.1]{mcduffSymplecticCapacitiesUnperturbed2022}. We point out that in \cite{mcduffSymplecticCapacitiesUnperturbed2022}, the notation $\tilde{\mathfrak{g}}_k$ is used for the case $\ell = \infty$, while we use this notation for the case $\ell = 1$. A similar discussion holds for the higher symplectic capacities $\mathfrak{g}_k$ of \cref{def:capacities glk}.
\end{remark}

\begin{theorem}[{\cite[Theorem 3.3.2]{mcduffSymplecticCapacitiesUnperturbed2022}}]
    \label{thm:properties of g tilde}
    The functions $\tilde{\mathfrak{g}}_k$ are independent of the choices of $x$ and $D$ and satisfy the following properties, for all nondegenerate Liouville domains $(X,\lambda_X)$ and $(Y,\lambda_Y)$ of the same dimension:
    \begin{description}
        \item[(Monotonicity)] If $X \longrightarrow Y$ is a generalized Liouville embedding then $\tilde{\mathfrak{g}}_k(X) \leq \tilde{\mathfrak{g}}_k(Y)$.
        \item[(Conformality)] If $\alpha > 0$ then $\tilde{\mathfrak{g}}_k(X, \alpha \lambda_X) = \alpha  \tilde{\mathfrak{g}}_k(X, \lambda_X)$.
        \item[(Nondecreasing)] $\tilde{\mathfrak{g}}_1(X) \leq \tilde{\mathfrak{g}}_{2}(X) \leq \cdots \leq +\infty$.
    \end{description}
\end{theorem}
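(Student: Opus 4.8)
The plan is to reduce, via \cref{lem:can prove ineqs for ndg}, to nondegenerate Liouville domains --- so that below any fixed action bound there are only finitely many Reeb orbits --- and then to dispatch the independence of the auxiliary data $(x,D)$ together with the conformality and nondecreasing axioms by soft arguments, leaving monotonicity as the one axiom that requires holomorphic curves. For independence of the interior point $x$ and the divisor $D$: since $\itr X$ is connected and the space of germs of symplectic divisors through a fixed interior point is connected, a Moser-type construction produces a symplectomorphism $\psi$ of $\hat X$, equal to the identity outside a compact subset of $\itr X$, with $\psi(x)=x'$ and $\psi(D)=D'$; then $J\mapsto\psi_*J$ is a bijection $\mathcal{J}(X,D)\to\mathcal{J}(X,D')$ (it preserves cylindricality near $\partial X$, where $\psi$ is the identity) and $u\mapsto\psi\circ u$ is a bijection $\mathcal{M}^J_X(\gamma)\p{<}{}{\mathcal{T}^{(k)}x}\to\mathcal{M}^{\psi_*J}_X(\gamma)\p{<}{}{\mathcal{T}^{(k)}x'}$ for every Reeb orbit $\gamma$ of $\partial X$, so the quantity $\sup_J\inf_\gamma\mathcal{A}(\gamma)$ of \cref{def:g tilde} does not change. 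Conformality holds because rescaling $\lambda$ by $\alpha>0$ rescales the action of every Reeb orbit by $\alpha$ while otherwise identifying the spaces $\mathcal{J}(X,D)$ and the moduli spaces $\mathcal{M}^J_X(\gamma)\p{<}{}{\mathcal{T}^{(k)}x}$ canonically (via the evident rescaling diffeomorphism of the completion, which may be taken to be the identity near $x$). The nondecreasing axiom is immediate: contact order $k+1$ implies contact order $k$, so $\mathcal{M}^J_X(\gamma)\p{<}{}{\mathcal{T}^{(k+1)}x}\subseteq\mathcal{M}^J_X(\gamma)\p{<}{}{\mathcal{T}^{(k)}x}$; thus for each $J$ the infimum defining $\tilde{\mathfrak{g}}_{k+1}$ runs over a subset of the orbits relevant to $\tilde{\mathfrak{g}}_k$ and is therefore no smaller, and taking $\sup_J$ gives $\tilde{\mathfrak{g}}_k(X)\leq\tilde{\mathfrak{g}}_{k+1}(X)$.

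For monotonicity, let $\phi\colon X\longrightarrow Y$ be a generalized Liouville embedding. As is standard for this type of argument (cf.\ \cite{guttSymplecticCapacitiesPositive2018,mcduffSymplecticCapacitiesUnperturbed2022}), I would first reduce to the case of a genuine Liouville embedding with $\phi(\overline{X})\subset\itr Y$, so that $W\coloneqq\overline{Y\setminus\phi(X)}$ is a Liouville cobordism from $\phi(\partial X)$ to $\partial Y$, and I would fix $x_0\in\itr X$ with a divisor $D_0\subset X$ and compute $\tilde{\mathfrak{g}}_k(Y)$ using $x\coloneqq\phi(x_0)$ and $D\coloneqq\phi(D_0)$. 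Fixing $a>\tilde{\mathfrak{g}}_k(Y)$ and $J_X\in\mathcal{J}(X,D_0)$, it suffices to exhibit a Reeb orbit $\gamma$ of $\partial X$ with $\mathcal{A}(\gamma)<a$ and $\mathcal{M}^{J_X}_X(\gamma)\p{<}{}{\mathcal{T}^{(k)}x_0}\neq\varnothing$. I would take a neck-stretching family $(J_t)_{t\geq0}\subset\mathcal{J}(Y,D)$ along $\phi(\partial X)$ with $J_t\to J_X$ on the $\hat X$-level (the stretching region is disjoint from $x$, so $J_t\in\mathcal{J}(Y,D)$ for all $t$); since $\inf_{\gamma'}\mathcal{A}(\gamma')\leq\tilde{\mathfrak{g}}_k(Y)<a$ for $J=J_t$, for each $t$ there is $u_t\in\mathcal{M}^{J_t}_Y(\gamma'_t)\p{<}{}{\mathcal{T}^{(k)}x}$ with $\mathcal{A}(\gamma'_t)<a$. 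Letting $t\to\infty$, nondegeneracy and the action bound make $\gamma'_t$ eventually equal to a fixed orbit $\gamma'$, and the SFT compactness theorem in its neck-stretching form (\cref{rmk:compactifications with tangency}) produces a limiting holomorphic building $F$ in the fully broken target --- a bottom level in $(\hat X,J_X)$, symplectization levels over $\partial X$, a level in $\hat W$ carrying the positive end on $\gamma'$, and symplectization levels over $\partial Y$. The energies telescope to $\mathcal{A}(\gamma')<a$, so by positivity every Reeb orbit appearing in $F$ has action $<a$; and since $x$ lies in the $\hat X$-level, the component of $F$ carrying the marked point at $x$ --- together with the non-constant components realizing the tangency constraint, which also pass through $x$ and hence also lie in the $\hat X$-level, joined to it through nodes and ghost components --- forms a connected nodal genus-$0$ $J_X$-holomorphic curve $\widetilde C$ in $\hat X$ of total contact order $\geq k$ to $D$ at $x$; because $\partial^-X=\varnothing$, $\widetilde C$ has no negative punctures, only positive asymptotes $\eta_1,\dots,\eta_m$ on $\partial X$ with $\sum_i\mathcal{A}(\eta_i)<a$.

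The hard part --- which is the real content of \cite[Theorem 3.3.2]{mcduffSymplecticCapacitiesUnperturbed2022} in the case $\ell=1$ relevant here --- is to upgrade $\widetilde C$ to a genuine element of $\mathcal{M}^{J_X}_X(\gamma)\p{<}{}{\mathcal{T}^{(k)}x_0}$ for a \emph{single} Reeb orbit $\gamma$ with $\mathcal{A}(\gamma)<a$: a priori $\widetilde C$ is nodal and may carry several positive punctures, whereas $\tilde{\mathfrak{g}}_k=\tilde{\mathfrak{g}}^{\leq1}_k$ calls for a connected smooth genus-$0$ curve with exactly one positive puncture. Here I would invoke the analysis of the tangency compactification (\cref{rmk:compactifications with tangency}, and \cite[Lemma 7.2]{cieliebakSymplecticHypersurfacesTransversality2007}) to reduce the constraint-carrying part of $\widetilde C$ to such a curve, which then has $\mathcal{A}(\gamma)\leq\sum_i\mathcal{A}(\eta_i)<a$. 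Granting this, $\inf_\gamma\mathcal{A}(\gamma)<a$ for the chosen $J_X$; as $J_X$ and $a>\tilde{\mathfrak{g}}_k(Y)$ were arbitrary, $\tilde{\mathfrak{g}}_k(X)\leq\tilde{\mathfrak{g}}_k(Y)$, establishing monotonicity and completing the proof.
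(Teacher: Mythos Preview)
The paper does not prove this theorem: it is quoted verbatim from \cite[Theorem~3.3.2]{mcduffSymplecticCapacitiesUnperturbed2022} and stated without proof, so there is no argument in the paper to compare against. Your sketch is the standard one and is correct as far as it goes; the arguments for independence of $(x,D)$, conformality, and the nondecreasing property are fine, and for monotonicity you have the right skeleton (neck-stretch along $\phi(\partial X)$, pass to an SFT limit, extract a curve in $\hat X$ from the bottom level) and you honestly flag the step that needs care.

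One remark on that flagged step: your worry that the constraint-carrying piece $\widetilde C$ might have several positive punctures is slightly misplaced at the level of irreducible components. Because the original curve in $\hat Y$ has genus zero with a single positive end, the graph of the limit building is a tree rooted at the component carrying $\gamma'$; from any component one can follow positive punctures upward to the root, and two distinct upward paths from the same component would create a cycle. Hence every irreducible component of the building has exactly one positive puncture. The genuine subtlety is therefore not the number of positive ends on a single component but the possibility that the marked point lands on a ghost bubble, so that (as in \cref{rmk:compactifications with tangency}) the tangency order $k$ is distributed among several adjacent nonconstant components $C_1,\dots,C_p$, each with a single positive puncture but with individual contact orders $k_i$ satisfying only $\sum_i k_i\ge k$; no single $C_i$ need carry the full order~$k$. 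Ruling out or working around this ghost degeneration is precisely what you defer to \cite{mcduffSymplecticCapacitiesUnperturbed2022}, and since the present paper also defers the whole statement to that reference, this is appropriate.
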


\begin{proposition}[{\cite[Proposition 5.6.1]{mcduffSymplecticCapacitiesUnperturbed2022}}]
    \label{prp:g tilde and cgh}
    If $X_{\Omega}$ is a $4$-dimensional convex toric domain then
    \begin{IEEEeqnarray*}{c+x*}
        \tilde{\mathfrak{g}}_k(X_\Omega) = \cgh{k}(X_\Omega).
    \end{IEEEeqnarray*}
\end{proposition}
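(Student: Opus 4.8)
The plan is to prove the two inequalities $\tilde{\mathfrak{g}}_k(X_\Omega) \le \cgh{k}(X_\Omega)$ and $\tilde{\mathfrak{g}}_k(X_\Omega) \ge \cgh{k}(X_\Omega)$ separately, following the strategy of McDuff--Siegel. By a standard approximation argument using monotonicity and conformality (\cref{thm:properties of g tilde,thm:properties of gutt-hutchings capacities}) one reduces to the case where $\partial X_\Omega$ carries a nondegenerate Reeb flow, so that its closed Reeb orbits are the combinatorial objects attached to the region $\Omega$ — short orbits over the smooth sides and orbits over the corners — with explicit actions; recall moreover that by Gutt--Hutchings the number $\cgh{k}(X_\Omega)$ equals a concrete lattice optimization over $\Omega$. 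Since $\tilde{\mathfrak{g}}_k$ is independent of the auxiliary point $x$ and divisor $D$, I would take these to be toric: $x$ on a coordinate axis and $D = \{z_1 = 0\}$.

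For $\tilde{\mathfrak{g}}_k(X_\Omega) \le \cgh{k}(X_\Omega)$ the content is that for \emph{every} $J \in \mathcal{J}(X_\Omega, D)$ there is a Reeb orbit $\gamma$ with $\mathcal{A}(\gamma) \le \cgh{k}(X_\Omega)$ and $\mathcal{M}^J_{X_\Omega}(\gamma)\p{<}{}{\mathcal{T}^{(k)}x} \neq \varnothing$. This existence statement for holomorphic planes is extracted from the structure underlying $\cgh{k}(X_\Omega)$: a Reeb orbit realizes it (the Reeb-orbits axiom of \cref{thm:properties of gutt-hutchings capacities}), and the iterated $U$-map in Diagram \eqref{eq:diagram for s1esh capacities} encodes geometrically an order-$k$ tangency constraint, so that $[\mathrm{pt}] \otimes [X_\Omega] \in \img(\delta \circ U^{k-1} \circ \iota^a)$ for $a$ slightly above $\cgh{k}(X_\Omega)$ produces a nonempty space of (a priori broken) holomorphic buildings in $\hat{X}_\Omega$ through $x$ carrying the tangency and with top asymptotic orbit of action $< a$. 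An SFT-compactness argument, combined with $4$-dimensional transversality — positivity of intersections and automatic regularity of the relevant low-index planes, which rule out extra symplectization levels or extra components — then shows that the component carrying the constraint is an honest plane, hence an element of the desired moduli space.

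The reverse inequality $\tilde{\mathfrak{g}}_k(X_\Omega) \ge \cgh{k}(X_\Omega)$ is the heart of the proof and the step I expect to be the main obstacle. Here one must exhibit a \emph{single} $J \in \mathcal{J}(X_\Omega, D)$ for which no plane through $x$ with contact order $k$ is asymptotic to a Reeb orbit of action $< \cgh{k}(X_\Omega)$. The natural candidate is the almost complex structure adapted to the toric structure — essentially the standard integrable structure on $\C^2$, modified only near $\partial X_\Omega$ and near $x$ so as to be cylindrical and compatible with $D$ — for which the toric divisors $\{z_1 = 0\}$ and $\{z_2 = 0\}$ are holomorphic. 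Positivity of intersections in dimension $4$ then forces any such plane $u$, asymptotic to $\gamma$, to meet $\{z_1 = 0\}$ with multiplicity $\ge k$ (from the order-$k$ tangency at $x$) and $\{z_2 = 0\}$ with nonnegative multiplicity; inserting these inequalities into the relation between the symplectic area of $u$ — which equals $\mathcal{A}(\gamma)$ by the energy identity \eqref{eq:energy identity} — and its intersection numbers with the toric divisors reproduces precisely the lattice optimization defining $\cgh{k}(X_\Omega)$, so $\mathcal{A}(\gamma) \ge \cgh{k}(X_\Omega)$. The delicate points are: checking that this $J$ is an honest element of $\mathcal{J}(X_\Omega, D)$ globally on $\hat{X}_\Omega$; handling the degenerate cases where $u$ is multiply covered or $\gamma$ lies over a corner of $\Omega$; and making the area/intersection bookkeeping sharp — this is exactly where the ECH-style index inequality, and hence the restriction to dimension $4$, is essential, and what allows one to avoid any virtual perturbation scheme.
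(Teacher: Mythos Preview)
The paper does not prove this statement at all: \cref{prp:g tilde and cgh} is quoted verbatim from \cite[Proposition 5.6.1]{mcduffSymplecticCapacitiesUnperturbed2022} and used as a black box in the proof of \cref{lem:computation of cl}. So there is no ``paper's own proof'' to compare your proposal against.

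That said, your sketch for the direction $\tilde{\mathfrak{g}}_k(X_\Omega) \le \cgh{k}(X_\Omega)$ has a genuine gap. You write that ``the iterated $U$-map in Diagram \eqref{eq:diagram for s1esh capacities} encodes geometrically an order-$k$ tangency constraint'', but nothing in the paper (or in the standard literature on $S^1$-equivariant symplectic homology) establishes such an interpretation. The $U$-map is defined via the $S^1$-equivariant structure of the Floer complex, not via tangency to a divisor, and turning the condition $[\mathrm{pt}] \otimes [X_\Omega] \in \img(\delta \circ U^{k-1} \circ \iota^a)$ into an actual $J$-holomorphic plane in $\hat{X}_\Omega$ with contact order $k$ at $x$ is precisely the content of the comparison $\mathfrak{g}_k = \cgh{k}$ in \cref{thm:g hat vs gh}, which in this paper requires the Bourgeois--Oancea isomorphism and \cref{assumption}. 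McDuff--Siegel instead proceed by computing both $\tilde{\mathfrak{g}}_k$ and $\cgh{k}$ in terms of the same lattice-point optimization for four-dimensional convex toric domains (via explicit Reeb dynamics, ECH-type index arguments, and cobordism/monotonicity through ellipsoids), rather than by a direct Floer-to-SFT translation of the $U$-map. Your outline for the reverse inequality, using a toric-compatible $J$ and positivity of intersections with the coordinate divisors, is much closer in spirit to what actually happens there.
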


Finally, we state two stabilization results which we will use in \cref{sec:augmentation map of an ellipsoid}. The fact that will be relevant to us is \cref{lem:stabilization 2} \ref{lem:stabilization 2 1}, which we will use to argue that the moduli space of curves in an ellipsoid satisfying a point constraint is independent of the dimension of the ellipsoid.

\begin{lemma}[{\cite[Lemma 3.6.2]{mcduffSymplecticCapacitiesUnperturbed2022}}]
    \label{lem:stabilization 1}
    Let $(X, \lambda)$ be a Liouville domain. For any $c, \varepsilon \in \R_{> 0}$, there is a subdomain with smooth boundary $\tilde{X} \subset X \times B^2(c)$ such that:
    \begin{enumerate}
        \item The Liouville vector field $Z_{\tilde{X}} = Z_{X} + Z_{B^2(c)}$ is outwardly transverse along $\partial \tilde{X}$.
        \item $X \times \{0\} \subset \tilde{X}$ and the Reeb vector field of $\partial \tilde{X}$ is tangent to $\partial X \times \{0\}$.
        \item Any Reeb orbit of the contact form $(\lambda + \lambda_0)|_{\partial \tilde{X}}$ (where $\lambda_0 = 1/2 (x \edv y - y \edv x)$) with action less than $c - \varepsilon$ is entirely contained in $\partial X \times \{0\}$ and has normal Conley--Zehnder index equal to $1$.
    \end{enumerate}
\end{lemma}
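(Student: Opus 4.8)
The plan is to build $\tilde X$ explicitly as a ``generalized ellipsoid over $X$'': a region cut out of $X \times B^2(c)$ by an inequality $c\,g(w) + \pi|z|^2 \le c$ for a suitably chosen profile function $g$, and then to read off all three properties from a direct computation of the Reeb flow on $\partial\tilde X$. For the setup I would use the backward Liouville flow to identify $X \setminus \operatorname{Skel}(X)$ with $(-\infty,0] \times \partial X$; with $\alpha := \lambda|_{\partial X}$ and $\rho := e^{t} \in (0,1]$ the flow coordinate, one has $\lambda = \rho\,\alpha$, $Z_X = \rho\,\partial_\rho$, and $\{\rho = 1\} = \partial X$. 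Fix $0 < \delta < \delta_0 < \min\{1,\varepsilon/c\}$ and a nondecreasing smooth $\chi \colon (0,\infty) \to [\delta,\infty)$ with $\chi(s) = s$ for $s \ge \delta_0$ and $\chi \equiv \delta$ near $0$; put $g := \chi\circ\rho$ on $X \setminus \operatorname{Skel}(X)$ and $g := \delta$ near $\operatorname{Skel}(X)$. This glues to a smooth function $g \colon X \to [\delta,1]$ with $g^{-1}(1) = \partial X$ and $\edv g(Z_X) = \chi'(\rho)\rho \ge 0$; the only feature I will use is the inequality
\[
    \edv g(Z_X) \;\ge\; g - \tfrac{\varepsilon}{c},
\]
which holds since $\edv g(Z_X) = g$ where $\rho \ge \delta_0$ and $g \le \delta_0 < \varepsilon/c$ where $\rho < \delta_0$. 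Now set $\tilde X := \{(w,z) \in X \times \C \mid c\,g(w) + \pi|z|^2 \le c\}$. Because $g \le 1$ forces $w \in X$ and $g \ge \delta$ forces $\pi|z|^2 \le c(1-\delta) < c$, this is a compact subdomain of $X \times B^2(c)$, and $\partial\tilde X = \{c\,g(w)+\pi|z|^2 = c\}$ is smooth: the differential of $c\,g + \pi|z|^2$ vanishes only where $\edv g = 0$ and $z = 0$, but $z = 0$ on $\partial\tilde X$ forces $g(w) = 1$, i.e.\ $w \in \partial X$, where $\edv g \ne 0$.

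Property (1) is then immediate: on $\partial\tilde X$ we have $\pi|z|^2 = c(1-g(w))$ and $Z_{B^2(c)}(\pi|z|^2) = \pi|z|^2$, so $Z_{\tilde X} = Z_X + Z_{B^2(c)}$ applied to the defining function equals $c\,\edv g(Z_X) + \pi|z|^2 = c(\edv g(Z_X) + 1 - g) \ge c - \varepsilon > 0$, giving outward transversality. For (2), $X \times \{0\} \subset \tilde X$ since $g \le 1$ on $X$, and $\partial X \times \{0\} \subset \partial\tilde X$; the remaining claim, together with all of (3), comes out of the Reeb computation. Let $R$ be the Reeb field of $\beta := (\lambda+\lambda_0)|_{\partial\tilde X}$. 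Via the characteristic foliation, $R = X_F/(\lambda+\lambda_0)(X_F)$ where $X_F$ is the Hamiltonian field of $F := c\,g(w)+\pi|z|^2$ for $\edv\lambda+\edv\lambda_0$. One gets $X_F = c\,X_g + 2\pi\,\partial_\theta$, with $X_g$ the Hamiltonian field of $g$ on $(X,\edv\lambda)$ and $\partial_\theta$ the angular vector field on $\C$ (vanishing at $z=0$), and the Liouville identity gives $\lambda(X_g) = \edv g(Z_X)$; hence
\[
    (\lambda+\lambda_0)(X_F) \;=\; c\,\edv g(Z_X) + \pi|z|^2 \;=\; c\big(\edv g(Z_X) + 1 - g\big) \;\ge\; c - \varepsilon \;>\; 0
\]
everywhere on $\partial\tilde X$. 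Since $X_g$ has no $\partial_\theta$-component, the $\theta$-component of $R$ is $2\pi/(\lambda+\lambda_0)(X_F) > 0$ wherever $z \ne 0$. So a closed Reeb orbit with $z \not\equiv 0$ has strictly monotone angle, winding by $2\pi m$ with $m \ge 1$, whence $m = \int_0^T \edv t/(\lambda+\lambda_0)(X_F) \le T/(c-\varepsilon)$ and its action (equal to its period $T$) is $\ge c - \varepsilon$. Thus every orbit of action $< c - \varepsilon$ has $z \equiv 0$, hence lies in $\{z=0\}\cap\partial\tilde X = \partial X \times \{0\}$, where $g \equiv \rho = 1$ nearby, $X_g$ equals the Reeb field $R_\alpha$ of $\alpha$, and $\partial_\theta = 0$, so $R = R_\alpha$ — tangent to $\partial X \times \{0\}$, completing (2). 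Finally, near $\partial X \times \{0\}$ one has $R = R_\alpha + (2\pi/c)\partial_\theta$, so along such an orbit $\gamma$ the linearized Reeb flow in the normal $z$-direction is $z \mapsto e^{(2\pi i/c)t}z$; over $T = \mathcal A(\gamma) \in (0,c-\varepsilon)$ this is a path of rotations through total angle $2\pi T/c \in (0,2\pi)$, of Conley--Zehnder index $1$, which is the normal index assertion in (3).

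I expect the main obstacle to be two pieces of bookkeeping rather than any deep point. First, one has to produce a profile $g$ with $\edv g(Z_X) \ge g - \varepsilon/c$: this forces a careful use of the Liouville dynamics of $X$ down to its skeleton, the key being that $g$ can be pushed below $\varepsilon/c$ near $\operatorname{Skel}(X)$ while remaining nondecreasing along $Z_X$ (which is why shrinking $\delta,\delta_0$ below $\varepsilon/c$ is essential). Second, the Reeb computation, and in particular the uniform lower bound $(\lambda+\lambda_0)(X_F) \ge c - \varepsilon$, must be carried out over all of $\partial\tilde X$, not just over a collar of $\partial X \times B^2(c)$, so that no stray short Reeb orbits appear over the interior of $X$; once that is in hand, the nondegeneracy and the normal Conley--Zehnder index follow directly from the explicit formula $R = R_\alpha + (2\pi/c)\partial_\theta$.
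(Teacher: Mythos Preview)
The paper does not give its own proof of this lemma: it is quoted as \cite[Lemma 3.6.2]{mcduffSymplecticCapacitiesUnperturbed2022} and used as a black box, so there is no in-text argument to compare against.

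That said, your construction is the natural one and matches the spirit of the cited result. A few remarks on points you should be careful with when writing it up. First, for your action bound on orbits with $z \not\equiv 0$ you implicitly use that $|z|$ is constant along the Reeb flow; this is true because the $\C$-component of $X_F$ is proportional to $\partial_\theta$ and the $X$-component is tangent to $X$, but it deserves a sentence since otherwise the winding argument breaks down where $z$ might vanish. Second, the formula $R = R_\alpha + (2\pi/c)\partial_\theta$ is literally meaningful only on a punctured neighbourhood of $\partial X \times \{0\}$ (as $\partial_\theta$ is not a smooth vector field through $z=0$); what you actually need is the linearized Reeb flow in the normal $\C$-direction, which is the rotation $e^{(2\pi i/c)t}$ computed from the $z$-linearization of $X_F$ at $z=0$ --- this is what you use, just say it that way. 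Third, your sign conventions for $X_g$ and for $\lambda(X_g) = \edv g(Z_X)$ should be fixed once so that the normalization $(\lambda+\lambda_0)(X_F)$ comes out positive; otherwise the Reeb field points the wrong way. None of these is a gap in the idea; with these clarifications the argument goes through.
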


\begin{lemma}[{\cite[Lemma 3.6.3]{mcduffSymplecticCapacitiesUnperturbed2022}}]
    \label{lem:stabilization 2}
    Let $X$ be a Liouville domain, and let $\tilde{X}$ be a smoothing of $X \times B^2(c)$ as in \cref{lem:stabilization 1}.
    \begin{enumerate}
        \item \label{lem:stabilization 2 1} Let $J \in \mathcal{J}(\tilde{X})$ be a cylindrical almost complex structure on the completion of $\tilde{X}$ for which $\hat{X} \times \{0\}$ is $J$-holomorphic. Let $C$ be an asymptotically cylindrical $J$-holomorphic curve in $\hat{X}$, all of whose asymptotic Reeb orbits are nondegenerate and lie in $\partial X \times \{0\}$ with normal Conley--Zehnder index $1$. Then $C$ is either disjoint from the slice $\hat{X} \times \{0\}$ or entirely contained in it.
        \item Let $J \in \mathcal{J}(\partial \tilde{X})$ be a cylindrical almost complex structure on the symplectization of $\partial \tilde{X}$ for which $\R \times \partial X \times \{0\}$ is $J$-holomorphic. Let $C$ be an asymptotically cylindrical $J$-holomorphic curve in $\R \times \partial \tilde{X}$, all of whose asymptotic Reeb orbits are nondegenerate and lie in $\partial X \times \{0\}$ with normal Conley--Zehnder index $1$. Then $C$ is either disjoint from the slice $\R \times \partial X \times \{0\}$ or entirely contained in it. Moreover, only the latter is possible if $C$ has at least one negative puncture.
    \end{enumerate}
\end{lemma}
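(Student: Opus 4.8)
The plan is to deduce both statements from positivity of intersections for $J$-holomorphic objects together with the asymptotic analysis of punctured holomorphic curves. Write $Y$ for the slice under consideration: $Y = \hat X \times \{0\}$, sitting inside the completion of $\tilde X$, in part (1), and $Y = \R \times \partial X \times \{0\}$, sitting inside $\R \times \partial \tilde X$, in part (2). In both cases $Y$ is, by hypothesis, a $J$-holomorphic submanifold of real codimension $2$. If $C$ is contained in $Y$ there is nothing to prove, so assume it is not. Then the ``normal component'' of $C$ near any point of $C^{-1}(Y)$ satisfies a linear Cauchy--Riemann type equation, so by the similarity principle together with unique continuation the set $C^{-1}(Y) \subset \dot{\Sigma}$ is discrete. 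I then want to show that $C \cap Y = \varnothing$ and, in part (2), that $C$ has no negative puncture.

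First I would check that the algebraic intersection number of $C$ with $Y$ is a finite nonnegative integer. Finiteness in the interior, and positivity of each local contribution, is the standard local intersection theory of a $J$-holomorphic map with a $J$-holomorphic submanifold, which does not require $C$ to be simple. There is no accumulation at the punctures either: by the asymptotic analysis of \cite{hoferPseudoholomorphicCurvesSymplectizations1993, wendlLecturesSymplecticField2016}, near each puncture $C$ differs from a trivial cylinder over its asymptotic orbit $\gamma \subset Y$ by a normal term $\eta(s,t) \sim e^{\lambda s}(\zeta(t) + o(1))$, where $\lambda \neq 0$ is an eigenvalue of the asymptotic operator restricted to the normal bundle of $Y$ along $\gamma$ and $\zeta$ is a nowhere-vanishing eigenvector, so that $\eta$ is nonzero for $|s|$ large. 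Hence $C \cdot Y := \sum_{w \in C^{-1}(Y)} \operatorname{ord}_w(C,Y)$ is a well-defined integer, with $C \cdot Y \geq 0$ and equality precisely when $C$ is disjoint from $Y$.

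The heart of the argument is to bound $C \cdot Y$ from above by a homotopy invariant and to compute that invariant. Here I would invoke the intersection theory of punctured holomorphic curves (Siefring; see \cite{wendlLecturesSymplecticField2016}): there is a number $C * Y$ depending only on the relative homotopy class of $C$ and on its asymptotic data, satisfying $C * Y = C \cdot Y + \delta_\infty(C,Y)$ with $\delta_\infty(C,Y) \geq 0$, so that $0 \leq C \cdot Y \leq C * Y$. Moreover $C * Y$ is the sum of the relative first Chern number of the pulled-back normal bundle of $Y$, taken with respect to the natural asymptotic trivializations, and a winding contribution at each puncture. Two inputs make this computable. The normal bundle of $Y$ is trivial: by construction $\tilde X$ is $X \times B^2(c)$ with corners smoothed, so near $Y$ the ambient manifold is $\hat X \times \C$ (resp.\ $\R \times \partial X \times \C$) with $Y$ the zero slice, so the relative Chern term vanishes. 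And the hypothesis from \cref{lem:stabilization 1} that every asymptotic orbit has normal Conley--Zehnder index $1$ pins down the winding contributions: for a rank-$2$ symplectic normal bundle, $\conleyzehnder = 1$ forces the extremal winding numbers $\alpha^{<0} = 0$ and $\alpha^{>0} = 1$ of the normal asymptotic operator, so that the contribution is $0$ at every positive puncture and $-1$ at every negative puncture. Therefore $C * Y = 0$ in part (1) --- where $\tilde X$ is a Liouville domain, so that $C$ has only positive punctures --- and $C * Y = -\#\{\text{negative punctures of } C\}$ in part (2).

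Combining these finishes everything. In part (1), $0 \leq C \cdot Y \leq C * Y = 0$ forces $C$ to be disjoint from $Y$. In part (2): if $C$ has no negative puncture the same inequality again gives that $C$ is disjoint from $Y$; if $C$ has a negative puncture, then $C * Y < 0$, contradicting $C * Y \geq C \cdot Y \geq 0$, so the assumption that $C$ is not contained in $Y$ is impossible and $C$ is entirely contained in $Y$ --- the ``latter'' alternative, and the only one available once $C$ has a negative puncture. I expect the main obstacle to be the bookkeeping in the third step: fixing the sign conventions in Siefring's intersection formula, checking that the trivializations for which the relative Chern number vanishes are exactly those used to define the winding contributions, and deducing $\alpha^{<0} = 0$ and $\alpha^{>0} = 1$ carefully from ``normal Conley--Zehnder index $1$''. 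One could avoid the packaged intersection theory and instead establish the needed winding identity directly, applying the argument principle to the normal component $\eta$ and playing the asymptotic windings off against the interior zero count; this is elementary but more laborious.
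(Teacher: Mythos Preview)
The paper does not give its own proof of this lemma: it is simply quoted from \cite[Lemma 3.6.3]{mcduffSymplecticCapacitiesUnperturbed2022} and then applied (in \cref{lem:moduli spaces of ellipsoids are all equal}). So there is nothing in the present paper to compare your argument against.

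That said, your proposed argument is the standard and correct one, and it is essentially the argument McDuff--Siegel give. The key computation---that normal Conley--Zehnder index $1$ forces $\alpha_-^{\tau} = 0$ and $\alpha_+^{\tau} = 1$ for the normal asymptotic operator, so that positive punctures contribute $0$ and negative punctures contribute $-1$ to $C * Y$---is exactly right, as is the observation that the normal bundle of the slice is trivial so the relative Chern term vanishes. Your caveat about bookkeeping (matching trivializations, sign conventions in Siefring's formula) is well placed but routine. The alternative ``bare hands'' approach you mention at the end, via the argument principle applied directly to the normal projection $\eta$, also works and is in fact how one proves the relevant special case of Siefring's theory; either route is fine here since the normal bundle has complex rank $1$.
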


\subsection{Computations without contact homology}

% -------------------------------------------------------------------
% Main theorem. Comparison of Lagrangian and McDuff--Siegel capacities
% -------------------------------------------------------------------

We now state and prove one of our main theorems, which is going to be a key step in proving that $c_L(X_{\Omega}) = \delta_{\Omega}$. The proof uses techniques similar to those used in the proof of \cite[Theorem 1.1]{cieliebakPuncturedHolomorphicCurves2018}.

\begin{theorem}
    \label{thm:lagrangian vs g tilde}
    If $(X, \lambda)$ is a Liouville domain then
    \begin{IEEEeqnarray*}{c+x*}
        c_L(X) \leq \inf_k^{} \frac{\tilde{\mathfrak{g}}_k(X)}{k}.
    \end{IEEEeqnarray*}
\end{theorem}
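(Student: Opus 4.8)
The plan is to prove $c_L(X) \leq \tilde{\mathfrak{g}}_k(X)/k$ for each fixed $k \in \Z_{\geq 1}$ separately; taking the infimum over $k$ then gives the result. So fix $k$, let $L \subset X$ be an embedded Lagrangian torus, and let $J \in \mathcal{J}(X,D)$ be any admissible almost complex structure with associated divisor $D$ at a point $x \in \itr X$. By the definition of $\tilde{\mathfrak{g}}_k$ it suffices to produce a Reeb orbit $\gamma$ in $\partial X$ with $\mathcal{M}_X^J(\gamma)\p{<}{}{\mathcal{T}^{(k)}x} \neq \varnothing$ and $\mathcal{A}(\gamma) \geq k \, A_{\mathrm{min}}(L)$ — or rather, since we want the supremum over $L$ on the left, we want to show that for any $\varepsilon > 0$ and any such $J$, there is such a $\gamma$ with $\mathcal{A}(\gamma) \geq k(A_{\mathrm{min}}(L) - \varepsilon)$. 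Actually the cleaner route, following Cieliebak--Mohnke, is to work in a Weinstein neighbourhood of $L$: identify a neighbourhood of $L$ with a neighbourhood of the zero section in $T^*L \cong T^*T^n$, equip $L = T^n$ with a flat metric (nonpositive curvature), and apply \cref{lem:geodesics lemma CM abs} to perturb it to a metric $g$ for which all short closed geodesics are noncontractible, nondegenerate, and have Morse index in $[0,n-1]$.

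The main construction is a neck-stretching argument. One takes a curve in $\mathcal{M}_X^J(\gamma)\p{<}{}{\mathcal{T}^{(k)}x}$ realizing (approximately) the capacity $\tilde{\mathfrak{g}}_k(X)$ — more precisely, for a suitable $J$ one argues such a curve exists with $\mathcal{A}(\gamma)$ close to $\tilde{\mathfrak{g}}_k(X)$ — and stretches the neck along the hypersurface $\partial N$ bounding the Weinstein neighbourhood $N$ of $L$. By SFT compactness (the neck-stretching version stated in \cref{sec:2}, together with \cref{rmk:compactifications with tangency} for the tangency constraint), the curve degenerates into a holomorphic building. The component carrying the tangency constraint $\p{<}{}{\mathcal{T}^{(k)}x}$ lives in the bottom level, which is (the completion of) $T^*L$ — or rather the symplectization pieces and the $T^*L$ piece; since $x$ can be chosen inside $N$, the tangency component is a punctured sphere in $\hat{T^*L} = T^*T^n$. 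All of its asymptotic orbits are Reeb orbits on $\partial N$, which correspond to closed geodesics of $L$; by the energy/action identity \eqref{eq:energy identity} combined with \cref{lem:energy wrt different forms} applied to that bottom component (it has no positive punctures), the total action of its asymptotic geodesics is controlled, and in particular each such geodesic, being noncontractible in $L$, has length $\geq A_{\mathrm{min}}(L)$ by \cref{lem:a min with exact symplectic manifold} (using that $\pi_1$ of the relevant space vanishes appropriately, or rather that $\lambda$ evaluated on a noncontractible loop in $L$ is at least $A_{\mathrm{min}}(L)$).

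Now comes the counting step, which I expect to be the main obstacle. By \cref{lem:punctures and tangency}, a punctured $J$-holomorphic sphere in $T^*L$ with contact order $k$ to $D$ at $x$ must have at least $k+1$ punctures — but one of these is, in the building, glued to the rest of the building above, so the tangency component has at least $k$ \emph{negative} punctures asymptotic to short geodesics (here one must be careful: the lemma gives $\tilde k + 1$ punctures where $\tilde k$ is the contact order inherited by that component, and one combines this with the additivity of contact orders across the building from \cref{rmk:compactifications with tangency} to get the total count $\geq k$ of geodesic asymptotics feeding into the bottom). Each of these geodesics $\gamma_i$ is noncontractible, so $\ell(\gamma_i) \geq A_{\mathrm{min}}(L)$. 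The sum $\sum_i \ell(\gamma_i)$ of their lengths is (by the Stokes argument of \cref{lem:energy wrt different forms}, since the bottom level has only negative ends) bounded above by the energy that got passed down, which in turn is bounded by $\mathcal{A}(\gamma) = $ action of the top orbit, which is close to $\tilde{\mathfrak{g}}_k(X)$. Hence $k \cdot A_{\mathrm{min}}(L) \leq \sum_i \ell(\gamma_i) \leq \tilde{\mathfrak{g}}_k(X) + (\text{error})$. Letting the errors go to zero and taking the supremum over Lagrangian tori $L$ gives $k \cdot c_L(X) \leq \tilde{\mathfrak{g}}_k(X)$, and dividing by $k$ and taking the infimum over $k$ completes the proof.

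The delicate points I would need to nail down carefully: (i) that for a suitable choice of $J \in \mathcal{J}(X,D)$ one can find a curve in $\mathcal{M}_X^J(\gamma)\p{<}{}{\mathcal{T}^{(k)}x}$ with action arbitrarily close to $\tilde{\mathfrak{g}}_k(X)$ — this is essentially unwinding the $\sup_J \inf_\gamma$ definition, choosing $J$ to be stretched and using that the set of relevant orbits is nonempty for the relevant $J$'s; (ii) that the neck-stretched limit building has its tangency-carrying component in $T^*L$ with the geodesic asymptotics genuinely being short closed geodesics, so that \cref{lem:geodesics lemma CM abs,lem:punctures and tangency} apply — this requires an a priori action/length bound to ensure the geodesics have length $\leq \ell_0$, which is where one must set $\ell_0$ larger than $\tilde{\mathfrak{g}}_k(X)$ from the start; (iii) bookkeeping the additivity of contact orders and puncture counts through the building (\cref{rmk:compactifications with tangency}), ensuring no geodesic asymptotic is ``lost'' to a constant component or to the trivial cylinders — here \cref{lem:no nodes} and the genus-zero/tree conditions on buildings are what prevent pathologies. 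None of these steps involves a genuinely new idea beyond \cite{cieliebakPuncturedHolomorphicCurves2018}, but the combination with the tangency constraint and the capacity $\tilde{\mathfrak{g}}_k$ is what makes the inequality sharp at the right value $\tilde{\mathfrak{g}}_k/k$.
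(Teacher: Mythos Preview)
Your neck-stretching setup and the invocation of \cref{lem:geodesics lemma CM abs,lem:punctures and tangency} are on the right track, but the final step---summing geodesic lengths and comparing to $A_{\mathrm{min}}(L)$---does not work as stated, and the paper's argument is genuinely different there.

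First, a sign issue: the tangency-carrying component $C$ sits in the bottom level $T^*L$, which has only a \emph{positive} boundary $S^*_\delta L$, so all punctures of $C$ are positive, not negative. More seriously, your inequality $\ell(\gamma_i) \geq A_{\mathrm{min}}(L)$ has no justification: $A_{\mathrm{min}}(L)$ is an infimum of $\int_\rho \lambda_X$ over loops $\rho$ in $L$ (\cref{lem:a min with exact symplectic manifold}), whereas $\ell(\gamma_i)$ is a length in an auxiliary perturbed metric on $L$; these quantities are unrelated. And your upper bound on $\sum_i \ell(\gamma_i)$ is off by a factor: what the energy identity \eqref{eq:energy identity} bounds is the sum of \emph{actions} of the Reeb orbits on $S^*_\delta L$, which equals $\delta \sum_i \ell(\gamma_i)$ with $\delta$ small, so the sum of lengths is only bounded by $a/\delta$, not by $a$.

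The paper's argument instead extracts disks in $X$ with boundary on $L$ from the \emph{upper} part of the building. The component $C$ has $p \geq k+1$ positive punctures; one subtree $C_1$ emanating from $C$ connects up to the Reeb orbit $\gamma_0$ on $\partial X$, while each of the remaining $p-1 \geq k$ subtrees $C_2,\ldots,C_p$ must contain a holomorphic plane. A maximum-principle and noncontractibility argument (a plane in a symplectization level or in $T^*L$ would force its asymptotic geodesic to be contractible) pins each such plane $D_\mu$ into $X \setminus L \subset \hat{X}\setminus L$. These planes are then disks in $X$ with boundary on $L$, i.e.\ genuine elements of $\pi_2(X,L)$. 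Now \cref{lem:energy wrt different forms} applies to the union $D_2\cup\cdots\cup D_p$ (which has no positive punctures), yielding $E_\omega(D_2\cup\cdots\cup D_p) \leq \tfrac{e^K}{e^K-1} E_{\tilde\omega}(D_2\cup\cdots\cup D_p) \leq \tfrac{e^K}{e^K-1}\, a$. An averaging argument then gives one disk $D_{\mu_0}$ with $\omega(D_{\mu_0}) \leq \tfrac{e^K}{e^K-1}\cdot \tfrac{a}{p-1} \leq \tfrac{a}{k}+\varepsilon$, and hence $A_{\mathrm{min}}(L) \leq \tilde{\mathfrak g}_k(X)/k + \varepsilon$. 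The constants $K$, $\delta$, $\ell_0$ are chosen at the outset precisely to make this last chain of inequalities close up.
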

\begin{proof}
    By \cref{lem:can prove ineqs for ndg}, we may assume that $X$ is nondegenerate. Let $k \in \Z_{\geq 1}$ and $L \subset \itr X$ be an embedded Lagrangian torus. We wish to show that for every $\varepsilon > 0$ there exists $\sigma \in \pi_2(X,L)$ such that $0 < \omega(\sigma) \leq \tilde{\mathfrak{g}}_k(X) / k + \varepsilon$. Define
    \begin{IEEEeqnarray*}{rCls+x*}
        a      & \coloneqq & \tilde{\mathfrak{g}}_k(X), \\
        K_1    & \coloneqq & \ln(2), \\
        K_2    & \coloneqq & \ln(1 + a / \varepsilon k), \\
        K      & \coloneqq & \max \{K_1, K_2\}, \\
        \delta & \coloneqq & e^{-K}, \\
        \ell_0 & \coloneqq & a / \delta.
    \end{IEEEeqnarray*}

    By \cref{lem:geodesics lemma CM abs} and the Lagrangian neighbourhood theorem, there exists a Riemannian metric $g$ on $L$ and a symplectic embedding $\phi \colon D^*L \longrightarrow X$ such that $\phi(D^*L) \subset \itr X$, $\phi|_L = \id_L$ and such that if $\gamma$ is a closed geodesic in $L$ with length $\ell(\gamma) \leq \ell_0$ then $\gamma$ is noncontractible, nondegenerate and satisfies $0 \leq \morse(\gamma) \leq n - 1$.

    Let $D^*_{\delta} L$ be the codisk bundle of radius $\delta$. Notice that $\delta$ has been chosen in such a way that the symplectic embedding $\phi \colon D^* L \longrightarrow X$ can be seen as an embedding like that of \cref{lem:energy wrt different forms}. Define symplectic cobordisms%
    \begin{IEEEeqnarray*}{rCl}
        (X^+, \omega^+) & \coloneqq & (X \setminus \phi(D^*_{\delta} L), \omega), \\
        (X^-, \omega^-) & \coloneqq & (D^*_{\delta} L, \edv \lambda_{T^* L}),
    \end{IEEEeqnarray*}
    which have the common contact boundary
    \begin{IEEEeqnarray*}{c+x*}
        (M, \alpha) \coloneqq (S^*_{\delta} L, \lambda_{T^* L}).
    \end{IEEEeqnarray*}
    Here, it is implicit that we are considering the restriction of the form $\lambda_{T^*L}$ on $T^* L$ to $D^*_{\delta} L$ or $S^*_{\delta} L$. Then, $(X,\omega) = (X^-, \omega^-) \circledcirc (X^+, \omega^+)$. Recall that there are piecewise smooth $2$-forms $\tilde{\omega} \in \Omega^2(\hat{X})$ and $\tilde{\omega}^{\pm} \in \Omega^2(\hat{X}^{\pm})$ given as in Equation \eqref{eq:form 2}. Choose $x \in \itr \phi(D^*_{\delta} L)$ and let $D \subset \phi(D^*_{\delta} L)$ be a symplectic divisor through $x$. Choose also generic almost complex structures
    \begin{IEEEeqnarray*}{rCls+x*}
        J_M & \in & \mathcal{J}(M), \\
        J^+ & \in & \mathcal{J}_{J_M}(X^+), \\
        J^- & \in & \mathcal{J}^{J_M}(X^-, D),
    \end{IEEEeqnarray*}
    and denote by $J_{\partial X} \in \mathcal{J}(\partial X)$ the ``restriction'' of $J^+$ to $\R \times \partial X$. Let $(J_t)_{t} \subset \mathcal{J}(X, D)$ be the corresponding neck stretching family of almost complex structures. Since $a = \tilde{\mathfrak{g}}_k(X)$, for every $t$ there exists a Reeb orbit $\gamma_t$ in $\partial X = \partial^+ X^+$ and a $J_t$-holomorphic curve $u_t \in \mathcal{M}_X^{J_t}(\gamma_t)\p{<}{}{\mathcal{T}^{(k)}x}$ such that $\mathcal{A}(\gamma_t) \leq a$. Since $\partial X$ has nondegenerate Reeb orbits, there are only finitely many Reeb orbits in $\partial X$ with action less than $a$. Therefore, possibly after going to a subsequence, we may assume that $\gamma_t \eqqcolon \gamma_0$ is independent of $t$.

    The curves $u_t$ satisfy the energy bound $E_{\tilde{\omega}}(u_t) \leq a$. By the SFT compactness theorem, the sequence $(u_t)_{t}$ converges to a holomorphic building
    \begin{IEEEeqnarray*}{c+x*}
        F = (F^1, \ldots, F^{L_0-1}, F^{L_0}, F^{{L_0}+1}, \ldots, F^N) \in \overline{\mathcal{M}}_X^{(J_t)_{t}}(\gamma_0)\p{<}{}{\mathcal{T}^{(k)}x},
    \end{IEEEeqnarray*}
    where
    \begin{IEEEeqnarray*}{rCls+x*}
        (X^{\nu}, \omega^\nu, \tilde{\omega}^{\nu}, J^{\nu})
        & \coloneqq &
        \begin{cases}
            (T^* L               , \edv \lambda_{T^* L}             , \tilde{\omega}^-           , J^-)            & \text{if } \nu = 1    , \\
            (\R \times M         , \edv(e^r \alpha)                 , \edv \alpha                , J_M)            & \text{if } \nu = 2    , \ldots, {L_0} - 1, \\
            (\hat{X} \setminus L , \hat{\omega}                     , \tilde{\omega}^+           , J^+)            & \text{if } \nu = {L_0}    , \\
            (\R \times \partial X, \edv (e^r \lambda|_{\partial X}) , \edv \lambda|_{\partial X} , J_{\partial X}) & \text{if } \nu = {L_0} + 1, \ldots, N    , \\
        \end{cases} \\
        (X^*, \omega^*, \tilde{\omega}^*, J^*) & \coloneqq & \coprod_{\nu = 1}^N (X^{\nu}, \omega^\nu, \tilde{\omega}^{\nu}, J^{\nu}),
    \end{IEEEeqnarray*}
    and $F^{\nu}$ is a $J^\nu$-holomorphic curve in $X^{\nu}$ with asymptotic Reeb orbits $\Gamma^{\pm}_{\nu}$ (see \cref{fig:holomorphic building in the proof}). The holomorphic building $F$ satisfies the energy bound
    \begin{IEEEeqnarray}{c+x*}
        \phantomsection\label{eq:energy of holo building in proof}
        E_{\tilde{\omega}^*}(F) \coloneqq \sum_{\nu = 1}^{N} E_{\tilde{\omega}^{\nu}}(F^{\nu}) \leq a.
    \end{IEEEeqnarray}

    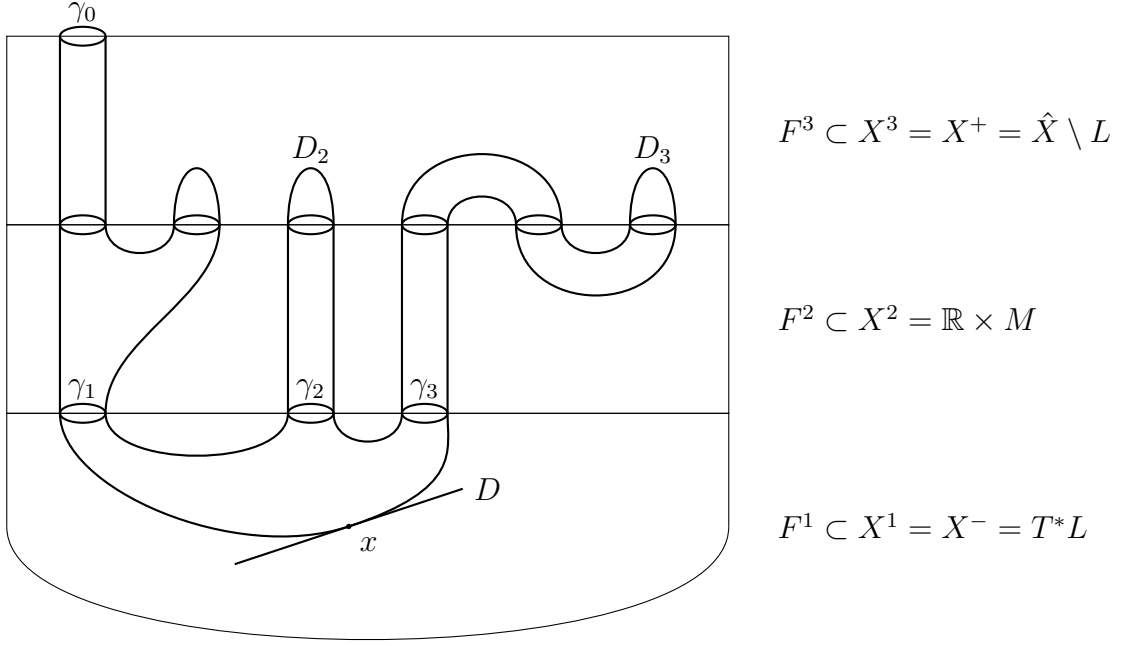
\begin{figure}[ht]
        \centering

        \begin{tikzpicture}
            [
                scale = 0.5,
                help/.style = {very thin, draw = black!50},
                curve/.style = {thick}
            ]

            \tikzmath{
                \rx = 0.6;
                \ry = 0.25;
            }

            % X^3
            \node[anchor=west] at (20, 13.5) {$F^3 \subset X^3 = X^+ = \hat{X} \setminus L$};
            \draw (0,6) rectangle (19,11);

            % X^2
            \node[anchor=west] at (20, 8.5) {$F^2 \subset X^2 = \R \times M$};
            \draw (0,11) rectangle (19,16);

            % X^1
            \node[anchor=west] at (20, 3) {$F^1 \subset X^1 = X^- = T^* L$};
            \draw (0,3) -- (0,6) -- (19,6) -- (19,3);
            \draw (0,3) .. controls (0,-1) and (19,-1) .. (19,3);

            % Reeb orbit locations
            \coordinate (G0) at ( 2,16);
            \coordinate (G1) at ( 2, 6);
            \coordinate (G2) at ( 8, 6);
            \coordinate (G3) at (11, 6);
            \coordinate (F1) at ( 2,11);
            \coordinate (F2) at ( 8,11);
            \coordinate (F3) at (11,11);
            \coordinate (F4) at ( 5,11);
            \coordinate (F5) at (14,11);
            \coordinate (F6) at (17,11);

            \coordinate (L) at (-\rx,0);
            \coordinate (R) at (+\rx,0);

            \coordinate (G0L) at ($ (G0) + (L) $);
            \coordinate (G1L) at ($ (G1) + (L) $);
            \coordinate (G2L) at ($ (G2) + (L) $);
            \coordinate (G3L) at ($ (G3) + (L) $);
            \coordinate (F1L) at ($ (F1) + (L) $);
            \coordinate (F2L) at ($ (F2) + (L) $);
            \coordinate (F3L) at ($ (F3) + (L) $);
            \coordinate (F4L) at ($ (F4) + (L) $);
            \coordinate (F5L) at ($ (F5) + (L) $);
            \coordinate (F6L) at ($ (F6) + (L) $);

            \coordinate (G0R) at ($ (G0) + (R) $);
            \coordinate (G1R) at ($ (G1) + (R) $);
            \coordinate (G2R) at ($ (G2) + (R) $);
            \coordinate (G3R) at ($ (G3) + (R) $);
            \coordinate (F1R) at ($ (F1) + (R) $);
            \coordinate (F2R) at ($ (F2) + (R) $);
            \coordinate (F3R) at ($ (F3) + (R) $);
            \coordinate (F4R) at ($ (F4) + (R) $);
            \coordinate (F5R) at ($ (F5) + (R) $);
            \coordinate (F6R) at ($ (F6) + (R) $);

            % Tangency constraint vectors
            \coordinate (P) at (9,3);
            \coordinate (D) at (3,1);

            % Reeb orbits
            \draw[curve] (G0) ellipse [x radius = \rx, y radius = \ry] node[above = 1] {$\gamma_0$};
            \draw[curve] (G1) ellipse [x radius = \rx, y radius = \ry] node[above = 1] {$\gamma_1$};
            \draw[curve] (G2) ellipse [x radius = \rx, y radius = \ry] node[above = 1] {$\gamma_2$};
            \draw[curve] (G3) ellipse [x radius = \rx, y radius = \ry] node[above = 1] {$\gamma_3$};
            \draw[curve] (F1) ellipse [x radius = \rx, y radius = \ry];
            \draw[curve] (F2) ellipse [x radius = \rx, y radius = \ry];
            \draw[curve] (F3) ellipse [x radius = \rx, y radius = \ry];
            \draw[curve] (F4) ellipse [x radius = \rx, y radius = \ry];
            \draw[curve] (F5) ellipse [x radius = \rx, y radius = \ry];
            \draw[curve] (F6) ellipse [x radius = \rx, y radius = \ry];

            % Symplectic divisor
            \fill (P) circle (2pt) node[anchor = north west] {$x$};
            \draw[curve] ($ (P) - (D) $) -- ( $ (P) + (D) $ ) node[anchor = west] {$D$};

            % Curve
            \draw[curve] (G1L) -- (G0L);
            \draw[curve] (F1R) -- (G0R);
            \draw[curve] (G2L) -- (F2L);
            \draw[curve] (G2R) -- (F2R);
            \draw[curve] (G3L) -- (F3L);
            \draw[curve] (G3R) -- (F3R);

            \draw[curve] (F4L) .. controls ($ (F4L) + (0,2) $) and ($ (F4R) + (0,2) $) .. (F4R);
            \draw[curve] (F2L) .. controls ($ (F2L) + (0,2) $) and ($ (F2R) + (0,2) $) .. (F2R);
            \draw[curve] (F6L) .. controls ($ (F6L) + (0,2) $) and ($ (F6R) + (0,2) $) .. (F6R);

            \draw[curve] (F3R) .. controls ($ (F3R) + (0,1) $) and ($ (F5L) + (0,1) $) .. (F5L);
            \draw[curve] (F5R) .. controls ($ (F5R) - (0,1) $) and ($ (F6L) - (0,1) $) .. (F6L);

            \draw[curve] (F3L) .. controls ($ (F3L) + (0,2.5) $) and ($ (F5R) + (0,2.5) $) .. (F5R);
            \draw[curve] (F5L) .. controls ($ (F5L) - (0,2.5) $) and ($ (F6R) - (0,2.5) $) .. (F6R);

            \draw[curve] (F1R) .. controls ($ (F1R) - (0,1) $) and ($ (F4L) - (0,1) $) .. (F4L);
            \draw[curve] (G1R) .. controls ($ (G1R) + (0,2) $) and ($ (F4R) - (0,2) $) .. (F4R);

            \draw[curve] (G1R) .. controls ($ (G1R) - (0,1.5) $) and ($ (G2L) - (0,1.5) $) .. (G2L);
            \draw[curve] (G2R) .. controls ($ (G2R) - (0,1) $) and ($ (G3L) - (0,1) $) .. (G3L);

            \draw[curve] (G1L) .. controls ($ (G1L) - (0,2) $) and ($ (P) - (D) $) .. (P);
            \draw[curve] (G3R) .. controls ($ (G3R) - (0,1) $) and ($ (P) + (D) $) .. (P);

            % Disks label
            \node at ($ (F2) + (0,2) $) {$D_2$};
            \node at ($ (F6) + (0,2) $) {$D_3$};
        \end{tikzpicture}

        \caption{The holomorphic building $F = (F^1, \ldots, F^N)$ in the case ${L_0} = N = p = 3$}
        \label{fig:holomorphic building in the proof}
    \end{figure}

    Moreover, by \cref{lem:no nodes}, $F$ has no nodes. Let $C$ be the component of $F$ in $X^-$ which carries the tangency constraint $\p{<}{}{\mathcal{T}^{(k)}x}$. Then, $C$ is positively asymptotic to Reeb orbits $(\gamma_1, \ldots, \gamma_p)$ of $M$. For $\mu = 1, \ldots, p$, let $C_\mu$ be the subtree emanating from $C$ at $\gamma_\mu$. For exactly one $\mu = 1, \ldots, p$, the top level of the subtree $C_\mu$ is positively asymptotic to $\gamma_0$, and we may assume without loss of generality that this is true for $\mu = 1$. By the maximum principle, $C_\mu$ has a component in $X^{L_0} = \hat{X} \setminus L$ for every $\mu = 2, \ldots, p$. Also by the maximum principle, there do not exist components of $C_\mu$ in $X^{L_0} = \hat{X} \setminus L$ which intersect $\R_{\geq 0} \times \partial X$ or components of $C_\mu$ in the top symplectization layers $X^{{L_0}+1}, \ldots, X^N$, for every $\mu = 2, \ldots, p$.

    We claim that if $\gamma$ is a Reeb orbit in $M$ which is an asymptote of $F^\nu$ for some $\nu = 2,\ldots,{L_0}-1$, then $\mathcal{A}(\gamma) \leq a$. To see this, notice that
    \begin{IEEEeqnarray*}{rCls+x*}
        a
        & \geq & E_{\tilde{\omega}^*}(F)           & \quad [\text{by Equation \eqref{eq:energy of holo building in proof}}] \\
        & \geq & E_{\tilde{\omega}^N}(F^N)         & \quad [\text{by monotonicity of $E$}] \\
        & \geq & (e^K - 1) \mathcal{A}(\Gamma^-_N) & \quad [\text{by \cref{lem:energy wrt different forms}}] \\
        & \geq & \mathcal{A}(\Gamma^-_N)           & \quad [\text{since $K \geq K_1$}] \\
        & \geq & \mathcal{A}(\Gamma^-_\nu)         & \quad [\text{by \cref{eq:energy identity}}]
    \end{IEEEeqnarray*}
    for every $\nu = 2, \ldots, {L_0}-1$. Every such $\gamma$ has a corresponding geodesic in $L$ (which by abuse of notation we denote also by $\gamma$) such that $\ell(\gamma) = \mathcal{A}(\gamma)/\delta \leq a / \delta = \ell_0$. Hence, by our choice of Riemannian metric, the geodesic $\gamma$ is noncontractible, nondegenerate and such that $\morse(\gamma) \leq n - 1$. Therefore, the Reeb orbit $\gamma$ is noncontractible, nondegenerate and such that $\conleyzehnder(\gamma) \leq n - 1$.

    We claim that if $D$ is a component of $C_\mu$ for some $\mu = 2,\ldots,p$ and $D$ is a plane, then $D$ is in $X^{L_0} = \hat{X} \setminus L$. Assume by contradiction otherwise. Notice that since $D$ is a plane, $D$ is asymptotic to a unique Reeb orbit $\gamma$ in $M = S^*_{\delta} L$ with corresponding noncontractible geodesic $\gamma$ in $L$. We will derive a contradiction by defining a filling disk for $\gamma$. If $D$ is in a symplectization layer $\R \times S^*_\delta L$, then the map $\pi \circ D$, where $\pi \colon \R \times S^*_{\delta} L \longrightarrow L$ is the projection, is a filling disk for the geodesic $\gamma$. If $D$ is in the bottom level, i.e. $X^1 = T^*L$, then the map $\pi \circ D$, where $\pi \colon T^*L \longrightarrow L$ is the projection, is also a filling disk. This proves the claim.

    So, summarizing our previous results, we know that for every $\mu = 2,\ldots,p$ there is a holomorphic plane $D_\mu$ in $X^{L_0} \setminus (\R_{\geq 0} \times \partial X) = X \setminus L$. For each plane $D_\mu$ there is a corresponding disk in $X$ with boundary on $L$, which we denote also by $D_\mu$. It is enough to show that $E_{\omega}(D_{\mu_0}) \leq a/k + \varepsilon$ for some $\mu_0 = 2,\ldots,p$. By \cref{lem:punctures and tangency}, $p \geq k + 1 \geq 2$. By definition of average, there exists $\mu_0 = 2,\ldots,p$ such that
    \begin{IEEEeqnarray*}{rCls+x*}
        E_{\omega}(D_{\mu_0})
        & \leq & \frac{1}{p-1} \sum_{\mu=2}^{p} E_{\omega}(D_{\mu})                           & \quad [\text{by definition of average}] \\
        & =    & \frac{E_{\omega}(D_2 \cup \cdots \cup D_p)}{p-1}                             & \quad [\text{since energy is additive}] \\
        & \leq & \frac{e^K}{e^K - 1} \frac{E_{\tilde{\omega}}(D_2 \cup \cdots \cup D_p)}{p-1} & \quad [\text{by \cref{lem:energy wrt different forms}}] \\
        & \leq & \frac{e^K}{e^K - 1} \frac{a}{p-1}                                            & \quad [\text{by Equation \eqref{eq:energy of holo building in proof}}] \\
        & \leq & \frac{e^K}{e^K - 1} \frac{a}{k}                                              & \quad [\text{since $p \geq k + 1$}] \\
        & \leq & \frac{a}{k} + \varepsilon                                                    & \quad [\text{since $K \geq K_2$}].                                  & \qedhere
    \end{IEEEeqnarray*}
\end{proof}

% ------------
% Consequences
% ------------

\begin{theorem}
    \label{lem:computation of cl}
    If $X_{\Omega}$ is a $4$-dimensional convex toric domain then
    \begin{IEEEeqnarray*}{c+x*}
        c_L(X_{\Omega}) = \delta_\Omega.
    \end{IEEEeqnarray*}
\end{theorem}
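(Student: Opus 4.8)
The plan is to establish the two inequalities $c_L(X_\Omega) \geq \delta_\Omega$ and $c_L(X_\Omega) \leq \delta_\Omega$ separately. The first is formal and does not use the dimension hypothesis: since $X_\Omega$ is convex we have $P(\delta_\Omega) \subset X_\Omega$, so $c_P(X_\Omega) \geq \delta_\Omega$ by \cref{lem:c square geq delta}, and since $X_\Omega$ is star-shaped, $c_L(X_\Omega) \geq c_P(X_\Omega)$ by \cref{lem:c square leq c lag}. Chaining these gives $c_L(X_\Omega) \geq \delta_\Omega$.

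For the upper bound, the key input is \cref{thm:lagrangian vs g tilde}, which gives $c_L(X_\Omega) \leq \inf_k \tilde{\mathfrak{g}}_k(X_\Omega)/k$. The plan is then to pass from $\tilde{\mathfrak{g}}_k$ to the Gutt--Hutchings capacities via \cref{prp:g tilde and cgh}, which asserts $\tilde{\mathfrak{g}}_k(X_\Omega) = \cgh{k}(X_\Omega)$ precisely for $4$-dimensional convex toric domains (this is where the dimension hypothesis enters). Next I would use the inclusion $X_\Omega \subset N(\delta_\Omega)$, valid for convex $X_\Omega$ with $n = 2$: if some $x \in \Omega$ had both coordinates strictly larger than $\delta_\Omega$, then convexity of $\hat\Omega$ together with the diagonal point $(\delta_\Omega,\delta_\Omega) \in \Omega$ would produce a point $(a,a) \in \Omega$ with $a > \delta_\Omega$, contradicting the definition of $\delta_\Omega$. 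By monotonicity of $\cgh{k}$ under the resulting (generalized Liouville) embedding, $\cgh{k}(X_\Omega) \leq \cgh{k}(N(\delta_\Omega))$, and \cref{lem:cgh of nondisjoint union of cylinders} with $n = 2$ evaluates the right-hand side as $\delta_\Omega(k+1)$. Putting the chain together yields $c_L(X_\Omega) \leq \delta_\Omega(k+1)/k$ for every $k \in \Z_{\geq 1}$, and taking the infimum over $k$ gives $c_L(X_\Omega) \leq \delta_\Omega$.

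There is no real obstacle here once \cref{thm:lagrangian vs g tilde} and \cref{prp:g tilde and cgh} are granted; the argument is just an assembly of these facts with the computation of \cref{lem:cgh of nondisjoint union of cylinders}. The only point requiring a moment's care is the elementary containment $X_\Omega \subset N(\delta_\Omega)$ for $4$-dimensional convex toric domains, which follows directly from convexity of $\hat\Omega$ and the definition of the diagonal. The substantive work has already been done in the proof of \cref{thm:lagrangian vs g tilde} via neck stretching and the Cieliebak--Mohnke geodesics lemma.
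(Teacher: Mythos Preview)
Your proposal is correct and follows essentially the same approach as the paper: the lower bound via \cref{lem:c square geq delta} and \cref{lem:c square leq c lag}, and the upper bound by chaining \cref{thm:lagrangian vs g tilde}, \cref{prp:g tilde and cgh}, the containment $X_\Omega \subset N(\delta_\Omega)$, monotonicity of $\cgh{k}$, and \cref{lem:cgh of nondisjoint union of cylinders}, then taking $k \to \infty$. The only minor remark is that your justification of $X_\Omega \subset N(\delta_\Omega)$ should invoke the reflection symmetry built into $\hat\Omega$ (so that $(x_1,x_2)\in\Omega$ with both coordinates $>\delta_\Omega$ forces the whole rectangle $[-x_1,x_1]\times[-x_2,x_2]\subset\hat\Omega$) rather than the diagonal point, but the paper simply asserts this containment without proof, so this does not affect the comparison.
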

\begin{proof}
    For every $k \in \Z_{\geq 1}$,
    \begin{IEEEeqnarray*}{rCls+x*}
        \delta_\Omega
        & \leq & c_P(X_{\Omega})                                & \quad [\text{by \cref{lem:c square geq delta}}] \\
        & \leq & c_L(X_{\Omega})                                & \quad [\text{by \cref{lem:c square leq c lag}}] \\
        & \leq & \frac{\tilde{\mathfrak{g}}_{k}(X_{\Omega})}{k} & \quad [\text{by \cref{thm:lagrangian vs g tilde}}] \\
        & =    & \frac{\cgh{k}(X_{\Omega})}{k}                  & \quad [\text{by \cref{prp:g tilde and cgh}}] \\
        & \leq & \frac{\cgh{k}(N(\delta_\Omega))}{k}            & \quad [\text{$X_{\Omega}$ is convex, hence $X_{\Omega} \subset N(\delta_{\Omega})$}] \\
        & =    & \frac{\delta_\Omega(k+1)}{k}                   & \quad [\text{by \cref{lem:cgh of nondisjoint union of cylinders}}].
    \end{IEEEeqnarray*}
    The result follows by taking the infimum over $k$.
\end{proof}

\section{Computations using the higher symplectic capacities}
\label{sec:4}

\subsection{Assumptions on virtual perturbation scheme}
\label{sec:assumptions of virtual perturbation scheme}

In this subsection, we wish to use techniques from contact homology to prove \cref{conj:the conjecture}. Consider the proof of \cref{lem:computation of cl}: to prove the inequality $c_L(X_{\Omega}) \leq \delta_\Omega$, we needed to use the fact that $\tilde{\mathfrak{g}}_k(X_{\Omega}) \leq \cgh{k}(X_{\Omega})$ (which is true if $X_{\Omega}$ is convex and $4$-dimensional). Our approach here will be to consider the capacities $\mathfrak{g}_{k}$ from \cite{siegelHigherSymplecticCapacities2020}, which satisfy $\tilde{\mathfrak{g}}_k(X) \leq \mathfrak{g}_k(X) = \cgh{k}(X)$. As we will see, $\mathfrak{g}_{k}(X)$ is defined using the linearized contact homology of $X$, where $X$ is any nondegenerate Liouville domain.

Very briefly, the linearized contact homology chain complex, denoted $CC(X)$, is generated by the good Reeb orbits of $\partial X$, and therefore maps whose domain is $CC(X)$ should count holomorphic curves which are asymptotic to Reeb orbits. The ``naive'' way to define such counts of holomorphic curves would be to show that they are the elements of a moduli space which is a compact, $0$-dimensional orbifold. However, there is the possibility that a curve is multiply covered. This means that in general it is no longer possible to show that the moduli spaces are transversely cut out, and therefore we do not have access to counts of moduli spaces of holomorphic curves (or at least not in the usual sense of the notion of signed count). In the case where the Liouville domain is $4$-dimensional, there exists the possibility of using automatic transversality techniques to show that the moduli spaces are regular. This is the approach taken by Wendl \cite{wendlAutomaticTransversalityOrbifolds2010}. Nelson \cite{nelsonAutomaticTransversalityContact2015}, Hutchings--Nelson \cite{hutchingsCylindricalContactHomology2016} and Bao--Honda \cite{baoDefinitionCylindricalContact2018} use automatic transversality to define cylindrical contact homology.

In order to define contact homology in more general contexts, one needs to use a suitable notion of virtual count, which is obtained through a virtual perturbation scheme. This was done by Pardon \cite{pardonAlgebraicApproachVirtual2016,pardonContactHomologyVirtual2019} to define contact homology in greater generality. The theory of polyfolds by Hofer--Wysocki--Zehnder \cite{hoferPolyfoldFredholmTheory2021} can also be used to define virtual moduli counts. Alternative approaches using Kuranishi structures (see \cite{fukayaLagrangianIntersectionFloer2010,fukayaLagrangianIntersectionFloer2010a}) have been given by Ishikawa \cite{ishikawaConstructionGeneralSymplectic2018} and Bao--Honda \cite{baoSemiglobalKuranishiCharts2021}.

Unfortunately, linearized contact homology is not yet defined in the generality we need. Indeed, in order to prove \cref{conj:the conjecture}, we need to use the capacities $\mathfrak{g}_k$. These are defined using the linearized contact homology and an augmentation map which counts curves satisfying a tangency constraint. As far as we know, the current work on defining virtual moduli counts does not yet deal with moduli spaces of curves satisfying tangency constraints.

So, during this section, we will work under assumption that it is possible to define a virtual perturbation scheme which makes the invariants and maps described above well-defined (this is expected to be the case).

\begin{assumption}
    \label{assumption}
    We assume the existence of a virtual perturbation scheme which to every compactified moduli space $\overline{\mathcal{M}}$ of asymptotically cylindrical holomorphic curves (in a symplectization or in a Liouville cobordism, possibly satisfying a tangency constraint) assigns a virtual count $\#^{\mathrm{vir}} \overline{\mathcal{M}}$. We will assume in addition that the virtual perturbation scheme has the following properties.
    \begin{enumerate}
        \item If $\#^{\mathrm{vir}} \overline{\mathcal{M}} \neq 0$ then $\operatorname{virdim} \overline{\mathcal{M}} = 0$;
        \item If $\overline{\mathcal{M}}$ is transversely cut out then $\#^{\mathrm{vir}} \overline{\mathcal{M}} = \# \overline{\mathcal{M}}$. In particular, if $\overline{\mathcal{M}}$ is empty then $\#^{\mathrm{vir}} \overline{\mathcal{M}} = 0$;
        \item The virtual count of the boundary of a moduli space (defined as a sum of virtual counts of the moduli spaces that constitute the codimension one boundary strata) is zero. In particular, the expected algebraic identities ($\partial^2 = 0$ for differentials, $\varepsilon \circ \partial = 0$ for augmentations) hold, as well as independence of auxiliary choices of almost complex structure and symplectic divisor.
    \end{enumerate}
\end{assumption}

\subsection{Linearized contact homology}

In this subsection, we define the linearized contact homology of a nondegenerate Liouville domain $X$. This is the homology of a chain complex $CC(X)$, which is described in \cref{def:linearized contact homology}. We also define an augmentation map (\cref{def:augmentation map}), which is necessary to define the capacities $\mathfrak{g}_k$.

\begin{definition}
    Let $(M,\alpha)$ be a contact manifold and $\gamma$ be a Reeb orbit in $M$. We say that $\gamma$ is \textbf{bad} if $\conleyzehnder(\gamma) - \conleyzehnder(\gamma_0)$ is odd, where $\gamma_0$ is the simple Reeb orbit that corresponds to $\gamma$. We say that $\gamma$ is \textbf{good} if it is not bad.
\end{definition}

Since the parity of the Conley--Zehnder index of a Reeb orbit is independent of the choice of trivialization, the definition above is well posed.

\begin{definition}
    \label{def:linearized contact homology}
    If $(X,\lambda)$ is a nondegenerate Liouville domain, the \textbf{linearized contact homology chain complex} of $X$, denoted $CC(X)$, is a chain complex given as follows. First, let $CC(X)$ be the vector space over $\Q$ generated by the set of good Reeb orbits of $(\partial X, \lambda|_{\partial X})$. The differential of $CC(X)$, denoted $\partial$, is given as follows. Choose $J \in \mathcal{J}(X)$. If $\gamma$ is a good Reeb orbit of $\partial X$, we define
    \begin{IEEEeqnarray*}{c+x*}
        \partial \gamma = \sum_{\eta} \p{<}{}{\partial \gamma, \eta}  \eta,
    \end{IEEEeqnarray*}
    where $\p{<}{}{\partial \gamma, \eta}$ is the virtual count (with combinatorial weights) of holomorphic curves in $\R \times \partial X$ with one positive asymptote $\gamma$, one negative asymptote $\eta$, and $k \geq 0$ extra negative asymptotes $\alpha_1,\ldots,\alpha_k$ (called \textbf{anchors}), each weighted by the count of holomorphic planes in $\hat{X}$ asymptotic to $\alpha_j$ (see \cref{fig:differential of lch}).
\end{definition}

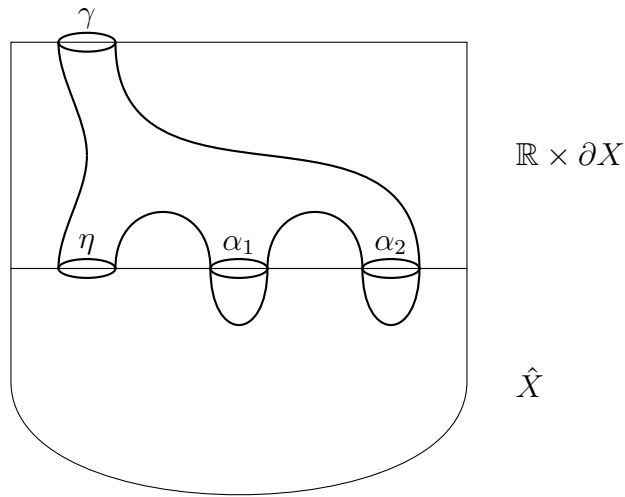
\begin{figure}[htp]
    \centering

    \begin{tikzpicture}
        [
            scale = 0.5,
            help/.style = {very thin, draw = black!50},
            curve/.style = {thick}
        ]

        \tikzmath{
            \rx = 0.75;
            \ry = 0.25;
        }

        % X^2
        \node[anchor=west] at (13,9) {$\R \times \partial X$};
        \draw (0,6) rectangle (12,12);

        % X^1
        \node[anchor=west] at (13,3) {$\hat{X}$};
        \draw (0,3) -- (0,6) -- (12,6) -- (12,3);
        \draw (0,3) .. controls (0,-1) and (12,-1) .. (12,3);

        % Reeb orbit locations
        \coordinate (G) at ( 2,12);
        \coordinate (E) at ( 2, 6);
        \coordinate (A) at ( 6, 6);
        \coordinate (B) at (10, 6);

        \coordinate (L) at (-\rx,0);
        \coordinate (R) at (+\rx,0);

        \coordinate (GL) at ($ (G) + (L) $);
        \coordinate (EL) at ($ (E) + (L) $);
        \coordinate (AL) at ($ (A) + (L) $);
        \coordinate (BL) at ($ (B) + (L) $);

        \coordinate (GR) at ($ (G) + (R) $);
        \coordinate (ER) at ($ (E) + (R) $);
        \coordinate (AR) at ($ (A) + (R) $);
        \coordinate (BR) at ($ (B) + (R) $);

        % Reeb orbits
        \draw[curve] (G) ellipse [x radius = \rx, y radius = \ry] node[above = 1] {$\gamma$};
        \draw[curve] (E) ellipse [x radius = \rx, y radius = \ry] node[above = 1] {$\eta$};
        \draw[curve] (A) ellipse [x radius = \rx, y radius = \ry] node[above = 1] {$\alpha_1$};
        \draw[curve] (B) ellipse [x radius = \rx, y radius = \ry] node[above = 1] {$\alpha_2$};

        % Curves
        \draw[curve] (ER) .. controls ($ (ER) + (0,2) $) and ($ (AL) + (0,2) $) .. (AL);
        \draw[curve] (AR) .. controls ($ (AR) + (0,2) $) and ($ (BL) + (0,2) $) .. (BL);

        \draw[curve] (AL) .. controls ($ (AL) - (0,2) $) and ($ (AR) - (0,2) $) .. (AR);
        \draw[curve] (BL) .. controls ($ (BL) - (0,2) $) and ($ (BR) - (0,2) $) .. (BR);

        \draw[curve] (GR) .. controls ($ (GR) - (0,5) $) and ($ (BR) + (0,5) $) .. (BR);

        \coordinate (C) at ($ (E) + (0,3) $);
        \draw[curve] (EL) .. controls ($ (EL) + (0,1) $) and ($ (C) - (0,1) $) .. (C);
        \draw[curve] (GL) .. controls ($ (GL) - (0,1) $) and ($ (C) + (0,1) $) .. (C);
    \end{tikzpicture}

    \caption{A holomorphic curve with anchors contributing to the coefficient $\p{<}{}{\partial \gamma, \eta}$}
    \label{fig:differential of lch}
\end{figure}

By assumption on the virtual perturbation scheme, $\partial \circ \partial = 0$ and $CC(X)$ is independent (up to chain homotopy equivalence) of the choice of almost complex structure $J$.

\begin{remark}
    \label{rmk:grading for lch}
    In general, the Conley--Zehnder index of a Reeb orbit is well-defined as an element of $\Z_2$. Therefore, the complex $CC(X)$ has a $\Z_{2}$-grading given by $\deg(\gamma) \coloneqq \conleyzehnder(\gamma)$, and with respect to this definition the differential $\partial$ has degree $-1$. If $\pi_1(X) = 0$ and $2 c_1(TX) = 0$, then the Conley--Zehnder index of Reeb orbit is well-defined as an element of $\Z$, which means that $CC(X)$ is $\Z$-graded.
\end{remark}

\begin{definition}
    \label{def:action filtration lch}
    For every $a \in \R$, we denote by $CC^{a}(X)$ the submodule of $CC(X)$ generated by the good Reeb orbits $\gamma$ with action $\mathcal{A}(\gamma) \leq a$. We call this filtration the \textbf{action filtration} of $CC(X)$.
\end{definition}

In the next lemma, we check that this filtration is compatible with the differential.

\begin{lemma}
    \label{lem:action filtration of lch}
    $\partial(CC^{a}(X)) \subset CC^{a}(X)$.
\end{lemma}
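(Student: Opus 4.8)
The plan is to show that the differential does not increase the action, by a direct application of Stokes' theorem (equivalently, positivity of energy) to the holomorphic curves it counts. Fix $a \in \R$ and a good Reeb orbit $\gamma$ of $\partial X$ with $\mathcal{A}(\gamma) \leq a$. Since $CC^{a}(X)$ is generated by such orbits, it suffices to prove that every good Reeb orbit $\eta$ with $\p{<}{}{\partial \gamma, \eta} \neq 0$ satisfies $\mathcal{A}(\eta) \leq a$, for then $\partial \gamma \in CC^{a}(X)$ for every generator $\gamma$ of $CC^{a}(X)$.

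First I would unwind \cref{def:linearized contact homology}: if $\p{<}{}{\partial \gamma, \eta} \neq 0$, then by \cref{assumption} (a nonzero virtual count forces the corresponding moduli space to be nonempty) there is an asymptotically cylindrical $J$-holomorphic curve $C$ in $\R \times \partial X$ with one positive puncture asymptotic to $\gamma$, one negative puncture asymptotic to $\eta$, and some number $k \geq 0$ of further negative punctures asymptotic to Reeb orbits $\alpha_1, \ldots, \alpha_k$ (the anchors), together with $J$-holomorphic planes in $\hat{X}$ asymptotic to the $\alpha_j$. For the estimate only the curve $C$ is needed.

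Next I would apply the energy identity. Since $J$ is cylindrical, $C^*(\edv \alpha) \geq 0$ pointwise, so $E_{\tilde{\omega}}(C) = \int_{\dot{\Sigma}} C^*(\edv \alpha) \geq 0$, and Stokes' theorem (exactly as in \eqref{eq:energy identity}, applied to a truncation of the symplectization, which is a Liouville cobordism with $\partial^{\pm} = \partial X$) gives
\begin{equation*}
    0 \leq E_{\tilde{\omega}}(C) = \mathcal{A}(\gamma) - \mathcal{A}(\eta) - \sum_{j=1}^{k} \mathcal{A}(\alpha_j).
\end{equation*}
Because the action of any Reeb orbit is positive, this yields $\mathcal{A}(\eta) \leq \mathcal{A}(\gamma) \leq a$, hence $\eta \in CC^{a}(X)$, which completes the argument.

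I do not expect any real obstacle here; it is essentially a one-line consequence of positivity of energy. The only points requiring care are (i) invoking the correct property of the virtual perturbation scheme in \cref{assumption} to pass from a nonzero virtual count to an honest holomorphic curve, and (ii) observing that the anchor planes can only lower the action further, so they do not spoil the estimate. The same reasoning will apply verbatim to the filtered versions of the other chain-level maps (augmentations, continuation maps) that appear later in this section.
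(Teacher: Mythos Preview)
Your proposal is correct and follows essentially the same approach as the paper: reduce to a generator $\gamma$ with $\mathcal{A}(\gamma)\leq a$, use \cref{assumption} to extract an actual holomorphic curve in $\R\times\partial X$ from $\p{<}{}{\partial\gamma,\eta}\neq 0$, and apply the energy identity \eqref{eq:energy identity} to conclude $\mathcal{A}(\eta)\leq\mathcal{A}(\gamma)\leq a$. The paper's version is slightly terser (it packages $\eta,\alpha_1,\ldots,\alpha_p$ into a single tuple $\Gamma$ and writes $\mathcal{A}(\eta)\leq\mathcal{A}(\Gamma)\leq\mathcal{A}(\gamma)$), but the content is identical.
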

\begin{proof}
    Let $\gamma$, $\eta$ be good Reeb orbits such that
    \begin{IEEEeqnarray*}{rCls+x*}
        \mathcal{A}(\gamma)            & \leq & a, \\
        \p{<}{}{\partial \gamma, \eta} & \neq & 0.
    \end{IEEEeqnarray*}
    We wish to show that $\mathcal{A}(\eta) \leq a$. Since $\p{<}{}{\partial \gamma, \eta} \neq 0$ and by assumption on the virtual perturbation scheme, there exists a tuple of Reeb orbits $\Gamma = (\eta, \alpha_1, \ldots, \alpha_p)$ and a (nontrivial) punctured $J$-holomorphic sphere in $\R \times \partial X$ with positive asymptote $\gamma$ and negative asymptotes $\Gamma$. Then,
    \begin{IEEEeqnarray*}{rCls+x*}
        \mathcal{A}(\eta)
        & \leq & \mathcal{A}(\Gamma) & \quad [\text{since $\eta \in \Gamma$}] \\
        & \leq & \mathcal{A}(\gamma) & \quad [\text{by Equation \eqref{eq:energy identity}}] \\
        & \leq & a                   & \quad [\text{by assumption on $\gamma$}].               & \qedhere
    \end{IEEEeqnarray*}
\end{proof}

\begin{definition}
    \label{def:augmentation map}
    Consider the complex $(CC(X), \partial)$. For each $k \in \Z_{\geq 1}$, we define an augmentation ${\epsilon}_k \colon CC(X) \longrightarrow \Q$ as follows. Choose $x \in \itr X$, a symplectic divisor $D$ at $x$, and an almost complex structure $J \in \mathcal{J}(X,D)$. Then, for every good Reeb orbit $\gamma$ define ${\epsilon}_k (\gamma)$ to be the virtual count of $J$-holomorphic planes in $\hat{X}$ which are positively asymptotic to $\gamma$ and have contact order $k$ to $D$ at $x$ (see \cref{fig:augmentation of lch}).
\end{definition}

\begin{figure}[htp]
    \centering

    \begin{tikzpicture}
        [
            scale = 0.5,
            help/.style = {very thin, draw = black!50},
            curve/.style = {thick}
        ]

        \tikzmath{
            \rx = 0.75;
            \ry = 0.25;
        }

        % X^1
        \node[anchor=west] at (13,3) {$\hat{X}$};
        \draw (0,3) -- (0,6) -- (12,6) -- (12,3);
        \draw (0,3) .. controls (0,-1) and (12,-1) .. (12,3);

        % Reeb orbit locations
        \coordinate (G1) at (6,6);
        % \coordinate (G2) at (8,6);

        \coordinate (L) at (-\rx,0);
        \coordinate (R) at (+\rx,0);

        \coordinate (G1L) at ($ (G1) + (L) $);
        % \coordinate (G2L) at ($ (G2) + (L) $);

        \coordinate (G1R) at ($ (G1) + (R) $);
        % \coordinate (G2R) at ($ (G2) + (R) $);

        % Tangency constraint vectors
        \coordinate (P) at (7,2);
        \coordinate (D) at (1, 0.5);

        % Reeb orbits
        \draw[curve] (G1) ellipse [x radius = \rx, y radius = \ry] node[above = 1] {$\gamma$};
        % \draw[curve] (G2) ellipse [x radius = \rx, y radius = \ry] node[above = 1] {$\gamma_2$};

        % Symplectic divisor
        \fill (P) circle (2pt) node[anchor = north west] {$x$};
        \draw[curve] ($ (P) - (D) $) -- ( $ (P) + (D) $ ) node[anchor = west] {$D$};

        % Curve
        % \draw[curve] (G1R) .. controls ($ (G1R) - (0,2) $) and ($ (G2L) - (0,2) $) .. (G2L);
        \draw[curve] (G1L) .. controls ($ (G1L) - (0,2) $) and ($ (P) - (D) $) .. (P);
        \draw[curve] (G1R) .. controls ($ (G1R) - (0,2) $) and ($ (P) + (D) $) .. (P);
    \end{tikzpicture}

    \caption{A holomorphic curve contributing to the count $\epsilon_k(\gamma)$}
    \label{fig:augmentation of lch}
\end{figure}
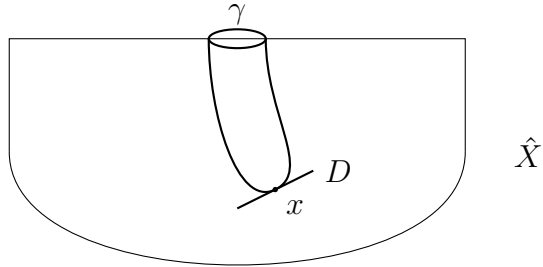

By Equation \eqref{eq:virtual dimension}, $\epsilon_k$ is a map $\epsilon_k \colon CC_{n-1+2k}(X) \longrightarrow \Q$. By assumption on the virtual perturbation scheme, ${\epsilon}_k$ is an augmentation, i.e. ${\epsilon}_k \circ \partial = 0$. Therefore, there is a corresponding map $\epsilon_k \colon CH_{n-1+2k}(X) \longrightarrow \Q$ on homology. In addition, ${\epsilon}_k$ is independent (up to chain homotopy) of the choices of $x, D, J$.

\subsection{Higher symplectic capacities}

% Definitions

% Here we define the symplectic capacities $\mathfrak{g}_k$ from \cite{siegelHigherSymplecticCapacities2020}. We will prove the usual properties of symplectic capacities (see \cref{thm:properties of hsc}), namely monotonicity and conformality. In addition, we prove that the value of the capacities $\mathfrak{g}_k$ can be represented by the action of a tuple of Reeb orbits. In \cref{rmk:computations using reeb orbits property} we show how this property could in principle be combined with results from \cite{guttSymplecticCapacitiesPositive2018} to compare the capacities $\mathfrak{g}_k(X_{\Omega})$ and $\cgh{k}(X_{\Omega})$ when $X_{\Omega}$ is a convex or concave toric domain.

\begin{definition}[{\cite[Section 6.1]{siegelHigherSymplecticCapacities2020}}]
    \label{def:capacities glk}
    Let $k, \ell \in \Z_{\geq 1}$ and $(X,\lambda)$ be a nondegenerate Liouville domain. The \textbf{higher symplectic capacities} of $X$ are given by
    \begin{IEEEeqnarray*}{c+x*}
        \mathfrak{g}_k(X) \coloneqq \inf \{ a > 0 \mid \epsilon_k \colon CH^a(X) \longrightarrow \Q \text{ is nonzero} \}.
    \end{IEEEeqnarray*}
\end{definition}

The capacities $\mathfrak{g}_{k}$ will be useful to us because they have similarities with the McDuff--Siegel capacities $\tilde{\mathfrak{g}}_k$, but also with the Gutt--Hutchings capacities $\cgh{k}$. More specifically:
\begin{enumerate}
    \item Both $\mathfrak{g}_{k}$ and $\tilde{\mathfrak{g}}_k$ are related to the energy of holomorphic planes in $X$ which are asymptotic to a Reeb orbit and satisfy a tangency constraint. In \cref{thm:g tilde vs g hat}, we will actually see that $\tilde{\mathfrak{g}}_k(X) \leq \mathfrak{g}_k(X)$. The capacities $\mathfrak{g}_k$ can be thought of as the SFT counterparts of $\tilde{\mathfrak{g}}_k$, or alternatively the capacities $\tilde{\mathfrak{g}}_k$ can be thought of as the counterparts of $\mathfrak{g}_k$ whose definition does not require the holomorphic curves to be regular.
    \item Both $\mathfrak{g}_{k}$ and $\cgh{k}$ are defined in terms of a map on homology being nonzero. In the case of $\mathfrak{g}_{k}$, we consider the linearized contact homology, and in the case of $\cgh{k}$ the invariant in question is $S^1$-equivariant symplectic homology. Taking into consideration the Bourgeois--Oancea isomorphism (see \cite{bourgeoisEquivariantSymplecticHomology2016}) between linearized contact homology and positive $S^1$-equivariant symplectic homology, one can think of $\mathfrak{g}_{k}$ and $\cgh{k}$ as restatements of one another under this isomorphism. This is the idea behind the proof of \cref{thm:g hat vs gh}, where we show that $\mathfrak{g}_{k}(X) = \cgh{k}(X)$.
\end{enumerate}

\begin{remark}
    \label{rmk:novikov coefficients}
    In the case where $X$ is only an exact symplectic manifold instead of a Liouville domain, we do not have access to an action filtration on $CC(X)$. However, it is possible to define linearized contact homology with coefficients in a Novikov ring $\Lambda_{\geq 0}$, in which case a coefficient in $\Lambda_{\geq 0}$ encodes the energy of a holomorphic curve. This is the approach taken in \cite{siegelHigherSymplecticCapacities2020} to define the capacities $\mathfrak{g}_{k}$. It is not obvious that the definition of $\mathfrak{g}_k$ we give and the one in \cite{siegelHigherSymplecticCapacities2020} are equivalent. However, \cref{def:capacities glk} seems to be the natural analogue when we have access to an action filtration, and in addition the definition we provide will be enough for our purposes.
\end{remark}

\begin{theorem}
    \label{thm:properties of hsc}
    The functions $\mathfrak{g}_k$ satisfy the following properties, for all nondegenerate Liouville domains $(X,\lambda_X)$ and $(Y,\lambda_Y)$ of the same dimension:
    \begin{description}
        \item[(Monotonicity)] If $X \longrightarrow Y$ is an exact symplectic embedding then $\mathfrak{g}_k(X) \leq \mathfrak{g}_k(Y)$.
        \item[(Conformality)] If $\mu > 0$ then ${\mathfrak{g}}_k(X, \mu \lambda_X) = \mu  {\mathfrak{g}}_k(X, \lambda_X)$.
        \item[(Reeb orbits)] If $\pi_1(X) = 0$, $2 c_1(TX) = 0$ and ${\mathfrak{g}}_k(X) < + \infty$, then there exists a Reeb orbit $\gamma$ such that ${\mathfrak{g}}_k(X) = \mathcal{A}(\gamma)$ and $\conleyzehnder(\gamma) =  n - 1 + 2 k$.
    \end{description}
\end{theorem}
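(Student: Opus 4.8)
The plan is to verify the three axioms directly from \cref{def:capacities glk}, in each case tracing how a functorial structure on linearized contact homology interacts with the augmentations $\epsilon_k$ and the action filtration $CH^a$. For \textbf{conformality}, I would observe that scaling the Liouville form $\lambda_X \rightsquigarrow \mu \lambda_X$ rescales all Reeb periods by $\mu$, so that the action filtration is simply reparametrized: there is a canonical identification $CC^a(X,\lambda_X) \cong CC^{\mu a}(X,\mu\lambda_X)$ that intertwines the differentials and commutes with the augmentation maps $\epsilon_k$ (the relevant moduli spaces of planes with a tangency constraint are unchanged, since scaling the symplectic form does not affect which maps are $J$-holomorphic once $J$ is adjusted accordingly). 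Hence $\epsilon_k \colon CH^a(X,\lambda_X) \to \Q$ is nonzero if and only if $\epsilon_k \colon CH^{\mu a}(X, \mu \lambda_X) \to \Q$ is nonzero, and taking the infimum over $a$ gives ${\mathfrak{g}}_k(X, \mu \lambda_X) = \mu\, {\mathfrak{g}}_k(X, \lambda_X)$.

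For \textbf{monotonicity}, suppose $\iota \colon X \longrightarrow Y$ is an exact symplectic embedding. The standard cobordism/transfer construction for linearized contact homology yields a chain map (in the appropriate sense, after choosing compatible data) $CC(Y) \to CC(X)$ which respects the action filtration and, crucially, is compatible with the augmentations: one places the point constraint $x$ and divisor $D$ inside $\iota(X) \subset Y$, so that a plane in $\hat Y$ asymptotic to a Reeb orbit of $\partial Y$ and satisfying $\p{<}{}{\mathcal{T}^{(k)}x}$ breaks, under neck-stretching along $\partial X$, into a plane in $\hat X$ carrying the same tangency constraint together with curves in the cobordism $\hat Y \setminus \hat X$. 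This shows that if $\epsilon_k^X \colon CH^a(X) \to \Q$ is nonzero then $\epsilon_k^Y \colon CH^a(Y) \to \Q$ is nonzero (the relevant count in $Y$ factors through the one in $X$), whence $\mathfrak{g}_k(Y) \geq \mathfrak{g}_k(X)$. All the algebraic identities needed here ($\epsilon_k \circ \partial = 0$, functoriality of the transfer, $\epsilon_k$ commutes with transfer up to chain homotopy) are guaranteed by \cref{assumption}.

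For the \textbf{Reeb orbits} property, assume $\pi_1(X) = 0$ and $2c_1(TX) = 0$, so that $CC(X)$ is $\Z$-graded and $\epsilon_k$ lives in degree $n - 1 + 2k$ by \cref{eq:virtual dimension}. If $\mathfrak{g}_k(X) < +\infty$, let $a \coloneqq \mathfrak{g}_k(X)$. Since $\partial X$ is nondegenerate, the set of Reeb orbits with action $\leq a + 1$ is finite, so the filtration $\{CH^{a'}(X)\}$ jumps only at finitely many values in $(0, a+1]$; in particular the infimum defining $\mathfrak{g}_k$ is attained, i.e. $\epsilon_k \colon CH^a(X) \to \Q$ is nonzero while $\epsilon_k \colon CH^{a'}(X) \to \Q$ vanishes for all $a' < a$. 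Picking a class $\beta \in CH^a(X)$ with $\epsilon_k(\beta) \neq 0$ and a cycle representative $z \in CC^a(X)$ of $\beta$, some good Reeb orbit $\gamma$ appearing in $z$ must have $\mathcal{A}(\gamma) = a$ (otherwise $z$ would already be supported in lower filtration, contradicting minimality of $a$), and that $\gamma$ contributes to $\epsilon_k$ so it has degree $n - 1 + 2k$, i.e. $\conleyzehnder(\gamma) = n - 1 + 2k$. The main obstacle is making this last step fully rigorous: one must argue that the $\epsilon_k$-dual class cannot be pushed below filtration level $a$, which uses that $\epsilon_k$ is filtered (it only counts curves of energy governed by the asymptotic orbit) together with exactness of the filtered homology long exact sequence — this is the one place where a little care with the action-filtered chain complex is genuinely needed, the rest being formal consequences of \cref{assumption} and the structure already developed for $\cgh{k}$ and $\tilde{\mathfrak{g}}_k$.
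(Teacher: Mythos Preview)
Your conformality and Reeb orbits arguments are correct and essentially match the paper's proof. For Reeb orbits, the worry you flag at the end is not a real obstacle: since $\epsilon_k$ is only sensitive to degree $n-1+2k$, you may as well take your cycle $z$ to be homogeneous of that degree from the start, and then every Reeb orbit appearing in $z$ already has $\conleyzehnder = n-1+2k$; the discreteness of the action spectrum then gives the conclusion exactly as you wrote.

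The monotonicity paragraph, however, has the implication running the wrong way. You correctly set up the transfer map $\phi_! \colon CH^a(Y) \to CH^a(X)$ and correctly explain, via neck-stretching with the constraint placed inside $\iota(X)$, why $\epsilon_k^Y = \epsilon_k^X \circ \phi_!$ on the filtered pieces. But from a factorization $\epsilon_k^Y = \epsilon_k^X \circ \phi_!$ one deduces that if $\epsilon_k^Y$ is nonzero on $CH^a(Y)$ then $\epsilon_k^X$ is nonzero on $CH^a(X)$, \emph{not} the reverse implication you stated. (Your stated implication would in fact yield $\mathfrak{g}_k(X) \geq \mathfrak{g}_k(Y)$, the wrong inequality; you then commit a second sign slip in passing from the implication to the inequality, and the two errors cancel.) The paper runs the diagram in the correct direction: from the commutative square with $\phi_!$ one reads off that nonvanishing of $\epsilon_k^Y$ at level $a$ forces nonvanishing of $\epsilon_k^X$ at level $a$, hence $\mathfrak{g}_k(X) \leq \mathfrak{g}_k(Y)$. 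Once you fix the direction of the arrow, your argument is the same as the paper's.
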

\begin{proof}
    We prove monotonicity. If $(X, \lambda^X) \longrightarrow (Y, \lambda^Y)$ is an exact symplectic embedding, then it is possible to define a Viterbo transfer map $CH(Y) \longrightarrow CH(X)$. This map respects the action filtration as well as the augmentation maps, i.e. the diagram
    \begin{IEEEeqnarray*}{c+x*}
        \begin{tikzcd}
            CH^a(Y) \ar[d] \ar[r] & CH(Y) \ar[d] \ar[r, "{\epsilon}_{k}^Y"] & \Q \ar[d, equals] \\
            CH^a(X) \ar[r]        & CH(X) \ar[r, swap, "{\epsilon}_{k}^X"]  & \Q
        \end{tikzcd}
    \end{IEEEeqnarray*}
    commutes. The result then follows by definition of $\tilde{\mathfrak{g}}_k$.

    We prove conformality. If $\gamma$ is a Reeb orbit of $(\partial X, \lambda|_{\partial X})$ of action $\mathcal{A}_{\lambda}(\gamma)$ then $\gamma$ is a Reeb orbit of $(\partial X, \mu \lambda|_{\partial X})$ of action $\mathcal{A}_{\mu \lambda}(\gamma) = \mu \mathcal{A}_{\lambda}(\gamma)$. Therefore, there is a diagram
    \begin{IEEEeqnarray*}{c+x*}
        \begin{tikzcd}
            CH^a      (X,     \lambda) \ar[d, equals] \ar[r] & CH(X,     \lambda) \ar[d, equals] \ar[r, "{\epsilon}_{k}^{\lambda}"] & \Q \ar[d, equals] \\
            CH^{\mu a}(X, \mu \lambda) \ar[r]                & CH(X, \mu \lambda) \ar[r, swap, "{\epsilon}_{k}^{\mu \lambda}"]     & \Q
        \end{tikzcd}
    \end{IEEEeqnarray*}
    Again, the result follows by definition of $\mathfrak{g}_{k}$.

    We prove the Reeb orbits property. Choose a point $x \in \itr X$, a symplectic divisor $D$ through $x$ and an almost complex structure $J \in \mathcal{J}(X,D)$. Consider the complex $CC(X)$, computed with respect to $J$. By assumption and definition of $\mathfrak{g}_{k}$,
    \begin{IEEEeqnarray*}{rCls+x*}
        + \infty
        & > & \mathfrak{g}_k(X) \\
        & = & \inf \{ a > 0 \mid \epsilon_k \colon CH^a(X) \longrightarrow \Q \text{ is nonzero} \} \\
        & = & \inf \{ \mathcal{A}(\beta) \mid \beta \in CC(X) \text{ is such that } {\epsilon}_k (\beta) \neq 0 \text{ and } \partial \beta = 0 \},
    \end{IEEEeqnarray*}
    where $\beta = \sum_{i=1}^{m} a_i \gamma_i$ is a linear combination of Reeb orbits and $\mathcal{A}(\beta) \coloneqq \max_{i=1,\ldots,m} \gamma_i$. Since the action spectrum of $(\partial X, \lambda|_{\partial X})$ is a discrete subset of $\R$, we conclude that in the above expression the infimum is a minimum. More precisely, there exists $\beta = \sum_{i=1}^{m} a_i \gamma_i \in CC_{n-1+2k}(X)$ such that $\epsilon_k(\beta) \neq 0$ and $\mathfrak{g}_k(X) = \mathcal{A}(\beta)$. One of the orbits in this linear combination is such that $\mathcal{A}(\gamma_i) = \mathcal{A}(\beta) = \mathfrak{g}_k(X)$.
\end{proof}

\begin{remark}
    \label{rmk:computations using reeb orbits property}
    In \cite[Theorem 1.6]{guttSymplecticCapacitiesPositive2018} (respectively \cite[Theorem 1.14]{guttSymplecticCapacitiesPositive2018}) Gutt--Hutchings give formulas for $\cgh{k}$ of a convex (respectively concave) toric domain. However, the given proofs only depend on specific properties of the Gutt--Hutchings capacity and not on the definition of the capacity itself. These properties are monotonicity, conformality, a ``Reeb orbits'' property similar to the one of \cref{thm:properties of hsc}, and finally that the capacity be finite on star-shaped domains. If we showed that $\mathfrak{g}_{k}$ is finite on star-shaped domains, we would conclude that $\mathfrak{g}_{k} = \cgh{k}$ on convex or concave toric domains, because in this case both capacities would be given by the formulas in the previously mentioned theorems. Showing that $\mathfrak{g}_{k}$ is finite boils down to showing that the augmentation map is nonzero, which we will do in \cref{sec:augmentation map of an ellipsoid}. However, in \cref{thm:g hat vs gh} we will use this information in combination with the Bourgeois--Oancea isomorphism to conclude that $\mathfrak{g}_{k}(X) = \cgh{k}(X)$ for any nondegenerate Liouville domain $X$. Therefore, the proof suggested above will not be necessary, although it is a proof of $\mathfrak{g}_{k}(X) = \cgh{k}(X)$ alternative to that of \cref{thm:g hat vs gh} when $X$ is a convex or concave toric domain.
\end{remark}

\subsection{Cauchy--Riemann operators on bundles}
\label{sec:cr operators}

In order to show that $\mathfrak{g}_{k}(X) = \cgh{k}(X)$, we will need to show that the augmentation map of a small ellipsoid in $X$ is nonzero (see the proof of \cref{thm:g hat vs gh}). Recall that the augmentation map counts holomorphic curves satisfying a tangency constraint. In \cref{sec:augmentation map of an ellipsoid}, we will explicitly compute how many such holomorphic curves there are. However, a count obtained by explicit methods will not necessarily agree with the virtual count that appears in the definition of the augmentation map. By assumption on the virtual perturbation scheme, it does agree if the relevant moduli space is transversely cut out.

Therefore, in this subsection and the next we will describe the framework that allows us to show that this moduli space is transversely cut out. This subsection deals with the theory of real linear Cauchy--Riemann operators on line bundles, and our main reference is \cite{wendlAutomaticTransversalityOrbifolds2010}. The outline is as follows. First, we review the basic definitions about real linear Cauchy--Riemann operators. By the Riemann--Roch theorem (\cref{thm:riemann roch with punctures}), these operators are Fredholm and their index can be computed from a number of topological quantities associated to them. We will make special use of a criterion by Wendl (\cref{prp:wen D surjective injective criterion}) which guarantees that a real linear Cauchy--Riemann operator defined on a complex line bundle is surjective. For our purposes, we will also need an adaptation of this result to the case where the operator is accompanied by an evaluation map, which we state in \cref{lem:D plus E is surjective}. We now state the assumptions for the rest of this subsection.

Let $(\Sigma, j)$ be a compact Riemann surface without boundary, of genus $g$, with sets of positive and negative punctures $\mathbf{z}^{\pm} = \{z^{\pm}_1,\ldots,z^{\pm}_{p^{\pm}}\}$. Denote $\mathbf{z} = \mathbf{z}^{+} \cup \mathbf{z}^{-}$ and $\dot{\Sigma} = \Sigma \setminus \mathbf{z}$. Choose cylindrical coordinates $(s,t)$ near each puncture $z \in \mathbf{z}$ and denote $\mathcal{U}_z \subset \dot{\Sigma}$ the domain of the coordinates $(s,t)$.

% --------------------------------
% New text, to replace definitions
% --------------------------------

We assume that we are given an {asymptotically Hermitian vector bundle}
\begin{IEEEeqnarray*}{c+x*}
    (E,J) \longrightarrow \dot{\Sigma}, \quad (E_z, J_z, \omega_z) \longrightarrow S^1, \quad \text{for each } z \in \mathbf{z}
\end{IEEEeqnarray*}
over $\dot{\Sigma}$ (see \cite[p.~68]{wendlLecturesSymplecticField2016}). If $\tau = (\tau_z)_{z \in \mathbf{z}}$ is an {asymptotic trivialization} of $E$ (i.e. each $\tau_z$ is a unitary trivialization of $(E_z, J_z, \omega_z)$, see \cite[p.~68]{wendlLecturesSymplecticField2016}), then one can define {Sobolev spaces} of sections of $E$, denoted by $W^{k,p}(E)$, and {weighted Sobolev spaces} of sections of $E$, denoted by $W^{k,p,\delta}(E)$. Let $\mathbf{D}$ be a {real linear Cauchy--Riemann operator} on $E$ (see \cite[Definition C.1.5]{mcduffHolomorphicCurvesSymplectic2012}), together with corresponding asymptotic operators $(\mathbf{A}_z)_{z \in \mathbf{z}}$ (see \cite[Definition 3.25]{wendlLecturesSymplecticField2016}). Some topological quantities which are going to be relevant to us are:
\begin{enumerate}
    \item The \textbf{Euler characteristic} of $\dot{\Sigma}$, which is given by $\chi(\dot{\Sigma}) = 2 - 2 g - \# \mathbf{z}$;
    \item The \textbf{relative first Chern number} of $E$ (with respect to the trivialization $\tau$), which is an integer denoted by $c_1^{\tau}(E) \in \Z$ (see \cite[Definition 5.1]{wendlLecturesSymplecticField2016});
    \item The \textbf{Conley--Zehnder} index (with respect to the trivialization $\tau$) of an asymptotic operator $\mathbf{A}_z$, which is an integer denoted by $\conleyzehnder^{\tau}(\mathbf{A}_z)$ (see \cite[Definitions 3.30 and 3.31]{wendlLecturesSymplecticField2016}).
\end{enumerate}

Using these quantities, we can state the following version of the Riemann--Roch theorem.

\begin{theorem}[Riemann--Roch, {\cite[Theorem 5.4]{wendlLecturesSymplecticField2016}}]
    \label{thm:riemann roch with punctures}
    The operator $\mathbf{D}$ is Fredholm and its (real) Fredholm index is given by
    \begin{IEEEeqnarray*}{c+x*}
        \operatorname{ind} \mathbf{D} = n \chi (\dot{\Sigma}) + 2 c_1^{\tau}(E) + \sum_{z \in \mathbf{z}^+} \conleyzehnder^{\tau}(\mathbf{A}_z) - \sum_{z \in \mathbf{z}^-} \conleyzehnder^{\tau}(\mathbf{A}_z).
    \end{IEEEeqnarray*}
\end{theorem}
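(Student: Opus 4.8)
The plan is to follow the standard route: first establish the Fredholm property via exponential weights, then compute the index by a sequence of reductions --- homotopy invariance, additivity under direct sums, and finally a capping-off argument for line bundles --- invoking the classical Riemann--Roch theorem at the end.

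First I would prove that $\mathbf{D}$ is Fredholm. In cylindrical coordinates $(s,t)$ near a puncture $z$, after conjugating by the chosen exponential weight the operator takes the asymptotic form $\partial_s + \mathbf{A}_z + \delta_z$ (with the sign of $\delta_z$ depending on whether $z$ is a positive or negative puncture), where $\mathbf{A}_z$ is a self-adjoint asymptotic operator on $S^1$ with discrete real spectrum. Since the weights are chosen so that $\pm \delta_z \notin \operatorname{spec}(\mathbf{A}_z)$, the translation-invariant operator $\partial_s + \mathbf{A}_z + \delta_z$ on the full cylinder is invertible; a parametrix-patching argument (following Lockhart--McOwen, cf.~\cite[Chapter 4]{wendlLecturesSymplecticField2016}) combining these invertible model operators on the ends with the elliptic operator $\mathbf{D}$ on the compact core of $\dot{\Sigma}$ then shows that $\mathbf{D}$ is Fredholm.

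For the index, I would first observe that the set of real linear Cauchy--Riemann operators on $(E,J)$ with prescribed asymptotic operators is affine --- the difference of two such is a zeroth-order bundle map --- hence connected, so by continuity of the Fredholm index $\operatorname{ind} \mathbf{D}$ depends only on the data $(E, \tau, (\mathbf{A}_z)_z)$; it is moreover unchanged under homotopies of the $\mathbf{A}_z$ through nondegenerate asymptotic operators within a fixed spectral gap, and all three topological quantities on the right-hand side are likewise invariant. Next I would use that both sides are additive under direct sums of the data $(E, \mathbf{D}, \tau, (\mathbf{A}_z)_z)$ --- $\chi(\dot{\Sigma})$ scales by the complex rank, while $c_1^{\tau}$, $\sum \conleyzehnder^{\tau}$ and $\operatorname{ind}$ are additive --- together with the fact that, after a homotopy, $(E, \mathbf{D})$ splits as a direct sum of complex line bundles with Cauchy--Riemann operators, the splitting arranged to be compatible with the asymptotic trivializations. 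This reduces the statement to the case of line bundles, i.e. $n = 1$.

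In the line bundle case I would cap off: glue a closed disk to each puncture to form a closed surface $\hat{\Sigma}$, extend $E$ to a line bundle $\hat{E} \to \hat{\Sigma}$ and $\mathbf{D}$ to a Cauchy--Riemann operator $\hat{\mathbf{D}}$, and invoke the linear gluing (excision) formula $\operatorname{ind} \hat{\mathbf{D}} = \operatorname{ind} \mathbf{D} + \sum_z \operatorname{ind} \mathbf{D}_z$, where $\mathbf{D}_z$ is a Cauchy--Riemann operator on the disk at $z$ whose asymptotic operator at the center matches $\mathbf{A}_z$. A direct computation of kernel and cokernel for a Cauchy--Riemann operator on the disk with an asymptotic condition of given winding number yields $\operatorname{ind} \mathbf{D}_z$ explicitly, and comparing with the definition of the Conley--Zehnder index as a winding number identifies the disk contributions; at the same time the caps account for the difference $c_1(\hat{E}) - c_1^{\tau}(E)$ in a matching way. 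Feeding in the classical Riemann--Roch formula $\operatorname{ind}_{\R} \hat{\mathbf{D}} = 2 - 2g + 2 c_1(\hat{E})$ and the identity $\chi(\dot{\Sigma}) = 2 - 2g - \# \mathbf{z}$, the terms reassemble into the claimed formula. I expect the main obstacle to be precisely this last bookkeeping step: carrying out the disk index computation and relating it to $\conleyzehnder^{\tau}(\mathbf{A}_z)$, while tracking how the trivialization-dependent quantities $c_1^{\tau}(E)$ and $\conleyzehnder^{\tau}(\mathbf{A}_z)$ transform under the gluing so that their $\tau$-dependence cancels, the net index being trivialization-independent as it must be.
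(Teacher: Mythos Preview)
The paper does not supply its own proof of this theorem; it is quoted verbatim as a black box from \cite[Theorem 5.4]{wendlLecturesSymplecticField2016}. Your outline is the standard argument and is essentially the one carried out in that reference, so there is nothing to compare against here.
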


For the rest of this subsection, we restrict ourselves to the case where $n = \operatorname{rank}_{\C} E = 1$. Our goal is to state a criterion that guarantees surjectivity of $\mathbf{D}$. This criterion depends on other topological quantities whose definition we now recall (see \cite[Section 2.2]{wendlAutomaticTransversalityOrbifolds2010}).

For every $\lambda$ in the spectrum of $\mathbf{A}_z$, let $w^{\tau}(\lambda)$ be the winding number of any nontrivial section in the $\lambda$-eigenspace of $\mathbf{A}_z$ (computed with respect to the trivialization $\tau$). Define the \textbf{winding numbers}
\begin{IEEEeqnarray*}{rClls+x*}
    \alpha_-^{\tau}(\mathbf{A}_z) & \coloneqq & \max & \{ w^{\tau}(\lambda) \mid \lambda < 0 \text{ is in the spectrum of }\mathbf{A}_z \}, \\
    \alpha_+^{\tau}(\mathbf{A}_z) & \coloneqq & \min & \{ w^{\tau}(\lambda) \mid \lambda > 0 \text{ is in the spectrum of }\mathbf{A}_z \}.
\end{IEEEeqnarray*}
The \textbf{parity} (the reason for this name is Equation \eqref{eq:cz winding parity} below) and associated sets of even and odd punctures are given by
\begin{IEEEeqnarray*}{rCls+x*}
    p(\mathbf{A}_{z}) & \coloneqq & \alpha_{+}^{\tau}(\mathbf{A}_z) - \alpha^{\tau}_{-}(\mathbf{A}_z) \in \{0,1\}, \\
    \mathbf{z}_0      & \coloneqq & \{ z \in \mathbf{z} \mid p(\mathbf{A}_z) = 0 \}, \\
    \mathbf{z}_1      & \coloneqq & \{ z \in \mathbf{z} \mid p(\mathbf{A}_z) = 1 \}.
\end{IEEEeqnarray*}
Finally, the \textbf{adjusted first Chern number} is given by
\begin{IEEEeqnarray*}{c+x*}
    c_1(E,\mathbf{A}_{\mathbf{z}}) = c_1^{\tau}(E) + \sum_{z \in \mathbf{z}^+} \alpha_-^{\tau}(\mathbf{A}_z) - \sum_{z \in \mathbf{z}^-} \alpha_-^{\tau}(\mathbf{A}_z).
\end{IEEEeqnarray*}
These quantities satisfy the following equations.
\begin{IEEEeqnarray}{rCls+x*}
    \conleyzehnder^{\tau}(\mathbf{A}_z) & = & 2 \alpha_{-}^{\tau}(\mathbf{A_z}) + p(\mathbf{A}_z) = 2 \alpha_{+}^{\tau}(\mathbf{A_z}) - p(\mathbf{A}_z), \phantomsection\label{eq:cz winding parity} \\
    2 c_1 (E,\mathbf{A}_{\mathbf{z}})   & = & \operatorname{ind} \mathbf{D} - 2 - 2g + \# \mathbf{z}_0. \phantomsection\label{eq:chern and index}
\end{IEEEeqnarray}

\begin{proposition}[{\cite[Proposition 2.2]{wendlAutomaticTransversalityOrbifolds2010}}]
    \phantomsection\label{prp:wen D surjective injective criterion}
    \begin{enumerate}
        \item[]
        \item If $\operatorname{ind} \mathbf{D} \leq 0$ and $c_1(E, \mathbf{A}_{\mathbf{z}}) < 0$ then $\mathbf{D}$ is injective.
        \item If $\operatorname{ind} \mathbf{D} \geq 0$ and $c_1(E, \mathbf{A}_{\mathbf{z}}) < \operatorname{ind} \mathbf{D}$ then $\mathbf{D}$ is surjective.
    \end{enumerate}
\end{proposition}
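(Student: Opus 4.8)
The plan is to deduce both statements from a single estimate on the kernel, namely: \emph{every nontrivial $v \in \ker \mathbf{D}$ has a finite, nonnegative algebraic count of zeros $Z(v)$, and $Z(v) \leq c_1(E, \mathbf{A}_{\mathbf{z}})$.} Granting this, statement (1) is immediate, since $c_1(E, \mathbf{A}_{\mathbf{z}}) < 0$ then forces $\ker \mathbf{D} = 0$; the hypothesis $\operatorname{ind} \mathbf{D} \leq 0$ is the obviously necessary consistency condition (as $\operatorname{ind} \mathbf{D} = \dim_{\R} \ker \mathbf{D} - \dim_{\R} \coker \mathbf{D}$) and is not otherwise needed for the kernel estimate. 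Statement (2) follows by duality. The formal adjoint $\mathbf{D}^{*}$ (identifying $\coker \mathbf{D}$ with $\ker \mathbf{D}^{*}$ using appropriate weighted Sobolev spaces with reversed weights) is again a real linear Cauchy--Riemann operator on a complex line bundle over $\dot{\Sigma}$, now with the roles of $\mathbf{z}^{+}$ and $\mathbf{z}^{-}$ interchanged; one checks that $\operatorname{ind} \mathbf{D}^{*} = -\operatorname{ind} \mathbf{D}$, that the parity data and hence $\#\mathbf{z}_0$ are unchanged, and therefore, by \eqref{eq:chern and index}, that $c_1(E^{*}, \mathbf{A}^{*}_{\mathbf{z}}) = c_1(E, \mathbf{A}_{\mathbf{z}}) - \operatorname{ind} \mathbf{D}$. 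Since $\mathbf{D}$ is surjective if and only if $\mathbf{D}^{*}$ is injective, the hypotheses $\operatorname{ind} \mathbf{D} \geq 0$ and $c_1(E, \mathbf{A}_{\mathbf{z}}) < \operatorname{ind} \mathbf{D}$ become exactly $\operatorname{ind} \mathbf{D}^{*} \leq 0$ and $c_1(E^{*}, \mathbf{A}^{*}_{\mathbf{z}}) < 0$, so statement (1) applied to $\mathbf{D}^{*}$ gives the claim.

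To prove the kernel estimate I would argue in three steps. First, by the Carleman similarity principle a nontrivial $v \in \ker \mathbf{D}$ is locally of the form $v = \Phi f$ with $\Phi$ a continuous bundle isomorphism and $f$ a holomorphic function; hence $v$ has isolated zeros, each of strictly positive local degree, so the total count $Z(v)$ is well-defined and $Z(v) \geq 0$ (finiteness follows once one knows $v$ does not vanish identically near a puncture, which is part of the asymptotic analysis below). Second, since the asymptotic operators $\mathbf{A}_z$ are nondegenerate, the exponential-convergence theorem for asymptotically cylindrical Cauchy--Riemann equations shows that near each puncture $z$ the section $v$ is asymptotic to $e^{\lambda s} e(t)$ for some eigenfunction $e$ of $\mathbf{A}_z$ with eigenvalue $\lambda$ of the correct sign (negative at positive punctures, positive at negative punctures, in order for $v$ to lie in the relevant space); since the winding $w^{\tau}(\lambda)$ is nondecreasing in $\lambda$, this yields $\operatorname{wind}_z^{\tau}(v) \leq \alpha_-^{\tau}(\mathbf{A}_z)$ for $z \in \mathbf{z}^{+}$ and $\operatorname{wind}_z^{\tau}(v) \geq \alpha_+^{\tau}(\mathbf{A}_z)$ for $z \in \mathbf{z}^{-}$. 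Third, a Stokes-type computation (the relative argument principle on $\dot{\Sigma}$, essentially the definition of the relative first Chern number) gives
\begin{IEEEeqnarray*}{c}
    Z(v) = c_1^{\tau}(E) + \sum_{z \in \mathbf{z}^{+}} \operatorname{wind}_z^{\tau}(v) - \sum_{z \in \mathbf{z}^{-}} \operatorname{wind}_z^{\tau}(v).
\end{IEEEeqnarray*}
Combining the winding bounds with $\alpha_+^{\tau}(\mathbf{A}_z) = \alpha_-^{\tau}(\mathbf{A}_z) + p(\mathbf{A}_z)$ from \eqref{eq:cz winding parity} and the definition of $c_1(E, \mathbf{A}_{\mathbf{z}})$ yields
\begin{IEEEeqnarray*}{rCl}
    0 & \leq & Z(v) \\
      & \leq & c_1^{\tau}(E) + \sum_{z \in \mathbf{z}^{+}} \alpha_-^{\tau}(\mathbf{A}_z) - \sum_{z \in \mathbf{z}^{-}} \alpha_+^{\tau}(\mathbf{A}_z) \\
      & =    & c_1(E, \mathbf{A}_{\mathbf{z}}) - \sum_{z \in \mathbf{z}^{-}} p(\mathbf{A}_z) \\
      & \leq & c_1(E, \mathbf{A}_{\mathbf{z}}),
\end{IEEEeqnarray*}
which is the desired inequality.

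The main obstacle is the asymptotic analysis underlying the winding bounds: one needs the exponential-decay/convergence theorem so that the leading asymptotic term of $v$ is a genuine eigenfunction of $\mathbf{A}_z$ with a well-defined winding number, together with the monotonicity of $w^{\tau}$ and the $\{0,1\}$-valuedness of the parity $p(\mathbf{A}_z)$ for asymptotic operators on $S^1$. This is also where the rank-one hypothesis $\operatorname{rank}_{\C} E = 1$ is genuinely used, since it is what makes the local positivity of zeros, the scalar winding numbers, and hence the argument principle available; in higher rank there is no such statement. The remaining points — setting up consistent weighted Sobolev spaces at the punctures, verifying that the formal adjoint is again a Cauchy--Riemann operator on a line bundle, and tracking the transformation of $\operatorname{ind}$, $\#\mathbf{z}_0$, and the adjusted first Chern number under passing to $\mathbf{D}^{*}$ — are routine but must be carried out carefully so that the duality argument for (2) is legitimate.
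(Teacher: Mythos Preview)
The paper does not give its own proof of this proposition; it is quoted from \cite{wendlAutomaticTransversalityOrbifolds2010} and the surrounding \cref{rmk:formulas for xi in ker nonzero} merely records the key formula $c_1(E,\mathbf{A}_{\mathbf{z}}) = Z(\xi) + Z_{\infty}(\xi)$ (Equation~\eqref{eq:c1 and asy vanishing}) without derivation. Your proposal is correct and is precisely Wendl's argument: the kernel estimate you prove is exactly Equation~\eqref{eq:c1 and asy vanishing} (your chain of inequalities is the statement $Z(v) + Z_\infty(v) = c_1(E,\mathbf{A}_{\mathbf{z}})$ with $Z_\infty(v) \geq 0$, once one notes that the term $\sum_{z \in \mathbf{z}^-} p(\mathbf{A}_z)$ you isolate is just a lower bound for $Z_\infty(v)$), and the duality reduction of (2) to (1) via the formal adjoint, with the bookkeeping $c_1(E^*,\mathbf{A}^*_{\mathbf{z}}) = c_1(E,\mathbf{A}_{\mathbf{z}}) - \operatorname{ind}\mathbf{D}$ read off from \eqref{eq:chern and index}, is the standard route.
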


We will apply the proposition above to moduli spaces of punctured spheres which have no even punctures. The following corollary is just a restatement of the previous proposition in this simpler case.

\begin{corollary}
    \label{lem:conditions for D surjective genus zero}
    Assume that $g = 0$ and $\# \mathbf{z}_0 = 0$. Then,
    \begin{enumerate}
        \item If $\operatorname{ind} \mathbf{D} \leq 0$ then $\mathbf{D}$ is injective.
        \item If $\operatorname{ind} \mathbf{D} \geq 0$ then $\mathbf{D}$ is surjective.
    \end{enumerate}
\end{corollary}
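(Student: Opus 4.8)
The plan is to deduce everything directly from Wendl's criterion (\cref{prp:wen D surjective injective criterion}) by feeding it the value of the adjusted first Chern number computed from \eqref{eq:chern and index}. Specializing that identity to $g = 0$ and $\# \mathbf{z}_0 = 0$ gives
\begin{IEEEeqnarray*}{c+x*}
    2 c_1(E, \mathbf{A}_{\mathbf{z}}) = \operatorname{ind} \mathbf{D} - 2,
\end{IEEEeqnarray*}
so that $c_1(E, \mathbf{A}_{\mathbf{z}}) = \operatorname{ind} \mathbf{D}/2 - 1$ (in particular $\operatorname{ind}\mathbf{D}$ is even, which is automatic here).

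For part (1), suppose $\operatorname{ind} \mathbf{D} \leq 0$. Then $c_1(E, \mathbf{A}_{\mathbf{z}}) = \operatorname{ind}\mathbf{D}/2 - 1 \leq -1 < 0$, so both hypotheses of the first item of \cref{prp:wen D surjective injective criterion} are met, and $\mathbf{D}$ is injective.

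For part (2), suppose $\operatorname{ind} \mathbf{D} \geq 0$. Then I would check the inequality $c_1(E, \mathbf{A}_{\mathbf{z}}) < \operatorname{ind} \mathbf{D}$: since $c_1(E,\mathbf{A}_{\mathbf{z}}) = \operatorname{ind}\mathbf{D}/2 - 1$, this is equivalent to $\operatorname{ind}\mathbf{D}/2 - 1 < \operatorname{ind}\mathbf{D}$, i.e. to $\operatorname{ind}\mathbf{D} > -2$, which holds by assumption. Hence the hypotheses of the second item of \cref{prp:wen D surjective injective criterion} are satisfied and $\mathbf{D}$ is surjective. There is essentially no obstacle here — the only thing to be careful about is that the corollary is genuinely just the $g=0$, $\#\mathbf{z}_0 = 0$ shadow of the general proposition, with the Chern number eliminated in favour of the index via \eqref{eq:chern and index}; the substantive content all lives in \cref{prp:wen D surjective injective criterion} and in the Riemann--Roch bookkeeping already recorded above.
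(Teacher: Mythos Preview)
Your proof is correct and follows exactly the paper's approach: the paper's proof reads in its entirety ``By \cref{prp:wen D surjective injective criterion} and Equation \eqref{eq:chern and index},'' and you have simply unpacked this by specializing \eqref{eq:chern and index} to $g=0$, $\#\mathbf{z}_0 = 0$ and verifying the hypotheses of \cref{prp:wen D surjective injective criterion}.
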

\begin{proof}
    By \cref{prp:wen D surjective injective criterion} and Equation \eqref{eq:chern and index}.
\end{proof}

We now wish to deal with the case where $\mathbf{D}$ is taken together with an evaluation map (see \cref{lem:D plus E is surjective} below). The tools we need to prove this result are explained in the following remark.

\begin{remark}[{\cite[p.~362-363]{wendlAutomaticTransversalityOrbifolds2010}}]
    \label{rmk:formulas for xi in ker nonzero}
    Suppose that $\ker \mathbf{D} \neq \{0\}$. If $\xi \in \ker \mathbf{D} \setminus \{0\}$, it is possible to show that $\xi$ has only a finite number of zeros, all of positive order, i.e. if $w$ is a zero of $\xi$ then $\operatorname{ord}(\xi;w) > 0$. For every $z \in \mathbf{z}$, there is an \textbf{asymptotic winding number} $\operatorname{wind}_z^{\tau}(\xi) \in \Z$, which has the properties
    \begin{IEEEeqnarray*}{rCls+x*}
        z \in \mathbf{z}^+ & \Longrightarrow & \operatorname{wind}_z^{\tau}(\xi) \leq \alpha_-^{\tau}(\mathbf{A}_z), \\
        z \in \mathbf{z}^- & \Longrightarrow & \operatorname{wind}_z^{\tau}(\xi) \geq \alpha_+^{\tau}(\mathbf{A}_z).
    \end{IEEEeqnarray*}
    Define the \textbf{asymptotic vanishing} of $\xi$, denoted $Z_{\infty}(\xi)$, and the \textbf{count of zeros}, denoted $Z(\xi)$, by
    \begin{IEEEeqnarray*}{rCls+x*}
        Z_{\infty}(\xi) & \coloneqq & \sum_{z \in \mathbf{z}^+} \p{}{1}{\alpha_-^{\tau}(\mathbf{A}_z) - \operatorname{wind}_z^{\tau}(\xi)} + \sum_{z \in \mathbf{z}^-} \p{}{1}{\operatorname{wind}_z^{\tau}(\xi) - \alpha_+^{\tau}(\mathbf{A}_z)} \in \Z_{\geq 0}, \\
        Z(\xi)          & \coloneqq & \sum_{w \in \xi^{-1}(0)} \operatorname{ord}(\xi;w) \in \Z_{\geq 0}.
    \end{IEEEeqnarray*}
    In this case, we have the formula (see \cite[Equation 2.7]{wendlAutomaticTransversalityOrbifolds2010})
    \begin{IEEEeqnarray}{c}
        \phantomsection\label{eq:c1 and asy vanishing}
        c_1(E,\mathbf{A}_{\mathbf{z}}) = Z(\xi) + Z_{\infty}(\xi).
    \end{IEEEeqnarray}
\end{remark}

\begin{lemma}
    \label{lem:D plus E is surjective}
    Let $w \in \dot{\Sigma}$ be a point and $\mathbf{E} \colon W^{1,p}(\dot{\Sigma}, E) \longrightarrow E_w$ be the evaluation map at $w$, i.e. $\mathbf{E}(\xi) = \xi_w$. Assume that $g = 0$ and $\# \mathbf{z}_0 = 0$. If $\operatorname{ind} \mathbf{D} = 2$ then $\mathbf{D} \oplus \mathbf{E} \colon W^{1,p}(\dot{\Sigma}, E) \longrightarrow L^p(\dot{\Sigma}, \Hom^{0,1}(T \dot{\Sigma}, E)) \oplus E_w$ is surjective.
\end{lemma}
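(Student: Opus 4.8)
The plan is to show that $\mathbf{D} \oplus \mathbf{E}$ is surjective by a dimension count together with an injectivity-type argument on the kernel, exactly as in \cite[p.~362-363]{wendlAutomaticTransversalityOrbifolds2010}. Since $\operatorname{ind} \mathbf{D} = 2 \geq 0$ and $g = 0$, $\# \mathbf{z}_0 = 0$, \cref{lem:conditions for D surjective genus zero} already tells us that $\mathbf{D}$ itself is surjective, so $\dim \ker \mathbf{D} = \operatorname{ind} \mathbf{D} = 2$ (as a real vector space, noting that $\ker \mathbf{D}$ carries a complex structure since we are in the line bundle case, so it is complex $1$-dimensional). Because $\mathbf{E} \colon E_w \cong \R^2$ has $2$-dimensional target and $\mathbf{D}$ is already onto its factor, $\mathbf{D} \oplus \mathbf{E}$ is surjective if and only if $\mathbf{E}|_{\ker \mathbf{D}} \colon \ker \mathbf{D} \longrightarrow E_w$ is surjective; and since both sides are $2$-dimensional, this is equivalent to $\mathbf{E}|_{\ker \mathbf{D}}$ being injective.

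So the key step is: if $\xi \in \ker \mathbf{D}$ and $\xi_w = 0$, then $\xi = 0$. Suppose not, so $\xi \in \ker \mathbf{D} \setminus \{0\}$ with $\xi_w = 0$. By \cref{rmk:formulas for xi in ker nonzero}, $\xi$ has only finitely many zeros, all of strictly positive order, and $w$ is one of them, so $Z(\xi) \geq \operatorname{ord}(\xi; w) \geq 1$. Combining this with $Z_\infty(\xi) \geq 0$ and Equation \eqref{eq:c1 and asy vanishing} gives
\begin{IEEEeqnarray*}{c}
    c_1(E, \mathbf{A}_{\mathbf{z}}) = Z(\xi) + Z_\infty(\xi) \geq 1.
\end{IEEEeqnarray*}
On the other hand, Equation \eqref{eq:chern and index} with $g = 0$ and $\# \mathbf{z}_0 = 0$ gives $2 c_1(E, \mathbf{A}_{\mathbf{z}}) = \operatorname{ind} \mathbf{D} - 2 = 0$, hence $c_1(E, \mathbf{A}_{\mathbf{z}}) = 0$. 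This contradicts $c_1(E, \mathbf{A}_{\mathbf{z}}) \geq 1$. Therefore no such nonzero $\xi$ exists, $\mathbf{E}|_{\ker \mathbf{D}}$ is injective, hence an isomorphism onto $E_w$, and $\mathbf{D} \oplus \mathbf{E}$ is surjective.

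I expect the only delicate point to be the bookkeeping around $\ker \mathbf{D}$ being complex $1$-dimensional so that $\dim_\R \ker \mathbf{D} = 2 = \dim_\R E_w$ — this is what makes "injective $\iff$ surjective" on the kernel legitimate — together with citing correctly that every zero of a nonzero element of the kernel has positive order (the similarity principle for solutions of the Cauchy--Riemann type equation, as recorded in \cref{rmk:formulas for xi in ker nonzero}). Everything else is a direct substitution into Equations \eqref{eq:c1 and asy vanishing} and \eqref{eq:chern and index}. No neck-stretching or compactness input is needed here; this is purely a linear functional-analytic statement.
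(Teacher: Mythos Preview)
Your proof is correct and follows essentially the same route as the paper: show $\mathbf{D}$ is surjective via \cref{lem:conditions for D surjective genus zero}, reduce to injectivity of $\mathbf{E}|_{\ker \mathbf{D}}$ by the dimension match $\dim_{\R}\ker\mathbf{D} = \operatorname{ind}\mathbf{D} = 2 = \dim_{\R} E_w$, and derive a contradiction from Equations \eqref{eq:chern and index} and \eqref{eq:c1 and asy vanishing}. One small remark: your parenthetical that ``$\ker \mathbf{D}$ carries a complex structure since we are in the line bundle case'' is not generally true for a \emph{real} linear Cauchy--Riemann operator and is in any case unnecessary---the equality $\dim_{\R}\ker\mathbf{D}=2$ follows directly from surjectivity of $\mathbf{D}$ and $\operatorname{ind}\mathbf{D}=2$, which is all you use.
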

\begin{proof}
    It is enough to show that the maps
    \begin{IEEEeqnarray*}{rCls+x*}
        \mathbf{D} \colon W^{1,p}(\dot{\Sigma}, E)           & \longrightarrow & L^p(\dot{\Sigma}, \Hom^{0,1}(T \dot{\Sigma}, E)), \\
        \mathbf{E}|_{\ker \mathbf{D}} \colon \ker \mathbf{D} & \longrightarrow & E_w
    \end{IEEEeqnarray*}
    are surjective. By \cref{lem:conditions for D surjective genus zero}, $\mathbf{D}$ is surjective. Since $\dim \ker \mathbf{D} = \operatorname{ind} \mathbf{D} = 2$ and $\dim_{\R} E_w = 2$, the map $\mathbf{E}|_{\ker \mathbf{D}}$ is surjective if and only if it is injective. So, we show that $\ker(E|_{\ker \mathbf{D}}) = \ker \mathbf{E} \cap \ker \mathbf{D} = \{0\}$. For this, let $\xi \in \ker \mathbf{E} \cap \ker \mathbf{D}$ and assume by contradiction that $\xi \neq 0$. Consider the quantities defined in \cref{rmk:formulas for xi in ker nonzero}. We compute
    \begin{IEEEeqnarray*}{rCls+x*}
        0
        & =    & \operatorname{ind} \mathbf{D} - 2 & \quad [\text{by assumption}] \\
        & =    & 2 c_1(E,\mathbf{A}_{\mathbf{z}})  & \quad [\text{by Equation \eqref{eq:chern and index}}] \\
        & =    & 2 Z(\xi) + 2 Z_{\infty}(\xi)      & \quad [\text{by Equation \eqref{eq:c1 and asy vanishing}}] \\
        & \geq & 0                                 & \quad [\text{by definition of $Z$ and $Z_{\infty}$}],
    \end{IEEEeqnarray*}
    which implies that $Z(\xi) = 0$. This gives the desired contradiction, because
    \begin{IEEEeqnarray*}{rCls+x*}
        0
        & =    & Z(\xi)                                             & \quad [\text{by the previous computation}] \\
        & =    & \sum_{z \in \xi^{-1}(0)} \operatorname{ord}(\xi;z) & \quad [\text{by definition of $Z$}] \\
        & \geq & \operatorname{ord}(\xi;w)                          & \quad [\text{since $\xi_w = \mathbf{E}(\xi) = 0$}] \\
        & >    & 0                                                  & \quad [\text{by \cref{rmk:formulas for xi in ker nonzero}}]. & \qedhere
    \end{IEEEeqnarray*}
\end{proof}

\subsection{Cauchy--Riemann operators as sections}
\phantomsection\label{sec:functional analytic setup}

In this subsection, we phrase the notion of a map $u \colon \dot{\Sigma} \longrightarrow \hat{X}$ being holomorphic in terms of $u$ being in the zero set of a section $\overline{\partial} \colon \mathcal{T} \times \mathcal{B} \longrightarrow \mathcal{E}$. The goal of this point of view is that we can then think of moduli spaces of holomorphic curves in $\hat{X}$ as the zero set of the section $\overline{\partial}$. To see if such a moduli space is regular near $(j, u)$, one needs to consider the linearization $\mathbf{L}_{(j,u)}$ of $\overline{\partial}$ at $(j,u)$, and prove that it is surjective. We will see that a suitable restriction of $\mathbf{L}_{(j,u)}$ is a real linear Cauchy--Riemann operator, and therefore we can use the theory from the last subsection to show that $\mathbf{L}_{(j,u)}$ is surjective in some particular cases (\cref{lem:Du is surjective case n is 1,lem:DX surj implies DY surj}).

\begin{definition}
    \label{def:moduli space with asy markers}
    Let $(X, \omega, \lambda)$ be a symplectic cobordism, $J \in \mathcal{J}(X)$ be a cylindrical almost complex structure on $\hat{X}$, and $\Gamma^{\pm} = (\gamma^{\pm}_1, \ldots, \gamma^{\pm}_{p^{\pm}})$ be tuples of Reeb orbits on $\partial^{\pm} X$. Consider the sphere $S^2$ together with a set of punctures $\mathbf{z}^{\pm} = \{z^{\pm}_1, \ldots, z^{\pm}_{p^{\pm}}\} \subset S^2$ and a corresponding set of asymptotic markers $\mathbf{v}^{\pm} = \{v^{\pm}_1, \ldots, v^{\pm}_{p^{\pm}}\}$ (i.e., $v_i^{\pm} \in (T_{z_i^{\pm}} S^2 \setminus \{0\}) / \R_{> 0}$). Define $\mathcal{M}^{\$,J}_X(\Gamma^+, \Gamma^-)$ to be the moduli space of (equivalence classes of) pairs $(j,u)$, where $j$ is an almost complex structure on $S^2$ and $u \colon (\dot{S}^2, j) \longrightarrow (\hat{X}, J)$ is an asymptotically cylindrical holomorphic curve such that
    \begin{enumerate}
        \item $u$ is positively/negatively asymptotic to $\gamma^{\pm}_i$ at $z^{\pm}_i$ for all $i$;
        \item If $c$ is a path in $S^2$ with $c(0) = z_i^{\pm}$ and $\dot{c}(0) = v_i^{\pm}$ for some $i$, then $\lim_{t \to 0^+} u(c(t)) = (\pm \infty, \gamma^{\pm}_i(0))$.
    \end{enumerate}
\end{definition}

\begin{remark}
    \label{rmk:counts of moduli spaces with or without asy markers}
    There is a surjective map $\pi^{\$} \colon \mathcal{M}^{\$, J}_{X}(\Gamma^+, \Gamma^-) \longrightarrow \mathcal{M}^{J}_{X}(\Gamma^+, \Gamma^-)$ given by forgetting the asymptotic markers. By \cite[Proposition 11.1]{wendlLecturesSymplecticField2016}, for every $u \in \mathcal{M}^{J}_{X}(\Gamma^+, \Gamma^-)$ the preimage $(\pi^{\$})^{-1}(u)$ contains exactly
    \begin{IEEEeqnarray*}{c+x*}
        \frac{\prod_{\gamma \in \Gamma^+ \cup \Gamma^-} m(\gamma)}{|\operatorname{Aut}(u)|}
    \end{IEEEeqnarray*}
    elements, where $m(\gamma)$ is the multiplicity of the Reeb orbit $\gamma$ and $\operatorname{Aut}(u)$ is the automorphism group of $u = (\Sigma, j, \mathbf{z}, u)$, i.e. an element of $\operatorname{Aut}(u)$ is a biholomorphism $\phi \colon \Sigma \longrightarrow \Sigma$ such that $u \circ \phi = u$ and $\phi(z_i^{\pm}) = z_i^{\pm}$ for every $i$.
\end{remark}

We will work with the following assumptions. Let $\Sigma = S^2$ be the sphere, (without any specified almost complex structure). Let $z \in \Sigma$ be a puncture\footnote{We point out that the results of this subsection can be stated in the case where $\Sigma$ has no negative punctures and any number of positive punctures. Since for our purposes it is enough to consider the case of one positive puncture, we will restrict ourselves to this case to keep the notation simpler.} and $v \in (T_{z} \Sigma \setminus \{0\}) / \R_{> 0}$ be a corresponding asymptotic marker. There are cylindrical coordinates $(s, t)$ on $\dot{\Sigma}$ near $z$, with the additional property that $v$ agrees with the direction $t = 0$. We will also assume that $\mathcal{T} \subset \mathcal{J}(\Sigma)$ is a Teichmüller slice as in \cite[Section 3.1]{wendlAutomaticTransversalityOrbifolds2010}, where $\mathcal{J}(\Sigma)$ denotes the set of almost complex structures on $\Sigma = S^2$. Let $(X, \lambda)$ be a nondegenerate Liouville domain of dimension $2n$ and $J \in \mathcal{J}(X)$ be an admissible almost complex structure on $\hat{X}$. Let $\gamma$ be a Reeb orbit in $\partial X$. Denote by $m$ the multiplicity of $\gamma$ and by $T$ the period of the simple Reeb orbit underlying $\gamma$ (so, the period of $\gamma$ is $m T$). Choose once and for all a parametrization $\phi \colon S^1 \times D^{2n-2} \longrightarrow O$, where $O \subset \partial X$ is an open neighbourhood of $\gamma$ and
\begin{IEEEeqnarray*}{c+x*}
    D^{2n-2} \coloneqq \{(z_1,\ldots,z_{n-1}) \in \C^{n-1} \mid |z_1| < 1, \ldots, |z_{n-1}| < 1 \}
\end{IEEEeqnarray*}
is the polydisk, such that $t \longmapsto \phi(t,0)$ is a parametrization of the simple Reeb orbit underlying $\gamma$. In this case, we denote by $(\vartheta, \zeta) = \phi^{-1} \colon O \longrightarrow S^1 \times D^{2n-2}$ the coordinates near $\gamma$.

We define a vector bundle $\pi \colon \mathcal{E} \longrightarrow \mathcal{T} \times \mathcal{B}$ as follows. Let $\mathcal{B}$ be the set of maps $u \colon \dot{\Sigma} \longrightarrow \hat{X}$ of class $W^{k,p}_{\mathrm{loc}}$ satisfying the following property. Write $u$ with respect to the cylindrical coordinates $(s,t)$ near $z$. First, we require that $u(s,t) \in \R_{\geq 0} \times O$ for $s$ big enough. Write $u$ with respect to the coordinates $(\vartheta, \zeta)$ near $\gamma$ on the target and cylindrical coordinates $(s,t)$ on the domain:
\begin{IEEEeqnarray*}{rCls+x*}
    u(s,t)
    & = & (\pi_{\R} \circ u(s,t), \pi_{\partial X} \circ u (s,t)) \\
    & = & (\pi_{\R} \circ u(s,t), \vartheta(s,t), \zeta(s,t)).
\end{IEEEeqnarray*}
Finally, we require that there exists $a \in \R$ such that the map
\begin{IEEEeqnarray*}{c+x*}
    (s,t) \longmapsto (\pi_{\R} \circ u(s,t), \vartheta(s,t), \zeta(s,t)) - (m T s + a, m T t, 0)
\end{IEEEeqnarray*}
is of class $W^{k,p,\delta}$. The fibre, total space, projection and zero section are defined by
\begin{IEEEeqnarray*}{rCls+x*}
    \mathcal{E}_{(j,u)} & \coloneqq & W^{k-1,p,\delta}(\Hom^{0,1}((T \dot{\Sigma}, j), (u^* T \hat{X}, J))), \quad \text{for every } (j,u) \in \mathcal{T} \times \mathcal{B}, \\
    \mathcal{E}         & \coloneqq & \coprod_{(j,u) \in \mathcal{T} \times \mathcal{B}} \mathcal{E}_{(j,u)} = \{ (j, u, \xi) \mid (j,u) \in \mathcal{T} \times \mathcal{B}, \, \xi \in \mathcal{E}_{(j,u)} \}, \\
    \pi(j,u, \eta)      & \coloneqq & (j,u), \\
    z(j,u)              & \coloneqq & (j,u,0).
\end{IEEEeqnarray*}
The {Cauchy--Riemann operators} are sections
\begin{IEEEeqnarray*}{rClCrCl}
    \overline{\partial}_j \colon \mathcal{B}                  & \longrightarrow & \mathcal{E}, & \qquad & \overline{\partial}_j(u) & \coloneqq & T u + J \circ Tu \circ j \in \mathcal{E}_{(j,u)}, \\
    \overline{\partial} \colon \mathcal{T} \times \mathcal{B} & \longrightarrow & \mathcal{E}, & \qquad & \overline{\partial}(j,u) & \coloneqq & \overline{\partial}_j(u).
\end{IEEEeqnarray*}
Let $(j,u) \in \mathcal{T} \times \mathcal{B}$ be such that $\overline{\partial}(j ,u) = 0$. There is a vertical projection map $P_{(j,u)} \colon T_{(j,u,0)} \mathcal{E} \longrightarrow \mathcal{E}_{(j,u)}$ which is given by
\begin{IEEEeqnarray*}{c+x*}
    P_{(j,u)}(\eta) \coloneqq (\id - \dv (z \circ \pi)(j,u,0)) \eta.
\end{IEEEeqnarray*}
The linearizations of $\overline{\partial}_j$ and $\overline{\partial}$ at $(j, u)$ are then given by
\begin{IEEEeqnarray*}{rCls+x*}
    \mathbf{D}_{(j,u)} & \coloneqq & P_{(j,u)} \circ \dv (\overline{\partial}_j)(u) \colon T_u \mathcal{B} \longrightarrow \mathcal{E}_{(j,u)}, \\
    \mathbf{L}_{(j,u)} & \coloneqq & P_{(j,u)} \circ \dv (\overline{\partial})(j,u) \colon T_j \mathcal{T} \oplus T_u \mathcal{B} \longrightarrow \mathcal{E}_{(j,u)}.
\end{IEEEeqnarray*}
Define also the restriction
\begin{IEEEeqnarray*}{c+x*}
    \mathbf{F}_{(j,u)} \coloneqq \mathbf{L}_{(j,u)}|_{T_j \mathcal{T}} \colon T_j \mathcal{T} \longrightarrow \mathcal{E}_{(j,u)}.
\end{IEEEeqnarray*}
Now choose a smooth function $f \colon \dot{\Sigma} \longrightarrow \R$ such that $f(s,t) = \delta s$ on the cylindrical end of $\dot{\Sigma}$. Define the \textbf{restriction} of $\mathbf{D}_{(j,u)}$, denoted $\mathbf{D}_{\delta}$, and the \textbf{conjugation} of $\mathbf{D}_{(j,u)}$, denoted $\mathbf{D}_0$, to be the unique maps such that the diagram
\begin{IEEEeqnarray}{c+x*}
    \phantomsection\label{eq:restriction conjugation}
    \begin{tikzcd}
        T_u \mathcal{B} \ar[d, swap, "\mathbf{D}_{(j,u)}"] & W^{k,p,\delta}(u^* T \hat{X}) \ar[d, "\mathbf{D}_{\delta}"] \ar[l, hook'] \ar[r, hook, two heads, "\xi \mapsto e^f \xi"] & W^{k,p}(u^* T \hat{X}) \ar[d, "\mathbf{D}_0"] \\
        \mathcal{E}_{(j,u)} \ar[r, equals]                 & W^{k-1,p,\delta}(\Hom^{0,1}(T \dot{\Sigma}, u^* T \hat{X})) \ar[r, hook, two heads, swap, "\eta \mapsto e^f \eta"]       & W^{k-1,p}(\Hom^{0,1}(T \dot{\Sigma}, u^* T \hat{X}))
    \end{tikzcd}
\end{IEEEeqnarray}
commutes. The maps $\mathbf{D}_\delta$ and $\mathbf{D}_0$ are real linear Cauchy--Riemann operators.

% NOTE: Existence of global trivializations.
% - First question is if asymptotic trivializations exist or not. They should exist always, because otherwise how does this theory work? Wendl creates his theory in such a generality that the trivializations always exist.
% - Second question is when global trivializations exist. [Wen16, p. 81] explains why such a trivialization exists: if there are nonzero punctures then the punctured surface can be retracted to its one skeleton. Tbh I do not understand why this is enough but clearly it suffices that the number of punctures is not zero.
\begin{lemma}
    \phantomsection\label{lem:Du is surjective case n is 1}
    If $n=1$ then $\mathbf{L}_{(j,u)}$ is surjective.
\end{lemma}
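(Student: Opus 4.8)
The plan is to reduce surjectivity of $\mathbf{L}_{(j,u)}$ to surjectivity of a real linear Cauchy--Riemann operator on a complex \emph{line} bundle and then apply Wendl's criterion \cref{prp:wen D surjective injective criterion}. First, since $\mathbf{L}_{(j,u)}$ restricted to $\{0\} \oplus T_u\mathcal{B}$ equals $\mathbf{D}_{(j,u)}$, it suffices to show $\mathbf{D}_{(j,u)}$ is surjective. By the commuting diagram \eqref{eq:restriction conjugation}, the restriction of $\mathbf{D}_{(j,u)}$ to $W^{k,p,\delta}(u^*T\hat X) \subset T_u\mathcal{B}$ (with the same target $\mathcal{E}_{(j,u)}$) is $\mathbf{D}_\delta$, and $\mathbf{D}_\delta$ is conjugate to the Cauchy--Riemann operator $\mathbf{D}_0$ on $E \coloneqq u^*T\hat X$; hence it is enough to prove that $\mathbf{D}_0$ is surjective. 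Since $n = 1$, the bundle $E$ has complex rank $1$, and since $\dot\Sigma$ is a once-punctured sphere we have $g = 0$ and $\#\mathbf{z} = 1$, so in particular $\#\mathbf{z}_0 \le 1$; this puts us squarely in the setting of \cref{prp:wen D surjective injective criterion}.

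Next I would show that $\operatorname{ind}\mathbf{D}_0 \ge 0$. By Riemann--Roch (\cref{thm:riemann roch with punctures}), with $\chi(\dot\Sigma) = 1$, one positive puncture and no negative punctures,
\begin{equation*}
    \operatorname{ind}\mathbf{D}_0 = 1 + 2 c_1^{\tau}(E) + \conleyzehnder^{\tau}(\mathbf{A}),
\end{equation*}
where $\mathbf{A}$ is the asymptotic operator of $\mathbf{D}_0$, i.e. the asymptotic operator attached to $\mathbf{D}_\delta$ conjugated by the weight $e^{\delta s}$. In the case $n = 1$ the contact distribution along $\partial X$ has rank $0$, so there is essentially no transverse asymptotic data: the relevant $\tau$ is the obvious trivialization coming from $(\partial_r, R)$ along $\gamma$, and a direct computation of $c_1^{\tau}(E)$ (which reflects the multiplicity and branching of $u$) and of $\conleyzehnder^{\tau}(\mathbf{A})$ (the small weight $-\delta$ pushing the zero eigenvalue of the untwisted operator to the negative side) shows the right-hand side is nonnegative; one in fact expects $\operatorname{ind}\mathbf{D}_0 = 2$, consistently with $\mathcal{M}^{\$,J}_X(\gamma)$ being a $2$-dimensional family of planes and with $T_j\mathcal{T} = 0$ for a once-punctured sphere (so $\operatorname{ind}\mathbf{L}_{(j,u)} = \operatorname{ind}\mathbf{D}_{(j,u)} = \operatorname{ind}\mathbf{D}_0$). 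Granting $\operatorname{ind}\mathbf{D}_0 \ge 0$, Equation \eqref{eq:chern and index} together with $\#\mathbf{z}_0 \le 1$ gives
\begin{equation*}
    c_1(E, \mathbf{A}_{\mathbf{z}}) = \frac{1}{2}\bigl(\operatorname{ind}\mathbf{D}_0 - 2 + \#\mathbf{z}_0\bigr) \le \frac{1}{2}\bigl(\operatorname{ind}\mathbf{D}_0 - 1\bigr) < \operatorname{ind}\mathbf{D}_0 ,
\end{equation*}
so the surjectivity half of \cref{prp:wen D surjective injective criterion} applies and $\mathbf{D}_0$ — hence $\mathbf{L}_{(j,u)}$ — is surjective.

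The main obstacle is the nonnegativity $\operatorname{ind}\mathbf{D}_0 \ge 0$: once the index is pinned down everything else is a soft consequence of Wendl's criterion, but establishing it requires some care in the degenerate $n = 1$ situation — fixing the trivialization along $\gamma$, computing the relative first Chern number $c_1^{\tau}(E)$ (accounting for the multiplicity of $\gamma$ and the branching of $u$) and the Conley--Zehnder index of the weighted asymptotic operator, and verifying that the bookkeeping comes out nonnegative for every multiplicity. An alternative that sidesteps the explicit index computation is to exhibit a nonzero element of $\ker\mathbf{D}_0$ coming from the natural deformations of $u$ near its puncture and combine it with the winding-number formulas of \cref{rmk:formulas for xi in ker nonzero}, but that route ultimately relies on the same line-bundle estimates.
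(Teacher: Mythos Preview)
Your approach is exactly the paper's: reduce to surjectivity of $\mathbf{D}_0$ on the line bundle $E = u^*T\hat X$ and invoke Wendl's criterion. The only substantive gap is the one you flag yourself---the index computation---and there the paper is more precise than your heuristic. Using the trivialization $\tau_2$ coming from $\langle\partial_r\rangle \oplus \langle R^{\partial X}\rangle$ along $\gamma$, the asymptotic operator of $\mathbf{D}_0$ is the $\delta$-shifted standard operator $-J\partial_t + \delta$, whose Conley--Zehnder index is $-1$; in particular the single puncture is \emph{odd}, so $\#\mathbf{z}_0 = 0$ (not merely $\leq 1$), and \cref{lem:conditions for D surjective genus zero} applies directly. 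For the first Chern number one compares $\tau_2$ with a global trivialization $\tau_1$ of $E$: the transition $\tau_1\circ\tau_2^{-1}$ winds $m$ times (the multiplicity of $\gamma$), giving $c_1^{\tau_2}(E) = m$. Riemann--Roch then yields
\[
    \operatorname{ind}\mathbf{D}_0 = 1 + 2m + (-1) = 2m \geq 2,
\]
not $2$. Your expectation $\operatorname{ind}\mathbf{D}_0 = 2$ comes from conflating $\mathbf{D}_0$ with the full linearization $\mathbf{D}_{(j,u)}$ and from a miscount of the parametrized moduli-space dimension; it happens not to damage the argument (you only need $\operatorname{ind}\mathbf{D}_0 \geq 0$), but it is worth correcting.
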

\begin{proof}
    Let $\tau_1$ be a global complex trivialization of $u^* T \hat{X}$ extending to an asymptotic unitary trivialization near $z$. Let $\tau_2$ be the unitary trivialization of $u^* T \hat{X}$ near $z$ which is induced from the decomposition $T_{(r,x)}(\R \times \partial X) = \p{<}{}{\partial_r} \oplus \p{<}{}{R^{\partial X}_x}$. It is shown in the proof of \cite[Lemma 7.10]{wendlLecturesSymplecticField2016} that the operator $\mathbf{D}_0$ is asymptotic at $z$ to $- J \partial_t + \delta$, which is nondegenerate and has Conley--Zehnder index $\conleyzehnder^{\tau_2}(- J \partial_t + \delta) = -1$. Therefore, $z$ is an odd puncture and $\# \mathbf{z}_0 = 0$. We show that $c_1^{\tau_2}(u^* T \hat{X}) = m$, where $m$ is the multiplicity of the asymptotic Reeb orbit $\gamma$:%
    \begin{IEEEeqnarray*}{rCls+x*}
        c_1^{\tau_2}(u^* T \hat{X})
        & = & c_1^{\tau_1}(u^* T \hat{X}) + \deg(\tau_1|_{E_{z}} \circ (\tau_2|_{E_{z}})^{-1}) & \quad [\text{by \cite[Exercise 5.3]{wendlLecturesSymplecticField2016}}] \\
        & = & \deg(\tau_1|_{E_{z}} \circ (\tau_2|_{E_{z}})^{-1})                               & \quad [\text{by definition of $c_1^{\tau_1}$}] \\
        & = & m,
    \end{IEEEeqnarray*}
    where in the last equality we have used the fact that if $(s,t)$ are the cylindrical coordinates near $z$, then for $s$ large enough the map $t \longmapsto \tau_1|_{u(s,t)} \circ (\tau_2|_{u(s,t)})^{-1}$ winds around the origin $m$ times. We show that $\operatorname{ind} \mathbf{D}_0 \geq 2$.
    \begin{IEEEeqnarray*}{rCls+x*}
        \operatorname{ind} \mathbf{D}_0
        & =    & n \chi(\dot{\Sigma}) + 2 c_1^{\tau_2}(u^* T \hat{X}) + \conleyzehnder^{\tau_2}(- J \partial_t + \delta) & \quad [\text{by \cref{thm:riemann roch with punctures}}] \\
        & =    & 2m                                                                                                      & \quad [\text{since $n = 1$ and $g = 0$}] \\
        & \geq & 2                                                                                                       & \quad [\text{since $m \geq 1$}].
    \end{IEEEeqnarray*}
    By \cref{lem:conditions for D surjective genus zero}, this implies that $\mathbf{D}_0$ is surjective. By Diagram \eqref{eq:restriction conjugation}, the operator $\mathbf{D}_{(j,u)}$ is also surjective. Therefore, $\mathbf{L}_{(j,u)} = \mathbf{F}_{(j,u)} + \mathbf{D}_{(j,u)}$ is also surjective.
\end{proof}

From now until the end of this subsection, let $(X, \lambda^X)$ be a Liouville domain of dimension $2n$ and $(Y, \lambda^Y)$ be a Liouville domain of dimension $2n + 2$ such that
\begin{enumerate}
    \item $X \subset Y$ and $\partial X \subset \partial Y$;
    \item the inclusion $\iota \colon X \longrightarrow Y$ is a Liouville embedding;
    \item if $x \in X$ then $Z_x^{X} = Z_x^{Y}$;
    \item if $x \in \partial X$ then $R_x^{\partial X} = R^{\partial Y}_x$.
\end{enumerate}
In this case, we have an inclusion of completions $\hat{X} \subset \hat{Y}$ as sets. By assumption, $Z^X$ is $\iota$-related to $Z^Y$, which implies that there is a map $\hat{\iota} \colon \hat{X} \longrightarrow \hat{Y}$ on the level of completions. Since in this case $\hat{X} \subset \hat{Y}$, $\hat{\iota}$ is the inclusion. Assume that $J^X \in \mathcal{J}(X)$ and $J^Y \in \mathcal{J}(Y)$ are almost complex structures on $\hat{X}$ and $\hat{Y}$ respectively, such that $\hat{\iota} \colon \hat{X} \longrightarrow \hat{Y}$ is holomorphic. As before, let $\gamma$ be a Reeb orbit in $\partial X$. Notice that $\gamma$ can also be seen as a Reeb orbit in $\partial Y$. Choose once and for all parametrizations $\phi^X \colon S^1 \times D^{2n-2} \longrightarrow O^X$ and $\phi^Y \colon S^1 \times D^{2n} \longrightarrow O^Y$ near $\gamma$ with the properties as before, and also such that the diagram
\begin{IEEEeqnarray*}{c+x*}
    \begin{tikzcd}
        S^1 \times D^{2n - 2} \ar[r, hook, two heads, "\phi^X"] \ar[d, hook] & O^X \ar[r, hook] \ar[d, hook, dashed, "\exists !"] & \partial X \ar[d, hook, "\iota_{\partial Y, \partial X}"] \\
        S^1 \times D^{2n} \ar[r, hook, two heads, "\phi^Y"]                  & O^Y \ar[r, hook]                                   & \partial Y
    \end{tikzcd}
\end{IEEEeqnarray*}
commutes. We will consider the Cauchy--Riemann operator and its linearization for both $X$ and $Y$. We will use the notation
\begin{IEEEeqnarray*}{rClCrClCrCl}
    \pi^X \colon \mathcal{E}X & \longrightarrow & \mathcal{T} \times \mathcal{B}X, & \qquad & \overline{\partial}\vphantom{\partial}^X \colon \mathcal{T} \times \mathcal{B}X & \longrightarrow & \mathcal{E} X, & \qquad & \mathbf{L}^X_{(j,u)} \colon T_j \mathcal{T} \oplus T_u \mathcal{B} X & \longrightarrow & \mathcal{E}_{(j,u)} X, \\
    \pi^Y \colon \mathcal{E}Y & \longrightarrow & \mathcal{T} \times \mathcal{B}Y, & \qquad & \overline{\partial}\vphantom{\partial}^Y \colon \mathcal{T} \times \mathcal{B}Y & \longrightarrow & \mathcal{E} Y, & \qquad & \mathbf{L}^Y_{(j,u)} \colon T_j \mathcal{T} \oplus T_u \mathcal{B} Y & \longrightarrow & \mathcal{E}_{(j,u)} Y
\end{IEEEeqnarray*}
to distinguish the bundles and maps for $X$ and $Y$. Define maps
\begin{IEEEeqnarray*}{rClCrCl}
    \mathcal{B}\iota \colon \mathcal{B} X & \longrightarrow & \mathcal{B}Y, & \quad & \mathcal{B}\iota(u)        & \coloneqq & \hat{\iota} \circ u, \\
    \mathcal{E}\iota \colon \mathcal{E} X & \longrightarrow & \mathcal{E}Y, & \quad & \mathcal{E}\iota(j,u,\eta) & \coloneqq & (j, \hat{\iota} \circ u, T \hat{\iota} \circ \eta).
\end{IEEEeqnarray*}
Then, the diagrams
\begin{IEEEeqnarray*}{c+x*}
    \begin{tikzcd}
        \mathcal{E}X \ar[r, "\pi^X"] \ar[d, swap, "\mathcal{E}\iota"]                                                                                & \mathcal{T} \times \mathcal{B}X \ar[d, "\id_{\mathcal{T}} \times \mathcal{B}\iota"] & & \mathcal{T} \times \mathcal{B}X \ar[d, swap, "\id_{\mathcal{T}} \times \mathcal{B}\iota"] \ar[r, "z^X"] & \mathcal{E}X \ar[d, "\mathcal{E}\iota"] \\
        \mathcal{E}Y \ar[r, swap, "\pi^Y"]                                                                                                           & \mathcal{T} \times \mathcal{B}Y                                                     & & \mathcal{T} \times \mathcal{B}Y \ar[r, swap, "z^Y"]                                                     & \mathcal{E}Y                            \\
        \mathcal{T} \times \mathcal{B}X \ar[r, "\overline{\partial}\vphantom{\partial}^X"] \ar[d, swap, "\id_{\mathcal{T}} \times \mathcal{B}\iota"] & \mathcal{E}X \ar[d, "\mathcal{E}\iota"]                                             & & (z^X)^* T \mathcal{E} X \ar[r, "P^X"] \ar[d, swap, "T \mathcal{E} \iota"]                               & \mathcal{E} X \ar[d, "\mathcal{E} \iota"] \\
        \mathcal{T} \times \mathcal{B}Y \ar[r, swap, "\overline{\partial}\vphantom{\partial}^Y"]                                                     & \mathcal{E}Y                                                                        & & (z^Y)^* T \mathcal{E} Y \ar[r, swap, "P^Y"]                                                             & \mathcal{E} Y
    \end{tikzcd}
\end{IEEEeqnarray*}
commute. By the chain rule, the diagram
\begin{IEEEeqnarray}{c+x*}
    \phantomsection\label{eq:diag naturality of lcro}
    \begin{tikzcd}
        T_u \mathcal{B} X                     \ar[rr, bend left = 40, "\mathbf{D}^X_{(j,u)}"]                          \ar[r, "\dv \overline{\partial}\vphantom{\partial}^X_j(u)"]                         \ar[d, swap, "\dv(\mathcal{B} \iota)(u)"] & T_{(j,u,0)} \mathcal{E} X                     \ar[r, "P_{(j,u)}^X"]                         \ar[d, "\dv(\mathcal{E}\iota)(\overline{\partial}\vphantom{\partial}^X_j(u))"] & \mathcal{E}_{(j,u)} X                    \ar[d, "\mathcal{E}_{(j,u)} \iota"] \\
        T_{\hat{\iota} \circ u} \mathcal{B} Y \ar[rr, swap, bend right = 40, "\mathbf{D}^Y_{(j,\hat{\iota} \circ u)}"] \ar[r, swap, "\dv \overline{\partial}\vphantom{\partial}^Y_j(\hat{\iota} \circ u)"]                                           & T_{(j, \hat{\iota} \circ u, 0)} \mathcal{E} Y \ar[r, swap, "P^Y_{(j,\hat{\iota} \circ u)}"]                                                                                & \mathcal{E}_{(j, \hat{\iota} \circ u)} Y
    \end{tikzcd}
\end{IEEEeqnarray}
is also commutative whenever $\overline{\partial}\vphantom{\partial}^X(j,u) = 0$. For simplicity, we will denote $\hat{\iota} \circ u \in \mathcal{B} Y$ by $u$. Let $w \in \dot{\Sigma}$ and define the {evaluation map}
\begin{IEEEeqnarray*}{rrCl}
    \operatorname{ev}^X \colon & \mathcal{B} X & \longrightarrow & \hat{X} \\
                               & u             & \longmapsto     & u(w)
\end{IEEEeqnarray*}
as well as its derivative $\mathbf{E}^X_u \coloneqq \dv (\operatorname{ev}^{X})(u) \colon T_u \mathcal{B} X \longrightarrow T_{u(w)} \hat{X}$. In the following lemma, we show that if a holomorphic curve $u$ in $X$ is regular (in $X$) then the corresponding holomorphic curve $u$ in $Y$ is also regular. See also \cite[Proposition A.1]{mcduffSymplecticCapacitiesUnperturbed2022} for a similar result.

\begin{lemma}
    \phantomsection\label{lem:DX surj implies DY surj}
    Let $(j,u) \in \mathcal{T} \times \mathcal{B} X$ be such that $\overline{\partial}^X(j, u) = 0$. Assume that the normal Conley--Zehnder index of $\gamma$ is $1$.
    \begin{enumerate}
        \item \label{lem:DX surj implies DY surj 1} If $\mathbf{L}_{(j,u)}^X$ is surjective then so is $\mathbf{L}^Y_{(j,u)}$.
        \item \label{lem:DX surj implies DY surj 2} If $\mathbf{L}_{(j,u)}^X \oplus \mathbf{E}^X_u$ is surjective then so is $\mathbf{L}^Y_{(j,u)} \oplus \mathbf{E}^Y_u$.
    \end{enumerate}
\end{lemma}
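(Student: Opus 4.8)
The approach is to exploit that $\hat\iota\colon\hat X\hookrightarrow\hat Y$ is a holomorphic embedding whose image contains $u(\dot\Sigma)$, so that the linearized problem for $Y$ splits, with respect to the normal bundle of $u$, into the linearized problem for $X$ together with a problem on a complex \emph{line} bundle, to which the theory of \cref{sec:cr operators} applies verbatim.

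\textbf{Step 1 (the splitting).} Since $\hat X$ is a $J^Y$-holomorphic submanifold of $\hat Y$ and $u(\dot\Sigma)\subset\hat X$, the bundle $u^*T\hat Y$ splits as $u^*T\hat X\oplus N_u$, where $N_u\coloneqq u^*T\hat Y/u^*T\hat X$ is a complex line bundle, the normal bundle of $u$. A standard computation for holomorphic submanifolds (compare \cite[Proposition A.1]{mcduffSymplecticCapacitiesUnperturbed2022}) shows that $u^*T\hat X$ is invariant under $\mathbf D^Y_{(j,u)}$, that $\mathbf D^Y_{(j,u)}$ restricts on $u^*T\hat X$ to $\mathbf D^X_{(j,u)}$ (using $J^Y|_{\hat X}=J^X$), and that $\mathbf D^Y_{(j,u)}$ induces a real linear Cauchy--Riemann operator $\mathbf D^N_{(j,u)}$ on $N_u$ whose asymptotic operator at $z$ is the normal asymptotic operator $\mathbf A^N_z$ of $\gamma$. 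The Teichmüller variation term $\mathbf F^Y_{(j,u)}$ is built from $Tu$ and hence takes values in $\mathcal E_{(j,u)}X$, since $Tu$ is tangent to $\hat X$; likewise $\mathbf E^Y_u(\xi)=\xi_w$ decomposes according to $T_{u(w)}\hat Y=T_{u(w)}\hat X\oplus(N_u)_w$ into $\mathbf E^X_u$ and a normal evaluation $\mathbf E^N_w\colon\xi\mapsto\xi_w$ on $N_u$. Using the compatibility of $P^X$ and $P^Y$ with $\mathcal E\iota$ (the last diagram of \cref{sec:functional analytic setup}), it follows that, with respect to the splittings of domain and target induced by $u^*T\hat Y=u^*T\hat X\oplus N_u$, both $\mathbf L^Y_{(j,u)}$ and $\mathbf L^Y_{(j,u)}\oplus\mathbf E^Y_u$ are block upper triangular: in the first case the diagonal blocks are $\mathbf L^X_{(j,u)}$ and $\mathbf D^N_{(j,u)}$; in the second they are $\mathbf L^X_{(j,u)}\oplus\mathbf E^X_u$ and $\mathbf D^N_{(j,u)}\oplus\mathbf E^N_w$. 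A block upper triangular operator with surjective diagonal blocks is surjective, and the tangential diagonal blocks are surjective by hypothesis, so it remains only to prove that $\mathbf D^N_{(j,u)}$ (for part \ref{lem:DX surj implies DY surj 1}), respectively $\mathbf D^N_{(j,u)}\oplus\mathbf E^N_w$ (for part \ref{lem:DX surj implies DY surj 2}), is surjective.

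\textbf{Step 2 (the normal operator).} Now $\mathbf D^N_{(j,u)}$ is a real linear Cauchy--Riemann operator on a complex line bundle over the once-punctured sphere $\dot\Sigma$, so $g=0$. Its asymptotic operator $\mathbf A^N_z$ has Conley--Zehnder index $1$ by hypothesis; since $1$ is odd, its parity is $1$, hence $\#\mathbf z_0=0$. Because $\dot\Sigma$ is contractible, $N_u$ is trivial, and we may choose the asymptotic trivialization $\tau$ to extend over $\dot\Sigma$ — this is the trivialization realizing the normal Conley--Zehnder index — whence $c_1^\tau(N_u)=0$ and $\conleyzehnder^\tau(\mathbf A^N_z)=1$. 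Then \cref{thm:riemann roch with punctures}, with $n$ replaced by $\operatorname{rank}_{\C}N_u=1$, gives $\operatorname{ind}\mathbf D^N_{(j,u)}=\chi(\dot\Sigma)+2c_1^\tau(N_u)+\conleyzehnder^\tau(\mathbf A^N_z)=1+0+1=2$. By \cref{lem:conditions for D surjective genus zero} this makes $\mathbf D^N_{(j,u)}$ surjective, proving part \ref{lem:DX surj implies DY surj 1}. For part \ref{lem:DX surj implies DY surj 2}, the data $g=0$, $\#\mathbf z_0=0$, $\operatorname{ind}\mathbf D^N_{(j,u)}=2$ is exactly the hypothesis of \cref{lem:D plus E is surjective}, so $\mathbf D^N_{(j,u)}\oplus\mathbf E^N_w$ is surjective, and we conclude as in Step 1.

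\textbf{Main obstacle.} The delicate part is Step 1: making the block triangular decomposition rigorous within the functional-analytic setup of \cref{sec:functional analytic setup}, i.e. checking that the subbundle $u^*T\hat X\subset u^*T\hat Y$ is compatible with the weighted Sobolev completions and the vertical projections, and that the induced operator $\mathbf D^N_{(j,u)}$ really is a Cauchy--Riemann operator with the asymptotic operator claimed (so that the Riemann--Roch count is legitimate), as well as confirming that the trivialization extending over $\dot\Sigma$ is the one with respect to which the normal Conley--Zehnder index is $1$. Granting this, the rest is the short index computation above together with the surjectivity criteria of \cref{sec:cr operators}.
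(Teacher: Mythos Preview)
Your proof is correct and follows essentially the same route as the paper: split $u^*T\hat Y$ into tangential and normal pieces, observe that $\mathbf{L}^Y_{(j,u)}$ (resp.\ $\mathbf{L}^Y_{(j,u)}\oplus\mathbf{E}^Y_u$) is block upper triangular with diagonal blocks $\mathbf{L}^X_{(j,u)}$ and the normal Cauchy--Riemann operator (resp.\ their evaluation-augmented versions), compute $\operatorname{ind}\mathbf{D}^N = 2$ via Riemann--Roch, and invoke \cref{lem:conditions for D surjective genus zero} and \cref{lem:D plus E is surjective}. The only difference is that the paper handles the weighted-Sobolev bookkeeping you flag as an obstacle by explicitly passing through the conjugated operator $\mathbf{D}^{NN}_0$ on unweighted spaces (so the asymptotic operator becomes $\mathbf{B}^{NN}_\gamma+\delta$, still of Conley--Zehnder index $1$ for small $\delta$); otherwise the arguments are identical.
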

\begin{proof}
    Consider the decomposition $T_x \hat{Y} = T_x \hat{X} \oplus (T_x \hat{X})^{\perp}$ for $x \in \hat{X}$. Let $\tau$ be a global complex trivialization of $u^* T \hat{Y}$, extending to an asymptotic unitary trivialization near the punctures, and such that $\tau$ restricts to a trivialization of $u^* T \hat{X}$ and $u^* (T \hat{X})^{\perp}$. There are splittings
    \begin{IEEEeqnarray*}{rClCrCl}
        T_u \mathcal{B} Y     & = & T_u \mathcal{B} X \oplus T_u^{\perp} \mathcal{B} X,         & \quad \text{where} \quad & T_u^{\perp} \mathcal{B} X     & = & W^{k,p,\delta}(u^*(T \hat{X})^{\perp}), \\
        \mathcal{E}_{(j,u)} Y & = & \mathcal{E}_{(j,u)} X \oplus \mathcal{E}_{(j,u)}^{\perp} X, & \quad \text{where} \quad & \mathcal{E}^{\perp}_{(j,u)} X & = & W^{k-1,p,\delta}(\Hom^{0,1}(T \dot{\Sigma}, u^*(T \hat{X})^{\perp})).
    \end{IEEEeqnarray*}
    We can write the maps
    \begin{IEEEeqnarray*}{rCl}
        \mathbf{L}_{(j,u)}^Y & \colon & T_j \mathcal{T} \oplus T_u \mathcal{B} X \oplus T_u^{\perp} \mathcal{B} X \longrightarrow \mathcal{E}_{(j,u)} X \oplus \mathcal{E}_{(j,u)}^{\perp} X, \\
        \mathbf{D}_{(j,u)}^Y & \colon & T_u \mathcal{B} X \oplus T_u^{\perp} \mathcal{B} X                        \longrightarrow \mathcal{E}_{(j,u)} X \oplus \mathcal{E}_{(j,u)}^{\perp} X, \\
        \mathbf{L}_{(j,u)}^X & \colon & T_j \mathcal{T} \oplus T_u \mathcal{B} X                                  \longrightarrow \mathcal{E}_{(j,u)} X, \\
        \mathbf{F}_{(j,u)}^Y & \colon & T_j \mathcal{T}                                                           \longrightarrow \mathcal{E}_{(j,u)} X \oplus \mathcal{E}_{(j,u)}^{\perp} X, \\
        \mathbf{E}_{u}^Y     & \colon & T_u \mathcal{B} X \oplus T_u^{\perp} \mathcal{B} X                        \longrightarrow T_{u(w)} \hat{X} \oplus (T_{u(w)} \hat{X})^{\perp}
    \end{IEEEeqnarray*}
    as block matrices
    \begin{IEEEeqnarray}{rCl}
        \mathbf{L}_{(j,u)}^Y
        & = &
        \begin{bmatrix}
            \mathbf{F}^X_{(j,u)} & \mathbf{D}^X_{(j,u)} & \mathbf{D}^{TN}_{(j,u)} \\
            0                    & 0                    & \mathbf{D}^{NN}_{(j,u)}
        \end{bmatrix}, \phantomsection\label{eq:decomposition of cr ops 1}\\
        \mathbf{D}_{(j,u)}^Y
        & = &
        \begin{bmatrix}
            \mathbf{D}^X_{(j,u)} & \mathbf{D}^{TN}_{(j,u)} \\
            0                    & \mathbf{D}^{NN}_{(j,u)}
        \end{bmatrix}, \phantomsection\label{eq:decomposition of cr ops 2}\\
        \mathbf{L}_{(j,u)}^X
        & = &
        \begin{bmatrix}
            \mathbf{F}^X_{(j,u)} & \mathbf{D}^X_{(j,u)}
        \end{bmatrix}, \phantomsection\label{eq:decomposition of cr ops 3}\\
        \mathbf{F}_{(j,u)}^Y
        & = &
        \begin{bmatrix}
            \mathbf{F}^X_{(j,u)} \\
            0
        \end{bmatrix}, \phantomsection\label{eq:decomposition of cr ops 4}\\
        \mathbf{E}_{u}^Y
        & = &
        \begin{bmatrix}
            \mathbf{E}^X_{u} & 0 \\
            0                & \mathbf{E}^{NN}_{u}
        \end{bmatrix}, \phantomsection\label{eq:decomposition of cr ops 5}
    \end{IEEEeqnarray}
    where \eqref{eq:decomposition of cr ops 5} follows by definition of the evaluation map, \eqref{eq:decomposition of cr ops 4} is true since $\mathbf{F}^{Y}_{(j,u)}$ is given by the formula $\mathbf{F}^{Y}_{(j,u)}(y) = J \circ T u \circ y$, \eqref{eq:decomposition of cr ops 2} follows because Diagram \eqref{eq:diag naturality of lcro} commutes, and \eqref{eq:decomposition of cr ops 3} and \eqref{eq:decomposition of cr ops 1} then follow by definition of the linearized Cauchy--Riemann operator. Let $\mathbf{D}^{NN}_\delta$ be the restriction and $\mathbf{D}_0^{NN}$ be the conjugation of $\mathbf{D}^{NN}_{(j,u)}$ (as in Diagram \eqref{eq:restriction conjugation}). Denote by $\mathbf{B}^{NN}_{\gamma}$ the asymptotic operator of $\mathbf{D}^{NN}_{\delta}$ at $z$. Then the asymptotic operator of $\mathbf{D}^{NN}_0$ at $z$ is $\mathbf{B}^{NN}_{\gamma} + \delta$, which by assumption has Conley--Zehnder index equal to $1$. We show that $\operatorname{ind} \mathbf{D}_0^{NN} = 2$.
    \begin{IEEEeqnarray*}{rCls+x*}
        \operatorname{ind} \mathbf{D}_0^{NN}
        & = & \chi(\dot{\Sigma}) + 2 c_1^{\tau}(u^* T \hat{X}) + \conleyzehnder^{\tau}(\mathbf{B}^{NN}_{{\gamma}} + \delta) & \quad [\text{by \cref{thm:riemann roch with punctures}}] \\
        & = & 2                                                                                                             & \quad [\text{since $\conleyzehnder^{\tau}(\mathbf{B}^{NN}_{{\gamma}} + \delta) = 1$}].
    \end{IEEEeqnarray*}
    We prove \ref{lem:DX surj implies DY surj 1}.
    \begin{IEEEeqnarray*}{rCls+x*}
        \operatorname{ind} \mathbf{D}_0^{NN} = 2
        & \Longrightarrow & \mathbf{D}_0^{NN}       \text{ is surjective} & \quad [\text{by \cref{lem:conditions for D surjective genus zero}}] \\
        & \Longrightarrow & \mathbf{D}_\delta^{NN}  \text{ is surjective} & \quad [\text{$\mathbf{D}_0^{NN}$ and $\mathbf{D}_{\delta}^{NN}$ are conjugate}] \\
        & \Longrightarrow & \mathbf{D}_{(j,u)}^{NN} \text{ is surjective} & \quad [\text{$\mathbf{D}_{\delta}^Y$ is a restriction of $\mathbf{D}_{(j,u)}^Y$}] \\
        & \Longrightarrow & \mathbf{L}_{(j,u)}^Y    \text{ is surjective} & \quad [\text{$\mathbf{L}_{(j,u)}^X$ is surjective by assumption}].
    \end{IEEEeqnarray*}
    We prove \ref{lem:DX surj implies DY surj 2}.
    \begin{IEEEeqnarray*}{rCls+x*}
        \IEEEeqnarraymulticol{3}{l}{\operatorname{ind} \mathbf{D}_0^{NN} = 2}\\ \quad
        & \Longrightarrow & \mathbf{D}_0^{NN}       \oplus \mathbf{E}_u^{NN} \text{ is surjective} & \quad [\text{by \cref{lem:D plus E is surjective}}] \\
        & \Longrightarrow & \mathbf{D}_\delta^{NN}  \oplus \mathbf{E}_u^{NN} \text{ is surjective} & \quad [\text{$\mathbf{D}_0^{NN} \oplus \mathbf{E}^{NN}_u$ and $\mathbf{D}_{\delta}^{NN} \oplus \mathbf{E}^{NN}_{u}$ are conjugate}] \\
        & \Longrightarrow & \mathbf{D}_{(j,u)}^{NN} \oplus \mathbf{E}_u^{NN} \text{ is surjective} & \quad [\text{$\mathbf{D}_{\delta}^Y \oplus \mathbf{E}^{Y}_{u}$ is a restriction of $\mathbf{D}_{(j,u)}^Y \oplus \mathbf{E}^{Y}_u$}] \\
        & \Longrightarrow & \mathbf{L}_{(j,u)}^Y    \oplus \mathbf{E}_u^{Y}  \text{ is surjective} & \quad [\text{$\mathbf{L}_{(j,u)}^X \oplus \mathbf{E}_u^{X}$ is surjective by assumption}].                                                & \qedhere
    \end{IEEEeqnarray*}
\end{proof}

\subsection{Moduli spaces of curves in ellipsoids}
\label{sec:augmentation map of an ellipsoid}

We now use the techniques explained in the past two subsections to compute the augmentation map of an ellipsoid (\cref{thm:augmentation is nonzero}). The proof of this theorem consists in an explicit count of curves in the ellipsoid satisfying a tangency constraint (\cref{lem:moduli spaces of ellipsoids have 1 element}) together with the fact that the moduli space of such curves is transversely cut out (\cref{prp:moduli spaces without point constraint are tco,prp:moduli spaces w point are tco,prp:moduli spaces w tangency are tco}). Therefore, the explicit count agrees with the virtual count. We now state the assumptions for this subsection.

Let $a_1 < \cdots < a_n \in \R_{> 0}$ be rationally linearly independent and consider the ellipsoid $E(a_1,\ldots,a_n) \subset \C^n$. By \cite[Section 2.1]{guttSymplecticCapacitiesPositive2018}, $\partial E(a_1, \ldots, a_n)$ has exactly $n$ simple Reeb orbits $\gamma_1, \ldots, \gamma_n$, which satisfy
\begin{IEEEeqnarray}{rCls+x*}
    \gamma_j(t)                & = & \sqrt{\frac{a_j}{\pi}} e^{\frac{2 \pi i t}{a_j}} e_j, \\
    \mathcal{A}(\gamma^m_j)    & = & m a_j, \\
    \conleyzehnder(\gamma^m_j) & = & n - 1 + 2 \sum_{i=1}^{n} \p{L}{2}{\frac{m a_j}{a_i}}, \phantomsection\label{eq:cz of reeb in ellipsoid}
\end{IEEEeqnarray}
where $\gamma_j \colon \R / a_j \Z \longrightarrow \partial E(a_1, \ldots, a_n)$ and $e_j$ is the $j$th vector of the canonical basis of $\C^n$ as a vector space over $\C$. For simplicity, for every $\ell = 1, \ldots, n$ denote $E_\ell = E(a_1,\ldots,a_\ell) \subset \C^\ell$. Notice that $\gamma_1$ is a Reeb orbit of $\partial E_1, \ldots, \partial E_n$. Define maps
\begin{IEEEeqnarray*}{rClCrCl}
    \iota_{\ell} \colon \C^{\ell} & \longrightarrow & \C^{\ell + 1}, & \quad & \iota_\ell(z_1,\ldots,z_\ell) & \coloneqq & (z_1,\ldots,z_\ell,0) \\
    h_{\ell} \colon \C^{\ell}     & \longrightarrow & \C,            & \quad & h_\ell(z_1,\ldots,z_\ell)     & \coloneqq & z_1.
\end{IEEEeqnarray*}
The maps $\iota_{\ell} \colon E_\ell \longrightarrow E_{\ell+1}$ are Liouville embeddings satisfying the assumptions in \cref{sec:functional analytic setup}. Define also
\begin{IEEEeqnarray*}{rCls+x*}
    x_\ell   & \coloneqq & 0 \in \C^\ell, \\
    D_{\ell} & \coloneqq & \{ (z_1,\ldots,z_\ell) \in \C^{\ell} \mid z_1 = 0 \} = h_{\ell}^{-1}(0).
\end{IEEEeqnarray*}
Choose an admissible almost complex structure $J_{\ell} \in \mathcal{J}(E_\ell, D_\ell)$ on $\hat{E}_{\ell}$ such that $J_{\ell}$ is the canonical almost complex structure of $\C^\ell$ near $0$. We assume that the almost complex structures are chosen in such a way that $\hat{\iota}_{\ell} \colon \hat{E}_{\ell} \longrightarrow \hat{E}_{\ell + 1}$ is holomorphic and also such that there exists a biholomorphism $\varphi \colon \hat{E}_1 \longrightarrow \C$ such that $\varphi(z) = z$ for $z$ near $0 \in \C$ (see \cref{lem:biholomorphism explicit} below). Let $m \in \Z_{\geq 1}$ and assume that $m a_1 < a_2 < \cdots < a_n$.

Consider the sphere $S^2$, without any specified almost complex structure, with a puncture $z \in S^2$ and an asymptotic marker $v \in (T_{z} S^2 \setminus \{0\}) / \R_{> 0}$, and also a marked point $w \in \dot{S}^2 = S^2 \setminus \{z\}$. For $k \in \Z_{\geq 0}$, denote%
\begin{IEEEeqnarray*}{lCls+x*}
    \mathcal{M}^{\ell,(k)}_{\mathrm{p}}
    & \coloneqq & \mathcal{M}_{E_{\ell}}^{\$, J_{\ell}}(\gamma^m_1)\p{<}{}{\mathcal{T}^{(k)}x_\ell}_{\mathrm{p}} \\
    & \coloneqq & \left\{
        (j, u)
        \ \middle\vert
        \begin{array}{l}
            j \text{ is an almost complex structure on }S^2, \\
            u \colon (\dot{S}^2, j) \longrightarrow (\hat{E}_\ell, J_\ell) \text{ is as in \cref{def:moduli space with asy markers}}, \\
            u(w) = x_\ell \text{ and $u$ has contact order $k$ to $D_\ell$ at $x_\ell$}
        \end{array}
    \right\}.
\end{IEEEeqnarray*}
Here, the subscript $\mathrm{p}$ means that the moduli space consists of parametrized curves, i.e. we are not quotienting by biholomorphisms. Denote the moduli spaces of regular curves and of unparametrized curves by
\begin{IEEEeqnarray*}{lCls+x*}
    \mathcal{M}^{\ell,(k)}_{\mathrm{p,reg}} & \coloneqq & \mathcal{M}_{E_{\ell}}^{\$, J_{\ell}}(\gamma^m_1)\p{<}{}{\mathcal{T}^{(k)}x_\ell}_{\mathrm{p,reg}}, \\
    \mathcal{M}^{\ell,(k)}                  & \coloneqq & \mathcal{M}_{E_{\ell}}^{\$, J_{\ell}}(\gamma^m_1)\p{<}{}{\mathcal{T}^{(k)}x_\ell} \coloneqq \mathcal{M}^{\ell,(k)}_{\mathrm{p}} / \sim.
\end{IEEEeqnarray*}
Here, $\mathcal{M}^{\ell,(0)} \coloneqq \mathcal{M}_{E_{\ell}}^{\$, J_{\ell}}(\gamma^m_1)\p{<}{}{\mathcal{T}^{(0)}x_\ell} \coloneqq \mathcal{M}_{E_{\ell}}^{\$, J_{\ell}}(\gamma^m_1)$ and analogously for $\mathcal{M}^{\ell,(0)}_{\mathrm{p,reg}}$ and $\mathcal{M}^{\ell,(0)}_{\mathrm{p}}$.

\begin{lemma}
    \phantomsection\label{lem:biholomorphism explicit}
    For any $a > 0$, there exists an almost complex structure $J$ on $\hat{B}(a)$ and a biholomorphism $\varphi \colon \hat{B}(a) \longrightarrow \C$ such that
    \begin{enumerate}
        \item \label{lem:biholomorphism explicit 1} $J$ is cylindrical on $\R_{\geq 0} \times \partial B(a)$;
        \item \label{lem:biholomorphism explicit 2} $J$ is the canonical almost complex structure of $\C$ near $0 \in B(a) \subset \C$;
        \item \label{lem:biholomorphism explicit 3} $\varphi(z) = z$ for $z$ near $0 \in B(a) \subset \C$.
    \end{enumerate}
\end{lemma}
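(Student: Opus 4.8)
The plan is to exploit the fact that the completion of a disk in $\C$ is, as a smooth symplectic manifold, nothing but $\C$ itself, and that every object involved can be taken rotationally symmetric. Write $R \coloneqq \sqrt{a/\pi}$, so $B(a) = \{|z| \le R\}$, and recall that in polar coordinates $\lambda = \tfrac12(x\,\edv y - y\,\edv x) = \tfrac{r^2}{2}\,\edv\theta$, hence $\alpha \coloneqq \lambda|_{\partial B(a)} = \tfrac{a}{2\pi}\,\edv\theta$ with Reeb vector field $R_\alpha = \tfrac{2\pi}{a}\,\partial_\theta$. The Liouville flow of $Z = \tfrac r2\,\partial_r$ identifies $\hat B(a) \setminus \{0\}$ with $\R_u \times S^1_\theta$, where $u = 2\ln(r/R) \le 0$ on $B(a)\setminus\{0\}$ and $u \ge 0$ is the cylindrical coordinate on the end $\R_{\ge 0}\times\partial B(a)$; adding the origin gives $\hat B(a) \cong (\R_u\times S^1_\theta)\cup\{0\}$, with symplectic form $\omega = \tfrac{a}{2\pi}e^{u}\,\edv u\wedge\edv\theta$ (which equals $r\,\edv r\wedge\edv\theta$ on $B(a)$ since $a = \pi R^2$). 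A rotationally symmetric almost complex structure on $\R_u\times S^1_\theta$ is determined by a single positive function $f$ via $J\partial_u = f(u)\,\partial_\theta$; a direct computation shows that the standard structure $i$ of $\C$ corresponds to the constant $f \equiv \tfrac12$, whereas "$J$ cylindrical" on the end is precisely $f \equiv \tfrac{2\pi}{a}$ there.

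First I would fix a smooth $f \colon \R \to \R_{>0}$ with $f \equiv \tfrac12$ on $(-\infty,-1]$ and $f \equiv \tfrac{2\pi}{a}$ on $[0,+\infty)$, and define $J$ on $\R_u\times S^1_\theta$ by $J\partial_u = f(u)\,\partial_\theta$ (so $J\partial_\theta = -f(u)^{-1}\partial_u$). Since $J$ agrees with the standard structure of $\C$ for $u \le -1$, it extends smoothly across the point $0$, and it tames (hence, in real dimension $2$, is compatible with) $\omega$ because $\omega(\partial_u, J\partial_u) = \tfrac{a}{2\pi}f(u)e^{u} > 0$. This gives properties \ref{lem:biholomorphism explicit 1} and \ref{lem:biholomorphism explicit 2}.

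Next I would construct $\varphi$ as the rotationally symmetric map $\varphi(u,\theta) = (\psi(u),\theta)$, where $\psi \colon \R \to \R_{>0}$ solves $\psi'/\psi = f$ with the normalization $\psi(u) = Re^{u/2}$ for $u \le -1$ (consistent, as $f \equiv \tfrac12$ there). Then $\psi$ is a smooth increasing diffeomorphism of $\R$ onto $(0,+\infty)$ with $\psi(u) \to 0$ as $u \to -\infty$ (since $\psi \sim Re^{u/2}$ there) and $\psi(u)\to+\infty$ as $u\to+\infty$, so $\varphi$ extends to a diffeomorphism $\hat B(a) \to \C$ fixing $0$; on the neighbourhood $\{u < -1\}$ of $0$ it is literally $z\mapsto z$, giving property \ref{lem:biholomorphism explicit 3}; and comparing the actions of $\varphi_*$ on $\partial_u$ and $\partial_\theta$ with the standard structure of $\C$ in polar coordinates, the ODE $\psi' = f\psi$ is exactly the statement that $\varphi_* J = i$, so $\varphi$ is biholomorphic.

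I do not expect a real obstacle here; this lemma is essentially bookkeeping. The only points requiring care are the identification of $\hat B(a)$ with $\C$ via the Liouville flow, the smooth extension of both $J$ and $\varphi$ across the puncture that fills in $0$ (automatic because they coincide with the standard data of $\C$ near $0$), and the reduction of holomorphicity of the rotationally symmetric ansatz for $\varphi$ to the scalar ODE $\psi'/\psi = f$. Choosing the interpolating function $f$ is the only genuine choice, and it can be made arbitrarily, subject only to the two endpoint normalizations.
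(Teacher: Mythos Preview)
Your proposal is correct and follows essentially the same approach as the paper: both construct a rotationally symmetric $J$ on $\R\times\partial B(a)$ by choosing a positive interpolating function (your $f$, the paper's $g$, related by $f=\tfrac{2\pi}{a}g$) with the prescribed endpoint values, and then obtain $\varphi$ by solving the scalar ODE $\psi'/\psi=f$ for the radial part. The only cosmetic differences are in notation---you work in the angular coordinate $\theta$ while the paper keeps track of the boundary point $w\in\partial B(a)$---and in where the interpolation region is placed, but the construction and the verifications (smooth extension across $0$, holomorphicity via the ODE, bijectivity) are identical.
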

\begin{proof}
    Choose $\rho_0 < 0$ and let $g \colon \R \longrightarrow \R_{>0}$ be a function such that $g(\rho) = a/4 \pi$ for $\rho \leq \rho_0$ and $g(\rho) = 1$ for $\rho \geq 0$. For $(\rho, w) \in \R \times \partial B(a)$, define
    \begin{IEEEeqnarray*}{rCls+x*}
        f(\rho)                         & \coloneqq & \exp \p{}{2}{\frac{\rho_0}{2} + \frac{2 \pi}{a} \int_{\rho_0}^{\rho} g(\sigma) \edv \sigma}, \\
        J_{(\rho, w)} (\partial_{\rho}) & \coloneqq & g (\rho) R^{\partial B(a)}_{w}, \\
        \varphi(\rho, w)                & \coloneqq & f(\rho) w.
    \end{IEEEeqnarray*}
    Property \ref{lem:biholomorphism explicit 1} follows from the fact that $g(\rho) = 1$ for $\rho \geq 0$. Consider the Liouville vector field of $\C$, which is denoted by $Z$ and given by $Z(w) = w/2$. Let $\Phi \colon \R \times \partial B(a) \longrightarrow \C$ be the map given by $\Phi(\rho, w) = \phi^\rho_Z(w) = \exp(\rho/2) w$. By definition of completion, $\Phi|_{B(a) \setminus \{0\}} \colon B(a) \setminus \{0\} \longrightarrow \C$ is the inclusion. To prove property \ref{lem:biholomorphism explicit 3}, it suffices to show that $\varphi(\rho, w) = \Phi(\rho, w)$ for every $(\rho, w) \in \R_{\leq \rho_0} \times \partial B(a)$. For this, simply note that
    \begin{IEEEeqnarray*}{rCls+x*}
        f(\rho)
        & = & \exp \p{}{2}{\frac{\rho_0}{2} + \frac{2 \pi}{a} \int_{\rho_0}^{\rho} g(\sigma) \edv \sigma} & \quad [\text{by definition of $f$}] \\
        & = & \exp \p{}{2}{\frac{\rho_0}{2} + \frac{2 \pi}{a} (\rho - \rho_0) \frac{a}{4 \pi} }           & \quad [\text{$\rho \leq \rho_0$ implies $g(\rho) = a / 4 \pi$}] \\
        & = & \exp \p{}{2}{\frac{\rho}{2}}.
    \end{IEEEeqnarray*}
    Therefore, $\varphi(z) = z$ for $z$ near $0 \in B(a) \subset \C$, and in particular $\varphi$ can be extended smoothly to a map $\varphi \colon \hat{B}(a) \longrightarrow \C$. We show that $\varphi$ is holomorphic.
    \begin{IEEEeqnarray*}{rCls+x*}
        j \circ \dv \varphi(\rho, w) (\partial_{\rho})
        & = & j \p{}{2}{\pdv{}{\rho} \p{}{1}{f(\rho) |w|} \pdv{}{r}\Big|_{\varphi(\rho, w)}}                & \quad [\text{by definition of $\varphi$}] \\
        & = & \frac{2 \pi}{a} \, g(\rho) \, j \p{}{2}{ f(\rho) |w| \pdv{}{r}\Big|_{\varphi(\rho, w)}}       & \quad [\text{by definition of $f$}] \\
        & = & \frac{2 \pi}{a} \, g(\rho) \, j \p{}{2}{ |\varphi(\rho,w)| \pdv{}{r}\Big|_{\varphi(\rho, w)}} & \quad [\text{by definition of $\varphi$}] \\
        & = & \frac{2 \pi}{a} \, g(\rho) \, \pdv{}{\theta}\Big|_{\varphi(\rho, w)}                          & \quad [\text{by definition of $j$}] \\
        & = & g(\rho) \, \dv \varphi(\rho, w) (R^{\partial B(a)}_w)                                         & \quad [\text{by \cite[Equation (2.2)]{guttSymplecticCapacitiesPositive2018}}] \\
        & = & \dv \varphi(\rho, w) \circ J (\partial_{\rho})                                                & \quad [\text{by definition of $J$}],
    \end{IEEEeqnarray*}
    Where $(r, \theta)$ are the polar coordinates of $\C$. Since $\varphi$ is holomorphic and $\varphi$ is the identity near the origin, we conclude that $J$ is the canonical almost complex structure of $\C$ near the origin. In particular, $J$ can be extended smoothly to an almost complex structure on $\hat{B}(a)$, which proves \ref{lem:biholomorphism explicit 2}. Finally, we show that $\varphi$ is a diffeomorphism. For this, it suffices to show that $\Phi^{-1} \circ \varphi \colon \R \times \partial B(a) \longrightarrow \R \times \partial B(a)$ is a diffeomorphism. This map is given by $\Phi^{-1} \circ \varphi(\rho, w) = (2 \ln(f(\rho)), w)$. Since
    \begin{IEEEeqnarray*}{c+x*}
        \odv{}{\rho} (2 \ln(f(\rho))) = 2 \frac{f'(\rho)}{f(\rho)} = \frac{4 \pi}{a} g(\rho) > 0,
    \end{IEEEeqnarray*}
    $\varphi$ is a diffeomorphism.
\end{proof}

\begin{lemma}
    \label{lem:psi j}
    Let $\operatorname{inv} \colon \overline{\C} \longrightarrow \overline{\C}$ be the map given by $\operatorname{inv}(z) = 1/z$ and consider the vector $V \coloneqq \dv \operatorname{inv}(0) \partial_x \in T_{\infty} \overline{\C}$. For every $j \in \mathcal{T}$ there exists a unique biholomorphism $\psi_j \colon (\overline{\C}, j_0) \longrightarrow (S^2, j)$ such that
    \begin{IEEEeqnarray*}{c+x*}
        \psi_j(0) = w, \qquad \psi_j(\infty) = z, \qquad \dv \psi_j(\infty) V = \frac{v}{\| v \|},
    \end{IEEEeqnarray*}
    where $\| \cdot \|$ is the norm coming from the canonical Riemannian metric on $S^2$ as the sphere of radius $1$ in $\R^3$.
\end{lemma}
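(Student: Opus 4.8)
The plan is to reduce the statement to the uniformization theorem in genus $0$ together with the standard description of the automorphism group of the Riemann sphere. Since $(S^2, j)$ is a closed Riemann surface of genus $0$, uniformization provides a biholomorphism $\phi \colon (\overline{\C}, j_0) \longrightarrow (S^2, j)$. Any biholomorphism $\psi_j$ as in the lemma then satisfies $\psi_j = \phi \circ g$, where $g \coloneqq \phi^{-1} \circ \psi_j \in \operatorname{Aut}(\overline{\C}, j_0)$, and $\operatorname{Aut}(\overline{\C}, j_0)$ is precisely the group of Möbius transformations $\operatorname{PSL}(2, \C)$. Thus it suffices to show that there is a unique Möbius transformation $g$ with
\[
    g(0) = p_0 \coloneqq \phi^{-1}(w), \qquad g(\infty) = p_\infty \coloneqq \phi^{-1}(z), \qquad \dv(\phi \circ g)(\infty) V = v/\|v\|.
\]

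First I would dispose of the two point conditions. Since $w \in \dot{S}^2 = S^2 \setminus \{z\}$, the points $p_0$ and $p_\infty$ are distinct, so there exists $g_0 \in \operatorname{PSL}(2,\C)$ with $g_0(0) = p_0$ and $g_0(\infty) = p_\infty$. The stabilizer in $\operatorname{PSL}(2,\C)$ of the ordered pair $(0, \infty)$ is $\{ m_\lambda \mid \lambda \in \C^* \}$ with $m_\lambda(\zeta) = \lambda \zeta$; hence every Möbius transformation satisfying the first two conditions is of the form $g = g_0 \circ m_\lambda$, and it remains only to determine $\lambda$ from the derivative condition. For this I would work in the coordinate $\zeta = 1/z$ near $\infty \in \overline{\C}$: in this coordinate $\operatorname{inv}$ is the identity near $0$, so $V = \dv \operatorname{inv}(0) \partial_x$ is the vector $\partial_{\operatorname{Re}\zeta}$ at $\infty$; and $m_\lambda$ reads $\zeta \longmapsto \zeta/\lambda$, so that $\dv m_\lambda(\infty) V = \lambda^{-1} V$, where the scaling by $\lambda^{-1}$ is the one on the complex line $(T_\infty \overline{\C}, j_0)$. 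Using the complex-linearity of $\dv \phi(p_\infty)$ and $\dv g_0(\infty)$, this gives $\dv(\phi \circ g)(\infty) V = \lambda^{-1} V_0$ with $V_0 \coloneqq \dv \phi(p_\infty) \circ \dv g_0(\infty)(V) \in T_z S^2 \setminus \{0\}$.

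Finally, since $\C^*$ acts simply transitively on $T_z S^2 \setminus \{0\}$ viewed as a punctured complex line via $j$, there is a unique $\lambda \in \C^*$ with $\lambda^{-1} V_0 = v/\|v\|$; then $\psi_j \coloneqq \phi \circ g_0 \circ m_\lambda$ has all three required properties, which proves existence, and uniqueness follows from the chain of equivalences recorded above together with the uniqueness of $\lambda$. The substantive input is the uniformization theorem, which I will simply quote; the only care needed beyond that is the elementary bookkeeping in the $\zeta = 1/z$ coordinate — verifying that $V$ becomes $\partial_{\operatorname{Re}\zeta}$ and that $\dv m_\lambda(\infty)$ is complex multiplication by $\lambda^{-1}$ — together with the observation that all maps in play are holomorphic, so their differentials are $\C$-linear and the simple-transitivity argument on the one-dimensional complex vector space $T_z S^2$ is legitimate.
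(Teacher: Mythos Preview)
Your proof is correct and follows essentially the same approach as the paper: invoke uniformization to reduce to finding a Möbius transformation with the required normalizations, then use the structure of $\operatorname{PSL}(2,\C)$. The paper's proof is terser---it simply asserts the existence and uniqueness of the required Möbius transformation---whereas you unpack this by first fixing the two point conditions and then using the residual $\C^*$-stabilizer to match the tangent vector; this extra detail is entirely in the spirit of the paper's argument and introduces no new ideas.
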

\begin{proof}
    By the uniformization theorem \cite[Theorem XII.0.1]{desaint-gervaisUniformizationRiemannSurfaces2016}, there exists a biholomorphism $\phi \colon (S^2, j) \longrightarrow (\overline{\C}, j_0)$. Since there exists a unique Möbius transformation $\psi_0 \colon (\overline{\C}, j_0) \longrightarrow (\overline{\C}, j_0)$ such that
    \begin{IEEEeqnarray*}{c+x*}
        \psi_0(0) = \phi(w), \qquad \psi_0(\infty) = \phi(z), \qquad \dv \psi_0 (\infty) V = \dv \phi(z) \frac{v}{\| v \|},
    \end{IEEEeqnarray*}
    the result follows.
\end{proof}

We will denote also by $\psi_j$ the restriction $\psi_j \colon (\C, j_0) \longrightarrow (S^2, j)$.

\begin{lemma}
    \phantomsection\label{lem:u is a polynomial}
    If $(j,u) \in \mathcal{M}^{1,(0)}$ then $\varphi \circ u \circ \psi_j \colon \C \longrightarrow \C$ is a polynomial of degree $m$.
\end{lemma}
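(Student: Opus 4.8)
The plan is to prove that $F \coloneqq \varphi \circ u \circ \psi_j \colon \C \longrightarrow \C$ is an entire function which extends to a holomorphic map $\overline{\C} \longrightarrow \overline{\C}$ whose only pole is at $\infty$ and has order $m$; any such map is a rational function with constant denominator, i.e.\ a polynomial of degree $m$.

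First I would check that $F$ is well defined and entire. By \cref{lem:psi j}, $\psi_j$ extends to a biholomorphism $\overline{\C} \longrightarrow S^2$ with $\psi_j(\infty) = z$, so it restricts to a biholomorphism $(\C, j_0) \longrightarrow (\dot{S}^2, j)$. The map $u \colon (\dot{S}^2, j) \longrightarrow (\hat{E}_1, J_1)$ is holomorphic by definition of $\mathcal{M}^{1,(0)}$, and $\varphi \colon (\hat{E}_1, J_1) \longrightarrow (\C, i)$ is a biholomorphism onto $\C$ with its standard complex structure by \cref{lem:biholomorphism explicit}. Hence $F$ is a composition of holomorphic maps taking values in $\C$; in particular it is entire, with no poles on $\C$.

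Next I would analyse $F$ near $\infty$; write $\zeta$ for the coordinate on the domain $\C$. Since $(j,u) \in \mathcal{M}^{1,(0)} = \mathcal{M}^{\$, J_1}_{E_1}(\gamma^m_1)$, the curve $u$ is positively asymptotic to $\gamma^m_1$ at $z$, so on a small enough punctured neighbourhood of $z$ the curve $u$ takes values in the positive end $\R_{\geq 0} \times \partial B(a_1)$ of $\hat{E}_1 = \hat{B}(a_1)$, with $\pi_{\R} \circ u \to +\infty$ at $z$. Recall from the proof of \cref{lem:biholomorphism explicit} that on this end $\varphi$ has the form $\varphi(\rho, x) = f(\rho)\, x$ with $f(\rho) = e^{c_0 + (2\pi/a_1)\rho}$ for $\rho \geq 0$ and $x \in \partial B(a_1)$; thus $\varphi$ maps $\R_{\geq 0} \times \partial B(a_1)$ into $\C \setminus \{0\}$, is orientation preserving in the $\partial B(a_1)$-factor, and satisfies $|\varphi(\rho,x)| \to \infty$ as $\rho \to \infty$. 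Together with $\pi_{\R} \circ u \to +\infty$ at $z$ this yields $F(\zeta) \to \infty$ as $\zeta \to \infty$, so $F$ extends to a holomorphic map $\overline{\C} \longrightarrow \overline{\C}$ with $F(\infty) = \infty$ and no pole other than $\infty$; hence $F$ is a polynomial.

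Finally I would identify the degree of $F$ with $m$. The degree of the polynomial $F$ equals the order of its pole at $\infty$, which (since $F$ is nonvanishing near $\infty$) equals the winding number about $0$ of $\theta \longmapsto F(Re^{i\theta})$ for $R$ large. Because $\varphi$ on the end is a radial rescaling followed by the inclusion $\partial B(a_1) \hookrightarrow \C \setminus \{0\}$, this winding number equals that of $\theta \longmapsto \pi_{\partial B(a_1)} \circ u(\psi_j(Re^{i\theta}))$ in $\partial B(a_1) \cong S^1$, which, since $\psi_j(\{|\zeta| = R\})$ encircles the puncture $z$ exactly once and $u$ is asymptotic at $z$ to the $m$-fold cover $\gamma^m_1$ of the simple Reeb orbit of $\partial B(a_1)$, equals $m$ (with the standard orientation conventions). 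The step I expect to be the main obstacle is precisely this orientation and winding bookkeeping: one must pin down the asymptotic behaviour of $u$ and of $\varphi$ along the end — in particular that the relevant projections and rescalings are orientation preserving — and track orientations through $\psi_j$, so that the count comes out to be exactly $+m$ rather than $-m$ or some other multiple. What makes it work is that $\varphi|_{\R_{\geq 0} \times \partial B(a_1)}$ is radial, hence isotopic to the identity in the $\partial B(a_1)$-factor, and that the asymptotic orbit of $u$ at $z$ is precisely $\gamma^m_1$, which winds $m$ times over $\partial B(a_1)$. Granting these, the conclusion is a routine fact about holomorphic self-maps of $\overline{\C}$.
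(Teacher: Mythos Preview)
Your proof is correct and follows essentially the same approach as the paper: show that $F$ is entire, that it tends to $\infty$ at $\infty$ (hence is a polynomial), and then identify the degree as the winding number of $\theta \mapsto F(Re^{i\theta})$ about $0$, which is $m$ because $u$ is asymptotic to $\gamma_1^m$. The paper's proof is a terse two-sentence version of exactly this argument, leaving the orientation bookkeeping you flagged implicit.
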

\begin{proof}
    Since $u$ is positively asymptotic to $\gamma^m_1$, the map $\varphi \circ u \circ \psi_j$ goes to $\infty$ as $z$ goes to $\infty$. Therefore, $\varphi \circ u \circ \psi_j$ is a polynomial. Again using the fact that $u$ is positively asymptotic to $\gamma^m_1$, we conclude that for $r$ big enough the path $\theta \longmapsto \varphi \circ u \circ \psi_j(r e^{i \theta})$ winds around the origin $m$ times. This implies that the degree of $\varphi \circ u \circ \psi_j$ is $m$.
\end{proof}

We now with to compute the normal Conley--Zehnder index of $\gamma_1^m$. For this, we will use the following result.

\begin{proposition}[{\cite[Proposition 41]{guttConleyZehnderIndex2012}}]
    \label{prp:gutts cz formula}
    Let $S$ be a symmetric, nondegenerate $2 \times 2$-matrix and $T > 0$ be such that $\exp(T J_0 S) \neq I$. Consider the path of symplectic matrices $A \colon [0,T] \longrightarrow \operatorname{Sp}(2)$ given by
    \begin{IEEEeqnarray*}{c+x*}
        A(t) \coloneqq \exp(t J_0 S).
    \end{IEEEeqnarray*}
    Let $a_1$ and $a_2$ be the eigenvalues of $S$ and $\signature S$ be its signature. Then,
    \begin{IEEEeqnarray*}{c+x*}
        \conleyzehnder(A) =
        \begin{cases}
            \p{}{1}{\frac{1}{2} + \p{L}{1}{\frac{\sqrt{a_1 a_2} T}{2 \pi}}} \signature S & \text{if } \signature S \neq 0, \\
            0                                                                            & \text{if } \signature S = 0.
        \end{cases}
    \end{IEEEeqnarray*}
\end{proposition}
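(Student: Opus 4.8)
The plan is to compute $\conleyzehnder(A)$ through the crossing‑form description of the Robbin–Salamon index $\robbinsalamon$. Since the hypothesis $\exp(TJ_0S)\neq I$ means exactly that $A(T)$ does not have $1$ as an eigenvalue, we have $\conleyzehnder(A)=\robbinsalamon(A)$, so it suffices to list the crossings of the path $t\mapsto A(t)=\exp(tJ_0S)$ and add up the signatures of the crossing forms. The key simplification is that $\dot A(t)A(t)^{-1}=J_0S$ for every $t$; hence at each crossing the crossing subspace carries the quadratic form $v\mapsto\omega(v,J_0Sv)$, which, with the conventions under which $\exp(tJ_0)$ is the counterclockwise rotation, is just $v\mapsto\langle v,Sv\rangle$ — a nondegenerate form of signature $\signature S$ at every crossing.

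First I would locate the crossings. The matrix $J_0S$ has trace $0$ and determinant $\det S=a_1 a_2$, so its eigenvalues are $\pm\sqrt{-a_1 a_2}$, and it is diagonalizable over $\C$ (the degenerate possibility $\det S=0$ is excluded because $S$ is nondegenerate). If $\signature S=0$ then $a_1 a_2<0$, the eigenvalues are real and nonzero, $A(t)$ is hyperbolic for $t>0$ and never has $1$ as an eigenvalue, so the only crossing in $[0,T]$ is the left‑endpoint crossing at $t=0$, where $A(0)=I$. If $\signature S\neq0$ then $S$ is definite, $a_1 a_2=\det S>0$, the eigenvalues of $J_0S$ are $\pm i\sqrt{a_1 a_2}$, and $A(t)=\exp(tJ_0S)$ equals $I$ precisely when $\sqrt{a_1 a_2}\,t\in2\pi\Z$ and has no eigenvalue $1$ otherwise; so the crossings in $[0,T]$ are $t=0$ together with $t_k=2\pi k/\sqrt{a_1 a_2}$ for $k=1,\dots,\lfloor\sqrt{a_1 a_2}\,T/(2\pi)\rfloor$, and there is no crossing at $t=T$ (that is exactly the content of $\exp(TJ_0S)\neq I$, which forces $\sqrt{a_1 a_2}\,T/(2\pi)\notin\Z$, so the floor is unambiguous).

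Next I would add things up. At $t=0$ and at each $t_k$ one has $A=I$, so the crossing subspace is all of $\R^2$ and the signature of the crossing form is $\signature S$. A left‑endpoint crossing contributes $\tfrac12\signature S$, each interior crossing contributes $\signature S$, and there is no right‑endpoint contribution. Therefore, if $\signature S=0$ we get $\robbinsalamon(A)=0$, and if $\signature S\neq0$ we get
\[
\robbinsalamon(A)=\Bigl(\tfrac12+\bigl\lfloor\tfrac{\sqrt{a_1 a_2}\,T}{2\pi}\bigr\rfloor\Bigr)\signature S,
\]
which is the asserted value of $\conleyzehnder(A)$.

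The step that needs the most care is fixing the sign in the crossing form so that it is $+\langle\cdot,S\cdot\rangle$ rather than its negative; this is a bookkeeping check of the conventions for $\omega$, $J_0$ and the Robbin–Salamon crossing form, and is pinned down on the single model $S=I$, where $A$ is the counterclockwise rotation path, whose Conley–Zehnder index is known to be positive. Everything else — the trace/determinant computation, the diagonalizability remark, the crossing count — is routine. Alternatively, one can avoid crossing forms by homotoping $S$, within a connected component of the nondegenerate symmetric matrices of fixed signature on which $A(T)$ stays nondegenerate, to one of the normal forms $S=\operatorname{diag}(1,-1)$ (a hyperbolic path, index $0$) or $S=cI$ with $c>0$ (a counterclockwise rotation path, index $1+2\lfloor\sqrt c\,T/(2\pi)\rfloor$), using homotopy invariance of $\conleyzehnder$ together with the standard index of the rotation path; the computation is essentially the same.
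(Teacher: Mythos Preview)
The paper does not prove this proposition; it merely quotes it from \cite[Proposition 41]{guttConleyZehnderIndex2012} and uses it as a black box in the proof of \cref{lem:normal cz is one}. So there is no ``paper's own proof'' to compare against.

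Your crossing-form argument is correct and is the standard way to establish such a formula. A few remarks. First, your claim that $\exp(TJ_0S)\neq I$ is equivalent to $A(T)$ having no eigenvalue $1$ is right but worth one more sentence: since $\det S\neq 0$, the eigenvalues $\pm\sqrt{-a_1a_2}$ of $J_0S$ are nonzero and distinct, so $J_0S$ is diagonalizable; hence $\exp(TJ_0S)$ is diagonalizable with reciprocal eigenvalues, and if one of them equals $1$ then both do and the matrix is $I$. Second, your handling of the sign convention is honest and adequate: anchoring to the model $S=I$ (counterclockwise rotation, positive index) is exactly how one resolves the ambiguity in practice. Third, in the $\signature S=0$ case the crossing form at $t=0$ is still the nondegenerate form $\langle\cdot,S\cdot\rangle$, so the half-contribution $\tfrac12\signature S=0$ is legitimate and there is no regularity issue. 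Your alternative homotopy argument is also fine and arguably cleaner.
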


\begin{lemma}
    \label{lem:normal cz is one}
    For every $\ell = 1,\ldots,n-1$, view $\gamma^m_1$ as a Reeb orbit of $\partial E_{\ell} \subset \partial E_{\ell + 1}$. The normal Conley--Zehnder index of $\gamma^m_1$ is $1$.
\end{lemma}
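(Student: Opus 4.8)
The plan is to compute the normal Conley--Zehnder index directly from \cref{prp:gutts cz formula}, which is exactly why that proposition was set up. Recall from \cite[Section 2.1]{guttSymplecticCapacitiesPositive2018} that on $\partial E(a_1,\ldots,a_{\ell+1})$ the Reeb flow is the restriction of the linear flow $z \longmapsto (e^{2\pi i t/a_1}z_1,\ldots,e^{2\pi i t/a_{\ell+1}}z_{\ell+1})$; this is consistent with the formula $\gamma_j(t) = \sqrt{a_j/\pi}\,e^{2\pi i t/a_j}e_j$ and the period $\mathcal{A}(\gamma_j) = a_j$. Since $\gamma_1^m$ has period $m a_1$ and lies in the $z_1$-plane, along $\gamma_1^m$ the contact distribution $\xi_{\ell+1}$ of $\partial E_{\ell+1}$ splits, invariantly under the linearized Reeb flow, as $\xi_{\ell+1}|_{\gamma_1^m} = \xi_\ell|_{\gamma_1^m} \oplus \nu$, where $\nu$ is the complex line spanned by $e_{\ell+1}$. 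This is precisely the restriction to the contact hyperplane of the splitting $T\hat{E}_{\ell+1}|_{\gamma_1^m} = T\hat{E}_\ell|_{\gamma_1^m} \oplus (T\hat{E}_\ell)^{\perp}|_{\gamma_1^m}$ appearing (through the trivialization $\tau$) in the proof of \cref{lem:DX surj implies DY surj}, and the ``normal Conley--Zehnder index'' of $\gamma_1^m$ is by definition $\conleyzehnder^{\tau}$ of the asymptotic operator of the summand $\nu$, computed with respect to the constant trivialization of $\nu$ given by $e_{\ell+1}$.

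First I would record the relevant path of symplectic matrices: on $\nu \cong \C \cong \R^2$ the linearized Reeb flow over time $t$ is rotation by the angle $2\pi t/a_{\ell+1}$, i.e.\ $A(t) = \exp(t J_0 S)$ on $[0,T]$ with $T = m a_1$ and $S = \tfrac{2\pi}{a_{\ell+1}}\,\mathrm{Id}$. This $S$ is symmetric and nondegenerate with both eigenvalues equal to $2\pi/a_{\ell+1} > 0$, so $\signature S = 2$, and $\exp(T J_0 S)$ is rotation by $2\pi m a_1/a_{\ell+1}$, which is not the identity because $0 < m a_1 < a_{\ell+1}$ (here we use the standing assumption $m a_1 < a_2 < \cdots < a_n$ together with $\ell+1 \le n$, so $m a_1 < a_2 \le a_{\ell+1}$). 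Hence \cref{prp:gutts cz formula} applies and yields
\begin{IEEEeqnarray*}{c+x*}
    \conleyzehnder(A) = \p{}{1}{\frac{1}{2} + \p{L}{1}{\frac{(2\pi/a_{\ell+1})\, m a_1}{2\pi}}}\signature S = \p{}{1}{\frac{1}{2} + \p{L}{1}{\frac{m a_1}{a_{\ell+1}}}}\cdot 2 = 1 + 2\p{L}{1}{\frac{m a_1}{a_{\ell+1}}}.
\end{IEEEeqnarray*}
Since $0 < m a_1 < a_{\ell+1}$ we get $\p{L}{1}{m a_1/a_{\ell+1}} = 0$, so $\conleyzehnder(A) = 1$, which is the claimed normal Conley--Zehnder index.

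As a consistency check (and an alternative argument) one can instead subtract indices: by additivity of $\conleyzehnder^{\tau}$ under the direct sum decomposition $\xi_{\ell+1} = \xi_\ell \oplus \nu$ (with a trivialization respecting it), the normal index equals $\conleyzehnder(\gamma_1^m \subset \partial E_{\ell+1}) - \conleyzehnder(\gamma_1^m \subset \partial E_\ell)$; evaluating \eqref{eq:cz of reeb in ellipsoid} with $m a_1 < a_2 < \cdots < a_{\ell+1}$ gives $(\ell + 2m) - (\ell - 1 + 2m) = 1$, in agreement. The main obstacle is purely a matter of conventions: one must check that the path $A(t)$ and the matrix $S$ are normalized exactly as in \cref{prp:gutts cz formula}, that the trivialization implicit in ``normal Conley--Zehnder index'' is the constant one (so that $S$ is constant rather than a conjugate), and that the hypothesis $\exp(T J_0 S) \neq I$ holds — all of which come down to the elementary inequality $0 < m a_1 < a_{\ell+1}$ for $\ell \le n-1$.
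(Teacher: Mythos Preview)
Your proof is correct and follows essentially the same approach as the paper: both identify the normal direction as the $e_{\ell+1}$-line, write the linearized Reeb flow there as $A(t)=\exp(tJ_0S)$ with $S=\tfrac{2\pi}{a_{\ell+1}}\mathrm{Id}$ on $[0,ma_1]$, and apply \cref{prp:gutts cz formula} together with $ma_1<a_{\ell+1}$ to obtain $\conleyzehnder^\perp(\gamma_1^m)=1$. Your additional consistency check via additivity and \eqref{eq:cz of reeb in ellipsoid} is a nice touch not present in the paper.
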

\begin{proof}
    By \cite[Equation (2.2)]{guttSymplecticCapacitiesPositive2018}, the Reeb vector field of $\partial E_{\ell + 1}$ is given by
    \begin{IEEEeqnarray*}{c+x*}
        R^{\partial E_{\ell + 1}} = 2 \pi \sum_{j=1}^{\ell+1} \frac{1}{a_j} \pdv{}{\theta_{j}},
    \end{IEEEeqnarray*}
    where $\theta_j$ denotes the angular polar coordinate of the $j$th summand of $\C^{\ell+1}$. Therefore, the flow of $R^{\partial E_{\ell + 1}}$ is given by
    \begin{IEEEeqnarray*}{rrCl}
        \phi^{t}_{R} \colon & \partial E_{\ell+1}     & \longrightarrow & \partial E_{\ell+1} \\
                                                  & (z_1,\ldots,z_{\ell+1}) & \longmapsto     & \p{}{2}{e^{\frac{2 \pi i}{a_1}} z_1, \ldots, e^{\frac{2 \pi i}{a_{\ell+1}}} z_{\ell+1}}.
    \end{IEEEeqnarray*}
    The diagram
    \begin{IEEEeqnarray*}{c+x*}
        \begin{tikzcd}
            \xi^{\partial E_{\ell}}_{\gamma^m_1(0)} \ar[r] \ar[d, swap, "\dv \phi^t_{R}(\gamma^m_1(0))"] & \xi^{\partial E_{\ell+1}}_{\gamma^m_1(0)} \ar[d, "\dv \phi^t_{R}(\gamma^m_1(0))"] & \big(\xi^{\partial E_{\ell+1}}_{\gamma^m_1(0)}\big)^{\perp} \ar[l] \ar[d, "\dv \phi^t_{R}(\gamma^m_1(0))"] \ar[r, equals] & \C \ar[d, "\times \exp \p{}{1}{\frac{2 \pi i t}{a_{\ell+1}}}"] \\
            \xi^{\partial E_{\ell}}_{\gamma^m_1(t)} \ar[r]                                               & \xi^{\partial E_{\ell+1}}_{\gamma^m_1(t)}                                         & \big(\xi^{\partial E_{\ell+1}}_{\gamma^m_1(t)}\big)^{\perp} \ar[l] \ar[r, equals]                                         & \C
        \end{tikzcd}
    \end{IEEEeqnarray*}
    commutes. Define a path $A_{\gamma^m_1} \colon [0,m a_1] \longrightarrow \operatorname{Sp}(2)$ by $A_{\gamma^m_1}(t) = \exp (t J_0 S)$, where
    \begin{IEEEeqnarray*}{c+x*}
        S = \frac{2 \pi}{a_{\ell + 1}}
        \begin{bmatrix}
            1 & 0 \\
            0 & 1
        \end{bmatrix}.
    \end{IEEEeqnarray*}
    The only eigenvalue of $S$ is $2 \pi / a_{\ell+1}$, which has multiplicity $2$. Therefore, the signature of $S$ is $\signature S = 2$. These facts allow us to compute $\conleyzehnder^{\perp}(\gamma^m_1)$ using \cref{prp:gutts cz formula}:
    \begin{IEEEeqnarray*}{rCls+x*}
        \conleyzehnder^{\perp}(\gamma^m_1)
        & = & \conleyzehnder(A_{\gamma^m_1})                                                                                                & \quad [\text{by definition of $\conleyzehnder^{\perp}$}] \\
        & = & \p{}{2}{\frac{1}{2} + \p{L}{2}{\sqrt{\frac{2 \pi}{a_{\ell + 1}}\frac{2 \pi}{a_{\ell + 1}}} \frac{m a_1}{2 \pi}}} \signature S & \quad [\text{by \cref{prp:gutts cz formula}}] \\
        & = & \frac{1}{2} \signature S                                                                                                      & \quad [\text{since $m a_1 < a_2 < \cdots < a_n$}] \\
        & = & 1                                                                                                                             & \quad [\text{by the discussion above}].                    & \qedhere
    \end{IEEEeqnarray*}
\end{proof}

\begin{lemma}
    \label{lem:unique reeb orbit with cz equal to}
    If $\ell = 1,\ldots,n$ then $\gamma^m_1$ is the unique Reeb orbit of $\partial E_{\ell}$ such that $\conleyzehnder(\gamma^m_1) = \ell - 1 + 2m$.
\end{lemma}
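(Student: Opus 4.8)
The plan is to show that among all Reeb orbits $\gamma_j^r$ of $\partial E_\ell$ (where $j \in \{1,\ldots,\ell\}$ and $r \in \Z_{\geq 1}$), the only one whose Conley--Zehnder index equals $\ell - 1 + 2m$ is $\gamma_1^m$. First I would recall from the excerpt that $E_\ell = E(a_1,\ldots,a_\ell)$, that the simple Reeb orbits are $\gamma_1,\ldots,\gamma_\ell$, and that by Equation \eqref{eq:cz of reeb in ellipsoid},
\begin{IEEEeqnarray*}{c+x*}
    \conleyzehnder(\gamma^r_j) = \ell - 1 + 2 \sum_{i=1}^{\ell} \p{L}{2}{\frac{r a_j}{a_i}}.
\end{IEEEeqnarray*}
So the statement is equivalent to: $\sum_{i=1}^\ell \lfloor r a_j / a_i \rfloor = m$ if and only if $(j,r) = (1,m)$. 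I would verify the ``if'' direction first: since $m a_1 < a_2 < \cdots < a_\ell$ by the standing assumption of this subsection, we have $\lfloor m a_1 / a_1 \rfloor = m$ and $\lfloor m a_1 / a_i \rfloor = 0$ for $i \geq 2$, so the sum is exactly $m$. Hence $\gamma_1^m$ does achieve the index $\ell - 1 + 2m$.

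For the ``only if'' direction, suppose $\gamma_j^r$ satisfies $\sum_{i=1}^\ell \lfloor r a_j / a_i \rfloor = m$. The key observation is that the $i = j$ term already contributes $\lfloor r a_j / a_j \rfloor = r$, and all other terms are nonnegative, so $r \leq m$. I would then split into the cases $j = 1$ and $j \geq 2$. If $j = 1$: then $r \leq m$, and using $r a_1 \leq m a_1 < a_2 < \cdots < a_\ell$ we get $\lfloor r a_1 / a_i \rfloor = 0$ for all $i \geq 2$, so the sum equals $r$; thus $r = m$ and the orbit is $\gamma_1^m$. If $j \geq 2$: the $i=1$ term is $\lfloor r a_j / a_1 \rfloor \geq \lfloor a_j / a_1 \rfloor \geq \lfloor a_2 / a_1 \rfloor$, and the assumption $m a_1 < a_2$ gives $a_2 / a_1 > m$, hence $\lfloor r a_j / a_1 \rfloor \geq m + 1$ (using $r \geq 1$ and rational independence to rule out equality, though $> m$ already forces $\geq m+1$ since the floor is an integer). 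Adding the nonnegative $i=j$ term $r \geq 1$ then makes the total sum strictly greater than $m$, a contradiction. Therefore $j = 1$, and we are back in the previous case.

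I do not expect a genuine obstacle here; the statement is essentially an arithmetic bookkeeping exercise built on the chain of inequalities $m a_1 < a_2 < \cdots < a_n$ together with the explicit Conley--Zehnder formula \eqref{eq:cz of reeb in ellipsoid}. The one point requiring a little care is making sure the floor estimates are strict where needed — specifically that $a_2/a_1 > m$ forces $\lfloor r a_j/a_1 \rfloor \geq m+1$ for $j \geq 2$, $r \geq 1$ — but this is immediate since $r a_j / a_1 \geq a_2/a_1 > m$ and the floor of a real number exceeding the integer $m$ is at least $m+1$. The rational linear independence of the $a_i$ is not even strictly needed for this particular lemma (it only guarantees the $\gamma_j$ are the sole simple orbits and that all iterates are nondegenerate, which is already invoked), so the proof reduces to the two-case analysis above.
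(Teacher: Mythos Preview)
Your argument is essentially correct and follows the same route as the paper: reduce to the arithmetic identity $\sum_i \lfloor r a_j/a_i\rfloor = m$, verify it for $(j,r)=(1,m)$ using $m a_1 < a_2 < \cdots < a_\ell$, then rule out $j\geq 2$ by bounding the sum from below and finish the $j=1$ case by showing the higher floor terms vanish. One small slip: from $r a_j/a_1 > m$ you only get $\lfloor r a_j/a_1\rfloor \geq m$, not $\geq m+1$ (e.g.\ $a_2/a_1$ could lie in $(m,m+1)$); your parenthetical ``$>m$ already forces $\geq m+1$'' is false. But this is harmless, since you then add the $i=j$ term $r\geq 1$ anyway, giving total $\geq m+1>m$ --- exactly the contradiction you need.
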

\begin{proof}
    First, notice that
    \begin{IEEEeqnarray*}{rCls+x*}
        \conleyzehnder(\gamma^m_1)
        & = & \ell - 1 + 2 \sum_{j=1}^{\ell} \p{L}{2}{\frac{m a_1}{a_j}} & \quad [\text{by equation \eqref{eq:cz of reeb in ellipsoid}}] \\
        & = & \ell - 1 + 2 m                                          & \quad [\text{since $m a_1 < a_2 < \cdots < a_n$}].
    \end{IEEEeqnarray*}
    Conversely, let $\gamma = \gamma^k_i$ be a Reeb orbit of $\partial E_\ell$ with $\conleyzehnder(\gamma) = \ell - 1 + 2m$. By equation \eqref{eq:cz of reeb in ellipsoid}, this implies that
    \begin{IEEEeqnarray}{c+x*}
        \label{eq:k is sum of floors}
        m = \sum_{j=1}^{\ell} \p{L}{2}{\frac{k a_i}{a_j}}.
    \end{IEEEeqnarray}
    We show that $i = 1$. Assume by contradiction otherwise. Then
    \begin{IEEEeqnarray*}{rCls+x*}
        m
        & =    & \sum_{1 \leq j \leq \ell} \p{L}{2}{\frac{k a_i}{a_j}}                          & \quad [\text{by equation \eqref{eq:k is sum of floors}}] \\
        & \geq & \sum_{1 \leq j \leq i} \p{L}{2}{\frac{k a_i}{a_j}}                             & \quad [\text{since every term in the sum is $\geq 0$}] \\
        & =    & \p{L}{2}{\frac{k a_i}{a_1}} + \sum_{1 < j < i} \p{L}{2}{\frac{k a_i}{a_j}} + k & \quad [\text{since by assumption, $i > 1$}] \\
        & \geq & (m + i - 1) k                                                                  & \quad [\text{$m a_1 < a_2 < \cdots < a_i$}] \\
        & >    & m k                                                                            & \quad [\text{since by assumption, $i > 1$}],
    \end{IEEEeqnarray*}
    which is a contradiction, and therefore $i = 1$. We show that $k = m$, using the fact that $m \geq \lfloor k a_i / a_1 \rfloor = k$.
    \begin{IEEEeqnarray*}{rCls+x*}
        m
        & = & \sum_{1 \leq j \leq \ell} \p{L}{2}{\frac{k a_1}{a_j}}     & \quad [\text{by equation \eqref{eq:k is sum of floors} and since $i = 1$}] \\
        & = & k + \sum_{2 \leq j \leq \ell} \p{L}{2}{\frac{k a_1}{a_j}} & \\
        & = & k                                                         & \quad [\text{since $k \leq m$ and $k a_1 \leq m a_1 < a_1 < \cdots < a_n$}]. & \qedhere
    \end{IEEEeqnarray*}
\end{proof}

Using the previous results, we can now compute the linearized contact homology of $E_n$.

\begin{lemma}
    \label{lem:lch of ellipsoid}
    The module $CH_{n - 1 + 2m}(E_n)$ is the free $\Q$-module generated by $\gamma^m_1$.
\end{lemma}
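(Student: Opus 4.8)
The plan is to reduce the computation to a parity argument at the level of the chain complex $CC(E_n)$: the point is that all Reeb orbits of an ellipsoid have Conley--Zehnder indices of the same parity, so that the complex is concentrated in one parity of degrees and the differential must vanish in the relevant degree.

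First I would record the structural facts. Since $E_n = E(a_1,\ldots,a_n)$ is a star-shaped domain in $\C^n$, we have $\pi_1(E_n) = 0$ and $c_1(TE_n) = 0$, so by \cref{rmk:grading for lch} the complex $CC(E_n)$ is $\Z$-graded, with $\deg(\gamma) = \conleyzehnder(\gamma)$ and differential $\partial$ of degree $-1$. (By the standing hypotheses $a_1,\ldots,a_n$ are rationally independent, so $\partial E_n$ is nondegenerate and $CC(E_n)$ is indeed defined.) Next, by \eqref{eq:cz of reeb in ellipsoid} every Reeb orbit $\gamma^k_j$ of $\partial E_n$ satisfies $\conleyzehnder(\gamma^k_j) \equiv n - 1 \pmod 2$; in particular every Reeb orbit of $\partial E_n$ is good, and $CC_i(E_n) = 0$ for every integer $i$ with $i \not\equiv n - 1 \pmod 2$. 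Combining this with \cref{lem:unique reeb orbit with cz equal to}, which identifies $\gamma^m_1$ as the \emph{unique} Reeb orbit of $\partial E_n$ with $\conleyzehnder(\gamma^m_1) = n - 1 + 2m$, we obtain $CC_{n-1+2m}(E_n) = \Q \langle \gamma^m_1 \rangle$ and $CC_{n-2+2m}(E_n) = CC_{n+2m}(E_n) = 0$.

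Finally, since $\partial$ has degree $-1$, it maps $CC_{n-1+2m}(E_n)$ into $CC_{n-2+2m}(E_n) = 0$, and it maps $CC_{n+2m}(E_n) = 0$ into $CC_{n-1+2m}(E_n)$. Hence $CH_{n-1+2m}(E_n) = \ker \partial / \img \partial = CC_{n-1+2m}(E_n) = \Q \langle \gamma^m_1 \rangle$, as claimed. I do not expect a genuine obstacle here: the only points that require care are invoking the $\Z$-grading of $CC(E_n)$ (which needs $\pi_1(E_n) = 0$ and $2 c_1(TE_n) = 0$, both immediate for a domain in $\C^n$) and the nondegeneracy of $\partial E_n$ coming from the rational independence of the $a_i$; everything else is the parity bookkeeping above.
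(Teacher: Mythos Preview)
Your proof is correct and follows essentially the same approach as the paper: both arguments rest on the parity of the Conley--Zehnder indices (from \eqref{eq:cz of reeb in ellipsoid}) forcing the differential to vanish, together with \cref{lem:unique reeb orbit with cz equal to} to identify the unique generator in the relevant degree. The only cosmetic difference is that the paper rederives the parity obstruction by an explicit virtual-dimension computation (showing in fact that $\partial = 0$ on all of $CC(E_n)$), whereas you invoke \cref{rmk:grading for lch} directly to get that $\partial$ has degree $-1$ and then note that the adjacent degrees vanish; this is the same argument, just packaged more concisely.
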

\begin{proof}
    By equation \eqref{eq:cz of reeb in ellipsoid}, every Reeb orbit of $\partial E_n$ is good. We claim that the differential $\partial \colon CC(E_n) \longrightarrow CC(E_n)$ is zero. Assume by contradiction that there exists a Reeb orbit $\gamma$ such that $\partial \gamma \neq 0$. By definition of $\partial$, this implies that there exist Reeb orbits $\eta, \alpha_1, \ldots, \alpha_p$ such that
    \begin{IEEEeqnarray*}{rCls+x*}
        0 & \neq & \#^{\mathrm{vir}} \overline{\mathcal{M}}^{J_n}_{\partial E_n}(\gamma; \eta, \alpha_1, \ldots, \alpha_p), \\
        0 & \neq & \#^{\mathrm{vir}} \overline{\mathcal{M}}^{J_n}_{E_n}(\alpha_j), \quad \text{for } j=1,\ldots,p.
    \end{IEEEeqnarray*}
    By assumption on the virtual perturbation scheme,
    \begin{IEEEeqnarray*}{rCls+x*}
        0 & =   & \operatorname{virdim} \overline{\mathcal{M}}^{J_n}_{E_n}(\alpha_j) = n - 3 + \conleyzehnder(\alpha_j) \quad \text{for every } j = 1,\ldots,p, \\ \\
        0 & =   & \operatorname{virdim} \overline{\mathcal{M}}^{J_n}_{\partial E_n}(\gamma; \eta, \alpha_1, \ldots, \alpha_p) \\
          & =   & (n-3)(2 - (2+p)) + \conleyzehnder(\gamma) - \conleyzehnder(\eta) - \sum_{j=1}^{p} \conleyzehnder(\alpha_j) - 1 \\
          & =   & \conleyzehnder(\gamma) - \conleyzehnder(\eta) - 1 \\
          & \in & 1 + 2 \Z,
    \end{IEEEeqnarray*}
    where in the last line we used equation \eqref{eq:cz of reeb in ellipsoid}. This gives the desired contradiction, and we conclude that $\partial \colon CC(E_n) \longrightarrow CC(E_n)$ is zero. Therefore, $CH(E_n) = CC(E_n)$ is the free $\Q$-module generated by the Reeb orbits of $\partial E_n$. By \cref{lem:unique reeb orbit with cz equal to}, $\gamma^m_1$ is the unique Reeb orbit of $\partial E_n$ with $\conleyzehnder(\gamma^m_1) = n - 1 + 2m$, from which the result follows.
\end{proof}

\begin{lemma}
    \phantomsection\label{lem:moduli spaces of ellipsoids are all equal}
    If $\ell = 1,\ldots,n$ and $k \in \Z_{\geq 1}$ then $\mathcal{M}^{\ell,(k)}_{\mathrm{p}} = \mathcal{M}^{1,(k)}_{\mathrm{p}}$ and $\mathcal{M}^{\ell,(k)} = \mathcal{M}^{1,(k)}$.
\end{lemma}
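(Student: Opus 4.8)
The plan is to argue by induction on $\ell$, peeling off one dimension at a time by means of the stabilization results \cref{lem:stabilization 1,lem:stabilization 2}. The base case $\ell=1$ is a tautology, so it suffices to show that $\mathcal{M}^{\ell+1,(k)}_{\mathrm p}=\mathcal{M}^{\ell,(k)}_{\mathrm p}$ (and likewise after passing to unparametrized curves) for every $\ell=1,\ldots,n-1$, where as always we identify a curve $u\colon\dot{S}^2\to\hat E_\ell$ with the curve $\hat\iota_\ell\circ u\colon\dot{S}^2\to\hat E_{\ell+1}$.

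First I would set up the stabilization picture. Since $m a_1<a_2<\cdots<a_n$, the domain $E_{\ell+1}$, equipped with the contact form $(\lambda_{T^*}\!+\lambda_0)|_{\partial E_{\ell+1}}=\lambda_{\C^{\ell+1}}|_{\partial E_{\ell+1}}$, realizes a smoothing of $E_\ell\times B^2(a_{\ell+1})$ as in \cref{lem:stabilization 1} (take $c=a_{\ell+1}$ and $\varepsilon$ so small that $m a_1<c-\varepsilon$): its Liouville field is radial hence outwardly transverse, the slice $\hat E_\ell\times\{0\}=\hat\iota_\ell(\hat E_\ell)$ contains $E_\ell\times\{0\}$ and is invariant under the Reeb flow of $\partial E_{\ell+1}$, and every Reeb orbit of $\partial E_{\ell+1}$ of action $<c-\varepsilon$ is an iterate of one of $\gamma_1,\ldots,\gamma_\ell$, hence lies in $\partial E_\ell\times\{0\}$ and — using \eqref{eq:cz of reeb in ellipsoid} together with $m a_1<a_2<\cdots<a_n$, exactly as in \cref{lem:normal cz is one} — has normal Conley--Zehnder index $1$; in particular this holds for $\gamma_1^m$ itself. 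Moreover, by our choice of almost complex structures $\hat\iota_\ell$ is a holomorphic embedding, so $\hat E_\ell\times\{0\}$ is $J_{\ell+1}$-holomorphic, and near $0$ the map $\hat\iota_\ell$ is the linear inclusion $(z_1,\ldots,z_\ell)\mapsto(z_1,\ldots,z_\ell,0)$ while $J_\ell$, $J_{\ell+1}$ are standard.

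For the inclusion $\mathcal{M}^{\ell,(k)}_{\mathrm p}\subseteq\mathcal{M}^{\ell+1,(k)}_{\mathrm p}$ I would post-compose with $\hat\iota_\ell$ and check that holomorphicity, the asymptotic behaviour at $z$ together with the marker $v$, the point constraint $u(w)=x_\ell$, and contact order $k$ to $D_\ell$ at $x_\ell$ are all preserved; this is immediate because $\iota_\ell(x_\ell)=x_{\ell+1}$, $h_{\ell+1}\circ\iota_\ell=h_\ell$, $\hat\iota_\ell$ carries the cylindrical end over $\gamma_1^m\subset\partial E_\ell$ compatibly onto the end over $\gamma_1^m\subset\partial E_{\ell+1}$, and near $0$ the three constraints are transported by the linear inclusion above. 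For the reverse inclusion, take $(j,u)\in\mathcal{M}^{\ell+1,(k)}_{\mathrm p}$. Its unique positive asymptotic orbit $\gamma_1^m$ is nondegenerate, lies in $\partial E_\ell\times\{0\}$, and has normal Conley--Zehnder index $1$, so \cref{lem:stabilization 2}\,\ref{lem:stabilization 2 1} applies and $u$ is either disjoint from the slice $\hat E_\ell\times\{0\}$ or entirely contained in it; since $u(w)=x_{\ell+1}=0$ lies on the slice, the first alternative is excluded, so $u=\hat\iota_\ell\circ u'$ for a curve $u'\colon\dot{S}^2\to\hat E_\ell$ which is $J_\ell$-holomorphic ($\hat\iota_\ell$ being a holomorphic embedding) and inherits the marker, point and tangency constraints, i.e. $(j,u')\in\mathcal{M}^{\ell,(k)}_{\mathrm p}$. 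This gives $\mathcal{M}^{\ell+1,(k)}_{\mathrm p}=\mathcal{M}^{\ell,(k)}_{\mathrm p}$; since the identification is post-composition with the fixed embedding $\hat\iota_\ell$, it is equivariant under the reparametrization equivalence and descends to $\mathcal{M}^{\ell+1,(k)}=\mathcal{M}^{\ell,(k)}$, and iterating down to $\ell=1$ finishes the proof.

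The only genuinely non-formal input is the confinement statement \cref{lem:stabilization 2}\,\ref{lem:stabilization 2 1}, which rests on positivity of intersection for the normal Cauchy--Riemann operator and may be taken as a black box here; everything else is bookkeeping. I expect the point that needs the most care to be the claim in the second paragraph that the honest ellipsoid $E_{\ell+1}$ (rather than some abstract smoothing) legitimately plays the role of $\tilde X$ in \cref{lem:stabilization 1,lem:stabilization 2}, i.e. verifying conditions (1)--(3) of \cref{lem:stabilization 1} for the standard ellipsoid from the hypothesis $m a_1<a_2<\cdots<a_n$.
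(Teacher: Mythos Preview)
Your proposal is correct and follows essentially the same route as the paper: reduce to $\mathcal{M}^{\ell+1,(k)}_{\mathrm p}=\mathcal{M}^{\ell,(k)}_{\mathrm p}$, get one inclusion from holomorphicity of $\hat\iota_\ell$ and the compatibility $h_{\ell+1}\circ\iota_\ell=h_\ell$, and get the other from \cref{lem:stabilization 2}\,\ref{lem:stabilization 2 1} together with the observation that the constraint $u(w)=x_{\ell+1}$ forces $u$ to meet the slice $\hat E_\ell$. Your write-up is in fact more detailed than the paper's, which invokes \cref{lem:stabilization 2} without pausing to check that the ellipsoid $E_{\ell+1}$ plays the role of the smoothing $\tilde X$; your final paragraph correctly flags this as the one place requiring care.
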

\begin{proof}
    It suffices to show that $\mathcal{M}^{\ell,(k)}_{\mathrm{p}} = \mathcal{M}^{\ell+1,(k)}_{\mathrm{p}}$ for every $\ell = 1,\ldots,n-1$. The inclusion $\mathcal{M}^{\ell,(k)}_{\mathrm{p}} \subset \mathcal{M}^{\ell+1,(k)}_{\mathrm{p}}$ follows from the fact that the inclusion $\hat{E}_\ell \hookrightarrow \hat{E}_{\ell+1}$ is holomorphic and the assumptions on the symplectic divisors. To prove that $\mathcal{M}^{\ell+1,(k)}_{\mathrm{p}} \subset \mathcal{M}^{\ell,(k)}_{\mathrm{p}}$, it suffices to assume that $(j,u) \in \mathcal{M}^{\ell+1,(k)}_{\mathrm{p}}$ and to show that the image of $u$ is contained in $\hat{E}_\ell \subset \hat{E}_{\ell+1}$. Since $u$ has contact order $k$ to $D_{\ell+1}$ at $x_{\ell+1} = \iota_{\ell}(x_{\ell})$, we conclude that $u$ is not disjoint from $\hat{E}_\ell$. By \cref{lem:stabilization 2}, $u$ is contained in $\hat{E}_\ell$.
\end{proof}

We now prove that the moduli spaces $\mathcal{M}^{\ell,(k)}$ are regular. The proof strategy is as follows.
\begin{enumerate}
    \item \cref{prp:moduli spaces without point constraint are tco} deals with the moduli spaces $\mathcal{M}^{1,(0)}$. We show that the linearized Cauchy--Riemann operator is surjective using \cref{lem:Du is surjective case n is 1}.
    \item \cref{prp:moduli spaces w point are tco} deals with the moduli spaces $\mathcal{M}^{\ell,(1)}$. Here, we need to consider the linearized Cauchy--Riemann operator together with an evaluation map. We show inductively that this map is surjective using \cref{lem:DX surj implies DY surj}.
    \item Finally, \cref{prp:moduli spaces w tangency are tco} deals with the moduli spaces $\mathcal{M}^{\ell,(k)}$. We now need to consider the jet evaluation map. We prove inductively that this map is surjective by writing it explicitly.
\end{enumerate}

\begin{proposition}
    \label{prp:moduli spaces without point constraint are tco}
    The moduli spaces $\mathcal{M}^{1,(0)}_{\mathrm{p}}$ and $\mathcal{M}^{1,(0)}$ are transversely cut out.
\end{proposition}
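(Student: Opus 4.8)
The plan is to invoke directly the functional-analytic framework of \cref{sec:functional analytic setup} with $X = E_1$, $J = J_1$ and $\gamma = \gamma^m_1$. In that framework $\mathcal{M}^{1,(0)}_{\mathrm{p}}$ is realized as the zero set $\overline{\partial}^{-1}(0)$ of the section $\overline{\partial} \colon \mathcal{T} \times \mathcal{B} \longrightarrow \mathcal{E}$ associated to $(\hat{E}_1, J_1)$ and the asymptotic data prescribed by $\gamma^m_1$, where the asymptotic marker is encoded in the choice of cylindrical coordinates near the puncture and the marked point $w$ carries no constraint since $k = 0$. By definition, to say that $\mathcal{M}^{1,(0)}_{\mathrm{p}}$ is transversely cut out is to say that the linearization $\mathbf{L}_{(j,u)} = \mathbf{F}_{(j,u)} + \mathbf{D}_{(j,u)}$ is surjective at every $(j,u) \in \overline{\partial}^{-1}(0)$; the implicit function theorem for Banach manifolds then gives $\mathcal{M}^{1,(0)}_{\mathrm{p}}$ the structure of a smooth manifold of dimension equal to the Fredholm index of the linearization.

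Since $E_1 = E(a_1) \subset \C^1$ is a Liouville domain of real dimension $2$, we are precisely in the situation $n = 1$, so \cref{lem:Du is surjective case n is 1} applies verbatim: $\mathbf{L}_{(j,u)}$ is surjective at every $(j,u) \in \mathcal{M}^{1,(0)}_{\mathrm{p}}$. This settles the parametrized statement. I emphasize that no somewhere-injectivity hypothesis enters: a curve in $\mathcal{M}^{1,(0)}$ is in general multiply covered (by \cref{lem:u is a polynomial} it is, after the biholomorphism $\hat{E}_1 \cong \C$ of \cref{lem:biholomorphism explicit}, a degree-$m$ polynomial), and it is exactly the automatic-transversality criterion of Wendl for complex line bundles — the content of \cref{lem:Du is surjective case n is 1} — that makes the argument work regardless of multiplicity.

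It remains to deduce the statement for $\mathcal{M}^{1,(0)} = \mathcal{M}^{1,(0)}_{\mathrm{p}} / \sim$, obtained by quotienting by the reparametrization group $G$ of biholomorphisms of $S^2$ fixing $z$, the asymptotic marker $v$ and the marked point $w$. This $G$ is a finite-dimensional Lie group (isomorphic to $\R$, realized as translation in the cylindrical end at $z$); it acts smoothly and properly, and it acts freely, since an automorphism fixing a nonconstant holomorphic map $u$ would have to preserve $u$ along its orbits — impossible for the continuous part — and the asymptotic marker rules out the deck transformations of any branched covering underlying $u$. Hence $\mathcal{M}^{1,(0)}$ is again a smooth manifold, and surjectivity of $\mathbf{L}_{(j,u)}$ descends to the quotient, so $\mathcal{M}^{1,(0)}$ is transversely cut out as well.

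I do not anticipate a genuine obstacle here: the substance of the proposition is already contained in \cref{lem:Du is surjective case n is 1}. The only points requiring (routine) attention are the bookkeeping that identifies $\mathcal{M}^{1,(0)}_{\mathrm{p}}$ with the zero set of $\overline{\partial}$ in the presence of an asymptotic marker, and the verification that the $G$-action on $\mathcal{M}^{1,(0)}_{\mathrm{p}}$ is free and proper so that transversality passes to the unparametrized moduli space.
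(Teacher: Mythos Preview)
Your approach is correct and matches the paper's: identify $\mathcal{M}^{1,(0)}_{\mathrm{p}}$ as $(\overline{\partial})^{-1}(0)$ in the setup of \cref{sec:functional analytic setup} and apply \cref{lem:Du is surjective case n is 1} to get surjectivity of $\mathbf{L}_{(j,u)}$. One small correction: since the paper sets $\mathcal{M}^{\ell,(0)} \coloneqq \mathcal{M}_{E_{\ell}}^{\$, J_{\ell}}(\gamma^m_1)$ with no marked point, the reparametrization group for $k=0$ consists of biholomorphisms fixing only $(z,v)$ and is three-real-dimensional (affine maps $z \mapsto az+b$ with $a>0$ after putting the puncture at $\infty$), not isomorphic to $\R$; this does not affect the argument, and the paper simply asserts the passage to the quotient without elaboration.
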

\begin{proof}
    It is enough to show that $\mathcal{M}^{1,(0)}_{\mathrm{p}}$ is transversely cut out, since this implies that $\mathcal{M}^{1,(0)}$ is transversely cut out as well. Recall that $\mathcal{M}^{1,(0)}_{\mathrm{p}}$ can be written as the zero set of the Cauchy--Riemann operator $\overline{\partial}\vphantom{\partial}^{1} \colon \mathcal{T} \times \mathcal{B} E_{1} \longrightarrow \mathcal{E} E_{1}$. It suffices to assume that $(j,u) \in (\overline{\partial}\vphantom{\partial}^{1})^{-1}(0)$ and to prove that the linearization
    \begin{IEEEeqnarray*}{c+x*}
        \mathbf{L}_{(j,u)}^1 \colon T_j \mathcal{T} \oplus T_u \mathcal{B} E_1 \longrightarrow \mathcal{E}_{(j,u)} E_1
    \end{IEEEeqnarray*}
    is surjective. This follows from \cref{lem:Du is surjective case n is 1}.
\end{proof}

\begin{proposition}
    \label{prp:moduli spaces w point are tco}
    If $\ell = 1,\ldots,n$ then $\mathcal{M}^{\ell,(1)}_{\mathrm{p}}$ and $\mathcal{M}^{\ell,(1)}$ are transversely cut out.
\end{proposition}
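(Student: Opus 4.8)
The plan is to reduce the statement to surjectivity of a single linearized operator and then to induct on $\ell$, using \cref{lem:DX surj implies DY surj} to pass from $E_\ell$ to $E_{\ell+1}$. As in \cref{prp:moduli spaces without point constraint are tco}, it is enough to treat the parametrized moduli space $\mathcal{M}^{\ell,(1)}_{\mathrm p}$, since its transversality forces transversality of the quotient $\mathcal{M}^{\ell,(1)}$. A curve has contact order $1$ to $D_\ell$ at $x_\ell$ exactly when it passes through $x_\ell$, so $\mathcal{M}^{\ell,(1)}_{\mathrm p}$ is the intersection of the zero set of the Cauchy--Riemann section $\overline{\partial}\vphantom{\partial}^{\ell} \colon \mathcal{T} \times \mathcal{B} E_\ell \longrightarrow \mathcal{E} E_\ell$ with the constraint $\operatorname{ev}^{E_\ell} = x_\ell$. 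Thus it suffices, for each $(j,u) \in \mathcal{M}^{\ell,(1)}_{\mathrm p}$, to show that
\[
    \mathbf{L}^{\ell}_{(j,u)} \oplus \mathbf{E}^{\ell}_u \colon T_j \mathcal{T} \oplus T_u \mathcal{B} E_\ell \longrightarrow \mathcal{E}_{(j,u)} E_\ell \oplus T_{x_\ell} \hat{E}_\ell
\]
is surjective.

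I would carry out the induction on $\ell$ as follows. For the inductive step, assume the claim for $\ell$ and let $(j,u) \in \mathcal{M}^{\ell+1,(1)}_{\mathrm p}$. By \cref{lem:moduli spaces of ellipsoids are all equal} we have $\mathcal{M}^{\ell+1,(1)}_{\mathrm p} = \mathcal{M}^{\ell,(1)}_{\mathrm p}$; in particular the image of $u$ is contained in $\hat{E}_\ell \subset \hat{E}_{\ell+1}$, so $(j,u)$ may be regarded as an element of $\mathcal{M}^{\ell,(1)}_{\mathrm p}$, for which $\mathbf{L}^{\ell}_{(j,u)} \oplus \mathbf{E}^{\ell}_u$ is surjective by the induction hypothesis. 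The inclusion $\iota_\ell \colon E_\ell \hookrightarrow E_{\ell+1}$ satisfies the hypotheses of \cref{sec:functional analytic setup}, and by \cref{lem:normal cz is one} the normal Conley--Zehnder index of the asymptote $\gamma_1^m$ (viewed in $\partial E_\ell \subset \partial E_{\ell+1}$) equals $1$. Hence \cref{lem:DX surj implies DY surj} \ref{lem:DX surj implies DY surj 2} applies and yields surjectivity of $\mathbf{L}^{\ell+1}_{(j,u)} \oplus \mathbf{E}^{\ell+1}_u$, completing the step.

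It then remains to prove the base case $\ell = 1$. Here $\hat{E}_1$ has complex dimension $n = 1$, so by \cref{lem:Du is surjective case n is 1} the operator $\mathbf{L}^1_{(j,u)}$ is already surjective (the underlying point being that, by \cref{lem:conditions for D surjective genus zero}, the conjugated operator $\mathbf{D}_0$ is onto because $g = 0$, $\# \mathbf{z}_0 = 0$ and $\operatorname{ind} \mathbf{D}_0 = 2m \geq 0$). Since $\mathcal{M}^{1,(0)}_{\mathrm p}$ is transversely cut out by $\overline{\partial}\vphantom{\partial}^{1}$ (\cref{prp:moduli spaces without point constraint are tco}), its tangent space at $(j,u)$ is $\ker \mathbf{L}^1_{(j,u)}$, and $\mathbf{L}^1_{(j,u)} \oplus \mathbf{E}^1_u$ is surjective if and only if the evaluation $\operatorname{ev}^{E_1} \colon \mathcal{M}^{1,(0)}_{\mathrm p} \longrightarrow \hat{E}_1$, $u \mapsto u(w)$, is a submersion at $(j,u)$. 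When $m = 1$ this is immediate from \cref{lem:D plus E is surjective}, since then $\operatorname{ind} \mathbf{D}_0 = 2$. For general $m$ I would use the explicit normal form: by \cref{lem:u is a polynomial} and the biholomorphisms $\varphi$ and $\psi_j$, the curve $u$ is encoded by the degree-$m$ polynomial $P = \varphi \circ u \circ \psi_j$; under this identification (and using that $\mathcal{M}^{1,(0)}_{\mathrm p}$ is a manifold of the expected dimension by \cref{prp:moduli spaces without point constraint are tco}) a chart is given by the coefficients of $P$ below the leading one, and $\operatorname{ev}^{E_1}$ becomes $P \longmapsto P(\psi_j^{-1}(w)) = P(0)$, i.e. the constant term of $P$ --- manifestly a submersion.

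The main obstacle I anticipate is precisely this base case when $m \geq 2$. Because the asymptote $\gamma_1^m$ has multiplicity $m$, the curve $u$ may be a reparametrization of an $m$-fold cover of the ``axis'' curve $z \mapsto z$, so the index-$2$ criterion \cref{lem:D plus E is surjective} is unavailable, and one must instead extract the submersion property from the polynomial description --- which in turn requires checking carefully that the parametrization coming from \cref{prp:moduli spaces without point constraint are tco} genuinely identifies $\operatorname{ev}^{E_1}$ with the constant-term map. By contrast, the reduction to the parametrized moduli space, the inductive step, and the bookkeeping of Conley--Zehnder indices are all routine given the results already in place.
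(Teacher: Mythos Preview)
Your proposal is correct and follows essentially the same approach as the paper: induction on $\ell$, with the inductive step handled by \cref{lem:DX surj implies DY surj} \ref{lem:DX surj implies DY surj 2} together with \cref{lem:normal cz is one} and \cref{lem:moduli spaces of ellipsoids are all equal}, and the base case $\ell=1$ handled by identifying $\operatorname{ev}^{E_1}$ with the constant-term map on degree-$m$ polynomials via $\varphi$ and $\psi_j$. The paper does exactly this polynomial argument for the base case (for all $m$ at once, without singling out $m=1$), so the obstacle you flagged is resolved in the way you suggested.
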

\begin{proof}
    We will use the notation of \cref{sec:functional analytic setup} with $X = E_{\ell}$ and $Y = E_{\ell + 1}$. We will show by induction on $\ell$ that $\mathcal{M}^{\ell,(1)}_{\mathrm{p}}$ is transversely cut out. This implies that $\mathcal{M}^{\ell,(1)}$ is transversely cut out as well.

    We prove the base case. By \cref{prp:moduli spaces without point constraint are tco}, $\mathcal{M}^{1,(0)}_{\mathrm{p}}$ is a smooth manifold. Consider the evaluation map
    \begin{IEEEeqnarray*}{rrCl}
        \operatorname{ev}^{1} \colon & \mathcal{M}^{1,(0)}_{\mathrm{p}} & \longrightarrow & \hat{E}_1 \\
                                     & (j,u)                            & \longmapsto     & u(w).
    \end{IEEEeqnarray*}
    Notice that $\mathcal{M}^{1,(1)}_{\mathrm{p}} = (\operatorname{ev}^1)^{-1}(x_1)$. We wish to show that the linearized evaluation map $\mathbf{E}^1_{(j,u)} = \dv (\operatorname{ev}^1)(j,u) \colon T_{(j,u)} \mathcal{M}^{1,(0)}_{\mathrm{p}} \longrightarrow T_{u(w)} \hat{E}_1$ is surjective whenever $u(w) = \operatorname{ev}^{1}(j,u) = x_1$. There are commutative diagrams
    \begin{IEEEeqnarray*}{c+x*}
        \begin{tikzcd}
            \mathcal{M}^{1,(0)}_{\mathrm{p}} \ar[r, two heads, "\Phi"] \ar[d, swap, "\operatorname{ev}^1"] & \mathcal{M} \ar[d, "\operatorname{ev}_{\mathcal{M}}"] & \mathcal{C} \ar[l, swap, hook', two heads, "\mathcal{P}"] \ar[d, "\operatorname{ev}_{\mathcal{C}}"] & & T_{(j,u)} \mathcal{M}^{1,(0)}_{\mathrm{p}} \ar[r, two heads, "{\dv \Phi(j,u)}"] \ar[d, swap, "{\mathbf{E}^1_{(j,u)}}"] & T_f \mathcal{M} \ar[d, "\mathbf{E}_{\mathcal{M}}"] & \C^{m+1} \ar[l, swap, hook', two heads, "\dv \mathcal{P}(a)"] \ar[d, "\mathbf{E}_{\mathcal{C}}"] \\
            \hat{E}_1 \ar[r, hook, two heads, swap, "\varphi"]                                             & \C \ar[r, equals]                                     & \C                                                                                                  & & T_{x_1} \hat{E}_1 \ar[r, hook, two heads, swap, "\dv \varphi(x_1)"]                                                    & \C \ar[r, equals]                                  & \C
        \end{tikzcd}
    \end{IEEEeqnarray*}
    where
    \begin{IEEEeqnarray*}{rCls+x*}
        \mathcal{M}                                     & \coloneqq & \{f \colon \C \longrightarrow \C \mid f \text{ is a polynomial of degree }m \}, \\
        \mathcal{C}                                     & \coloneqq & \{(a_0,\ldots,a_m) \in \C^{m+1} \mid a_m \neq 0\}, \\
        \Phi(j,u)                                       & \coloneqq & \varphi \circ u \circ \psi_j, \\
        \operatorname{ev}_{\mathcal{M}}(f)              & \coloneqq & f(0), \\
        \operatorname{ev}_{\mathcal{C}}(a_0,\ldots,a_m) & \coloneqq & a_0, \\
        \mathcal{P}(a_0,\ldots,a_m)(z)                  & \coloneqq & a_0 + a_1 z + \cdots + a_m z^m,
    \end{IEEEeqnarray*}
    and the diagram on the right is obtained by linearizing the one on the left. The map $\Phi$ is well-defined by \cref{lem:u is a polynomial}. Since $\mathbf{E}_{\mathcal{C}}(a_0,\ldots,a_m) = a_0$ is surjective, $\mathbf{E}^1_u$ is surjective as well. This finishes the proof of the base case.

    We prove the induction step, i.e. that if $\mathcal{M}^{\ell,(1)}_p$ is transversely cut out then so is $\mathcal{M}^{\ell+1,(1)}_p$. We prove that $\mathcal{M}^{\ell,(1)}_{\mathrm{p,reg}} \subset \mathcal{M}^{\ell+1,(1)}_{\mathrm{p,reg}}$. For this, assume that $(j,u) \in \mathcal{M}^{\ell,(1)}_{\mathrm{p}}$ is such that $\mathbf{L}_{(j,u)}^\ell \oplus \mathbf{E}_u^\ell \colon T_j \mathcal{T} \oplus T_{u} \mathcal{B} E_\ell \longrightarrow \mathcal{E}_{(j,u)} E_\ell \oplus T_{x_\ell} \hat{E}_\ell$ is surjective. By \cref{lem:DX surj implies DY surj},
    \begin{IEEEeqnarray*}{c+x*}
        \mathbf{L}_{(j,u)}^{\ell+1} \oplus \mathbf{E}_u^{\ell+1} \colon T_j \mathcal{T} \oplus T_{u} \mathcal{B} E_{\ell+1} \longrightarrow \mathcal{E}_{(j,u)} E_{\ell+1} \oplus T_{x_{\ell+1}} \hat{E}_{\ell+1}
    \end{IEEEeqnarray*}
    is also surjective, which means that $(j,u) \in \mathcal{M}^{\ell+1,(1)}_{\mathrm{p,reg}}$. This concludes the proof of $\mathcal{M}^{\ell,(1)}_{\mathrm{p,reg}} \subset \mathcal{M}^{\ell+1,(1)}_{\mathrm{p,reg}}$. Finally, we show that $\mathcal{M}^{\ell+1,(1)}_{\mathrm{p,reg}} = \mathcal{M}^{\ell+1,(1)}_{\mathrm{p}}$.
    \begin{IEEEeqnarray*}{rCls+x*}
        \mathcal{M}^{\ell+1,(1)}_{\mathrm{p,reg}}
        & \subset & \mathcal{M}^{\ell+1,(1)}_{\mathrm{p}}     & \quad [\text{since regular curves form a subset}] \\
        & =       & \mathcal{M}^{\ell,(1)}_{\mathrm{p}}       & \quad [\text{by \cref{lem:moduli spaces of ellipsoids are all equal}}] \\
        & =       & \mathcal{M}^{\ell,(1)}_{\mathrm{p,reg}}   & \quad [\text{by the induction hypothesis}] \\
        & \subset & \mathcal{M}^{\ell+1,(1)}_{\mathrm{p,reg}} & \quad [\text{proven above}].                                             & \qedhere
    \end{IEEEeqnarray*}
\end{proof}

% One possible explanation for the weirdness with m = k + 1. If m = k + 1, then the polynomial has contact order k + 1, which means that the ith = 0,\ldots,k derivative at 0 is 0. This implies that the polynomial is constant equal to zero. So the proof is not supposed to work in that case. "Dimension mismatch" -> take into account that one needs to quotient by group action.
\begin{proposition}
    \label{prp:moduli spaces w tangency are tco}
    If $\ell = 1,\ldots, n$ and $k = 1,\ldots,m$ then $\mathcal{M}^{\ell,(k)}_{\mathrm{p}}$ and $\mathcal{M}^{\ell,(k)}$ are transversely cut out.
\end{proposition}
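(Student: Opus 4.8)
The plan is to prove the statement by induction on $\ell$, adapting the argument of \cref{prp:moduli spaces w point are tco} to the jet setting. As in the earlier propositions it is enough to show that the parametrized moduli space $\mathcal{M}^{\ell,(k)}_{\mathrm{p}}$ is transversely cut out, since this implies the same for the quotient $\mathcal{M}^{\ell,(k)}$. By \cref{lem:moduli spaces of ellipsoids are all equal} we have $\mathcal{M}^{\ell,(k)}_{\mathrm{p}} = \mathcal{M}^{1,(k)}_{\mathrm{p}}$, so every curve in play is contained in $\hat{E}_1 \subset \hat{E}_\ell$, a fact I will use throughout. Since $\mathcal{M}^{\ell,(1)}_{\mathrm{p}}$ is transversely cut out by \cref{prp:moduli spaces w point are tco}, it is a smooth manifold, and on it I would consider the jet evaluation map $\operatorname{jev}^{\ell}_k \colon \mathcal{M}^{\ell,(1)}_{\mathrm{p}} \longrightarrow \C^{k-1}$ sending $(j,u)$ to the tuple $\big( (h_\ell \circ u \circ \psi_j)^{(i)}(0) \big)_{i=1}^{k-1}$, where $\psi_j$ is the biholomorphism of \cref{lem:psi j} (recall $\psi_j(0) = w$). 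Then $\mathcal{M}^{\ell,(k)}_{\mathrm{p}} = (\operatorname{jev}^{\ell}_k)^{-1}(0)$, and the claim reduces to showing that $\operatorname{jev}^{\ell}_k$ is a submersion at each of its zeros; phrased at the level of the defining section, this is the surjectivity of $\mathbf{L}^\ell_{(j,u)} \oplus \mathbf{J}^\ell_{(j,u)}$ for $(j,u) \in \mathcal{M}^{\ell,(k)}_{\mathrm{p}}$, where $\mathbf{J}^\ell_{(j,u)} \colon T_j\mathcal{T} \oplus T_u\mathcal{B}E_\ell \longrightarrow \C^{k-1}$ is the linearization of the jet constraint (that ``transversely cut out'' is equivalent to this is part of the transversality formalism of \cite[Lemma 7.2]{cieliebakSymplecticHypersurfacesTransversality2007}, cf.\ \cref{rmk:compactifications with tangency}).

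For the base case $\ell = 1$ I would reuse the commutative square of \cref{prp:moduli spaces w point are tco}. By \cref{lem:u is a polynomial}, $\Phi(j,u) \coloneqq \varphi \circ u \circ \psi_j$ is a polynomial $a_0 + a_1 z + \cdots + a_m z^m$ with $a_m \neq 0$; since $\varphi = \id$ near $0$ and $u(w) = x_1 = 0$, we have $h_1 \circ u \circ \psi_j = \Phi(j,u)$ near $0$, so the contact-order-$k$ condition becomes $a_0 = a_1 = \cdots = a_{k-1} = 0$, and on $\mathcal{M}^{1,(1)}_{\mathrm{p}}$, which corresponds to $\{(a_1,\ldots,a_m) \mid a_m \neq 0\}$, the map $\operatorname{jev}^{1}_k$ becomes, up to nonzero constants, the linear projection $(a_1,\ldots,a_m) \longmapsto (a_1,\ldots,a_{k-1})$. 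This is a submersion precisely because $k \leq m$; transporting this through the commutative square (exactly as $\mathbf{E}_{\mathcal{C}}$ was transported to $\mathbf{E}^1_u$ in the proof of \cref{prp:moduli spaces w point are tco}) yields surjectivity of $\mathbf{L}^1_{(j,u)} \oplus \mathbf{J}^1_{(j,u)}$.

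For the induction step I would mimic the proof of \cref{lem:DX surj implies DY surj}. Fix $(j,u) \in \mathcal{M}^{\ell+1,(k)}_{\mathrm{p}} = \mathcal{M}^{\ell,(k)}_{\mathrm{p}}$; then $u$ maps into $\hat{E}_\ell$, so the splittings $T_u\mathcal{B}E_{\ell+1} = T_u\mathcal{B}E_\ell \oplus T_u^\perp\mathcal{B}E_\ell$ and $\mathcal{E}_{(j,u)}E_{\ell+1} = \mathcal{E}_{(j,u)}E_\ell \oplus \mathcal{E}^\perp_{(j,u)}E_\ell$ of that proof are available. Since $h_{\ell+1}$ restricted to $\hat{E}_\ell$ equals $h_\ell$ and involves only the first coordinate, the linearized jet constraint $\mathbf{J}^{\ell+1}_{(j,u)}$ annihilates the normal factor $T_u^\perp\mathcal{B}E_\ell$; combining this with the block form of $\mathbf{L}^{\ell+1}_{(j,u)}$ recorded in \cref{lem:DX surj implies DY surj}, one gets, with respect to the domain decomposition $(T_j\mathcal{T}\oplus T_u\mathcal{B}E_\ell) \oplus T_u^\perp\mathcal{B}E_\ell$ and the target decomposition $(\mathcal{E}_{(j,u)}E_\ell\oplus\C^{k-1}) \oplus \mathcal{E}^\perp_{(j,u)}E_\ell$,
\begin{IEEEeqnarray*}{c+x*}
    \mathbf{L}^{\ell+1}_{(j,u)} \oplus \mathbf{J}^{\ell+1}_{(j,u)} =
    \begin{bmatrix}
        \mathbf{L}^{\ell}_{(j,u)} \oplus \mathbf{J}^{\ell}_{(j,u)} & \ast \\
        0 & \mathbf{D}^{NN}_{(j,u)}
    \end{bmatrix}.
\end{IEEEeqnarray*}
By \cref{lem:normal cz is one} the normal Conley--Zehnder index of $\gamma^m_1$ is $1$, so, exactly as in \cref{lem:DX surj implies DY surj}, the conjugate operator $\mathbf{D}^{NN}_0$ has Fredholm index $2$ and is therefore surjective by \cref{lem:conditions for D surjective genus zero}, whence $\mathbf{D}^{NN}_{(j,u)}$ is surjective. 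The diagonal block $\mathbf{L}^\ell_{(j,u)}\oplus\mathbf{J}^\ell_{(j,u)}$ is surjective by the induction hypothesis, and an upper block-triangular operator with surjective diagonal blocks is surjective; hence $\mathbf{L}^{\ell+1}_{(j,u)}\oplus\mathbf{J}^{\ell+1}_{(j,u)}$ is surjective, completing the induction. This induction step is the part of the argument I expect to require the most care: one must check that the jet constraint genuinely decouples from the normal direction — so that the block form above is upper triangular and the induction hypothesis can be read off the diagonal — and, more foundationally, that transversality of a tangency-constrained moduli space is correctly captured by surjectivity of $\mathbf{L}\oplus\mathbf{J}$. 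Everything else is routine: the base case is a direct computation with polynomials, and the index computation for $\mathbf{D}^{NN}_0$ is already recorded in \cref{lem:normal cz is one}.
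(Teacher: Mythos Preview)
Your overall strategy---induction on $\ell$ with a polynomial computation for the base case---matches the paper's, but there is a gap in your passage from ``$\operatorname{jev}^\ell_k$ is a submersion on $\mathcal{M}^{\ell,(1)}_{\mathrm{p}}$'' to ``$\mathbf{L}^\ell_{(j,u)}\oplus\mathbf{J}^\ell_{(j,u)}$ is surjective on $T_j\mathcal{T}\oplus T_u\mathcal{B}E_\ell$''. These are not equivalent: the tangency constraint $\langle\mathcal{T}^{(k)}x_\ell\rangle$ includes the point condition $u(w)=x_\ell$, so at the level of the ambient section the correct operator is $\mathbf{L}^\ell\oplus\mathbf{E}^\ell\oplus\mathbf{J}^\ell$, not $\mathbf{L}^\ell\oplus\mathbf{J}^\ell$. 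Surjectivity of $\mathbf{L}^\ell\oplus\mathbf{J}^\ell$ and of $\mathbf{L}^\ell\oplus\mathbf{E}^\ell$ separately does not give surjectivity of the triple, so your induction step as written does not conclude. The fix is routine: carry $\mathbf{E}^\ell$ through the block decomposition, so that the lower-right diagonal block becomes $\mathbf{D}^{NN}_{(j,u)}\oplus\mathbf{E}^{NN}_u$, and invoke \cref{lem:D plus E is surjective} (this is exactly part~\ref{lem:DX surj implies DY surj 2} of \cref{lem:DX surj implies DY surj}) rather than \cref{lem:conditions for D surjective genus zero}.

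The paper's induction step is shorter and sidesteps this issue entirely. It never returns to $\mathcal{T}\times\mathcal{B}E_{\ell+1}$: since both $\mathcal{M}^{\ell,(1)}_{\mathrm{p}}$ and $\mathcal{M}^{\ell+1,(1)}_{\mathrm{p}}$ are already known to be smooth manifolds by \cref{prp:moduli spaces w point are tco}, and since $h_{\ell+1}|_{\hat{E}_\ell}=h_\ell$, the jet map $j^{\ell,(k)}$ factors as the inclusion $\mathcal{M}^{\ell,(1)}_{\mathrm{p}}\hookrightarrow\mathcal{M}^{\ell+1,(1)}_{\mathrm{p}}$ followed by $j^{\ell+1,(k)}$. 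Surjectivity of $\dv(j^{\ell,(k)})$ then immediately forces surjectivity of $\dv(j^{\ell+1,(k)})$ from the commutative triangle, with no block analysis required---the normal-direction work was already absorbed into \cref{prp:moduli spaces w point are tco}.
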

\begin{proof}
    By \cref{prp:moduli spaces w point are tco}, $\mathcal{M}^{\ell,(1)}_{\mathrm{p}}$ is a smooth manifold. Consider the jet evaluation map
    \begin{IEEEeqnarray*}{rrCl}
        j^{\ell,(k)} \colon & \mathcal{M}^{\ell,(1)}_{\mathrm{p}} & \longrightarrow & \C^{k-1} \\
                            & (j,u)                               & \longmapsto     & ((h_{\ell} \circ u \circ \psi_j)^{(1)}(0), \ldots, (h_{\ell} \circ u \circ \psi_j)^{(k-1)}(0)).
    \end{IEEEeqnarray*}
    The moduli space $\mathcal{M}^{\ell,(k)}_{\mathrm{p}}$ is given by $\mathcal{M}^{\ell,(k)}_{\mathrm{p}} = (j^{\ell,(k)})^{-1}(0)$. We will prove by induction on $\ell$ that $\mathcal{M}^{\ell,(k)}_{\mathrm{p}}$ is transversely cut out. This shows that $\mathcal{M}^{\ell,(k)}$ is transversely cut out as well. Define $\mathbf{J}^{\ell,(k)}_{(j,u)} \coloneqq \dv(j^{\ell,(k)})(j,u) \colon T_{(j,u)} \mathcal{M}^{\ell,(1)}_{\mathrm{p}} \longrightarrow \C^{k-1}$.

    We prove the base case, i.e. that $\mathcal{M}^{1,(k)}_{\mathrm{p}}$ is transversely cut out. For this, it suffices to assume that $(j,u) \in \mathcal{M}^{1,(1)}_{\mathrm{p}}$ is such that $j^{1,(k)}(j,u) = 0$ and to prove that $\mathbf{J}^{1,(k)}_{(j,u)}$ is surjective. There are commutative diagrams
    \begin{IEEEeqnarray*}{c+x*}
        \begin{tikzcd}
            \mathcal{M}^{1,(1)}_{\mathrm{p}} \ar[r, two heads, "\Phi"] \ar[d, swap, "j^{1,(k)}"] & \mathcal{M} \ar[d, "j^{(k)}_{\mathcal{M}}"] & \mathcal{C} \ar[l, swap, hook', two heads, "\mathcal{P}"] \ar[d, "j^{(k)}_{\mathcal{C}}"] & & T_{(j,u)} \mathcal{M}^{1,(1)}_{\mathrm{p}} \ar[r, two heads, "{\dv \Phi(j,u)}"] \ar[d, swap, "{\mathbf{J}^{1,(k)}_{(j,u)}}"] & T_f \mathcal{M} \ar[d, "\mathbf{J}^{(k)}_{\mathcal{M}}"] & \C^{m} \ar[l, swap, hook', two heads, "\dv \mathcal{P}(a)"] \ar[d, "\mathbf{J}^{(k)}_{\mathcal{C}}"] \\
            \C^{k-1} \ar[r, equals]                                                              & \C^{k-1} \ar[r, equals]                     & \C^{k-1}                                                                                  & & \C^{k-1} \ar[r, equals]                                                                                                      & \C^{k-1} \ar[r, equals]                                  & \C^{k-1}
        \end{tikzcd}
    \end{IEEEeqnarray*}
    where
    \begin{IEEEeqnarray*}{rCls+x*}
        \mathcal{M}                                     & \coloneqq & \{f \colon \C \longrightarrow \C \mid f \text{ is a polynomial of degree }m \text{ with }f(0)=0 \}, \\
        \mathcal{C}                                     & \coloneqq & \{(a_1,\ldots,a_m) \in \C^{m} \mid a_m \neq 0\}, \\
        \Phi(j,u)                                       & \coloneqq & \varphi \circ u \circ \psi_j, \\
        j^{(k)}_{\mathcal{M}}(f)                        & \coloneqq & (f^{(1)}(0),\ldots,f^{(k-1)}(0)), \\
        j^{(k)}_{\mathcal{C}}(a_1,\ldots,a_m)           & \coloneqq & (a_1,\ldots,(k-1)! a_{k-1}), \\
        \mathcal{P}(a_1,\ldots,a_m)(z)                  & \coloneqq & a_1 z + \cdots + a_m z^m,
    \end{IEEEeqnarray*}
    and the diagram on the right is obtained by linearizing the one on the left. The map $\Phi$ is well-defined by \cref{lem:u is a polynomial}. Since $\mathbf{J}^{(k)}_{\mathcal{C}}(a_1,\ldots,a_m) = (a_1,\ldots,(k-1)! a_{k-1})$ is surjective, $\mathbf{J}^{1,(k)}_u$ is surjective as well. This finishes the proof of the base case.

    We prove the induction step, i.e. that if $\mathcal{M}^{\ell,(k)}_{\mathrm{p}}$ is transversely cut out then so is $\mathcal{M}^{\ell+1,(k)}_{\mathrm{p}}$. We show that $\mathcal{M}^{\ell,(k)}_{\mathrm{p,reg}} \subset \mathcal{M}^{\ell+1,(k)}_{\mathrm{p,reg}}$. For this, it suffices to assume that $(j,u) \in \mathcal{M}^{\ell,(k)}_{\mathrm{p}}$ is such that $\mathbf{J}^{\ell,(k)}_{(j,u)}$ is surjective, and to prove that $\mathbf{J}^{\ell+1,(k)}_{(j,u)}$ is surjective as well. This follows because the diagrams
    \begin{IEEEeqnarray*}{c+x*}
        \begin{tikzcd}
            \mathcal{M}^{\ell,(1)}_{\mathrm{p}} \ar[d] \ar[dr, "j^{\ell,(k)}"]   &          & & T_{(j,u)} \mathcal{M}^{\ell,(1)}_{\mathrm{p}} \ar[d] \ar[dr, "\mathbf{J}^{\ell,(k)}_u"] \\
            \mathcal{M}^{\ell+1,(1)}_{\mathrm{p}} \ar[r, swap, "j^{\ell+1,(k)}"] & \C^{k-1} & & T_{(j,u)} \mathcal{M}^{\ell+1,(1)}_{\mathrm{p}} \ar[r, swap, "\mathbf{J}_u^{\ell+1,(k)}"] & \C^{k-1}
        \end{tikzcd}
    \end{IEEEeqnarray*}
    commute. Finally, we show that $\mathcal{M}^{\ell+1,(k)}_{\mathrm{p,reg}} = \mathcal{M}^{\ell+1,(k)}_{\mathrm{p}}$.
    \begin{IEEEeqnarray*}{rCls+x*}
        \mathcal{M}^{\ell+1,(k)}_{\mathrm{p,reg}}
        & \subset & \mathcal{M}^{\ell+1,(k)}_{\mathrm{p}}     & \quad [\text{since regular curves form a subset}] \\
        & =       & \mathcal{M}^{\ell,(k)}_{\mathrm{p}}       & \quad [\text{by \cref{lem:moduli spaces of ellipsoids are all equal}}] \\
        & =       & \mathcal{M}^{\ell,(k)}_{\mathrm{p,reg}}   & \quad [\text{by the induction hypothesis}] \\
        & \subset & \mathcal{M}^{\ell+1,(k)}_{\mathrm{p,reg}} & \quad [\text{proven above}].                                             & \qedhere
    \end{IEEEeqnarray*}
\end{proof}

\begin{proposition}
    \label{lem:moduli spaces of ellipsoids have 1 element}
    If $\ell = 1,\ldots,n$ then $\#^{\mathrm{vir}} \overline{\mathcal{M}}^{\ell,(m)} = \# \overline{\mathcal{M}}^{\ell,(m)} = 1$.
\end{proposition}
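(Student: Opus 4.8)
The plan is to reduce the statement to an explicit count of polynomial maps into the one-dimensional ellipsoid $E_1 = B(a_1)$, exploiting all the structural results already assembled. The backbone of the argument is this: by \cref{prp:moduli spaces w tangency are tco} the moduli space $\mathcal{M}^{\ell,(m)}$ is transversely cut out, and its virtual dimension is $0$ (this follows from \eqref{eq:virtual dimension} together with $\conleyzehnder(\gamma^m_1) = \ell - 1 + 2m$, cf.\ \cref{lem:unique reeb orbit with cz equal to}), so by \cref{assumption} (property (2)) the virtual count of $\overline{\mathcal{M}}^{\ell,(m)}$ agrees with the honest count, \emph{provided} the compactification introduces no genuine boundary, i.e.\ $\overline{\mathcal{M}}^{\ell,(m)} = \mathcal{M}^{\ell,(m)}$. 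The proof therefore has three ingredients: (i) no breaking, (ii) reduction to $\ell = 1$, and (iii) the count in $E_1$.

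For (ii), \cref{lem:moduli spaces of ellipsoids are all equal} already identifies $\mathcal{M}^{\ell,(m)} = \mathcal{M}^{1,(m)}$, and I would extend this to the compactifications by the same stabilization argument: every Reeb orbit of $\partial E_\ell$ of action $\leq m a_1$ is an iterate $\gamma^k_1$ with $k \leq m$, hence lies in $\partial E_1 \times \{0\}$ with normal Conley--Zehnder index $1$ by \cref{lem:normal cz is one}, so applying \cref{lem:stabilization 2} inductively to the pairs $E_{\ell-1} \subset E_\ell$ shows that every component of any holomorphic building occurring in $\overline{\mathcal{M}}^{\ell,(m)}$ is contained in $\hat E_1$, whence $\overline{\mathcal{M}}^{\ell,(m)} = \overline{\mathcal{M}}^{1,(m)}$. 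For (i) I would work inside $\hat E_1 = \hat B(a_1)$, which by \cref{lem:biholomorphism explicit} is biholomorphic to $\C$ and in particular aspherical, so no nonconstant sphere bubbles can form; moreover every holomorphic curve in a symplectization level $\R \times \partial B(a_1) = \R \times S^1$ is a (branched) cover of the trivial cylinder over $\gamma_1$, and such levels are excluded from holomorphic buildings by the defining conditions (no levels of trivial cylinders, constant components of negative Euler characteristic). Hence any building in $\overline{\mathcal{M}}^{1,(m)}$ reduces to a single nodeless curve in $\hat E_1$, i.e.\ $\overline{\mathcal{M}}^{1,(m)} = \mathcal{M}^{1,(m)}$.

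It remains to check (iii), that $\mathcal{M}^{1,(m)}$ is a single point. By \cref{lem:u is a polynomial}, composing with the biholomorphisms $\psi_j$ of \cref{lem:psi j} and $\varphi \colon \hat E_1 \to \C$ identifies a curve in $\mathcal{M}^{1,(0)}$ with a degree-$m$ polynomial $f = \varphi \circ u \circ \psi_j$; unwinding the asymptotics at $\gamma^m_1$ together with the asymptotic-marker condition of \cref{def:moduli space with asy markers} pins the leading coefficient of $f$ to be a positive real number (its modulus records the vertical shift of $u$ near the puncture). Imposing in addition $u(w) = x_1 = 0$ and contact order $m$ to $D_1 = \{z_1 = 0\}$ at $0$ forces $f(z) = c z^m$ with $c \in \R_{>0}$, so $\mathcal{M}^{1,(m)}_{\mathrm{p}} \cong \R_{>0}$, and the residual reparametrization group $\operatorname{Aut}(S^2; z, v, w) = \{z \mapsto \lambda z : \lambda \in \R_{>0}\}$ acts on this $\R_{>0}$ transitively (by $c \mapsto \lambda^{-m} c$). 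Therefore $\mathcal{M}^{1,(m)}$ consists of one point, which is transverse by \cref{prp:moduli spaces w tangency are tco}; checking that it is counted with sign $+1$ then gives $\# \mathcal{M}^{1,(m)} = 1$, and chaining the identifications of (i)--(iii) yields $\#^{\mathrm{vir}} \overline{\mathcal{M}}^{\ell,(m)} = \# \overline{\mathcal{M}}^{\ell,(m)} = 1$.

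The step I expect to be the main obstacle is (i): one must verify carefully that the SFT compactification of $\overline{\mathcal{M}}^{1,(m)}$ really contains no multi-level or nodal configurations, which means combining the triviality of the Reeb flow on $\partial B(a_1) = S^1$, the asphericity of $\hat B(a_1) \cong \C$, and the structural constraints on holomorphic buildings, and ruling out that the tangency constraint gets spread over several components (as allowed in principle by \cref{rmk:compactifications with tangency}). A lesser but still delicate point is the bookkeeping in (iii): the asymptotic-marker normalization and the residual automorphism group must be tracked precisely so that the count comes out to exactly $1$ rather than $\pm 1$ or a finite cyclic ambiguity.
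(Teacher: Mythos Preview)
Your approach is essentially the same as the paper's: reduce to $\ell = 1$ via \cref{lem:moduli spaces of ellipsoids are all equal}, identify $\mathcal{M}^{1,(m)}$ with degree-$m$ polynomials through $\varphi$ and $\psi_j$, and show that the tangency constraint together with the asymptotic marker pins the curve down to $z \mapsto c z^m$ with $c \in \R_{>0}$, on which the residual reparametrization group acts transitively. Your step (iii) matches the paper's argument almost verbatim.

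The one substantive difference is how compactness is handled. The paper does not argue ``no breaking'' directly: it simply notes that $\mathcal{M}^{\ell,(m)}$ is transversely cut out of dimension $0$, and once the explicit count shows it is a single point, compactness is immediate and hence $\# \overline{\mathcal{M}}^{\ell,(m)} = \# \mathcal{M}^{\ell,(m)}$. Your geometric sketch for (i) is more ambitious but contains a slip: a genus-$0$ curve in $\R \times S^1$ with one positive end on $\gamma^m_1$ and $p \geq 2$ negative ends on $\gamma^{b_1}_1, \ldots, \gamma^{b_p}_1$ is \emph{not} a trivial cylinder, so it is not excluded by the stability condition you cite. This can be patched (each capping plane in $\hat E_1$ then has degree $b_i < m$ and cannot carry tangency order $m$; the distributed-tangency scenario of \cref{rmk:compactifications with tangency} is then ruled out by the Euler-characteristic constraint on ghost components), but the paper's route of first establishing the one-point count and reading off compactness a posteriori is quicker.
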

\begin{proof}
    By assumption on the perturbation scheme and \cref{prp:moduli spaces w tangency are tco}, $\#^{\mathrm{vir}} \overline{\mathcal{M}}^{\ell,(m)} = \# \overline{\mathcal{M}}^{\ell,(m)}$. Again by \cref{prp:moduli spaces w tangency are tco}, the moduli space $\mathcal{M}^{\ell,(m)}$ is transversely cut out and%
    \begin{IEEEeqnarray*}{c}
        \dim \mathcal{M}^{\ell,(m)} = (n -3)(2 - 1) + \conleyzehnder(\gamma_1^m) - 2 \ell - 2 m + 4 = 0,
    \end{IEEEeqnarray*}
    where in the second equality we have used \cref{lem:unique reeb orbit with cz equal to}. This implies that $\mathcal{M}^{\ell,(m)}$ is compact, and in particular $\# \overline{\mathcal{M}}^{\ell,(m)} = \# \mathcal{M}^{\ell,(m)}$. By \cref{lem:moduli spaces of ellipsoids are all equal}, $\# \mathcal{M}^{\ell,(m)} = \# \mathcal{M}^{1,(m)}$. It remains to show that $\# \mathcal{M}^{1,(m)} = 1$. For this, notice that $\mathcal{M}^{1,(m)}$ is the set of equivalence classes of pairs $(j,u)$, where $j$ is an almost complex structure on $\Sigma = S^2$ and $u \colon (\dot{\Sigma}, j) \longrightarrow (\hat{E}_1, J_1)$ is a holomorphic map such that
    \begin{enumerate}
        \item $u(w) = x_1$ and $u$ has contact order $m$ to $D_1$ at $x_1$;
        \item if $(s,t)$ are the cylindrical coordinates on $\dot{\Sigma}$ near $z$ such that $v$ agrees with the direction $t = 0$, then
            \begin{IEEEeqnarray*}{rrCls+x*}
                \lim_{s \to +\infty} & \pi_{\R} \circ u(s,t)           & = & + \infty, \\
                \lim_{s \to +\infty} & \pi_{\partial E_1} \circ u(s,t) & = & \gamma_1 (a_1 m t).
            \end{IEEEeqnarray*}
    \end{enumerate}
    Here, two pairs $(j_0, u_0)$ and $(j_1, u_1)$ are equivalent if there exists a biholomorphism $\phi \colon (\Sigma, j_0) \longrightarrow (\Sigma, j_1)$ such that
    \begin{IEEEeqnarray*}{c+x*}
        u_0 = u_1 \circ \phi, \quad \phi(w) = w, \qquad \phi(z) = z, \qquad \dv \phi(z) v = v.
    \end{IEEEeqnarray*}
    We claim that any two pairs $(j_0, u_0)$ and $(j_1, u_1)$ are equivalent. By \cref{lem:u is a polynomial}, the maps $\varphi \circ u_0 \circ \psi_{j_0}$ and $\varphi \circ u_1 \circ \psi_{j_1}$ are polynomials of degree $m$:
    \begin{IEEEeqnarray*}{rCls+x*}
        \varphi \circ u_0 \circ \psi_{j_0} (z) & = & a_0 + \cdots + a_m z^m, \\
        \varphi \circ u_1 \circ \psi_{j_1} (z) & = & b_0 + \cdots + b_m z^m.
    \end{IEEEeqnarray*}
    Since $u_0$ and $u_1$ have contact order $m$ to $D_1$ at $x_1$, for every $\nu = 0,\ldots,m-1$ we have
    \begin{IEEEeqnarray*}{rCls+x*}
        0 & = & (\varphi \circ u_0 \circ \psi_{j_0})^{(\nu)}(0) = \nu! a_{\nu}, \\
        0 & = & (\varphi \circ u_1 \circ \psi_{j_1})^{(\nu)}(0) = \nu! b_{\nu}.
    \end{IEEEeqnarray*}
    Since $u_0$ and $u_1$ have the same asymptotic behaviour, $\operatorname{arg}(a_m) = \operatorname{arg}(b_m)$. Hence, there exists $\lambda \in \R_{>0}$ such that $\lambda^m b_m = a_m$. Then,
    \begin{IEEEeqnarray*}{c+x*}
        u_1 \circ \psi_{j_1} (\lambda z) = u_0 \circ \psi_{j_0} (z).
    \end{IEEEeqnarray*}
    Therefore, $(j_0, u_0)$ and $(j_1, u_1)$ are equivalent and $\# \mathcal{M}^{1,(m)} = 1$.
\end{proof}

\begin{remark}
    In \cite[Proposition 3.4]{cieliebakPuncturedHolomorphicCurves2018}, Cieliebak and Mohnke show that the signed count of the moduli space of holomorphic curves in $\C P^n$ in the homology class $[\C P^1]$ which satisfy a tangency condition $\p{<}{}{\mathcal{T}^{(n)}x}$ equals $(n-1)!$. It is unclear how this count relates to the one of \cref{lem:moduli spaces of ellipsoids have 1 element}.
\end{remark}

Finally, we will use the results of this subsection to compute the augmentation map of the ellipsoid $E_n$.

\begin{theorem}
    \label{thm:augmentation is nonzero}
    The augmentation map $\epsilon_m \colon CH_{n - 1 + 2m}(E_n) \longrightarrow \Q$ is an isomorphism.
\end{theorem}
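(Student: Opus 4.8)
The plan is to reduce the theorem to a single nonvanishing statement and then quote the transversality and enumerative results established earlier in this subsection. By \cref{lem:lch of ellipsoid}, $CH_{n-1+2m}(E_n)$ is the free $\Q$-module of rank one generated by $\gamma_1^m$; since the target $\Q$ is also one-dimensional, $\epsilon_m$ is an isomorphism if and only if $\epsilon_m(\gamma_1^m) \neq 0$. So the whole proof amounts to showing that the virtual count of $J_n$-holomorphic planes in $\hat{E}_n$ positively asymptotic to $\gamma_1^m$ with contact order $m$ to $D_n$ at $x_n$ is nonzero.

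Next I would identify this moduli space with $\mathcal{M}^{n,(m)}$ (keeping in mind that, by \cref{lem:moduli spaces of ellipsoids are all equal}, the inductive constructions based at $\hat{E}_1$ already compute the relevant space for $E_n$). By \cref{prp:moduli spaces w tangency are tco} this moduli space is transversely cut out, and by the index computation in the proof of \cref{lem:moduli spaces of ellipsoids have 1 element} it is zero-dimensional, hence compact with no boundary strata; therefore the second property in \cref{assumption} gives $\#^{\mathrm{vir}}\overline{\mathcal{M}}^{n,(m)} = \#\overline{\mathcal{M}}^{n,(m)}$, and \cref{lem:moduli spaces of ellipsoids have 1 element} evaluates this to $1$.

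It then remains to translate the count of the marked moduli space $\mathcal{M}^{n,(m)}$ into the weighted count of unmarked planes that enters \cref{def:augmentation map}. Using \cref{rmk:counts of moduli spaces with or without asy markers} together with the fact that $\gamma_1^m$ has multiplicity $m$, the forgetful map has fibre of cardinality $m/|\operatorname{Aut}(\overline{u})|$ over each unmarked curve $\overline{u}$; since the total count is $1$, there is exactly one unmarked plane $\overline{u}$, it satisfies $|\operatorname{Aut}(\overline{u})| = m$, and its contribution to $\epsilon_m(\gamma_1^m)$ is $\pm 1/m$ (or $\pm 1$ with the unnormalised convention). In either normalisation $\epsilon_m(\gamma_1^m) \neq 0$, so $\epsilon_m$ is an isomorphism.

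The substantive work — transversality of the moduli spaces and the explicit enumeration — is carried out in \cref{prp:moduli spaces without point constraint are tco,prp:moduli spaces w point are tco,prp:moduli spaces w tangency are tco,lem:moduli spaces of ellipsoids have 1 element}, so the main obstacle in the present proof is purely bookkeeping: one must be sure that the virtual count defining $\epsilon_m$ is genuinely computed by the transversely cut out moduli space (no multiply covered or broken configurations contributing, which is where \cref{assumption} and the compactness built into \cref{lem:moduli spaces of ellipsoids have 1 element} are used), and one must handle automorphisms and multiplicities correctly when passing between marked and unmarked curves via \cref{rmk:counts of moduli spaces with or without asy markers}.
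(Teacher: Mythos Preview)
Your proposal is correct and follows essentially the same approach as the paper: reduce to $\epsilon_m(\gamma_1^m)\neq 0$ via \cref{lem:lch of ellipsoid}, then invoke \cref{lem:moduli spaces of ellipsoids have 1 element} together with \cref{rmk:counts of moduli spaces with or without asy markers} to get the nonvanishing. The paper's proof is a two-line version of exactly what you wrote; your additional unpacking of the transversality, compactness, and automorphism bookkeeping is accurate but already subsumed in the cited results.
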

\begin{proof}
    By \cref{lem:moduli spaces of ellipsoids have 1 element}, \cref{rmk:counts of moduli spaces with or without asy markers} and definition of the augmentation map, we have $\epsilon_m(\gamma^m_1) \neq 0$. By \cref{lem:lch of ellipsoid}, $\epsilon_m$ is an isomorphism.
\end{proof}

\subsection{Computations with contact homology}

Finally, we use the tools developed in this section to prove \cref{conj:the conjecture} (see \cref{thm:my main theorem}). The proof we give is the same as that of \cref{lem:computation of cl}, with the update that we will use the capacity $\mathfrak{g}_{k}$ to prove that
\begin{IEEEeqnarray*}{c+x*}
    \tilde{\mathfrak{g}}_k(X) \leq \mathfrak{g}_k(X) = \cgh{k}(X)
\end{IEEEeqnarray*}
for any nondegenerate Liouville domain $X$. Notice that in \cref{lem:computation of cl}, $\tilde{\mathfrak{g}}_k(X) \leq \cgh{k}(X)$ held because by assumption $X$ was a $4$-dimensional convex toric domain. We start by showing that $\tilde{\mathfrak{g}}_k(X) \leq \mathfrak{g}_k(X)$. This result has already been proven in \cite[Section 3.4]{mcduffSymplecticCapacitiesUnperturbed2022}, but we include a proof for the sake of completeness.

\begin{theorem}[{\cite[Section 3.4]{mcduffSymplecticCapacitiesUnperturbed2022}}]
    \phantomsection\label{thm:g tilde vs g hat}
    If $X$ is a Liouville domain then
    \begin{IEEEeqnarray*}{c+x*}
        \tilde{\mathfrak{g}}_k(X) \leq \mathfrak{g}_k(X).
    \end{IEEEeqnarray*}
\end{theorem}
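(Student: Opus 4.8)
The plan is to compare the two capacities by using their respective definitions in terms of counts of holomorphic planes satisfying a tangency constraint. Recall that $\tilde{\mathfrak{g}}_k(X) = \sup_{J} \inf_{\gamma} \mathcal{A}(\gamma)$, where the infimum is over Reeb orbits $\gamma$ such that the moduli space $\mathcal{M}_X^J(\gamma)\p{<}{}{\mathcal{T}^{(k)}x}$ is nonempty, while $\mathfrak{g}_k(X) = \inf\{a > 0 \mid \epsilon_k \colon CH^a(X) \longrightarrow \Q \text{ is nonzero}\}$, where $\epsilon_k$ counts (virtually) $J$-holomorphic planes asymptotic to a Reeb orbit and having contact order $k$ to $D$ at $x$. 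First I would invoke \cref{lem:can prove ineqs for ndg} to reduce to the case where $X$ is nondegenerate, and fix $x \in \itr X$, a symplectic divisor $D$ through $x$, and a generic $J \in \mathcal{J}(X,D)$.

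The key observation is that both quantities are controlled by the same moduli spaces of planes. Set $a \coloneqq \mathfrak{g}_k(X)$ (if $a = +\infty$ there is nothing to prove). By the Reeb orbits property of \cref{thm:properties of hsc} — or directly by unwinding the definition of $\mathfrak{g}_k$ together with the discreteness of the action spectrum of $\partial X$ — there exists a cycle $\beta = \sum_i a_i \gamma_i \in CC_{n-1+2k}(X)$ with $\partial \beta = 0$, $\epsilon_k(\beta) \neq 0$, and $\mathcal{A}(\gamma_i) \leq a$ for all $i$, with equality for at least one $i$. Since $\epsilon_k(\beta) = \sum_i a_i \epsilon_k(\gamma_i) \neq 0$, there is some $\gamma_i$ with $\epsilon_k(\gamma_i) \neq 0$. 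By assumption on the virtual perturbation scheme (\cref{assumption}), a nonzero virtual count forces the corresponding moduli space of planes $\mathcal{M}_X^J(\gamma_i)\p{<}{}{\mathcal{T}^{(k)}x}$ to be nonempty (its virtual dimension must be zero, and if it were empty the virtual count would vanish). Hence $\gamma_i$ is a Reeb orbit appearing in the infimum defining $\tilde{\mathfrak{g}}_k(X)$ for this particular $J$, so $\inf_{\gamma} \mathcal{A}(\gamma) \leq \mathcal{A}(\gamma_i) \leq a = \mathfrak{g}_k(X)$.

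The remaining subtlety is the order of the sup and inf: $\tilde{\mathfrak{g}}_k(X)$ is a supremum over all $J \in \mathcal{J}(X,D)$, whereas the argument above produces a bound for one specific $J$. The point is that $\mathfrak{g}_k(X)$ is independent of the auxiliary choice of $J$ (again by \cref{assumption}, part (3)), so the inequality $\inf_{\gamma}\mathcal{A}(\gamma) \le \mathfrak{g}_k(X)$ holds for every admissible $J$, and taking the supremum over $J$ preserves it: $\tilde{\mathfrak{g}}_k(X) = \sup_J \inf_\gamma \mathcal{A}(\gamma) \le \mathfrak{g}_k(X)$. I expect the main obstacle to be bookkeeping the relationship between the virtual count entering $\epsilon_k$ — which involves planes in $\hat X$ with the contact constraint, possibly with anchors in the linearized contact homology sense — and the moduli space $\mathcal{M}_X^J(\gamma)\p{<}{}{\mathcal{T}^{(k)}x}$ entering the definition of $\tilde{\mathfrak{g}}_k$; one must check that nonemptiness of the former at the chain level indeed produces a curve contributing to the latter, rather than some degenerate configuration. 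Since this is precisely the content of \cite[Section 3.4]{mcduffSymplecticCapacitiesUnperturbed2022}, I would follow their treatment, and under \cref{assumption} this matching is exactly what the perturbation-scheme axioms are set up to guarantee.
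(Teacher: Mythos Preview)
Your approach is essentially the same as the paper's, but there is one imprecision that you flag yourself and that the paper resolves explicitly. When you write that $\epsilon_k(\gamma_i) \neq 0$ forces $\mathcal{M}_X^J(\gamma_i)\p{<}{}{\mathcal{T}^{(k)}x}$ to be nonempty, this is not quite what \cref{assumption} gives you: the augmentation is a virtual count of the \emph{compactified} moduli space $\overline{\mathcal{M}}_X^J(\gamma_i)\p{<}{}{\mathcal{T}^{(k)}x}$, so nonvanishing only tells you that this compactification is nonempty. An element of it is in general a holomorphic building $F = (F^1,\ldots,F^N)$, not a smooth plane.

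The paper fixes this with one extra sentence: take the component $C$ of $F$ which inherits the tangency constraint. This component is an honest $J$-holomorphic plane in $\hat{X}$ asymptotic to some Reeb orbit $\gamma$, possibly different from $\gamma_i$, but with $\mathcal{A}(\gamma) \leq \mathcal{A}(\gamma_i) \leq a$ by the action/energy inequality. Hence $\mathcal{M}_X^J(\gamma)\p{<}{}{\mathcal{T}^{(k)}x} \neq \varnothing$ for this $\gamma$, and the rest of your argument goes through verbatim. This is exactly the ``degenerate configuration'' concern you raise in your last paragraph; the resolution is simply to pass to the component carrying the constraint rather than to the whole building.
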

\begin{proof}
    By \cref{lem:can prove ineqs for ndg}, we may assume that $X$ is nondegenerate. Choose a point $x \in \itr X$ and a symplectic divisor $D$ through $x$. Let $J \in \mathcal{J}(X,D)$ be an almost complex structure on $\hat{X}$ and consider the complex $CC(X)$, computed with respect to $J$. Suppose that $a > 0$ is such that the augmentation map
    \begin{IEEEeqnarray*}{c+x*}
        \epsilon_k \colon CH^a(X) \longrightarrow \Q
    \end{IEEEeqnarray*}
    is nonzero. By \cref{def:g tilde}, it is enough to show that there exists a Reeb orbit $\gamma$ such that $\mathcal{A}(\gamma) \leq a$ and $\mathcal{M}^{J}_{X}(\gamma)\p{<}{}{\mathcal{T}^{(k)}x} \neq \varnothing$. Choose a homology class $\beta \in CH^{a}(X)$ such that $\epsilon_k(\beta) \neq 0$, and write $\beta$ as a finite linear combination of Reeb orbits $\beta = \sum_{i=1}^{m} a_i \rho_i$, where every Reeb orbit has action $\mathcal{A}(\rho_i) \leq a$. One of the orbits in this linear combination, which we denote by $\rho$, is such that $\#^{\mathrm{vir}} \overline{\mathcal{M}}^{J}_{X}(\rho)\p{<}{}{\mathcal{T}^{(k)}x} \neq 0$. By assumption on the virtual perturbation scheme, $\overline{\mathcal{M}}^{J}_{X}(\rho)\p{<}{}{\mathcal{T}^{(k)}x}$ is nonempty. Choose $F = (F^1,\ldots,F^N) \in \overline{\mathcal{M}}^{J}_{X}(\rho)\p{<}{}{\mathcal{T}^{(k)}x}$ and denote by $C$ the component of $F$ which inherits the tangency constraint. Then, $C \in \mathcal{M}^J_X(\gamma)\p{<}{}{\mathcal{T}^{(k)}x}$ for some Reeb orbit $\gamma$ satisfying $\mathcal{A}(\gamma) \leq \mathcal{A}(\rho)$.
\end{proof}

\begin{theorem}
    \label{thm:g hat vs gh}
    If $X$ is a Liouville domain such that $\pi_1(X) = 0$ and $2 c_1(TX) = 0$ then%
    \begin{IEEEeqnarray*}{c+x*}
        \mathfrak{g}_k(X) = \cgh{k}(X).
    \end{IEEEeqnarray*}
\end{theorem}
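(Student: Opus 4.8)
The plan is to deduce the equality $\mathfrak{g}_k(X) = \cgh{k}(X)$ from the Bourgeois--Oancea isomorphism between linearized contact homology and positive $S^1$-equivariant symplectic homology, together with the fact (established in \cref{thm:augmentation is nonzero} for ellipsoids) that the augmentation map $\epsilon_k$ is nonzero on a small ellipsoid $E \subset X$. First I would recall the Bourgeois--Oancea isomorphism $\Phi \colon CH(X) \longrightarrow SH^{S^1,+}(X)$ and check that it is compatible with the action filtration, so that it descends to isomorphisms $CH^a(X) \cong SH^{S^1,+,a}(X)$ for every $a \in \R$. Under the grading hypotheses $\pi_1(X) = 0$ and $2c_1(TX) = 0$, both sides are $\Z$-graded and $\Phi$ has a definite degree shift; the relevant class $\gamma$ with $\conleyzehnder(\gamma) = n-1+2k$ lives in the degree where $\mathfrak{g}_k$ is computed, which should correspond under $\Phi$ to the degree $n-1+2k$ part of $SH^{S^1,+}(X)$ where $\cgh{k}$ lives (via the map $\delta \circ U^{k-1} \circ \iota^a$, cf. Diagram \eqref{eq:diagram for s1esh capacities case ss} and \cref{def:gutt hutchings capacities}).

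Next I would set up the comparison of the two ``criterion maps''. On the contact homology side, $\mathfrak{g}_k(X)$ is the infimum of $a>0$ such that $\epsilon_k \colon CH^a(X) \to \Q$ is nonzero; on the symplectic homology side, $\cgh{k}(X)$ is the infimum of $a>0$ such that $[\mathrm{pt}]\otimes[X] \in \img(\delta \circ U^{k-1}\circ\iota^a)$. The key point is that, under $\Phi$, the augmentation $\epsilon_k$ corresponds to the composition $\delta \circ U^{k-1}$ (suitably interpreted as a functional by pairing with $[\mathrm{pt}]\otimes[X]$), because both count the same geometric object: the augmentation counts planes through $x$ with contact order $k$ to $D$, and the $U$-map together with the connecting map $\delta$ in $S^1$-equivariant symplectic homology encodes precisely the $k$-fold iteration recorded by the tangency constraint. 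I would make this precise by citing \cite{bourgeoisEquivariantSymplecticHomology2016} and, for the identification of the augmentation with the $\cgh{k}$-defining map, the discussion in \cite[Section 6]{siegelHigherSymplecticCapacities2020}; alternatively one can use the ``Reeb orbits'' characterizations (\cref{thm:properties of hsc} and \cref{thm:properties of gutt-hutchings capacities}) to reduce the equality to matching spectral values. Since $\Phi$ is a filtered isomorphism, $\epsilon_k \colon CH^a(X) \to \Q$ is nonzero if and only if the dual class survives to filtration level $a$ on the $SH^{S^1,+}$ side, i.e. if and only if $[\mathrm{pt}]\otimes[X] \in \img(\delta \circ U^{k-1}\circ\iota^a)$; taking infima over $a$ gives $\mathfrak{g}_k(X) = \cgh{k}(X)$.

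There is one subtlety to address: $\mathfrak{g}_k(X)$ could a priori be $+\infty$ (if $\epsilon_k$ is zero on all of $CH(X)$), in which case the filtered isomorphism argument still gives $\mathfrak{g}_k(X) = \cgh{k}(X)$ provided $\cgh{k}(X)$ is also $+\infty$ there; but to get the inequality $\mathfrak{g}_k(X) \leq \cgh{k}(X) < \infty$ for the toric domains we care about, I would invoke monotonicity (\cref{thm:properties of hsc}) together with \cref{thm:augmentation is nonzero}: embedding a small rationally-independent ellipsoid $E \hookrightarrow X$ exactly symplectically gives $\mathfrak{g}_k(E) \geq \mathfrak{g}_k(X)$ is the wrong direction, so instead I use the Viterbo transfer square in the proof of \cref{thm:properties of hsc} to conclude $\mathfrak{g}_k$ is finite on star-shaped domains because the augmentation of the ellipsoid is nonzero, hence $\mathfrak{g}_k$ is finite on $X$ by monotonicity from a large ellipsoid containing $X$. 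The main obstacle I expect is making the identification of $\epsilon_k$ with $\delta \circ U^{k-1}$ under Bourgeois--Oancea genuinely rigorous rather than heuristic: one must track the $S^1$-equivariant parameter carefully and verify that the $k$-fold tangency constraint in the definition of $\epsilon_k$ matches the $k-1$ applications of the $U$-map plus the connecting homomorphism $\delta$, including all the combinatorial multiplicities from \cref{rmk:counts of moduli spaces with or without asy markers}. I would present this identification at the level needed — using the spectral ``Reeb orbits'' properties as the bridge — rather than reproving the full chain-level isomorphism.
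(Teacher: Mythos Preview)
Your overall idea --- use the Bourgeois--Oancea isomorphism to transport the filtered criterion defining $\mathfrak{g}_k$ to the $SH^{S^1,+}$ side --- is the right one, and it is what the paper does. However, the specific route you propose has a real gap, namely the step where you claim that under $\Phi_{\mathrm{BO}}$ the augmentation $\epsilon_k$ corresponds to the functional $\delta \circ U^{k-1}$. You yourself flag this as ``the main obstacle'', and indeed making that chain-level identification rigorous (matching the $k$-fold tangency constraint with $k-1$ applications of $U$ plus the connecting map) is not something one can extract from \cite{bourgeoisEquivariantSymplecticHomology2016} or from the spectral ``Reeb orbits'' properties alone; the latter only pin down values in the action spectrum, not which map is being represented.

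The paper sidesteps this difficulty entirely by inserting a small ellipsoid $E \hookrightarrow X$ and using the alternative description of $\cgh{k}$ from \cref{def:ck alternative}: $\cgh{k}(X)$ is the infimum of $a$ such that the composition $\phi^{S^1}_! \circ \iota^{S^1,a} \colon SH^{S^1,(\varepsilon,a]}_{n-1+2k}(X) \to SH^{S^1,+}_{n-1+2k}(E)$ is nonzero. Now one only needs that $\Phi_{\mathrm{BO}}$ intertwines the action filtration and the Viterbo transfer maps --- standard structural facts --- to replace this by the condition that $\phi_! \circ \iota^{a} \colon CH^a_{n-1+2k}(X) \to CH_{n-1+2k}(E)$ is nonzero. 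Finally, since the Viterbo transfer in linearized contact homology commutes with augmentations and since $\epsilon_k^E \colon CH_{n-1+2k}(E) \to \Q$ is an \emph{isomorphism} by \cref{thm:augmentation is nonzero}, the last condition is equivalent to $\epsilon_k^X \circ \iota^a \neq 0$, which is exactly the defining condition for $\mathfrak{g}_k(X)$. Thus the ellipsoid computation is not a side remark about finiteness (your discussion of which gets the monotonicity direction tangled), but the pivot of the argument: it turns the transfer-map criterion into the augmentation criterion without ever needing to identify $\epsilon_k$ with $\delta \circ U^{k-1}$ directly.
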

\begin{proof}
    By \cref{lem:can prove ineqs for ndg}, we may assume that $X$ is nondegenerate. Let $E = E(a_1,\ldots,a_n)$ be an ellipsoid as in \cref{sec:augmentation map of an ellipsoid} such that there exists a strict exact symplectic embedding $\phi \colon E \longrightarrow X$. In \cite{bourgeoisEquivariantSymplecticHomology2016}, Bourgeois--Oancea define an isomorphism between linearized contact homology and positive $S^1$-equivariant contact homology, which we will denote by $\Phi_{\mathrm{BO}}$. This isomorphism commutes with the Viterbo transfer maps and respects the action filtration. In addition, the Viterbo transfer maps in linearized contact homology commute with the augmentation maps. Therefore, there is a commutative diagram
    \begin{IEEEeqnarray*}{c+x*}
        \begin{tikzcd}
            SH^{S^1,(\varepsilon,a]}_{n - 1 + 2k}(X) \ar[r, "\iota^{S^1,a}"] \ar[d, hook, two heads, swap, "\Phi_{\mathrm{BO}}^a"] & SH^{S^1,+}_{n - 1 + 2k}(X) \ar[r, "\phi_!^{S^1}"] \ar[d, hook, two heads, "\Phi_{\mathrm{BO}}"] & SH^{S^1,+}_{n - 1 + 2k}(E) \ar[d, hook, two heads, "\Phi_{\mathrm{BO}}"] \\
            CH^{a}_{n - 1 + 2k}(X) \ar[r, "\iota^{a}"] \ar[d, equals]                                & CH_{n - 1 + 2k}(X) \ar[r, "\phi_{!}"] \ar[d, equals]                  & CH_{n - 1 + 2k}(E) \ar[d, hook, two heads, "{\epsilon}^E_k"] \\
            CH^{a}_{n - 1 + 2k}(X) \ar[r, swap, "\iota^{a}"]                                         & CH_{n - 1 + 2k}(X) \ar[r, swap, "{\epsilon}_k^X"]                 & \Q
        \end{tikzcd}
    \end{IEEEeqnarray*}
    Here, the map ${\epsilon}_k^E$ is nonzero, or equivalently an isomorphism, by \cref{thm:augmentation is nonzero}. Then,%
    \begin{IEEEeqnarray*}{rCls+x*}
        \cgh{k}(X)
        & = & \inf \{ a > 0 \mid \phi_!^{S^1} \circ \iota^{S^1,a} \neq 0 \} & \quad [\text{by \cref{def:ck alternative}}] \\
        & = & \inf \{ a > 0 \mid {\epsilon}_k^X \circ \iota^{a} \neq 0 \}   & \quad [\text{since the diagram commutes}] \\
        & = & {\mathfrak{g}}_k(X)                                           & \quad [\text{by \cref{def:capacities glk}}].  & \qedhere
    \end{IEEEeqnarray*}
\end{proof}

\begin{theorem}
    \phantomsection\label{thm:my main theorem}
    Under \cref{assumption}, if $X_\Omega$ is a convex or concave toric domain then%
    \begin{IEEEeqnarray*}{c+x*}
        c_L(X_{\Omega}) = \delta_\Omega.
    \end{IEEEeqnarray*}
\end{theorem}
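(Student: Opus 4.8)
The strategy is to reproduce the proof of \cref{lem:computation of cl} essentially verbatim, with the single structural change that the step $\tilde{\mathfrak{g}}_k(X_\Omega) \le \cgh{k}(X_\Omega)$ — which there was available only for $4$-dimensional convex toric domains through \cref{prp:g tilde and cgh} — is now obtained in full generality from the factorization $\tilde{\mathfrak{g}}_k(X_\Omega) \le \mathfrak{g}_k(X_\Omega) = \cgh{k}(X_\Omega)$, where the inequality is \cref{thm:g tilde vs g hat} and the equality is \cref{thm:g hat vs gh}. The hypotheses of \cref{thm:g hat vs gh} are satisfied because $X_\Omega$ is a star-shaped domain in $\C^n$, so $\pi_1(X_\Omega) = 0$ and $TX_\Omega$ is trivial, whence $2c_1(TX_\Omega) = 0$; and it is exactly here that \cref{assumption} enters, via \cref{thm:augmentation is nonzero} and the Bourgeois--Oancea isomorphism used in the proof of \cref{thm:g hat vs gh}.

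Concretely, I would first recall that $c_L(X_\Omega) \ge \delta_\Omega$ for every convex or concave toric domain by \cref{lem:c square geq delta} together with \cref{lem:c square leq c lag}, so that only the reverse inequality $c_L(X_\Omega) \le \delta_\Omega$ remains. Then, for every $k \in \Z_{\ge 1}$, I would assemble the chain
\begin{IEEEeqnarray*}{rCls+x*}
    c_L(X_\Omega)
    & \leq & \frac{\tilde{\mathfrak{g}}_k(X_\Omega)}{k}          & \quad [\text{by \cref{thm:lagrangian vs g tilde}}] \\
    & \leq & \frac{\mathfrak{g}_k(X_\Omega)}{k}                  & \quad [\text{by \cref{thm:g tilde vs g hat}}] \\
    & =    & \frac{\cgh{k}(X_\Omega)}{k}                         & \quad [\text{by \cref{thm:g hat vs gh}}] \\
    & \leq & \frac{\cgh{k}(N(\delta_\Omega))}{k}                 & \quad [\text{by monotonicity of $\cgh{k}$, since $X_\Omega \subset N(\delta_\Omega)$}] \\
    & =    & \frac{\delta_\Omega(k+n-1)}{k}                      & \quad [\text{by \cref{lem:cgh of nondisjoint union of cylinders}}],
\end{IEEEeqnarray*}
using that the inclusion $X_\Omega \subset N(\delta_\Omega)$ is a Liouville (hence generalized Liouville) embedding, and that this inclusion holds for any convex or concave toric domain: for convex $X_\Omega$ it is immediate since $\hat{\Omega}$ convex forces $\Omega$ to be a downward-closed convex subset of $\R^n_{\ge 0}$, so a point of $\Omega$ with all coordinates exceeding $\delta_\Omega$ would place $(\delta_\Omega+\varepsilon,\ldots,\delta_\Omega+\varepsilon)$ in $\Omega$ for small $\varepsilon>0$, contradicting the maximality of $\delta_\Omega$; for concave $X_\Omega$ the same conclusion follows from convexity of $\R^n_{\ge 0}\setminus\Omega$. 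Taking the infimum over $k$ (equivalently, letting $k \to \infty$) in $c_L(X_\Omega) \le \delta_\Omega(k+n-1)/k$ gives $c_L(X_\Omega) \le \delta_\Omega$, which combined with the reverse inequality proves the theorem.

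I do not expect this concluding argument to present any genuine obstacle: all the analytic and homological content has been isolated in \cref{thm:lagrangian vs g tilde}, \cref{thm:g tilde vs g hat}, \cref{thm:g hat vs gh} and \cref{thm:augmentation is nonzero}, so the proof is a purely formal concatenation of inequalities, exactly parallel to \cref{lem:computation of cl}. The only point needing a brief independent verification is the geometric inclusion $X_\Omega \subset N(\delta_\Omega)$ in the concave case; should one prefer to avoid even that, one may instead bound $\cgh{k}(X_\Omega)$ directly via the Gutt--Hutchings formula for concave toric domains, as indicated in \cref{rmk:computations using reeb orbits property}.
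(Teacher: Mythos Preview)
Your proposal is correct and follows essentially the same argument as the paper's own proof: both assemble the chain $\delta_\Omega \leq c_L(X_\Omega) \leq \tilde{\mathfrak{g}}_k(X_\Omega)/k \leq \mathfrak{g}_k(X_\Omega)/k = \cgh{k}(X_\Omega)/k \leq \cgh{k}(N(\delta_\Omega))/k = \delta_\Omega(k+n-1)/k$ and take the infimum over $k$. The paper simply asserts $X_\Omega \subset N(\delta_\Omega)$ without the justification you supply, but otherwise the proofs are identical.
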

\begin{proof}
    Since $X_{\Omega}$ is concave or convex, we have $X_{\Omega} \subset N(\delta_\Omega)$. For every $k \in \Z_{\geq 1}$,
    \begin{IEEEeqnarray*}{rCls+x*}
        \delta_\Omega
        & \leq & c_P(X_{\Omega})                                & \quad [\text{by \cref{lem:c square geq delta}}] \\
        & \leq & c_L(X_{\Omega})                                & \quad [\text{by \cref{lem:c square leq c lag}}] \\
        & \leq & \frac{\tilde{\mathfrak{g}}_{k}(X_{\Omega})}{k} & \quad [\text{by \cref{thm:lagrangian vs g tilde}}] \\
        & \leq & \frac{{\mathfrak{g}}_{k}(X_{\Omega})}{k}       & \quad [\text{by \cref{thm:g tilde vs g hat}}] \\
        & =    & \frac{\cgh{k}(X_{\Omega})}{k}                  & \quad [\text{by \cref{thm:g hat vs gh}}] \\
        & \leq & \frac{\cgh{k}(N(\delta_\Omega))}{k}            & \quad [\text{since $X_{\Omega} \subset N(\delta_{\Omega})$}] \\
        & =    & \frac{\delta_\Omega(k+n-1)}{k}                 & \quad [\text{by \cref{lem:cgh of nondisjoint union of cylinders}}].
    \end{IEEEeqnarray*}
    The result follows by taking the infimum over $k$.
\end{proof}

% Bibliography
\AtEndDocument{
    \bibliographystyle{alpha}
    \bibliography{lagrangian}
}

\end{document}